\theoremstyle{plain}
\newtheorem{theorem}[equation]{Theorem}
\newtheorem{lemma}[equation]{Lemma}
\newtheorem{corollary}[equation]{Corollary}
\newtheorem{proposition}[equation]{Proposition}
\theoremstyle{definition}
\newtheorem{remark}[equation]{Remark}
\newtheorem{definition}[equation]{Definition}
\theoremstyle{remark}
\numberwithin{equation}{section}
\newcommand{\RR}{{\mathbb{R}}}
\newcommand{\ZZ}{{\mathbb{Z}}}
\newcommand{\PP}{\mathcal{P}}
\newcommand{\DD}{\mathbb{D}}
\newcommand{\CC}{{\mathbb{C}}}
\newcommand{\dist}{\operatorname{dist}}
\newcommand{\rn}{\mathbb{R}^n}
\newcommand{\reu}{\mathbb{R}^{n+1}_+}
\newcommand{\N}{\mathbb{N}}
\newcommand{\hm}{\omega}
\newcommand{\vp}{\varphi}
\newcommand{\ve}{v_{\epsilon}}
\newcommand{\Dom}{\mathrm{Dom}}
\newcommand{\loc}{\mathrm{loc}}
\newcommand{\ube}{\bar{u}_\epsilon}
\renewcommand{\emptyset}{\mbox{\textup{\O}}}
\DeclareMathOperator{\supp}{supp}
\DeclareMathOperator{\diam}{diam}
\def\div{\mathop{\operatorname{div}}\nolimits}
\begin{document}
%\allowdisplaybreaks

\title[Carleson measure estimates and the Dirichlet problem]{Carleson measure estimates and the Dirichlet problem for degenerate elliptic equations}

%\author{Steve Hofmann}

\address{Steve Hofmann
\\
Department of Mathematics
\\
University of Missouri
\\
Columbia, MO 65211, USA.}
\email{hofmanns@missouri.edu}

%\author{Phi Le}

\address{Phi L. Le
\\
Mathematics Department
\\
Syracuse University
\\
Syracuse, NY 13244, USA.}
\email{ple101@syr.edu}

%\author{Andrew Morris}
\address{Andrew J. Morris
\\
School of Mathematics
\\
University of Birmingham
\\
Birmingham, B15 2TT, UK.}
\email{a.morris.2@bham.ac.uk}

\author{Steve Hofmann, Phi Le, Andrew J. Morris}

\thanks{
 }

\date{\today}
\subjclass[2010]{35J25, 35J70, 42B20, 42B25.}
\keywords{Square functions, non-tangential maximal functions, harmonic measure, Radon--Nikodym derivative, Carleson measure, divergence form elliptic equations, Dirichlet problem, $A_2$ Muckenhoupt weights, reverse H\"{o}lder inequality.}

\begin{abstract}
We prove that the Dirichlet problem for degenerate elliptic equations $\div (A \nabla u) = 0$ in the upper half-space $(x,t)\in \RR^{n+1}_+$ is solvable when $n\geq2$ and the boundary data is in $L^p_\mu(\RR^n)$ for some $p<\infty$. The coefficient matrix $A$ is only assumed to be measurable, real-valued and $t$-independent with a degenerate bound and ellipticity controlled by an $A_2$-weight $\mu$. It is not required to be symmetric. The result is achieved by proving a Carleson measure estimate for all bounded solutions in order to deduce that the degenerate elliptic measure is in $A_\infty$ with respect to the $\mu$-weighted Lebesgue measure on $\RR^n$. The Carleson measure estimate allows us to avoid applying the method of $\epsilon$-approximability, which simplifies the proof obtained recently in the case of uniformly elliptic coefficients. The results have natural extensions to Lipschitz  domains.
\end{abstract}

\maketitle

\tableofcontents

\section{Introduction}\label{sec:intro}
We consider the Dirichlet boundary value problem for the degenerate elliptic equation $\div (A \nabla u)=0$ in the upper half-space $\RR^{n+1}_+$ when $n\geq2$ and which we make precise below. The boundary $\RR^n\times\{0\}$ is identified with $\RR^n$ and we adopt the notation $X=(x,t)$ for points $X\in\RR^{n+1}_+$ with coordinates $x\in\RR^n$ and $t\in(0,\infty)$. The gradient $\nabla:=(\nabla_x,\partial_t)$ and divergence $\div:=\div_x+\partial_t$ are with respect to all $(n+1)$-coordinates. The coefficient $A$ denotes an $(n+1)\times (n+1)$-matrix of measurable, real-valued and $t$-independent functions on $\RR^{n+1}_+$. The matrix $A(x):=A(x,t)$ is not required to be symmetric. We suppose that there exist constants $0<\lambda\leq\Lambda<\infty$ and an $A_2$-weight $\mu$ on $\RR^n$ such that the degenerate bound and ellipticity
\begin{equation}\label{eq:deg.def}
|\langle A(x)\xi, \zeta \rangle| \leq \Lambda \mu(x)|\xi| |\zeta|
\quad\text{and}\quad
\langle A(x)\xi, \xi \rangle \geq \lambda \mu(x)|\xi|^2
\end{equation}
hold for all $\xi,\zeta \in \RR^{n+1}$ and almost every $x\in \RR^n$. We use $\langle \cdot,\cdot\rangle$ and $|\cdot|$ to denote the Euclidean inner-product and norm. An $A_2$-weight $\mu$ on $\RR^n$ refers to a non-negative locally integrable function $\mu:\RR^n\rightarrow[0,\infty]$ such that
\[
[\mu]_{A_2(\RR^n)}
:=\sup_Q \left(\frac{1}{|Q|}\int_Q \mu(x)\,  dx \right) \left(\frac{1}{|Q|}\int_Q \frac{1}{\mu(x)}\ dx  \right)
<\infty,
\]
where $\sup_Q$ denotes the supremum over all cubes $Q$ in $\RR^n$ with volume~$|Q|$. We also use $\mu$ to denote the measure $\mu(Q):=\int_Q \mu(x)\ dx$ and consider the Lebesgue space $L^p_\mu(\RR^n)$ with the norm $\|f\|_{L^p_\mu(\RR^n)}:= (\int_{\RR^n} |f|^p\ d\mu)^{1/p}$ for all $p\in[1,\infty)$. There is also the notation $\fint_Q f\ d\mu:=\mu(Q)^{-1}\int_Q f\ d\mu$ whilst $\fint_Q f :=|Q|^{-1}\int_Q f(x)\ dx$.

If $\mu$ is identically 1, then $A$ is called uniformly elliptic. The solvability of the Dirichlet problem for general non-symmetric coefficients in that case was obtained only recently by Hofmann, Kenig, Mayboroda and Pipher in~\cite{HKMP1}. The result in dimension $n=1$ had been obtained previously by Kenig, Koch, Pipher and Toro in~\cite{KKoPT}. These results assert that for each  uniformly elliptic coefficient matrix $A$, there exists some $p<\infty$ for which the Dirichlet problem is solvable for $L^p$-boundary data. Conversely, counterexamples in \cite{KKoPT} show that for each $p<\infty$, there exists a uniformly elliptic coefficient matrix $A$ for which the Dirichlet problem is not solvable for $L^p$-boundary data. In contrast, solvability of the Dirichlet problem for symmetric coefficients in the uniformly elliptic case is well-understood, and we mention only that it was obtained by Jerison and Kenig in~\cite{JK} for $L^p$-boundary data when $2\leq p<\infty$.

The solvability of the Dirichlet problem in the uniformly elliptic case has also been established for a variety of complex coefficient structures (see, for instance, \cite{AS,HKMP1,HoMiMo}). A significant portion of that theory was recently extended to the degenerate elliptic case by Auscher, Ros\'{e}n and Rule in~\cite{ARR} for $L^2$-boundary data. That extension did not include, however, the results for general non-symmetric coefficients in \cite{HKMP1}. This paper complements the progress made in \cite{ARR} by extending the solvability obtained for the Dirichlet problem in \cite{HKMP1} to the degenerate elliptic case.

For solvability on the upper half-space $\RR^{n+1}_+$, the $A_2$-weight $\mu$ on $\RR^n$ is extended to the $t$-independent $A_2$-weight $\mu(x,t):=\mu(x)$ on $\RR^{n+1}$ (and $[\mu]_{A_2(\RR^{n+1})}=[\mu]_{A_2(\RR^n)}$). We then say that $u$ is a \textit{solution} of the equation $\div (A \nabla u)=0$ in an open set $\Omega\subseteq \RR^{n+1}$ when $u\in W^{1,2}_{\mu,\loc}(\Omega)$ and $\int_{\RR^{n+1}_+} \langle A\nabla u, \nabla \Phi \rangle =0$ for all smooth compactly supported functions $\Phi\in C^\infty_c(\Omega)$.  The solution space is the local $\mu$-weighted Sobolev space $W^{1,2}_{\mu,\loc}$ defined in Section~\ref{sec:prelims}. The convergence of solutions to boundary data is afforded by estimates for the non-tangential maximal function $N_*u$ of solutions $u$, defined by 
\[
(N_*u)(x) := \sup_{(y,t)\in\Gamma(x)} |u(y,t)|
\qquad \forall x\in\RR^n,
\]
where the cone $\Gamma(x) := \{(y,t)\in\RR^{n+1}_+ : |y-x|<t\}$. If $p\in(1,\infty)$, then the Dirichlet problem for $L^p_\mu(\RR^n)$-boundary data, or simply $(D)_{p,\mu}$, is said to be \textit{solvable} when for each $f\in L^p_\mu(\RR^n)$, there exists a solution $u$ such that
\begin{equation}\tag*{$(D)_{p,\mu}$}
\begin{cases}
\div (A \nabla u)=0 \textrm{ in } \RR^{n+1}_+,\\
N_* u \in L^p_{\mu}(\RR^n),\\
\lim_{t\rightarrow 0} u(\cdot,t) = f,
\end{cases}
\end{equation}
where the limit is required to converge in $L^p_\mu(\RR^n)$-norm and in the non-tangential sense whereby $\lim_{\Gamma(x)\ni(y,t)\rightarrow (x,0)} u(y,t) = f(x)$ for almost every $x\in\RR^n$. Note that this definition of solvability is distinct from \textit{well-posedness}, which requires that such solutions are unique. We are able to obtain a uniqueness result for solutions that converge uniformly to 0 at infinity, but the question of well-posedness more generally remains open (see Theorem~\ref{thm:DpSolvmain} and the preceding discussion).

A non-negative Borel measure $\omega$ on a cube $Q_0$ in $\RR^n$ is said to be in the $A_\infty$-class with respect to $\mu$, written $\omega \in A_\infty(\mu)$, when there exist constants $C,\theta>0$, which we call the $A_\infty(Q_0)$-constants, such that
\[
\omega(E) \leq C \left(\frac{\mu(E)}{\mu(Q)}\right)^\theta \omega(Q)
\]
for all cubes $Q\subseteq Q_0$ and all Borel sets $E\subseteq Q$. This is a scale-invariant version of the absolute continuity of $\omega$ with respect to $\mu$. It is well-known, at least in the uniformly elliptic case, that solvability of the Dirichlet problem for $L^p$-boundary data for some $p<\infty$ is equivalent to the property that an adapted harmonic measure (elliptic measure) belongs to $A_\infty$ with respect to the Lebesgue measure on $\RR^n$ (see Theorem 1.7.3 in \cite{K}). In the degenerate case, an adapted harmonic measure $\omega^X$, which we call degenerate elliptic measure, can also be defined at each $X\in\RR^{n+1}_+$ (see Section~\ref{sec:Dp}). We prove that this degenerate elliptic measure is in $A_\infty$ with respect to $\mu$ and then deduce the solvability of $(D)_{p,\mu}$ stated in the theorem below. This requires the notation associated with cubes $Q$ in $\RR^n$ whereby $x_Q$ and $\ell(Q)$ denote the centre and side length of $Q$, respectively, and $X_Q:=(x_Q,\ell(Q))$ denotes the corkscrew point in $\RR^{n+1}_+$ relative to $Q$. 

\begin{theorem}\label{thm:introAinfinity+Dp}
If $n\geq 2$ and the $t$-independent coefficient matrix $A$ satisfies the degenerate bound and ellipticity in~\eqref{eq:deg.def} for some constants $0<\lambda\leq\Lambda<\infty$ and an $A_2$-weight $\mu$ on $\RR^n$, then there exists $p\in(1,\infty)$ such that $(D)_{p,\mu}$ is solvable. Moreover, on each cube $Q$ in $\RR^n$, the degenerate elliptic measure $\omega:=\omega^{X_Q}\lfloor{Q}$ satisfies $\omega\in A_\infty(\mu)$ with $A_\infty(Q)$-constants that depend only on $n$, $\lambda$, $\Lambda$ and $[\mu]_{A_2}$.
\end{theorem}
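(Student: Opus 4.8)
The plan is to prove the theorem in three stages. First, one develops the potential theory of the \emph{degenerate elliptic measure} $\omega^X$ in the $A_2$-weighted setting. Second, one establishes a Carleson measure estimate for arbitrary bounded solutions of $\div(A\nabla u)=0$ in $\RR^{n+1}_+$. Third, one uses that Carleson measure estimate to show $\omega:=\omega^{X_Q}\lfloor Q\in A_\infty(\mu)$, and then deduces the solvability of $(D)_{p,\mu}$. By translation invariance in $x$ together with the dilation invariance of the hypotheses — the rescaled coefficients and weights retain the same $\lambda$, $\Lambda$ and $[\mu]_{A_2}$, and the degenerate elliptic measure transforms covariantly — it suffices to prove the $A_\infty(\mu)$ conclusion for a single fixed cube, say $Q_0=[0,1)^n$, with constants depending only on $n,\lambda,\Lambda,[\mu]_{A_2}$; the case of a general cube $Q$ then follows.

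\emph{Potential theory.} Because $\mu\in A_2(\RR^n)\subseteq A_2(\RR^{n+1})$, the degenerate De Giorgi--Nash--Moser theory of Fabes--Kenig--Serapioni applies to solutions: local boundedness, interior Hölder continuity, Harnack's inequality, the Caccioppoli inequality, and the weighted Sobolev--Poincaré inequality, all with constants controlled by $n,\lambda,\Lambda,[\mu]_{A_2}$. Since $\mu$ is doubling on $\RR^n$ and $\RR^{n+1}_+$ is a half-space, one also obtains boundary Hölder continuity of solutions vanishing on a boundary cube and the boundary Harnack/comparison principle. With these tools I would construct $\omega^X$ by solving, for each $f\in C_c(\RR^n)$, the Dirichlet problem in the weighted energy sense via the Lax--Milgram theorem applied to the bilinear form $(u,v)\mapsto\int_{\RR^{n+1}_+}\langle A\nabla u,\nabla v\rangle$, and then invoking the Riesz representation theorem to obtain a family of Borel probability measures $\{\omega^X\}_{X\in\RR^{n+1}_+}$. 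The quantitative facts needed downstream are the Bourgain-type nondegeneracy $\omega^{X_Q}(Q)\gtrsim 1$, the doubling property of $\omega^X$ on surface cubes away from $X$, the existence and pointwise/oscillation bounds for the weighted Green function $G(X,Y)$, the CFMS-type comparison relating $\omega^{X_Q}$ and $G(X_Q,\cdot)$ at the scale of $Q$, and the change-of-pole estimate comparing $\omega^{X_Q}$ and $\omega^{X_{Q'}}$ on $Q'$ for $Q'\subseteq Q$.

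\emph{The Carleson measure estimate.} This is the heart of the matter and the step I expect to be the main obstacle: for every solution $u$ in $\RR^{n+1}_+$ with $\|u\|_{L^\infty(\RR^{n+1}_+)}\le 1$,
\[
\iint_{Q\times(0,\ell(Q))} |\nabla u(x,t)|^2\,\mu(x)\,t\,dx\,dt \;\lesssim\; \mu(Q)
\]
uniformly over cubes $Q$, with implicit constant depending only on $n,\lambda,\Lambda,[\mu]_{A_2}$ — the natural weighted analogue of the uniformly elliptic estimate. I would follow the strategy of Hofmann--Kenig--Mayboroda--Pipher for non-symmetric $t$-independent coefficients, adapted to the weighted setting. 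The decisive structural input is the $t$-independence of $A$: it makes $\partial_t u$ a solution of the same equation and permits a Rellich-type integration-by-parts identity in which a normal derivative is traded for tangential derivatives, after which the ellipticity in \eqref{eq:deg.def} bounds the bulk term $\iint|\nabla u|^2\mu\,t$ by boundary and lower-order contributions. Throughout, the weighted Caccioppoli and Poincaré inequalities of Fabes--Kenig--Serapioni replace their unweighted counterparts, and the weight $\mu$ must be carried through every application of Cauchy--Schwarz and every sum over Whitney regions, checking that the resulting error terms define $\mu$-Carleson measures; the $A_2$-condition is what keeps the relevant packing estimates valid. As in the uniformly elliptic case, a stopping-time (corona) decomposition adapted simultaneously to $\omega$ and to $\mu$ localizes the estimate and absorbs the contributions coming from the non-symmetry of $A$; crucially, no appeal to $\epsilon$-approximability is required at this stage.

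\emph{From the Carleson measure estimate to $A_\infty(\mu)$ and to solvability.} Using the Carleson measure estimate together with the doubling of $\omega$, I would prove $\omega\in A_\infty(\mu)$ in the equivalent scale-invariant form: for each $\varepsilon>0$ there is $\delta>0$ so that $\mu(E)/\mu(Q)<\delta$ forces $\omega^{X_Q}(E)/\omega^{X_Q}(Q)<\varepsilon$ for all Borel $E\subseteq Q\subseteq Q_0$. The mechanism is to apply the Carleson measure estimate to the bounded solution $u^E(X):=\omega^X(E)$ and then, via the CFMS comparison, the interior Harnack inequality, and a stopping-time argument over those subcubes on which the $\mu$-density of $E$ remains small, to conclude that $u^E(X_Q)=\omega^{X_Q}(E)$ is itself small; it is precisely here that the Carleson measure estimate takes the place of the $\epsilon$-approximability argument used previously. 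Once $\omega\in A_\infty(\mu)$ is known, the self-improvement of $A_\infty$ supplies a uniform reverse Hölder inequality for the Poisson kernel $k^{X_Q}:=d\omega^{X_Q}/d\mu$ with some exponent $q>1$; setting $p:=q/(q-1)$, the solution of $(D)_{p,\mu}$ for $f\in L^p_\mu(\RR^n)$ is $u(X):=\int_{\RR^n}f\,d\omega^X$. The reverse Hölder inequality yields the pointwise bound $N_*u\lesssim M_\mu f$ for the $\mu$-weighted Hardy--Littlewood maximal operator, which is bounded on $L^p_\mu(\RR^n)$; the weighted Lebesgue differentiation theorem and boundary Hölder continuity give the non-tangential convergence $u\to f$ almost everywhere, and dominated convergence gives convergence in $L^p_\mu$-norm. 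The asserted dependence of the $A_\infty(Q)$-constants is inherited from that in the Carleson measure estimate and the potential theory.
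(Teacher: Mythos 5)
Your three-stage architecture matches the paper's, and the final deduction of solvability from the reverse H\"older inequality is essentially the paper's Lemma~\ref{lem13} and Theorem~\ref{thm:DpSolvmain}. However, there are three genuine gaps. First, the construction of $\omega^X$ "via Lax--Milgram on $\RR^{n+1}_+$" and the appeal to "existence and pointwise bounds for the weighted Green function $G(X,Y)$" on the half-space do not go through: for a general $A_2$-weight the global Sobolev embedding \eqref{wSob} on the unbounded domain is not available, so neither coercivity of the form on the half-space nor the existence of a global Green function is known. The paper instead exhausts by the bounded Lipschitz domains $\Sigma_R=B(0,R)\cap\RR^{n+1}_+$, uses the Green functions $g_R$ and measures $\omega^X_R$ of \cite{FJK2} there, and transfers doubling, the comparison principle, the Bourgain-type bound and the kernel decay to $\omega^X$ by the weak-convergence inequalities \eqref{eq:KU}; this limiting step is not optional and your sketch has no substitute for it.

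Second, your description of the Carleson measure estimate omits its actual engine. The proof is not a Rellich identity plus a corona decomposition "adapted simultaneously to $\omega$ and to $\mu$" (which would in any case be circular, since no information about $\omega$ is available before the $A_\infty$ property is proved). What is needed is: the $W^{1,2}_{0,\mu}(5Q)$-Hodge decomposition \eqref{eq:W12Hodge} of the off-diagonal coefficients $\mathbf{b},\mathbf{c}$; the Dahlberg--Kenig--Stein-type pull-back $t\mapsto t-(I-P^*_{\eta t})\varphi(x)$, justified by weak-type bounds for the adapted maximal operators of Section~\ref{sec:maxops}; and the vertical square function estimates of Theorem~\ref{VSFbound}, i.e.\ the solution of the degenerate Kato problem \cite{CuR} for $L_{||,\mu}^*$. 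This yields the estimate only on a big piece $F\subseteq Q$ (Theorem~\ref{thm:BPF}), and one must then self-improve via the John--Nirenberg-type Lemma~\ref{JNforCar} together with Lemma~\ref{lem:tangrad}. Third, your mechanism for passing from the Carleson estimate to $A_\infty(\mu)$ is inverted: an \emph{upper} Carleson bound on $\nabla u^E$ cannot, by itself, show that $u^E(X_Q)=\omega^{X_Q}(E)$ is small. The correct argument (Lemma~\ref{28}, following \cite{KKoPT,KKiPT}) assumes $\omega^{X_{2Q_0}}(E)$ is small, builds a good $\epsilon_0$-cover of $E$ and a Borel set $\mathcal{B}$ whose solution $u(X)=\omega^X(\mathcal{B})$ oscillates by a fixed amount along every chain of cubes over each $x\in E$, so that the truncated square function of $u$ is $\geq M$ on $E$; only then does the Carleson upper bound force $M\mu(E)\lesssim\mu(Q)$, giving statement (1) of Theorem~\ref{thm:Ainftymain}. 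Without this lower bound construction your Stage 3 does not close.
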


In contrast to the proof of solvability in the uniformly elliptic case in \cite{HKMP1}, we avoid the need to apply the method of $\epsilon$-approximability by first establishing the Carleson measure estimate in the theorem below. This crucial estimate facilitates the main results of the paper. The connection between the Carleson measure estimate and solvability was first established in the uniformly elliptic case by Kenig, Kirchheim, Pipher and Toro in \cite{KKiPT}, and we follow their approach here, adapting it to the degenerate elliptic setting (see Lemma~\ref{28} below). In particular, the $A_\infty$-property of degenerate elliptic measure is obtained by combining the Carleson measure estimate \eqref{CME} with the notion of good $\epsilon$-coverings introduced in \cite{KKoPT}. 

\begin{theorem}\label{thm:introCME}
If $n\geq 2$ and the $t$-independent coefficient matrix $A$ satisfies the degenerate bound and ellipticity in~\eqref{eq:deg.def} for some constants $0<\lambda\leq\Lambda<\infty$ and an $A_2$-weight $\mu$ on $\RR^n$, then any solution $u\in L^\infty(\RR^{n+1}_+)$ of $\div (A \nabla u)=0$ in $\RR^{n+1}_+$ satisfies the Carleson measure estimate
\begin{equation}\label{CME}
\sup_Q \frac{1}{\mu(Q)}\int_{0}^{\ell(Q)}\int_{Q} |t\nabla u(x,t)|^2\ d\mu(x)\frac{dt}{t} \leq C \|u\|_\infty^2,
\end{equation}
where $C$ depends only on $n$, $\lambda$, $\Lambda$ and $[\mu]_{A_2}$.
\end{theorem}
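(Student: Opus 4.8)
The plan is to normalise $\|u\|_{\infty}=1$ (noting that $u$ may be taken real-valued), to fix a cube $Q$, and to prove the single-box estimate $\iint_{T(Q)}|t\nabla u|^{2}\,d\mu\,\tfrac{dt}{t}\lesssim\mu(Q)$, where $T(Q):=Q\times(0,\ell(Q))$; taking the supremum over $Q$ then yields \eqref{CME}. Since finiteness of the left-hand side is already the whole point, I would first prove this for the integral truncated to $\{t>\delta\}$ with a bound independent of $\delta>0$, and let $\delta\to0$ at the end. It is convenient to decompose $T(Q)$ into the Whitney regions $\widetilde{U}_{Q'}$ of the dyadic subcubes $Q'\subseteq Q$, so that the estimate becomes a Carleson packing condition $\sum_{Q'\subseteq Q}a_{Q'}\lesssim\mu(Q)$ with $a_{Q'}:=\iint_{\widetilde{U}_{Q'}}|t\nabla u|^{2}\,d\mu\,\tfrac{dt}{t}$; the $\mu$-weighted Caccioppoli and local boundedness inequalities from Section~\ref{sec:prelims}---available because $(\RR^{n},|\cdot|,\mu)$ is a space of homogeneous type---immediately give $a_{Q'}\lesssim\mu(Q')$, but summing this over all scales is far too lossy.

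The engine of the argument, and the point at which one sidesteps the method of $\epsilon$-approximability used in the uniformly elliptic case~\cite{HKMP1}, is that $u^{2}$ is a \emph{subsolution}: testing the equation against $u\phi$ for $\phi\geq0$ and using only the ellipticity in~\eqref{eq:deg.def}, one gets $\di(A\nabla u^{2})=2\langle A\nabla u,\nabla u\rangle\geq 2\lambda\,\mu\,|\nabla u|^{2}$, with \emph{no} appeal to symmetry. Hence $v:=1-u^{2}$ is a nonnegative supersolution with $0\leq v\leq1$, and the Riesz/potential-theoretic representation for $\di(A\nabla\cdot)$ on $\RR^{n+1}_+$---built on the degenerate potential theory of Fabes--Kenig--Serapioni as developed in Section~\ref{sec:prelims}---gives, for the corkscrew point $X_{Q}$,
\begin{equation}\label{plan:green}
\iint_{\RR^{n+1}_+} G(X_{Q},Y)\,|\nabla u(Y)|^{2}\,d\mu(Y)\ \lesssim\ v(X_{Q})\ \leq\ 1,
\end{equation}
where $G$ is the Green function for $\di(A\nabla\cdot)$ in $\RR^{n+1}_+$. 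This replaces the Dahlberg--Jerison--Kenig integration by parts: carried out directly, the latter produces a ``drift'' term $\iint u\,(A\nabla u)_{n+1}\,\Phi$ which does not localise in $t$ and does not see the non-symmetry as a small perturbation, and it is precisely this term that forced the detour through $\epsilon$-approximability in~\cite{HKMP1}.

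It remains to convert~\eqref{plan:green} into the scale-invariant bound \eqref{CME}. For this I would use the CFMS-type comparison of the Green function with degenerate elliptic measure---which holds here and does \emph{not} presuppose $A_{\infty}$---together with Bourgain's lower bound $\omega^{X_{Q'}}(Q')\gtrsim1$ and the doubling of $\omega^{X_{Q}}$, in the form $G(X_{Q},(y,t))\approx\tfrac{t}{\mu(Q')}\,\omega^{X_{Q}}(Q')$ for $(y,t)$ in the Whitney region of a dyadic cube $Q'\subseteq Q$. Inserting this into~\eqref{plan:green} yields $\sum_{Q'\subseteq Q}\tfrac{\omega^{X_{Q}}(Q')}{\mu(Q')}\,a_{Q'}\lesssim1$, so the desired packing bound would follow at once from the lower estimate $\omega^{X_{Q}}(Q')\gtrsim\mu(Q')/\mu(Q)$ for every $Q'\subseteq Q$---but this is exactly (part of) the $A_{\infty}$ conclusion of Theorem~\ref{thm:introAinfinity+Dp} that we are not yet entitled to use. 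The way around this apparent circularity---and the technical heart of the proof---is to run a stopping-time decomposition of the dyadic subcubes of $Q$ that isolates the ``bad'' cubes on which $\omega^{X_{Q}}$ is small relative to $\mu$, and to show, by the maximum principle and the $\mu$-weighted Harnack inequality, that these bad cubes can only occupy a controlled portion of $Q$ before the argument self-improves; because a naive stopping time would again demand an $A_{\infty}$-type comparison between $\omega$ and $\mu$, this has to be organised either through an iteration keeping careful track of the packing constant of the stopping family or, more robustly, through the good $\epsilon$-covering machinery of~\cite{KKoPT} adapted to the homogeneous space $(\RR^{n},\mu)$ (the same device used later, in Lemma~\ref{28}, for Theorem~\ref{thm:introAinfinity+Dp}). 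I expect this last step---making the stopping-time/self-improvement argument quantitative using only the doubling and Bourgain-type properties of degenerate elliptic measure, with constants depending solely on $n$, $\lambda$, $\Lambda$ and $[\mu]_{A_2}$---to be the main obstacle; the $\mu$-weighted Caccioppoli, Harnack and De Giorgi--Nash--Moser estimates of Section~\ref{sec:prelims} should make everything else routine.
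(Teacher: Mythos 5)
Your reduction to a dyadic packing condition and your observation that $u^2$ is a subsolution (so that $1-u^2$ is a bounded non-negative supersolution, with no symmetry needed) are fine, but the argument breaks down exactly where you flag it, and the proposed repair does not close the gap. The bound $\sum_{Q'\subseteq Q}\frac{\omega^{X_Q}(Q')}{\mu(Q')}\,a_{Q'}\lesssim 1$ yields $\sum_{Q'}a_{Q'}\lesssim\mu(Q)$ only if $\omega^{X_Q}(Q')\gtrsim\mu(Q')/\mu(Q)$ uniformly, and this is (a quantitative form of) the $A_\infty$ property of Theorem~\ref{thm:introAinfinity+Dp}, which in this paper is a \emph{consequence} of \eqref{CME}, not an input to it. The good $\epsilon$-covering machinery of \cite{KKoPT} that you invoke runs in the opposite direction: in Lemma~\ref{28} it is used to manufacture, from a set of small $\omega$-measure, a bounded solution with large truncated square function, so that the \emph{already established} estimate \eqref{CME} forces the set to have small $\mu$-measure; it cannot produce the lower bound on $\omega/\mu$ that your packing argument needs. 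More decisively, nothing in your outline after the Green function step uses the $t$-independence of $A$: the subsolution inequality, the CFMS comparison (Lemmas~\ref{Greensize}--\ref{Lem:Comparison}), doubling of $\omega^{X}$, Bourgain's estimate (Lemma~\ref{Bo:est}), Harnack and the maximum principle all hold for general real elliptic coefficients, for which elliptic measure can be singular with respect to $\mu$ and \eqref{CME} is false. So any stopping-time or self-improvement scheme built only on those ingredients cannot succeed; the $t$-independence must enter the proof in an essential analytic way. (A secondary obstruction: as noted in Section~\ref{Sec:PreEst}, a Green function for the unbounded half-space is not known to exist for a general $A_2$-weight, so your representation would at best have to be run on the truncated domains $\Sigma_R$ and passed to the limit.)

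The paper's proof uses an entirely different mechanism, and it is there that $t$-independence is exploited. One first proves the Carleson estimate for $|t\partial_t u|^2$ on a ``big piece'' $F\subseteq Q$ with $\mu(F)\geq c_0\mu(Q)$ (Theorem~\ref{thm:BPF}), where $F$ is the set on which certain maximal functions of Hodge potentials are controlled: the off-diagonal coefficients $\mathbf{b},\mathbf{c}$ are decomposed via the $W^{1,2}_{0,\mu}(5Q)$-Hodge decomposition \eqref{eq:W12Hodge}, a Dahlberg--Kenig--Stein-type pull-back $t\mapsto t-(I-P^*_{\eta t})\varphi(x)$ adapted to the semigroup of $L_{||,\mu}^*$ is performed, and after the main integration by parts all principal and error terms are controlled by the vertical square function bounds of Theorem~\ref{VSFbound}, i.e.\ by the solution of the degenerate Kato problem \cite{CuR}. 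The full estimate then follows from the John--Nirenberg-type self-improvement for Carleson measures (Lemma~\ref{JNforCar}, using the interior H\"older continuity of $t\partial_t u$, itself a solution by $t$-independence) together with the reduction of $\nabla u$ to $\partial_t u$ (Lemma~\ref{lem:tangrad}). To salvage your outline you would need an independent source for the lower bound on $\omega^{X_Q}(Q')/\mu(Q')$, which for non-symmetric coefficients is precisely what is unavailable in advance.
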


Using the Carleson measure estimate in this way allows us to bypass the need to establish norm-equivalences between the non-tangential maximal function $N_*u$ and the square function $Su$ of solutions $u$, defined by
\[
(Su)(x):=\left(\iint_{\Gamma(x)} |t\nabla u(y,t)|^2 \frac{d\mu(y)}{\mu(\Delta(x,t))}\frac{dt}{t}\right)^{1/2}
\qquad \forall x\in\RR^n,
\]
where the surface ball $\Delta(x,t):=\{y\in\RR^n: |y-x|<t\}$. It was shown by Dahlberg, Jerison and Kenig in \cite{DJK}, however, that such estimates are a consequence of the $A_\infty$-property of degenerate elliptic measure, which provides the following result.

\begin{theorem}\label{thm:SNS}
If $n\geq 2$ and the $t$-independent coefficient matrix $A$ satisfies the degenerate bound and ellipticity in~\eqref{eq:deg.def} for some constants $0<\lambda\leq\Lambda<\infty$ and an $A_2$-weight $\mu$ on $\RR^n$, then any solution of $\div (A \nabla u)=0$ in $\RR^{n+1}_+$ satisfies
\[
\|Su\|_{L^p_\mu(\RR^n)} \leq C \|N_*u\|_{L^p_\mu(\RR^n)}
\qquad \forall p\in(0,\infty),
\]
and if, in addition, $u(X_0)=0$ for some $X_0\in\RR^{n+1}_+$, then
\[
\|N_*u\|_{L^p_\mu(\RR^n)} \leq C \|Su\|_{L^p_\mu(\RR^n)}
\qquad \forall p\in(0,\infty),
\]
where $C$ depends only on $X_0$, $p$, $n$, $\lambda$, $\Lambda$ and $[\mu]_{A_2}$.
\end{theorem}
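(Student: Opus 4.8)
The plan is to follow the area-integral machinery of Dahlberg, Jerison and Kenig \cite{DJK}, recast in the $\mu$-weighted, degenerate framework. This recasting is legitimate because $\mu\in A_2(\RR^n)$ is a doubling measure --- so the $\mu$-weighted Hardy--Littlewood maximal operator is bounded on $L^p_\mu$ for $p>1$, Whitney decompositions of open sets behave as usual, and the Fubini identity $\int_{\RR^n}(Su)^2\,d\mu\approx\iint_{\RR^{n+1}_+}|t\nabla u|^2\,d\mu\,\tfrac{dt}{t}$ together with its tent-truncated analogues holds --- and because the $t$-independent degenerate equation enjoys the full De~Giorgi--Nash--Moser package (interior and boundary H\"older continuity, local boundedness of $u$ and of $\nabla u$, Caccioppoli, Harnack inequality and Harnack chains), as well as the comparison principle, the Green-function/elliptic-measure comparison of Caffarelli--Fabes--Mortola--Salsa (CFMS) type, and the doubling property of $\omega$, all available in the $A_2$-setting and developed in the earlier sections. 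Both inequalities then reduce, by integrating the distribution function (for $p<1$ as well, this is routine once finiteness is known), to good-$\lambda$ inequalities
\[
\mu\bigl(\{Su>K\lambda\}\cap\{N_*u\le\gamma\lambda\}\bigr)\le C\gamma^2\,\mu\bigl(\{Su>\lambda\}\bigr)
\quad\text{and}\quad
\mu\bigl(\{N_*u>K\lambda\}\cap\{Su\le\gamma\lambda\}\bigr)\le\eta(\gamma)\,\mu\bigl(\{N_*u>\lambda\}\bigr),
\]
for all $\lambda>0$, with $K$ a fixed large constant and $\eta(\gamma)\to 0$ as $\gamma\to0^+$; a priori finiteness is arranged by truncating the cones defining $Su$ and $N_*u$ from below at height $\varepsilon>0$, proving the estimates with $\varepsilon$-uniform constants, and letting $\varepsilon\to0$.

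For the first good-$\lambda$ inequality, which uses only Theorem~\ref{thm:introCME}, I would Whitney-decompose $\{Su>\lambda\}$ into cubes $\{Q_j\}$ and, for $x\in Q_j$, split the cone defining $Su(x)$ into its portion over a tent $T(CQ_j)$ and its complement. The complementary portion is dominated by a fixed multiple of $Su(z_j)\le\lambda$ for a point $z_j$ just outside $\{Su>\lambda\}$ near $Q_j$ (using doubling of $\mu$ to compare truncated cones), so, setting $F:=\{N_*u\le\gamma\lambda\}$ and $\mathcal{R}_F:=\bigcup_{x\in F}\Gamma(x)$ and noting that only $x\in F\cap Q_j$ contributes, it suffices by Chebyshev to bound $\int_{F\cap Q_j}(S^{T(CQ_j)}u)^2\,d\mu$, which by the Fubini identity is $\lesssim\iint_{\mathcal{R}_F\cap T(CQ_j)}|t\nabla u|^2\,d\mu\,\tfrac{dt}{t}$. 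On the Lipschitz sawtooth region $\mathcal{R}_F$ one has $|u|\le\gamma\lambda$, so a version of the Carleson measure estimate valid on such sawtooth subdomains --- which follows from the methods establishing Theorem~\ref{thm:introCME} together with the $t$-independence of $A$ --- bounds this by $C(\gamma\lambda)^2\mu(CQ_j)$. Summing over $j$ with bounded overlap and doubling gives the asserted bound. (Only those $Q_j$ meeting $F$ contribute, so the region where $|u|\le\gamma\lambda$ is indeed nonempty where it is used.)

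For the second good-$\lambda$ inequality --- where the hypothesis $u(X_0)=0$ and the $A_\infty(\mu)$-property of $\omega$ from Theorem~\ref{thm:introAinfinity+Dp} are used --- I would Whitney-decompose $\{N_*u>\lambda\}$ into $\{Q_j\}$ and estimate $E_j:=Q_j\cap\{N_*u>K\lambda\}\cap\{Su\le\gamma\lambda\}$. First one controls $u$ at the corkscrew point $X_{Q_j}$: for $Q_j$ of small side length, $|u(X_{Q_j})|\lesssim\lambda$ via a point just outside $\{N_*u>\lambda\}$ near $Q_j$; at large scales, it is controlled through the anchor $u(X_0)=0$ together with a Harnack chain from $X_0$ --- the place where the constant acquires its dependence on $X_0$. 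One then bounds $\omega^{X_{Q_j}}(E_j)$: a Green-function integration-by-parts identity shows that the $L^2(\omega^{X_{Q_j}})$-oscillation of $u$ over $Q_j$, measured from $u(X_{Q_j})$ (at the boundary or at comparable interior scales), is $\lesssim\int_{Q_j}(Su)^2\,d\omega^{X_{Q_j}}\le(\gamma\lambda)^2\,\omega^{X_{Q_j}}(Q_j)$, using the CFMS comparison of Green's function with $\omega$; by interior estimates this forces $N_*u\lesssim\lambda+\gamma\lambda$ off a set of $\omega^{X_{Q_j}}$-measure $\lesssim\gamma^2\,\omega^{X_{Q_j}}(Q_j)$, so for $K$ large $E_j$ lies in that small set. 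Finally the $A_\infty(\mu)$-property converts this into $\mu(E_j)\le C\gamma^{2\theta}\mu(Q_j)=:\eta(\gamma)\mu(Q_j)$, and summing over $j$ finishes.

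The main obstacle, beyond bookkeeping, is twofold. First, promoting the Carleson measure estimate of Theorem~\ref{thm:introCME} --- proved on the half-space, crucially using $t$-independence --- to the Lipschitz sawtooth subdomains required above; this cannot be obtained from Caccioppoli alone, since Whitney boxes accumulating at the boundary would produce a divergent sum over scales, so one must rework the proof of Theorem~\ref{thm:introCME} on sawtooths (or invoke a localization argument) and verify that the degenerate interior/boundary regularity and elliptic-measure doubling make it go through. Second, in the $N_*u\lesssim Su$ direction, carrying out the Green-function identity and the passage from an $L^2(\omega^{X_{Q_j}})$-oscillation bound to a pointwise non-tangential bound in the degenerate setting (this needs local boundedness of $\nabla u$, hence $t$-independence, and the CFMS-type comparison), and orchestrating the $A_\infty(\mu)$-conversion together with the Harnack-chain bound at large scales so that the constants depend only on the stated quantities (and $X_0$). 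The underlying PDE toolkit is entirely available for $A_2$-degenerate $t$-independent operators by Fabes--Kenig--Serapioni-type theory and the constructions of the preceding sections, so no genuinely new phenomenon arises; the work lies in checking that the weighted measures behave throughout as in the classical case.
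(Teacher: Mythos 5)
Your plan is essentially a reconstruction of the argument the paper invokes by citation: the proof of Theorem~\ref{thm:SNS} given in the paper consists of observing that, once the $A_\infty(\mu)$ property of Theorem~\ref{thm:Ainftymain} is in hand, the result is exactly Theorem~1 of \cite{DJK}, whose hypotheses are formulated for doubling boundary measures and whose proof (the good-$\lambda$ scheme you describe) is carried out there for degenerate operators (see p.~106 of \cite{DJK}); the paper only adds the remark that the normalisation $u(X_0)=0$ is needed solely for the $N_*\lesssim S$ direction. So no new argument is written out in the paper, and your outline matches the cited one in structure. Two points of comparison and caution. First, for the $S\lesssim N_*$ good-$\lambda$ inequality, \cite{DJK} do not use a sawtooth version of Theorem~\ref{thm:introCME}; they obtain the needed local square-function bound from the Green-function identity $\int (u-u(X_0))^2\, d\omega^{X_0} = 2\iint \langle A\nabla u,\nabla u\rangle\, G(X_0,\cdot)$ on a bounded sawtooth, combined with the comparison of $G$ with $\omega$ (Lemmas~\ref{Greensize} and~\ref{grhar}) and the $A_\infty$ property, which avoids re-proving the Carleson measure estimate on sawtooths but routes this direction through $\omega$ as well. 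Your alternative is viable because $\mathcal{R}_F$ is the region above the Lipschitz graph $y\mapsto\dist(y,F)$ and the shear $(y,s)\mapsto(y,s+\dist(y,F))$ preserves $t$-independence and the weight, so Theorem~\ref{thm:introCME} transfers; but you must take the cones defining $Su$ of strictly smaller aperture than those generating the sawtooth, since the transferred estimate controls $\iint \dist(\cdot,\partial\mathcal{R}_F)\,|\nabla u|^2\,d\mu$ and $\dist(\cdot,\partial\mathcal{R}_F)\eqsim t$ only on an interior sawtooth of smaller aperture (the change of aperture for $S$ in $L^p_\mu$ is then standard for doubling $\mu$). Second, in the $N_*\lesssim S$ direction, recall that this paper has no Green's function on the unbounded half-space (Section~\ref{Sec:PreEst}), so the integration by parts and the CFMS-type comparison must be performed on bounded Carleson boxes via the theory on $\Sigma_R$ and a limiting argument; also the conversion from $\omega^{X_{Q_j}}(E_j)\lesssim\gamma^2\,\omega^{X_{Q_j}}(Q_j)$ to $\mu(E_j)\le\eta(\gamma)\mu(Q_j)$ uses the symmetric form of $A_\infty$ (property (1) of Theorem~\ref{thm:Ainftymain}), which is available. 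With these adjustments your scheme goes through and reproduces the cited result.
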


The paper is structured as follows. Technical preliminaries concerning weights and degenerate elliptic operators are in Section~\ref{sec:prelims} whilst estimates for weighted maximal operators are in Section~\ref{sec:maxops}. The Carleson measure estimate in Theorem~\ref{thm:introCME} is obtained in Section~\ref{sec:CME}. The degenerate elliptic measure is constructed in Section~\ref{sec:Dp} and then the $A_\infty$-estimates in Theorem~\ref{thm:introAinfinity+Dp} are deduced as part of Theorem~\ref{thm:Ainftymain}. The square function and non-tangential maximal function estimates in Theorem~\ref{thm:SNS} are included in the more general result in Theorem~\ref{thm:DJKSNS} whilst the solvability of the Dirichlet problem in Theorem~\ref{thm:introAinfinity+Dp} is finally deduced in Theorem~\ref{thm:DpSolvmain}, where a uniqueness result is also obtained.

We state and prove our results in the upper half-space, but we note that they extend immediately to the case that the domain is the region above a Lipschitz graph, by a well-known pull-back technique which preserves the $t$-independence of the coefficients.  In turn, our results concerning the $A_\infty$-property of degenerate elliptic measure may then be extended to the case of a bounded star-like Lipschitz domain, with radially independent coefficients, by a standard localization argument using the maximum principle.

The convention is adopted whereby $C$ denotes a finite positive constant that may change from one line to the next. For $a,b\in\RR$, the notation $a\lesssim b$ means that $a\leq Cb$ whilst $a\eqsim b$ means that $a\lesssim b \lesssim a$. We write $a\lesssim_p b$ when $a\leq C b$ and we wish to emphasize that $C$ depends on a specified parameter $p$.

\section{Preliminaries}\label{sec:prelims}
We dispense with some technical preliminaries concerning general $A_p$-weights~$\mu$ for $p\in(1,\infty)$ and degenerate elliptic operators on $\RR^n$ for $n\in\N$. All cubes $Q$ and balls $B$ in $\RR^n$ are assumed to be open (except in Section~\ref{sect:dyadiccubes} where the standard dyadic cubes $S$ in $\DD(\RR^n)$ are assumed to be closed to provide genuine coverings of $\RR^n$). For $\alpha>0$, let $\alpha Q$ and $\alpha B$ denote the concentric dilates of $Q$ and $B$ respectively. For $x\in\RR^n$ and $r>0$, define the ball $B(x,r):=\{y\in\RR^n : |y-x| <r\}$. An $A_p$-weight refers to a non-negative locally integrable function $\mu$ on $\RR^n$ with the property that $[\mu]_{A_p(\RR^n)}:=\sup_Q \big(\fint_Q \mu \big) \big(\fint_Q \mu^{-1/(p-1)}\big)^{p-1}<\infty$. The measure associated with such a weight satisfies the doubling property
\begin{equation}\label{eq:D}
\mu(\alpha B) \leq [\mu]_{A_p} \alpha^{np} \mu(B)
\end{equation}
for all $\alpha\geq1$ (see, for instance, Section~1.5 in Chapter~V of~\cite{St2}). 

For an open set $\Omega \subseteq \RR^n$, the Sobolev space $W^{1,p}_{\mu}(\Omega)$ is defined as the completion, in the ambient space $L^p_\mu(\Omega)$, of the normed space of all $f\in C^\infty(\Omega)$ with finite norm
\begin{equation}\label{eq:weightedSob}
\|f\|_{W^{1,p}_\mu(\Omega)}^p
:= \int_{\Omega} |f|^p\ d\mu + \int_{\Omega} |\nabla f|^p\ d\mu < \infty.
\end{equation}
The embedding of the completion $W^{1,p}_{\mu}(\Omega)$ in $L^p_\mu(\Omega)$ relies on the $A_p$-property of the weight (to the extent that it implies both $\mu$ and $\mu^{-1/(p-1)}$ are in $L^1_\loc(\Omega)$), which ensures that if $(f_j)_j$ is a $W^{1,p}_\mu(\Omega)$-Cauchy sequence in $C^\infty(\Omega)$ converging to $0$ in $L^p_\mu(\Omega)$, then $(f_j)_j$ converges to $0$ in $W^{1,p}_\mu(\Omega)$-norm (see Section 2.1 in \cite{FKS}). Therefore, since $C^\infty(\Omega)$ is dense in $W^{1,p}_\mu(\Omega)$, the gradient extends to a bounded operator $\nabla:W^{1,p}_\mu(\Omega)\rightarrow L^p_\mu(\Omega,\RR^n)$, thereby extending \eqref{eq:weightedSob} to all $f\in W^{1,p}_\mu(\Omega)$. The Sobolev space $W^{1,p}_{0,\mu}(\Omega)$ is defined as the closure of $C_c^\infty(\Omega)$ in $W^{1,p}_\mu(\Omega)$. It can be shown that $W^{1,p}_{0,\mu}(\RR^n)=W^{1,p}_\mu(\RR^n)$ by following the proof in the unweighted case from Proposition~1 of Chapter~V in~\cite{St} but instead using Lemma~2.2 in \cite{ARR} to deduce the convergence of the regularization in $L^p_\mu(\RR^n)$. The local space $W^{1,p}_{\mu,\loc}(\Omega)$ is then defined as the set of all $f\in L^p_{\mu,\loc}(\Omega)$ such that $f\in W^{1,p}_{\mu}(\Omega')$ for all open sets $\Omega'$ with compact closure $\overline{\Omega'}\subset\Omega$ (henceforth denoted $\Omega'\subset\subset\Omega$). Finally, the weighted Sobolev and Poincar\'{e} inequalities obtained for continuous functions in Theorems~1.2 and~1.5 in~\cite{FKS} have the following immediate extensions. 

\begin{theorem}\label{thm:wSob+wPoinc}
Let $n\geq2$ and suppose that $B\subset\RR^n$ denotes a ball with radius $r(B)$. If $p\in(1,\infty)$ and $\mu$ is an $A_p$-weight on~$\RR^n$, then there exists $\delta>0$ such that
\begin{equation}\label{wSob}
\left(\fint_{B}|f|^{p(\frac{n}{n-1}+\delta)}\ d\mu \right)^{1/(p(\frac{n}{n-1}+\delta))} \lesssim r(B) \left(\fint_{B}|\nabla f|^p\ d\mu \right)^{1/p}
\end{equation}
for all $f\in W^{1,p}_{0,\mu}(B)$, and
\begin{equation}\label{wPoinc}
\left(\fint_{B}|f(x) - c_B|^p\ d\mu\right)^{1/p} \lesssim r(B) \left(\fint_{B}|\nabla f|^p\ d\mu \right)^{1/p}
\end{equation}
for all $f\in W^{1,p}_\mu(B)$ and $c_B \in \left\{\fint_{B}f\ d\mu, \fint_{B} f \right\}$, where the implicit constants depend only on $n$, $p$ and $[\mu]_{A_p}$. The estimates also hold when the ball $B$ and the radius $r(B)$ are replaced by a cube $Q$ and the sidelength $\ell(Q)$.
\end{theorem}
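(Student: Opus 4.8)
The plan is to run a density argument, deducing \eqref{wSob} and \eqref{wPoinc} for the full weighted Sobolev spaces from the versions for continuous functions in Theorems~1.2 and~1.5 of \cite{FKS} by passing to the limit along the approximating sequences built into the definitions of $W^{1,p}_{0,\mu}(B)$ and $W^{1,p}_\mu(B)$. Two elementary facts about the weight will be used repeatedly: since $\mu$ is locally integrable and $B$ is bounded one has $0<\mu(B)<\infty$, and since the $A_p$-condition forces $\mu^{-1/(p-1)}\in L^1_\loc(\RR^n)$, Hölder's inequality shows that a sequence converging in $L^p_\mu(B)$ also converges in $L^1(B,dx)$, so that both its $\mu$-averages and its plain Lebesgue averages over $B$ converge.

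For \eqref{wSob} I would take $f\in W^{1,p}_{0,\mu}(B)$ and, using the definition of this space, a sequence $f_j\in C^\infty_c(B)$ with $f_j\to f$ in $W^{1,p}_\mu(B)$. Set $q:=p(\tfrac{n}{n-1}+\delta)$, so $q>p$. Each difference $f_j-f_k$ is again in $C^\infty_c(B)$, so Theorem~1.2 of \cite{FKS} bounds $\big(\fint_B|f_j-f_k|^q\,d\mu\big)^{1/q}$ by a multiple of $r(B)\big(\fint_B|\nabla(f_j-f_k)|^p\,d\mu\big)^{1/p}$; since $(\nabla f_j)_j$ is Cauchy in $L^p_\mu(B)$, this makes $(f_j)_j$ Cauchy in $L^q_\mu(B)$, so $f_j$ converges in $L^q_\mu(B)$ to a limit which, because $q>p$ and $\mu(B)<\infty$, must coincide $\mu$-a.e.\ with $f$. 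Applying Theorem~1.2 of \cite{FKS} to $f_j$ itself and letting $j\to\infty$ — the left-hand side converging by the $L^q_\mu$ limit and the right-hand side by the $L^p_\mu$ limit of the gradients — then gives \eqref{wSob} for $f$.

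For \eqref{wPoinc} I would take $f\in W^{1,p}_\mu(B)$ and $f_j\in C^\infty(B)$ of finite $W^{1,p}_\mu(B)$-norm with $f_j\to f$ in $W^{1,p}_\mu(B)$. Each $f_j$ is continuous on $B$, so Theorem~1.5 of \cite{FKS} applies to it (if one wishes to stay strictly inside the hypotheses of \cite{FKS}, one first applies it on concentric balls $B'\subset\subset B$, on which $f_j$ is Lipschitz, and then lets $B'\uparrow B$ by dominated convergence), yielding $\big(\fint_B|f_j-c_{B,j}|^p\,d\mu\big)^{1/p}\lesssim r(B)\big(\fint_B|\nabla f_j|^p\,d\mu\big)^{1/p}$ with $c_{B,j}\in\{\fint_B f_j\,d\mu,\fint_B f_j\}$. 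Since $f_j\to f$ and $\nabla f_j\to\nabla f$ in $L^p_\mu(B)$ and $c_{B,j}\to c_B$ by the first paragraph, passing to the limit gives \eqref{wPoinc}. The statement for cubes follows the same way, using the cube versions of the inequalities in \cite{FKS} or comparing $Q$ with its inscribed and circumscribed balls together with the doubling property \eqref{eq:D}.

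The only step that is not a completely mechanical limit is the higher integrability exponent in \eqref{wSob}: there one cannot pass to the limit directly from $L^p_\mu$-convergence of the $f_j$, and the device of first applying the inequality to the Cauchy differences $f_j-f_k$ to manufacture the $L^q_\mu$ limit is the small point that makes the argument go through; everything else is bookkeeping with the two weight facts recorded above.
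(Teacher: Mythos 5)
Your density argument is correct and is precisely the routine the paper has in mind: it states the theorem as an ``immediate extension'' of Theorems~1.2 and~1.5 of \cite{FKS} without writing out a proof, and the extension is exactly the passage to the limit along the approximating sequences built into the definitions of $W^{1,p}_{0,\mu}(B)$ and $W^{1,p}_\mu(B)$. Your handling of the two non-trivial points --- applying the Sobolev inequality to Cauchy differences to manufacture the $L^q_\mu$ limit, and using $\mu^{-1/(p-1)}\in L^1_{\loc}$ to pass to the limit in both the $\mu$-average and the Lebesgue average --- supplies correctly the details the paper omits.
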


For $n\in\N$, constants $0<\lambda\leq\Lambda<\infty$ and an $A_2$-weight $\mu$ on $\RR^n$, let $\mathcal{E}(n,\lambda,\Lambda,\mu)$ denote the set of all $n\times n$-matrices $\mathcal{A}$ of measurable real-valued functions on $\RR^n$ satisfying the degenerate bound and ellipticity
\begin{equation}\label{eq:deg.def.bdry}
|\langle \mathcal{A}(x)\xi, \zeta \rangle| \leq \Lambda \mu(x)|\xi| |\zeta|
\quad\text{and}\quad
\langle \mathcal{A}(x)\xi, \xi \rangle \geq \lambda \mu(x)|\xi|^2
\end{equation}
for all $\xi,\zeta \in \RR^n$ and almost every $x\in \RR^n$. These properties allow us to define ${\mathcal{L}_{\mu,\Omega}:\Dom(\mathcal{L}_{\mu,\Omega})\subseteq L^2_\mu(\Omega) \rightarrow L^2_\mu(\Omega)}$ as the maximal accretive operator in $L^2_\mu(\Omega)$ associated with the bilinear form defined by
\begin{equation}\label{eq:formbdrydef}
\mathfrak{a}_\Omega(f,g) := \int_{\Omega} \langle \mathcal{A} \nabla f, \nabla g \rangle
= \int_{\Omega} \langle \tfrac{1}{\mu} \mathcal{A} \nabla f, \nabla g \rangle\ d\mu
\end{equation}
for all $f,g \in W^{1,2}_{0,\mu}(\Omega)$. The domain of $\mathcal{L}_{\mu,\Omega}$ is dense in $L^2_\mu(\Omega)$, and in particular
\[
\Dom(\mathcal{L}_{\mu,\Omega}) = \{f\in W^{1,2}_{0,\mu}(\Omega) : \sup\nolimits_{g\in C_c^\infty(\Omega)} |\mathfrak{a}_\Omega(f,g)| / \|g\|_{L^2_\mu(\Omega)} < \infty\},
\]
with
\begin{equation}\label{eq:Lbdrydef}
\int_{\Omega} (\mathcal{L}_{\mu,\Omega} f) g\ d\mu =  \mathfrak{a}_\Omega(f,g)
\end{equation}
for all $f\in\Dom(\mathcal{L}_{\mu,\Omega})$ and $g\in W^{1,2}_{0,\mu}(\Omega)$. It is equivalent to define $\mathcal{L}_{\mu,\Omega}$ as the composition $-\div_{\mu,\Omega} (\frac{1}{\mu}\mathcal{A}\nabla)$ of unbounded operators, where $-\div_{\mu,\Omega}$ is the adjoint~$\nabla^*$ of the closed densely-defined operator $\nabla:W^{1,2}_{0,\mu}(\Omega)\subseteq L^2_\mu(\Omega)\rightarrow L^2_\mu(\Omega,\RR^n)$, that is
\begin{equation}\label{eq:divbdrydef}
\int_{\Omega} (-\div_{\mu,\Omega} \mathbf{f}) g\ d\mu =  \int_{\Omega} \langle \mathbf{f}, \nabla g\rangle\ d\mu
\end{equation}
for all $\mathbf{f}\in\Dom(\div_{\mu,\Omega}):=\Dom(\nabla^*)$ and $g\in W^{1,2}_{0,\mu}(\Omega)$. In view of \eqref{eq:formbdrydef} and \eqref{eq:Lbdrydef}, we have the formal identities $\div_{\mu,\Omega}=\frac{1}{\mu}\div_\Omega \mu$ and $\mathcal{L}_{\mu,\Omega}=-\frac{1}{\mu}\div_\Omega (\mathcal{A}\nabla)$.

Now let $\Omega=Q$ for some cube $Q\subset\RR^n$ and denote the space of bounded linear functionals on $W^{1,2}_{0,\mu}(Q)$ by $W^{-1,2}_{0,\mu}(Q)$. The inclusions $W^{1,2}_{0,\mu}(Q) \subseteq L^2_\mu(Q) \subseteq W^{-1,2}_{0,\mu}(Q)$ are interpreted in the standard way by identifying $f\in L^2_\mu(Q)$ with the functional $\ell_f$ defined by $\ell_f(g):= \int_{Q} fg\ d\mu$ for all $g\in W^{1,2}_{0,\mu}(Q)$. Thus, setting
\[
\mathcal{L}_{\mu,Q} f(g):=\mathfrak{a}_Q(f,g)
\qquad{\textrm{and}}\qquad
-\div_{\mu,Q}\mathbf{f}(g):=\int_{Q} \langle \mathbf{f}, \nabla g\rangle\ d\mu
\]
for all $f,g\in W^{1,2}_{0,\mu}(Q)$ and $\mathbf{f}\in L^2(Q,\RR^n)$, we obtain an extension of $\mathcal{L}_{\mu,Q}$ from \eqref{eq:Lbdrydef} to a bounded invertible operator from $W^{1,2}_{0,\mu}(Q)$ onto $W^{-1,2}_{0,\mu}(Q)$, and an extension of $\div_{\mu,Q}$ from \eqref{eq:divbdrydef} to a bounded operator from $L^2_\mu(Q)$ into $W^{-1,2}_{0,\mu}(Q)$. The surjectivity of $\mathcal{L}_{\mu,Q}$ relies on \eqref{wSob} and the Lax--Milgram Theorem. These definitions imply that 
\[
\|\nabla \mathcal{L}^{-1}_{\mu,Q} \div_{\mu,Q} \mathbf{f}\|_{L^2_\mu(Q,\RR^n)} \lesssim \|\mathbf{f}\|_{L^2_\mu(Q,\RR^n)}
\]
for all $\mathbf{f}\in L^2_\mu(Q,\RR^n)$. The topological direct sum or $W^{1,2}_{0,\mu}(Q)$-Hodge decomposition
\begin{equation}\label{eq:W12Hodge}
L^2_\mu(Q,\RR^n) = \{\tfrac{1}{\mu}\mathcal{A}\nabla g : g \in W^{1,2}_{0,\mu}(Q)\} \oplus \{\mathbf{h} \in L^2_\mu(Q,\RR^n) : \div_{\mu,Q} \mathbf{h} = 0\}
\end{equation}
follows by writing $\mathbf{f}=-\tfrac{1}{\mu}\mathcal{A}\nabla \mathcal{L}_{\mu,Q}^{-1} \div_{\mu,Q} \mathbf{f} + (\mathbf{f} + \tfrac{1}{\mu}\mathcal{A}\nabla \mathcal{L}_{\mu,Q}^{-1} \div_{\mu,Q} \mathbf{f}) =: \tfrac{1}{\mu}\mathcal{A}\nabla g + \mathbf{h}$, since then $\div_{\mu,Q} \mathbf{h} = \div_{\mu,Q} \mathbf{f} -\mathcal{L}_{\mu,Q}\mathcal{L}_{\mu,Q}^{-1}\div_{\mu,Q} \mathbf{f}=0$. This decomposition also extends to $L^p_\mu(Q,\RR^n)$ for all $p \in [2,2+\epsilon)$ and some $\epsilon> 0$ by recent work of Le in~\cite{L}, although we do not need it here.

Now let $\Omega=\RR^n$ and consider $\div_\mu:=\div_{\mu,\RR^n}$ as in~\eqref{eq:divbdrydef} so $\mathcal{L}_\mu:=-\div_\mu(\frac{1}{\mu}\mathcal{A}\nabla)$ is maximal accretive, thus having a maximal accretive square root $\mathcal{L}_\mu^{1/2}$, in~$L^2_\mu(\RR^n)$. The solution of the Kato square root problem in \cite{AHLMT} was recently extended to degenerate elliptic equations by Cruz-Uribe and Rios in~\cite{CuR}. This shows that $\|\mathcal{L}_\mu^{1/2} f\|_{L^2_\mu(\RR^n)} \eqsim \|\nabla f\|_{L^2_\mu(\RR^n,\RR^n)}$ for all $f\in W^{1,2}_\mu(\RR^n)$, hence $\Dom(\mathcal{L}_\mu^{1/2}) = W^{1,2}_\mu(\RR^n)$.

The operator $\mathcal{L}_\mu$ is also injective and type-$S_{\omega+}$ in $L^2_\mu(\RR^n)$ for some $\omega\in(0,\pi/2)$, so it has a bounded $H^\infty(S_{\theta+}^o)$-functional calculus in $L^2_\mu(\RR^n)$ for each $\theta\in(\omega,\pi)$, where $S_{\theta+}^o:=\{z\in\CC\setminus\{0\} : |\arg z| < \theta\}$. See Section~2.2 of \cite{A} for the uniformly elliptic case and Theorems~F and~G in \cite{ADM} for the general theory. An equivalent property is the validity of the quadratic estimate
\begin{equation}\label{eq:QEest}
\int_0^\infty \|\psi(t\mathcal{L_\mu})f\|_{L^2_\mu(\RR^n)}^2 \frac{dt}{t} \eqsim \|f\|_{L^2_\mu(\RR^n)}^2
\qquad\forall f\in L^2_\mu(\RR^n)
\end{equation}
for each holomorphic $\psi$ on $S_{\theta+}^o$ satisfying $|\psi(z)| \lesssim \min\{|z|^\alpha,|z|^{-\beta}\}$ for some $\alpha,\beta>0$, where the bounded operator $\psi(t\mathcal{L_\mu})$ on $L^2_\mu(\RR^n)$ is defined by a Cauchy integral. More generally, the relationship between bounded holomorphic functional calculi and quadratic estimates is developed in the seminal articles \cite{Mc} and \cite{CDMY}.

The functional calculus then defines a bounded operator $\varphi(\mathcal{L}_\mu)$ on $L^2_\mu(\RR^n)$ for each bounded holomorphic function $\varphi$ on~$S_{\theta+}^o$ and $\|\varphi(\mathcal{L}_\mu)\|_{L^2_\mu(\RR^n)\rightarrow L^2_\mu(\RR^n)} \lesssim_\theta \|\varphi\|_\infty$. Another consequence is that $-\mathcal{L}_\mu$ generates a holomorphic contraction semigroup $(e^{-\zeta\mathcal{L}_\mu})_{\zeta\in S^o_{\pi/2-\omega}\cup\{0\}}$ on $L^2_\mu(\RR^n)$, thus $e^{-t\mathcal{L_\mu}} f \in \Dom(\mathcal{L}_\mu)$ and $\partial_t(e^{-t\mathcal{L_\mu}} f) = \mathcal{L}_\mu e^{-t\mathcal{L_\mu}} f $ for all $f\in L^2_\mu(\RR^n)$ and $t>0$. The functional calculus also extends to define an unbounded operator $\phi(\mathcal{L}_\mu)$ on $L^2_\mu(\RR^n)$ for each holomorphic function $\phi$ on $S_{\theta+}^o$ satisfying $|\phi(z)| \lesssim \max\{|z|^\alpha,|z|^{-\beta}\}$ for some $\alpha,\beta>0$, but the algebra homomorphism property of the functional calculus ($\phi_1(\mathcal{L}_\mu)\phi_2(\mathcal{L}_\mu)=(\phi_1\phi_2)(\mathcal{L}_\mu)$) must then be interpreted in the sense of unbounded linear operators. This allows us to interpret both the semigroup and the square root of $\mathcal{L}_\mu$ in terms of the functional calculus in order to justify some otherwise formal manipulations, beginning with~\eqref{eq:FCident} in the proof of the following corollary of the solution of the Kato problem in~\cite{CuR}.

\begin{theorem}\label{VSFbound}
Let $n \geq 1$ and suppose that $\mathcal{A}\in\mathcal{E}(n,\lambda,\Lambda,\mu)$ for some constants $0<\lambda\leq\Lambda<\infty$ and an $A_2$-weight $\mu$ on $\RR^n$. The operator $\mathcal{L}_\mu:=-\div_\mu (\frac{1}{\mu}\mathcal{A}\nabla)$ satisfies
\begin{equation}\label{eq:VSF1}
\int_0^\infty \|t \mathcal{L}_{\mu} e^{-t^2\mathcal{L}_{\mu}} f\|_{L^2_\mu(\RR^n)}^2 \frac{dt}{t}
\eqsim \|\nabla f\|_{L^2_\mu(\RR^n,\RR^n)}^2
\end{equation}
and
\begin{equation}\label{eq:VSF2}
\int_0^\infty \|t^2\nabla_{x,t} \mathcal{L}_{\mu} e^{-t^2\mathcal{L}_{\mu}} f\|_{L^2_\mu(\RR^n,\RR^{n+1})}^2 \frac{dt}{t}
\lesssim \|\nabla f\|_{L^2_\mu(\RR^n,\RR^n)}^2
\end{equation}
for all $f \in W^{1,2}_{\mu}(\RR^n)$, where the implicit constants depend only on $n$, $\lambda$, $\Lambda$ and~$[\mu]_{A_2}$.
\end{theorem}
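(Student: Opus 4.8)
The plan is to obtain both \eqref{eq:VSF1} and \eqref{eq:VSF2} from the quadratic estimate \eqref{eq:QEest} together with the Kato estimate $\|\mathcal{L}_\mu^{1/2}f\|_{L^2_\mu(\RR^n)}\eqsim\|\nabla f\|_{L^2_\mu(\RR^n,\RR^n)}$, valid for every $f\in W^{1,2}_\mu(\RR^n)=\Dom(\mathcal{L}_\mu^{1/2})$ by~\cite{CuR}. The guiding observation is that, after extracting a factor $\mathcal{L}_\mu^{1/2}$ from $f$ and changing variable to $s=t^2$ (so that $\int_0^\infty(\cdot)\,\tfrac{dt}{t}=\tfrac12\int_0^\infty(\cdot)\,\tfrac{ds}{s}$ for integrands that are functions of $t^2$), each integrand takes the shape $\|\psi(s\mathcal{L}_\mu)g\|_{L^2_\mu(\RR^n)}^2$ with $g:=\mathcal{L}_\mu^{1/2}f\in L^2_\mu(\RR^n)$ and $\psi(z)=z^\gamma e^{-z}$ for some $\gamma\in\{\tfrac12,1,\tfrac32\}$. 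Fixing $\theta\in(\omega,\pi/2)$, one has $\Re z\geq|z|\cos\theta$ on $S^o_{\theta+}$, so each such $\psi$ is holomorphic on $S^o_{\theta+}$ with $|\psi(z)|\lesssim\min\{|z|^\gamma,|z|^{-N}\}$ for every $N>0$, and hence \eqref{eq:QEest} applies to it.

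For \eqref{eq:VSF1} I would first record the identity \eqref{eq:FCident}: for $f\in W^{1,2}_\mu(\RR^n)$ and $t>0$,
\begin{equation}\label{eq:FCident}
t\mathcal{L}_\mu e^{-t^2\mathcal{L}_\mu}f=\psi_{1/2}(t^2\mathcal{L}_\mu)\,\mathcal{L}_\mu^{1/2}f,\qquad \psi_{1/2}(z):=z^{1/2}e^{-z}.
\end{equation}
This holds because the holomorphic semigroup maps $L^2_\mu(\RR^n)$ into $\bigcap_k\Dom(\mathcal{L}_\mu^k)$, commutes with $\mathcal{L}_\mu^{1/2}$ on $\Dom(\mathcal{L}_\mu^{1/2})$, and the composition rule of the extended functional calculus is valid on these domains; using $(t^2\mathcal{L}_\mu)^{1/2}=t\mathcal{L}_\mu^{1/2}$ one then gets $\psi_{1/2}(t^2\mathcal{L}_\mu)\mathcal{L}_\mu^{1/2}f=t\mathcal{L}_\mu^{1/2}\mathcal{L}_\mu^{1/2}e^{-t^2\mathcal{L}_\mu}f=t\mathcal{L}_\mu e^{-t^2\mathcal{L}_\mu}f$. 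Applying \eqref{eq:QEest} to $\psi_{1/2}$ and $g=\mathcal{L}_\mu^{1/2}f$, then the Kato estimate, yields
\[
\int_0^\infty\|t\mathcal{L}_\mu e^{-t^2\mathcal{L}_\mu}f\|_{L^2_\mu(\RR^n)}^2\,\tfrac{dt}{t}=\tfrac12\int_0^\infty\|\psi_{1/2}(s\mathcal{L}_\mu)g\|_{L^2_\mu(\RR^n)}^2\,\tfrac{ds}{s}\eqsim\|g\|_{L^2_\mu(\RR^n)}^2\eqsim\|\nabla f\|_{L^2_\mu(\RR^n,\RR^n)}^2,
\]
which is \eqref{eq:VSF1}.

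For \eqref{eq:VSF2} I would write $\nabla_{x,t}=(\nabla_x,\partial_t)$ and bound the two contributions separately. For the time derivative, $\partial_t e^{-t^2\mathcal{L}_\mu}f=-2t\mathcal{L}_\mu e^{-t^2\mathcal{L}_\mu}f$, and since $\mathcal{L}_\mu$ is closed and the semigroup maps $L^2_\mu(\RR^n)$ into $\Dom(\mathcal{L}_\mu)$, the operator $\mathcal{L}_\mu$ commutes with $\partial_t$, so $t^2\partial_t\big(\mathcal{L}_\mu e^{-t^2\mathcal{L}_\mu}f\big)=-2(t^2\mathcal{L}_\mu)^{3/2}e^{-t^2\mathcal{L}_\mu}g=-2\psi_{3/2}(t^2\mathcal{L}_\mu)g$ with $\psi_{3/2}(z)=z^{3/2}e^{-z}$ and $g=\mathcal{L}_\mu^{1/2}f$, by the same functional-calculus reasoning as in~\eqref{eq:FCident}; \eqref{eq:QEest} and the Kato estimate then bound the $\tfrac{dt}{t}$-integral of its squared $L^2_\mu$-norm by $\|\nabla f\|_{L^2_\mu(\RR^n,\RR^n)}^2$. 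For the spatial gradient, $v_t:=\mathcal{L}_\mu e^{-t^2\mathcal{L}_\mu}f\in\Dom(\mathcal{L}_\mu)\subseteq\Dom(\mathcal{L}_\mu^{1/2})=W^{1,2}_\mu(\RR^n)$, so the lower Kato bound gives $\|\nabla_x v_t\|_{L^2_\mu(\RR^n,\RR^n)}\lesssim\|\mathcal{L}_\mu^{1/2}v_t\|_{L^2_\mu(\RR^n)}=\|\mathcal{L}_\mu^{3/2}e^{-t^2\mathcal{L}_\mu}f\|_{L^2_\mu(\RR^n)}=\|\mathcal{L}_\mu e^{-t^2\mathcal{L}_\mu}g\|_{L^2_\mu(\RR^n)}$, whence $\|t^2\nabla_x v_t\|_{L^2_\mu(\RR^n,\RR^n)}\lesssim\|\psi_1(t^2\mathcal{L}_\mu)g\|_{L^2_\mu(\RR^n)}$ with $\psi_1(z)=ze^{-z}$; once more \eqref{eq:QEest} and the Kato estimate bound the $\tfrac{dt}{t}$-integral by $\|\nabla f\|_{L^2_\mu(\RR^n,\RR^n)}^2$. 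Adding the two contributions gives \eqref{eq:VSF2}. All constants are inherited from the Kato estimate of~\cite{CuR} and from~\eqref{eq:QEest}, so they depend only on $n$, $\lambda$, $\Lambda$ and $[\mu]_{A_2}$.

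The step I expect to be the main obstacle is one of rigour rather than of ideas: justifying \eqref{eq:FCident} and its analogues, i.e. commuting $\mathcal{L}_\mu^{1/2}$ and $\mathcal{L}_\mu$ through the holomorphic semigroup and factoring $\mathcal{L}_\mu^{3/2}=\mathcal{L}_\mu\circ\mathcal{L}_\mu^{1/2}$ on the appropriate dense domains, so that the $L^2_\mu(\RR^n)$-statement \eqref{eq:QEest} may be legitimately applied to $g=\mathcal{L}_\mu^{1/2}f$. This is settled by the extended (polynomial-growth) functional calculus for $\mathcal{L}_\mu$ recalled in Section~\ref{sec:prelims}. Once that is in place, what remains is the elementary bookkeeping with the holomorphic functions $z^\gamma e^{-z}$ on $S^o_{\theta+}$ and two applications — one in each direction — of the Kato equivalence.
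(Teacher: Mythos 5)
Your proof is correct, and for \eqref{eq:VSF1} it coincides with the paper's argument: both extract $\mathcal{L}_\mu^{1/2}f$ via the extended functional calculus, recognise the integrand as $\|\psi(s\mathcal{L}_\mu)g\|^2$ with $\psi(z)=z^{1/2}e^{-z}$, and conclude from \eqref{eq:QEest} and the Kato equivalence of \cite{CuR}. For \eqref{eq:VSF2} you take a genuinely different route on the spatial-gradient term. The paper first proves the \emph{uniform} operator bound $\|t\nabla_{x,t}e^{-t^2\mathcal{L}_\mu}g\|_{L^2_\mu}\lesssim\|g\|_{L^2_\mu}$ by testing the sesquilinear form against $e^{-t^2\mathcal{L}_\mu}g$ and using only the accretivity/degenerate ellipticity of $\mathcal{A}$ (an energy, Caccioppoli-type argument), and then composes with the semigroup splitting $e^{-t^2\mathcal{L}_\mu}=e^{-(t^2/2)\mathcal{L}_\mu}e^{-(t^2/2)\mathcal{L}_\mu}$ from the second identity in \eqref{eq:FCident} so that \eqref{eq:VSF2} reduces to the already-proved \eqref{eq:VSF1}. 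You instead apply, at each fixed $t$, the reverse Kato inequality $\|\nabla_x v_t\|_{L^2_\mu}\lesssim\|\mathcal{L}_\mu^{1/2}v_t\|_{L^2_\mu}$ to $v_t=\mathcal{L}_\mu e^{-t^2\mathcal{L}_\mu}f$ (legitimate, since $v_t\in\Dom(\mathcal{L}_\mu)\subseteq\Dom(\mathcal{L}_\mu^{1/2})=W^{1,2}_\mu(\RR^n)$ by analyticity of the semigroup) and reduce everything to quadratic estimates for $z^{\gamma}e^{-z}$, $\gamma\in\{1,3/2\}$. Your version is slightly more streamlined but leans on the full two-sided Kato equivalence a second time (the lower bound being the dual/adjoint half of the result), whereas the paper's energy argument for the spatial gradient is self-contained and uses only ellipticity at that step; both deliver constants depending only on $n$, $\lambda$, $\Lambda$ and $[\mu]_{A_2}$.
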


\begin{proof}
The functional calculus of $\mathcal{L}_\mu$ justifies the identity 
\begin{equation}\label{eq:FCident}
\mathcal{L}_\mu e^{-t^2\mathcal{L}_\mu} f
= \mathcal{L}_\mu^{1/2} e^{-t^2\mathcal{L}_\mu} \mathcal{L}_\mu^{1/2} f 
= e^{-(t^2/2)\mathcal{L}_\mu} \mathcal{L}_\mu e^{-(t^2/2)\mathcal{L}_\mu} f
\end{equation}
for all $f\in \Dom(\mathcal{L}_\mu^{1/2})$ and $t>0$. The first equality in \eqref{eq:FCident}, the quadratic estimate in~\eqref{eq:QEest} and the solution of the Kato problem in \cite{CuR} imply that
\begin{align*}
\int_0^\infty \|t \mathcal{L}_{\mu} e^{-t^2\mathcal{L}_{\mu}} f\|_{L^2_\mu(\RR^n)}^2 \frac{dt}{t}
&= \int_0^\infty \|(\tau\mathcal{L}_{\mu})^{1/2} e^{-\tau\mathcal{L}_{\mu}} \mathcal{L}_{\mu}^{1/2}  f\|_{L^2_\mu(\RR^n)}^2 \frac{d\tau}{\tau} \\
&\eqsim \|\mathcal{L}_{\mu}^{1/2}  f\|_{L^2_\mu(\RR^n)}^2 \\
&\eqsim \|\nabla f\|_{L^2_\mu(\RR^n,\RR^n)}^2
\end{align*}
for all $f\in \Dom(\mathcal{L}_\mu^{1/2}) = W^{1,2}_\mu(\RR^n)$, which proves \eqref{eq:VSF1}.

The bounded $H^\infty(S_{\theta+}^o)$-functional calculus of $\mathcal{L}_\mu$ implies the uniform estimate
\begin{align*}
\|t\nabla_{x,t} e^{-t^2\mathcal{L}_{\mu}} g\|_{L^2_\mu(\RR^n,\RR^{n+1})}^2
&= \|t \partial_t e^{-t^2\mathcal{L}_{\mu}} g\|_{L^2_\mu(\RR^n)}^2
+ \|t \nabla_x e^{-t^2\mathcal{L}_\mu}\|_{L^2_\mu(\RR^n,\RR^n)}^2 \\
&\lesssim \|t^2\mathcal{L}_\mu e^{-t^2\mathcal{L}_\mu}g\|_{L^2_\mu(\RR^n)}^2
+  \int_{\RR^n} t^2 \langle \mathcal{A}\nabla_x e^{-t^2\mathcal{L}_\mu}g, \nabla_x e^{-t^2\mathcal{L}_\mu}g\rangle \\
&\lesssim \|g\|_{L^2_\mu(\RR^n)}^2
+ \|t^2\mathcal{L}_\mu e^{-t^2\mathcal{L}_\mu}g\|_{L^2_\mu(\RR^n)} \|e^{-t^2\mathcal{L}_\mu}g\|_{L^2_\mu(\RR^n)} \\
&\lesssim \|g\|_{L^2_\mu(\RR^n)}^2
\end{align*}
for all $g\in L^2_\mu(\RR^n)$ and $t>0$. Thus, the second equality in \eqref{eq:FCident} and the vertical square function estimate in \eqref{eq:VSF1}, which we have already proved, imply that
\[
\int_0^\infty \|t^2\nabla_{x,t} \mathcal{L}_{\mu} e^{-t^2\mathcal{L}_{\mu}} f\|_{L^2_\mu(\RR^n,\RR^{n+1})}^2 \frac{dt}{t}
\lesssim \int_0^\infty \|t \mathcal{L}_{\mu} e^{-(t^2/2)\mathcal{L}_{\mu}} f\|_{L^2_\mu(\RR^n)}^2 \frac{dt}{t} \lesssim \|\nabla f\|_{L^2_\mu(\RR^n,\RR^n)}^2
\]
for all $f \in W^{1,2}_\mu(\RR^n)$, which proves \eqref{eq:VSF2}.
\end{proof}

Now let us return to the case when $\Omega \subseteq \RR^n$ is an arbitrary open set and suppose that $\mathbf{f}:\Omega\rightarrow \RR^n$ is a measurable function for which $\frac{1}{\mu}\mathbf{f} \in L^\infty(\Omega)$. A \textit{solution} of the inhomogeneous equation $\div (\mathcal{A} \nabla u)= \div \mathbf{f}$ in $\Omega \subseteq \RR^n$ refers to any function $u\in W^{1,2}_{\mu,\loc}(\Omega)$ such that $\int_{\RR^n} \langle \mathcal{A} \nabla u - \mathbf{f}, \nabla \Phi \rangle = 0$ for all $\Phi\in C_c^\infty(\Omega)$. All solutions $u$ of the homogeneous equation $\div (\mathcal{A} \nabla u)=0$ in $\Omega$ are locally bounded and H\"{o}lder continuous in the sense that 
\begin{equation}\label{eq:M}
\|u\|_{L^\infty(B)} \lesssim \left(\fint_{2B} |u|^2\ d\mu\right)^{1/2} 
\end{equation}
and there exists $\alpha>0$ such that
\begin{equation}\label{eq:dGN}
|u(x)-u(y)| \lesssim \left(\frac{|x-y|}{r(B)}\right)^{\alpha}\left(\fint_{2B}|u|^{2} \ d\mu \right)^{1/2} \qquad \forall x,y\in B,
\end{equation}
and if, in addition, $u\geq0$ almost everywhere on $\Omega$, there is the Harnack inequality
\begin{equation}\label{eq:H}
\sup_{B} u \lesssim \inf_{B} u,
\end{equation}
for all balls $B$ of radius $r(B)$ such that $2B \subseteq \Omega$, where $\alpha$ and the implicit constants depend only on $n$, $\lambda$, $\Lambda$ and $[\mu]_{A_2}$. These properties follow from Corollary 2.3.4, Lemma 2.3.5 and Theorem~2.3.12 in~\cite{FKS} by observing that the proofs do not use the  assumption therein that $\mathcal{A}$ is symmetric. The estimates also hold when the balls $B$ are replaced by (open) cubes $Q$, and also when the dilate $2B$ is replaced by $C_0B$ for any $C_0>1$, provided the implicit constants are understood to depend on $C_0$.  

The following local boundedness estimate for solutions of the inhomogeneous equation is needed in Lemma~\ref{lem:CoV}, although only for $p=2$. This is a simpler version of Theorem~8.17 in~\cite{GT}, which we have adapted to degenerate elliptic equations. In fact, the result for $p\geq2$ is already proven in \cite{FKS} by combining Corollary~2.3.4 with estimates~(2.3.7) and (2.3.13) therein. The proof is included here for the readers convenience and since it implies \eqref{eq:M} as a specical case, which in turn is the well-known starting point for establishing \eqref{eq:dGN}.

\begin{theorem}\label{MoEsttype}
Let $n\geq2$ and suppose that $\mathcal{A}\in\mathcal{E}(n,\lambda,\Lambda,\mu)$ for some  constants $0<\lambda\leq\Lambda<\infty$ and an $A_2$-weight $\mu$ on $\RR^n$. Let $\Omega\subseteq\RR^n$ denote an open set and suppose that $\mathbf{f}:\Omega\rightarrow \RR^n$ is a measurable function such that $\frac{1}{\mu}\mathbf{f} \in L^\infty(\Omega)$. If $p\in(1,\infty)$ and $\div (\mathcal{A} \nabla u) = \div \mathbf{f}$ in $\Omega$, then
\begin{equation}\label{eq:MoserInh}
\|u\|_{L^\infty(B)} \lesssim \left(\fint_{2B} |u|^p\ d\mu\right)^{1/p} 
+ r(B) \|\tfrac{1}{\mu} \mathbf{f} \|_{L^\infty(\Omega)}
\end{equation}
for all balls $B$ of radius $r(B)>0$ such that $2B \subseteq \Omega$, where the implicit constant depends only on $p$, $n$, $\lambda$, $\Lambda$ and $[\mu]_{A_2}$.
\end{theorem}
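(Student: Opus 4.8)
The plan is a Moser iteration adapted to the $\mu$-weighted setting and arranged so as to absorb the inhomogeneity, following the scheme of Theorem~8.17 in \cite{GT}. First I would normalize. Since $[\mu]_{A_2}$ is invariant under translations and dilations, replacing $u(\cdot)$, $\mathcal{A}(\cdot)$, $\mu(\cdot)$ and $\mathbf{f}(\cdot)$ by $u(x_B+r(B)\,\cdot)$, $\mathcal{A}(x_B+r(B)\,\cdot)$, $\mu(x_B+r(B)\,\cdot)$ and $r(B)\,\mathbf{f}(x_B+r(B)\,\cdot)$ reduces matters to the unit ball $B=B(0,1)$ with $2B\subseteq\Omega$; the rescaling of $\mathbf{f}$ is exactly what produces the factor $r(B)$ in \eqref{eq:MoserInh}. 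Set $k:=\|\tfrac1\mu\mathbf{f}\|_{L^\infty(\Omega)}$, so $|\mathbf{f}|\leq k\mu$ a.e. Since the equation is linear, it suffices to bound $u^+$ in $L^\infty(B(0,1))$, since applying the result to $-u$ then controls $u^-$ and hence $|u|$. The key device for the forcing term is to work with $\bar{u}:=u^++k$: then $k\leq\bar{u}$, so $|\mathbf{f}|\leq\bar{u}\,\mu$ pointwise and the inhomogeneous term acquires the same structure as the terms coming from $\mathcal{A}$ itself.

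Next I would establish the basic reverse-H\"older (Caccioppoli) inequality. Fix $\beta\geq0$ and radii $1\leq\rho<\rho'\leq2$, and pick $\eta\in C_c^\infty(B(0,\rho'))$ with $\eta\equiv1$ on $B(0,\rho)$, $0\leq\eta\leq1$, $|\nabla\eta|\lesssim(\rho'-\rho)^{-1}$. Testing the weak formulation of $\div(\mathcal{A}\nabla u)=\div\mathbf{f}$ with $\varphi:=\eta^2(\bar{u}^{2\beta+1}-k^{2\beta+1})$ — which vanishes together with its gradient on $\{u\leq0\}$, so that all integrands are supported where $u^+=u$ and $\nabla\bar u=\nabla u$ — and then using the lower ellipticity bound in \eqref{eq:deg.def.bdry} on the principal term, the upper bound in \eqref{eq:deg.def.bdry} together with Cauchy--Schwarz and Young on the cross term, and $|\mathbf{f}|\leq\bar u\,\mu$ with Young on the forcing term (absorbing the resulting gradient contribution into the principal term), yields, with $v:=\bar u^{\beta+1}$,
\begin{equation*}
\int_{\RR^n}|\nabla(\eta v)|^2\,d\mu\;\lesssim\;\frac{(\beta+1)^2}{(\rho'-\rho)^2}\int_{B(0,\rho')}v^2\,d\mu ,
\end{equation*}
with implicit constant depending only on $n,\lambda,\Lambda,[\mu]_{A_2}$. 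Applying the weighted Sobolev inequality \eqref{wSob} (with $p=2$) to $\eta v\in W^{1,2}_{0,\mu}(B(0,\rho'))$, writing $\kappa:=\tfrac{n}{n-1}+\delta>1$, and using the doubling bound \eqref{eq:D} to compare the $\mu$-averages over $B(0,\rho)$, $B(0,\rho')$ and $B(0,2)$, this self-improves to
\begin{equation*}
\left(\fint_{B(0,\rho)}\bar u^{2\kappa(\beta+1)}\,d\mu\right)^{\frac{1}{2\kappa(\beta+1)}}\;\lesssim\;\left(\frac{\beta+1}{\rho'-\rho}\right)^{\frac{1}{\beta+1}}\left(\fint_{B(0,\rho')}\bar u^{2(\beta+1)}\,d\mu\right)^{\frac{1}{2(\beta+1)}},
\end{equation*}
the implicit constant (now raised to the power $1/(\beta+1)$) again depending only on $n,\lambda,\Lambda,[\mu]_{A_2}$.

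Now I would iterate. Taking $\gamma_j:=2\kappa^{j}$ (that is, $\beta+1=\kappa^{j}$) and radii $\rho_j:=1+2^{-j}$, the accumulated constant is $\prod_{j\geq0}\big(C\,\kappa^{j}\,2^{j+1}\big)^{\kappa^{-j}}$, which is finite because $\sum_{j\geq0}j\kappa^{-j}<\infty$; since $\gamma_j\to\infty$ and $\bigcap_j B(0,\rho_j)=\overline{B(0,1)}$, this gives $\|\bar u\|_{L^\infty(B(0,1))}\lesssim(\fint_{B(0,2)}\bar u^{2}\,d\mu)^{1/2}$, and undoing $\bar u=u^++k$ and the normalization gives \eqref{eq:MoserInh} for $p=2$ (and, with $\mathbf{f}=0$, the estimate \eqref{eq:M}). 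The case $p\geq2$ is then immediate from Jensen's inequality. For $1<p<2$, I would re-run the iteration between arbitrary radii $1\leq\rho<\rho'\leq2$ to obtain $\|\bar u\|_{L^\infty(B(0,\rho))}\lesssim(\rho'-\rho)^{-N}(\fint_{B(0,\rho')}\bar u^{2}\,d\mu)^{1/2}+k$ for some $N=N(n,[\mu]_{A_2})$, then insert $\int_{B(0,\rho')}\bar u^{2}\,d\mu\leq\|\bar u\|_{L^\infty(B(0,\rho'))}^{2-p}\int_{B(0,\rho')}\bar u^{p}\,d\mu$ and use Young's inequality to split off a small multiple of $\|\bar u\|_{L^\infty(B(0,\rho'))}$; a standard iteration lemma (as in Lemma~8.18 of \cite{GT}) absorbs this term and yields $\|\bar u\|_{L^\infty(B(0,1))}\lesssim(\fint_{B(0,2)}\bar u^{p}\,d\mu)^{1/p}+k$, which is \eqref{eq:MoserInh}.

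Every ingredient is classical, so there is no essential obstacle; the work is bookkeeping. The two points needing care are: (i) the choice of test function $\varphi=\eta^2(\bar u^{2\beta+1}-k^{2\beta+1})$, which is what simultaneously confines the integrals to $\{u>0\}$ (circumventing the sign issue in applying the lower ellipticity bound) and lets the substitution $\bar u=u^++k$ fold the forcing term into the elliptic terms — this is the only genuinely new element relative to the homogeneous case \eqref{eq:M}; and (ii) tracking the polynomial-in-$\beta$ dependence of the Caccioppoli constant, together with the volume factors from \eqref{eq:D}, through the iteration, so that the infinite product of constants converges. The weighted nature of the problem enters only through \eqref{wSob} and \eqref{eq:D}, both already established.
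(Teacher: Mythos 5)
Your proposal follows the same overall strategy as the paper's proof: a Moser iteration in the spirit of Theorem~8.17 of \cite{GT}, with the forcing term absorbed by adding $k=r(B)\|\tfrac1\mu\mathbf{f}\|_\infty$ to the solution so that $|\mathbf{f}|\leq \bar u\,\mu$ and the inhomogeneity is dominated by the elliptic terms. The routes differ only in bookkeeping. The paper runs the iteration directly from exponent $p$ for every $p\in(1,\infty)$: its test function $\eta^2\ube^{\beta}$ with $\beta\geq p-1$ produces a factor $(p-1)^{-1}$ in the Caccioppoli constant, which is harmless since the constant is allowed to depend on $p$. You instead start at exponent $2$ and recover $1<p<2$ by the interpolation $\int\bar u^{2}\leq\|\bar u\|_{\infty}^{2-p}\int\bar u^{p}$ together with a standard absorption lemma; this costs an extra step but avoids constants that degenerate as the starting exponent decreases. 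Both are valid, and your scaling normalization to the unit ball is a legitimate substitute for the paper's direct tracking of $r(B)$.

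The one point you dismiss as ``bookkeeping'' but which the paper treats as a genuine step is the admissibility of the test function. A solution is only assumed to lie in $W^{1,2}_{\mu,\loc}(\Omega)$ and is not a priori bounded, so for $\beta>0$ the function $\varphi=\eta^{2}(\bar u^{2\beta+1}-k^{2\beta+1})$ need not belong to $W^{1,2}_{0,\mu}$: its gradient involves $\bar u^{2\beta}\nabla u$, whose square integrability is exactly what the iteration is trying to establish. The standard remedy, and the one the paper uses via the truncation $h_N$, is to replace the power by a truncated power, derive the Caccioppoli inequality with constants independent of the truncation level, and pass to the limit by monotone convergence. Your argument is correct once this patch is inserted, but it should be stated rather than elided.
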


\begin{proof} 
Suppose that $\div (\mathcal{A} \nabla u )= \div \mathbf{f}$ in $\Omega$ and consider a ball $B$ such that $2B \subseteq \Omega$. First, assume that $u$ is non-negative and in $L^\infty(2B)$. Let~$\epsilon>0$, set $k=r(B)\|\frac{1}{\mu}\mathbf{f}\|_{L^\infty(\Omega)}$ and $\ube:=u + k + \epsilon$. Let $B_r$ denote the ball concentric to $B$ with radius $r>0$ and recall the index $\delta>0$ from the Sobolev inequality in Theorem~\ref{thm:wSob+wPoinc}. We claim that if $\gamma\in[p,\infty)$ and $r(B)\leq r_1<r_2\leq2r(B)$, then
\begin{equation}\label{eq:MIclaim}
\left(\fint_{B_{r_1}} \ube^{\gamma(\frac{n}{n-1}+\delta)} \ d\mu\right)^{1/(\gamma(\frac{n}{n-1}+\delta))}
\lesssim \left(\gamma \frac{r_1}{r_2-r_1}\right)^{2/\gamma} \left(\fint_{B_{r_2}} \ube^\gamma\ d\mu \right)^{1/\gamma},
\end{equation}
where the implicit constant depends only on $p$, $n$, $\lambda$, $\Lambda$ and $[\mu]_{A_2}$. To prove \eqref{eq:MIclaim}, fix $\eta\in C^\infty_c(\Omega)$ such that $\eta:\Omega\rightarrow[0,1]$, $\eta\equiv 1$ on $B_{r_1}$, $\eta\equiv 0$ on $\Omega\setminus B_{r_2}$ and $\|\nabla\eta\|_\infty \leq 2/(r_2-r_1)$. Set $\beta:=\gamma-1$ and $\nu:=\eta^2\ube^\beta$. Note that $\nu\in W^{1,2}_{0,\mu}(\Omega)$ with
\[
\nabla v = 2\eta \nabla \eta \ube^\beta + \beta \eta^2 \ube^{\beta-1} \nabla u,
\]
since $0<\epsilon \leq \ube(x) \leq \|u\|_{L^\infty(2B)} + k + \epsilon<\infty$ for almost every $x\in 2B$, thus
\[
\int_{\RR^n} \langle \mathcal{A} \nabla u - \mathbf{f}, 2\eta \nabla \eta \ube^\beta \rangle
 = - \int_{\RR^n} \langle \mathcal{A} \nabla u - \mathbf{f}, \beta \eta^2 \ube^{\beta-1} \nabla u \rangle.
\]
We then use this identity and Cauchy's inequality with $\sigma>0$ to obtain
\begin{align*}
\int_{\RR^n} \eta^2 \ube^{\beta-1} |\nabla u|^2 \ d\mu
&\lesssim_\lambda \int_{\RR^n} \eta^2 \ube^{\beta-1} \langle \mathcal{A}\nabla u, \nabla u\rangle \\
&= -2\beta^{-1} \int_{\RR^n} \eta \ube^\beta \langle \mathcal{A} \nabla u - \mathbf{f}, \nabla \eta \rangle
+\!\int_{\RR^n} \eta^2 \ube^{\beta-1} \langle \mathbf{f}, \nabla u\rangle \\
&\lesssim_\Lambda (p\!-\!1)^{-1}\! \int_{\RR^n} \eta \ube^\beta (|\nabla u| + |\tfrac{1}{\mu}\mathbf{f}|) |\nabla \eta|\ d\mu
+\!\int_{\RR^n} \eta^2 \ube^{\beta-1} |\tfrac{1}{\mu}\mathbf{f}| |\nabla u| \ d\mu \\
&\lesssim_p \sigma \int_{\RR^n} \eta^2 \ube^{\beta-1} |\nabla u|^2\ d\mu
+ \sigma^{-1} \int_{\RR^n} \ube^{\beta+1} |\nabla \eta|^2\ d\mu \\
&\qquad + \int_{\RR^n} \ube^{\beta+1} |\nabla \eta|^2\ d\mu
+ \int_{\RR^n} (\eta/r(B))^2 \ube^{\beta+1} \ d\mu \\
&\qquad + \sigma \int_{\RR^n} \eta^2 \ube^{\beta-1} |\nabla u|^2 \ d\mu 
+ \sigma^{-1} \int_{\RR^n} (\eta/r(B))^2 \ube^{\beta+1} \ d\mu,
\end{align*}
where in the second inequality we used the assumption that $\beta:=\gamma-1\geq p-1$ and in the final inequality we used the fact that $|\tfrac{1}{\mu}\mathbf{f}| \leq k/r(B) \leq \ube /r(B)$ on $\Omega$. Next, choose $\sigma>0$ small enough, depending only on $p$, $\lambda$ and $\Lambda$, to deduce that
\[
\int_{B_{r_1}} \ube^{\beta-1} |\nabla u|^2\ d\mu
\lesssim_{p,\lambda,\Lambda} \int_{\RR^n} \ube^{\beta+1} (|\nabla \eta|^2\
+ (\eta/r(B))^2)\ d\mu
\lesssim \frac{1}{(r_2-r_1)^2} \int_{B_{r_2}} \ube^{\beta+1}\ d\mu,
\]
where in the final inequality we used the fact that $r(B)\geq r_2-r_1$. Now combine this estimate with the Sobolev inequality \eqref{wSob} and recall that $\beta:=\gamma-1$ to obtain
\begin{align*}
\left(\fint_{B_{r_1}} \ube^{\gamma(\frac{n}{n-1}+\delta)}\ d\mu\right)^{1/(\frac{n}{n-1}+\delta)}
&\lesssim r_1^2 \fint_{B_{r_1}} |\nabla(\ube^{(\beta+1)/2})|^2\ d\mu \\
&\lesssim ((\beta +1) r_1)^2 \fint_{B_{r_1}} \ube^{\beta-1} |\nabla u|^2\ d\mu \\
&\lesssim \left(\gamma \frac{r_1}{r_2-r_1}\right)^{2} \fint_{B_{r_2}} \ube^{\gamma}\ d\mu,
\end{align*}
where the implicit constants depend only on $p$, $n$, $\lambda$, $\Lambda$ and $[\mu]_{A_2}$, proving~\eqref{eq:MIclaim}.

We now apply the Moser iteration technique to prove \eqref{eq:MoserInh}. Set $\chi:=\frac{n}{n-1}+\delta$ and define $\Phi(q,r):= \left(\fint_{B_r} \ube^q\ d\mu \right)^{1/q}$ for $q,r>0$. Estimate~\eqref{eq:MIclaim} implies that
\[
\Phi(\gamma\chi,r_1) \leq \left(C\gamma \frac{r_1}{r_2-r_1}\right)^{2/\gamma} \Phi(\gamma,r_2)
\]
where $C$ depends only on $p$, $n$, $\lambda$, $\Lambda$ and $[\mu]_{A_2}$, and it follows by induction that
\[
\Phi(p\chi^m,(1+2^{-m})r(B)) \leq (4Cp)^{\frac{2}{p}\sum_{k=0}^{m-1} \chi^{-k}} (2\chi)^{\frac{2}{p}\sum_{k=0}^{m-1} k\chi^{-k}} \Phi(p,2r(B)) \lesssim \Phi(p,2r(B))
\]
for all $m\in\N$. This shows that
\[
\|\ube\|_{L^\infty(B)} = \lim_{m\rightarrow\infty} \Phi(p\chi^m,r(B))
\lesssim \Phi(p,2r(B))
= \left(\fint_{2B} \ube^p\ d\mu\right)^{1/p}
\]
and therefore
\[
\|u\|_{L^\infty(B)} \leq \|\ube\|_{L^\infty(B)}
\lesssim \left(\fint_{2B} \ube^p\ d\mu\right)^{1/p}
\lesssim \left(\fint_{2B} u^p\ d\mu\right)^{1/p} + r(B)\|\tfrac{1}{\mu}\mathbf{f}\|_{L^\infty(\Omega)} + \epsilon
\]
for all $\epsilon>0$, which implies \eqref{eq:MoserInh}.

Finally, it remains to remove the assumption that $u$ is non-negative and bounded. This is achieved by setting $\ube:=\max\{u,0\}+k+\epsilon$ and $\ube:=-\min\{u,0\}+k+\epsilon$ respectively and in each case adjusting the proof above to incorporate the truncated test function $\nu := \eta^2 h_N(\ube) \ube$, where 
\[
h_N(x) :=
\begin{cases}
x^{\beta-1}, & x \leq N+k+\epsilon, \\
(N+k+\epsilon)^{\beta-1}, & x > N+k+\epsilon.
\end{cases}
\]
We leave the standard details to the reader.
\end{proof}

The following self-improvement property for Carleson measures will be used in conjunction with the local H\"{o}lder continuity estimate for solutions in~\eqref{eq:dGN}. The result is proved in the unweighted case in Lemma~2.14 in~\cite{AHLT}. In that proof, the Lebesgue measure on $\RR^n$ can in fact be replaced by any doubling measure, since the Whitney decomposition of open sets can be adapted to any such measure (see, for instance, Lemma~2 in Chapter~I of~\cite{St2}). The result below then follows.

\begin{lemma}\label{JNforCar} Let $n\geq1$ and suppose that $\mu$ is an $A_2$-weight on $\RR^n$. Let $\alpha$, $\beta_0>0$ and suppose that $(v_t)_{t>0}$ is a collection of H\"{o}lder continuous functions on a cube $Q\subset\RR^n$ satisfying 
\begin{equation*}
0 \leq v_t(x) \leq \beta_0
\quad{and}\quad
|v_t(x) - v_t(y)| \leq \beta_0\left(\frac{|x - y|}{t}\right)^{\alpha}
\end{equation*}
for all $x, y \in Q$. If there exists $\eta \in (0,1]$, $\beta>0$ and, for each cube $Q' \subseteq Q$, a measurable set $F' \subset Q'$ such that
\begin{equation*}
\mu(F') \geq \eta \mu(Q')
\quad{and}\quad
\frac{1}{\mu(Q')} \int_{0}^{l(Q')} \int_{F'} v_t(x)\ d\mu(x) \frac{dt}{t} \leq \beta,
\end{equation*}
then
\begin{equation*}
\frac{1}{\mu(Q)}\int_{0}^{\ell(Q)} \int_{Q} v_{t}(x)\ d\mu(x) \frac{dt}{t} \lesssim_{\alpha,\eta} \beta + \beta_0,
\end{equation*}
where the implicit constant depends only on $\alpha$, $\eta$, $n$ and $[\mu]_{A_2}$.
\end{lemma}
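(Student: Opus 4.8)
The plan is to run the John--Nirenberg-type self-improvement scheme of Auscher--Hofmann--Lacey--McIntosh--Tchamitchian (Lemma~2.14 in \cite{AHLT}), replacing Lebesgue measure by the doubling measure $\mu$ throughout; recall that an $A_2$-weight is doubling, so the (dyadic) Whitney decomposition of open sets remains available. The first move is to make the quantity finite: since $0\le v_t\le\beta_0$, the truncated functional $\Theta_\varepsilon(Q'):=\mu(Q')^{-1}\int_\varepsilon^{\ell(Q')}\int_{Q'}v_t\,d\mu\,\tfrac{dt}{t}$ is finite for every $\varepsilon>0$ and $M_\varepsilon:=\sup_{Q'\subseteq Q}\Theta_\varepsilon(Q')\le\beta_0\log(\ell(Q)/\varepsilon)<\infty$. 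It therefore suffices to prove a bound $M_\varepsilon\lesssim_{\alpha,\eta}\beta+\beta_0$ independent of $\varepsilon$ and then let $\varepsilon\to0$ by monotone convergence; I suppress $\varepsilon$ from the notation below.

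Fix $Q'\subseteq Q$ and let $F'\subset Q'$ be the set provided by the hypothesis; shrinking $F'$ to a relatively closed subset of $\mu$-measure at least $\tfrac{\eta}{2}\mu(Q')$ (which only decreases the left-hand side of the hypothesis, and is possible by inner regularity of $\mu$) I may assume $\Omega:=Q'\setminus F'$ is open. Writing $\mu(Q')\Theta(Q')$ as the $F'$-integral plus the $\Omega$-integral, the first piece is $\le\beta\mu(Q')$. For the second, take a Whitney decomposition $\Omega=\bigsqcup_j R_j$ into pairwise disjoint dyadic cubes with $\ell(R_j)\eqsim\dist(R_j,\partial\Omega)$ and boundedly overlapping dilates, and on each $R_j$ split the $t$-integral at $t=c\,\ell(R_j)$ for a dimensional constant $c$. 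The low part $\int_0^{c\ell(R_j)}\int_{R_j}v_t\,d\mu\,\tfrac{dt}{t}=\mu(R_j)\Theta(R_j)\le M\mu(R_j)$; since $\sum_j\mu(R_j)=\mu(\Omega)\le(1-\tfrac{\eta}{2})\mu(Q')$, summing gives a contribution $\le(1-\tfrac{\eta}{2})M\mu(Q')$, which is the term that will be absorbed at the end.

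The heart of the matter is the high part, $t\ge c\,\ell(R_j)$. For such $t$ and $x\in R_j$ pick $\hat x_j\in\partial\Omega\subseteq F'\cup\partial Q'$ with $|x-\hat x_j|\lesssim\ell(R_j)\le t/c$; the Hölder bound then gives $v_t(x)\le v_t(\hat x_j)+\beta_0(C\ell(R_j)/t)^\alpha$. Crucially the error is summable in $t$, namely $\int_{c\ell(R_j)}^{\infty}(C\ell(R_j)/t)^\alpha\tfrac{dt}{t}\lesssim_{\alpha,n}1$, so over all $j$ these errors contribute $\lesssim_{\alpha,n}\beta_0\sum_j\mu(R_j)\le\beta_0\mu(Q')$. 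It remains to bound $\sum_j\mu(R_j)\int_{c\ell(R_j)}^{\ell(Q')}v_t(\hat x_j)\,\tfrac{dt}{t}$. When $\hat x_j\in\partial Q'$ the corresponding points $x$ all lie in a collar $\{x\in Q':\dist(x,\partial Q')\lesssim t\}$; since $\mu\in A_2$ satisfies a reverse Hölder inequality we have $\mu(\{x\in Q':\dist(x,\partial Q')\le s\})\lesssim(s/\ell(Q'))^\theta\mu(Q')$ for some $\theta>0$, and together with $v_t\le\beta_0$ and $\int_0^{\ell(Q')}(t/\ell(Q'))^\theta\tfrac{dt}{t}\lesssim_\theta1$ this shows the collar contributes $\lesssim_\theta\beta_0\mu(Q')$. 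When $\hat x_j\in F'$ one transfers the point values $v_t(\hat x_j)$ back into an integral of $v_t$ over $F'$, again using the Hölder continuity to replace a value by a $\mu$-average over a surrounding ball (with $\mu(R_j)\eqsim\mu(B(\hat x_j,\ell(R_j)))$ by doubling) together with the bounded overlap of the Whitney structure, so that this sum is $\lesssim\int_0^{\ell(Q')}\int_{F'}v_t\,d\mu\,\tfrac{dt}{t}+\beta_0\mu(Q')\lesssim(\beta+\beta_0)\mu(Q')$, exactly as in \cite{AHLT}.

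Collecting the three contributions and taking the supremum over $Q'\subseteq Q$ (legitimate because $M=M_\varepsilon<\infty$) yields $M\le(1-\tfrac{\eta}{2})M+C_{\alpha,\eta}(\beta+\beta_0)$, hence $M\le\tfrac{2}{\eta}C_{\alpha,\eta}(\beta+\beta_0)$; specializing to $Q'=Q$ and letting $\varepsilon\to0$ gives the claim, with a constant depending only on $\alpha$, $\eta$, $n$ and $[\mu]_{A_2}$. I expect the main obstacle to be the last part of the third paragraph: transferring the large-scale values $v_t(\hat x_j)$ — with $\hat x_j$ possibly lying in a $\mu$-thin portion of $F'$ — back to a controlled integral over $F'$. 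This is where the Hölder continuity is used a second time and where the Whitney cubes and their associated balls must be organized so that the transfer costs only an $O(\beta_0)$ error; the accompanying boundary collar in the same step is precisely where the $A_\infty$ (reverse Hölder) property of the weight $\mu$ is invoked.
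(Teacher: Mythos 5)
You have correctly identified the crux, and unfortunately the step you flag as the ``main obstacle'' is exactly where the argument has a genuine gap. The mechanism you propose for the large-scale part over a Whitney cube $R_j$ with nearest point $\hat x_j\in F'$ --- replace $v_t(\hat x_j)$ by a $\mu$-average over a ball $B_j:=B(\hat x_j,C\ell(R_j))$ and invoke the bounded overlap of the Whitney family --- does not close, for two reasons. First, the average must be taken over $B_j\cap F'$ (an average over the full ball is not controlled by the hypothesis), and $\hat x_j\in F'$ gives no lower bound on $\mu(B_j\cap F')$: the set $F'$ may be $\mu$-thin at scale $\ell(R_j)$ near $\hat x_j$, so one would first have to pass to the ample-density points of $F'$ by a stopping-time argument. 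Second, and more seriously, even granting ample density the transfer produces
\begin{equation*}
\sum_j\mu(R_j)\int_{c\ell(R_j)}^{\ell(Q')}v_t(\hat x_j)\,\frac{dt}{t}
\ \lesssim\ \int_0^{\ell(Q')}\!\!\int_{F'}v_t(y)\,N_t(y)\,d\mu(y)\,\frac{dt}{t}+\beta_0\,\mu(Q'),
\qquad N_t(y):=\#\{j:\ell(R_j)\le t,\ y\in B_j\},
\end{equation*}
and the Whitney structure only bounds $N_t(y)$ \emph{per dyadic scale}: a point $y\in F'\cap\partial\Omega$ receives contributions from every scale between the truncation parameter and $t$, so $N_t(y)$ is not uniformly bounded (only $\int N_t\,d\mu\lesssim\mu(Q')$ holds), and the displayed right-hand side is not $\lesssim(\beta+\beta_0)\mu(Q')$ by any overlap count. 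The same logarithmic pile-up defeats the variant in which one averages at scale $\eqsim t$ instead, since that incurs a non-decaying error $O(\beta_0)$ at every scale, which is not integrable against $dt/t$.

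The missing idea is a Chebyshev step that converts the integral hypothesis on $F'$ into a \emph{pointwise} bound, after which no transfer back to an integral over $F'$ is needed. Set $G:=\{y\in F':\int_0^{\ell(Q')}v_t(y)\,\tfrac{dt}{t}\le 2\beta/\eta\}$, so $\mu(G)\ge\tfrac{\eta}{2}\mu(Q')$, and run your decomposition around $\overline G$ instead of $F'$. Then for $x\in Q'$ with $d(x):=\dist(x,\overline G)$ and $y\in G$ with $|x-y|\le 2d(x)$, a single application of the H\"older bound gives
\begin{equation*}
\int_{2d(x)}^{\ell(Q')}v_t(x)\,\frac{dt}{t}
\le\int_{2d(x)}^{\ell(Q')}v_t(y)\,\frac{dt}{t}
+\beta_0\int_{2d(x)}^{\infty}\Big(\frac{2d(x)}{t}\Big)^{\alpha}\frac{dt}{t}
\le\frac{2\beta}{\eta}+\frac{\beta_0}{\alpha},
\end{equation*}
which integrates over $x\in Q'$ to $\lesssim_{\alpha,\eta}(\beta+\beta_0)\mu(Q')$ with no overlap issue at all; each bad point uses exactly one good point, for all large $t$ at once. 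The complementary region $\{t<2d(x)\}$ is then precisely your low part (absorbed, since $\sum_j\mu(R_j)\le(1-\tfrac{\eta}{2})\mu(Q')$) together with your boundary collar. The remainder of your proposal is sound; in particular your observation that the collar term requires the $A_\infty$ inequality $\mu(E)\lesssim(|E|/|Q'|)^{\theta}\mu(Q')$ --- trivial for Lebesgue measure but a genuine extra input for a weight --- is a point worth making explicit, since the paper offers no proof of this lemma at all, merely citing Lemma~2.14 of \cite{AHLT} and asserting that its argument survives the replacement of Lebesgue measure by a doubling measure.
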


\section{Estimates for Maximal Operators}\label{sec:maxops}
We obtain estimates for a variety of maximal operators ($M_{\mu}$, $D_{*,\mu}$, $N_*^{\eta}$ and $\widetilde{N}_{*,\mu}^{\eta}$) adapted to an $A_2$-weight~$\mu$ and degenerate elliptic operators $\mathcal{L}_\mu:=-\div_\mu(\frac{1}{\mu}\mathcal{A}\nabla)$ on $\RR^n$ for $n\geq 2$. These will be used to prove the Carleson measure estimate from Theorem~\ref{thm:introCME} in Section~\ref{sec:CME}. We first define the maximal operators $M_{\mu}$ and $D_{*,\mu}$ by
\begin{align*}
M_{\mu}f(x) &:= \sup_{r>0}\fint_{B(x,r)}|f(y)|\ d\mu(y),  \\
D_{*,\mu}g(x) &:= \sup_{r>0}\left( \fint_{B(x,r)}\left( \frac{|g(x) -g(y)|}{|x-y|}\right)^2 d\mu(y)\right)^{1/2}
\end{align*}
for all $f\in L^1_{\mu,\loc}(\RR^n)$, $g\in W^{1,2}_{\mu,\loc}(\RR^n)$ and $x\in\RR^n$. The usual unweighted and centred Hardy--Littlewood maximal operator is abbreviated by $M$. The maximal operator $M_\mu$ is bounded on $L^p_\mu(\RR^n)$ for all $p\in(1,\infty)$ and satisfies the weak-type estimate
\begin{equation}\label{eq:wkdef}
\mu\big(\{x\in\RR^n : |M_\mu f(x)| > \kappa\}\big) \lesssim \kappa^{-1} \|f\|_{L^1_\mu(\RR^n)}
\qquad \forall \kappa >0
\end{equation}
for all $f\in L^1_\mu(\RR^n)$ (see, for instance, Theorem 1 in Chapter~I of~\cite{St2}). There is also the following weak-type estimate for the maximal operator $D_{*,\mu}$.

\begin{lemma}\label{lem:Dpbound} Let $n\geq 2$. If $\mu$ is an $A_2$-weight on $\RR^n$, then
\begin{align}\label{Dptilde}
\mu\big(\{x\in\RR^n : |D_{*,\mu} f(x)| > \kappa\}\big) \lesssim \kappa^{-2} \|\nabla f\|_{L^2_\mu(\RR^n,\RR^n)}^2
\qquad \forall \kappa >0
\end{align}
for all $f\in W^{1,2}_\mu(\RR^n)$, where the implicit constant depends only on $n$ and~$[\mu]_{A_2}$.
\end{lemma}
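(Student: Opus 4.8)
The plan is to prove the weak-type $(2,2)$ bound for $D_{*,\mu}$ by dominating $D_{*,\mu}f$ pointwise by the square root of a weighted fractional-type maximal function of $|\nabla f|^2$, and then invoking a weighted version of the Fefferman--Stein / Hardy--Littlewood--Wiener weak-type inequality together with the $A_2$-property of $\mu$. Concretely, for $x\in\RR^n$, a ball $B=B(x,r)$ and $y\in B$, one wants to control $|f(x)-f(y)|/|x-y|$ in an averaged sense. The natural tool is a pointwise estimate of the form
\[
|f(x)-f(y)| \lesssim \int_{B(x,2|x-y|)} \frac{|\nabla f(z)|}{|x-z|^{n-1}}\, dz,
\]
valid (for smooth $f$, hence by density for $f\in W^{1,2}_\mu$) via the fundamental theorem of calculus along segments and integration over a cone, exactly as in the classical proof of Morrey's inequality. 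Dividing by $|x-y|$ and using $|x-z|\le 2|x-y|$ on the domain of integration, this gives
\[
\frac{|f(x)-f(y)|}{|x-y|} \lesssim \frac{1}{|x-y|^{n}}\int_{B(x,2|x-y|)} |\nabla f(z)|\, dz =: T(\nabla f)(x,|x-y|),
\]
a quantity that is essentially the (unweighted) Hardy--Littlewood maximal function of $|\nabla f|$ evaluated at $x$, but with the radius slaved to $|x-y|$.

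Next I would average the square of this over $B(x,r)$ with respect to $d\mu$. Since $\mu$ is doubling with exponent depending only on $n$ and $[\mu]_{A_2}$ (by \eqref{eq:D}), and since the right-hand side above depends on $y$ only through $|x-y|$, a standard decomposition of $B(x,r)$ into dyadic annuli $\{2^{-k-1}r\le|x-y|<2^{-k}r\}$ reduces the weighted average to a sum over $k\ge 0$ of terms of the form
\[
\frac{\mu\big(B(x,2^{-k}r)\big)}{\mu\big(B(x,r)\big)} \left(\frac{1}{(2^{-k}r)^{n}}\int_{B(x,2^{-k+1}r)}|\nabla f|\right)^{2}.
\]
Using $\mu(B(x,2^{-k}r))\le\mu(B(x,r))$ (monotonicity) to kill the ratio, and then writing $(2^{-k}r)^{-n}\int_{B(x,2^{-k+1}r)}|\nabla f| \lesssim M(|\nabla f|\mathbf 1_{B(x,2r)})(x)$ — or, better, $\lesssim M_\mu(\tfrac{1}{\mu}|\nabla f|)(x)^{?}$ — one is led to bound $D_{*,\mu}f(x)^2$ by a constant times the unweighted maximal function $M$ applied to $|\nabla f|$. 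But $|\nabla f|$ is in $L^2_\mu$, not $L^2$, so the cleaner route is: by Cauchy--Schwarz against the weight, $(2^{-k}r)^{-n}\int_{B}|\nabla f| \le \big((2^{-k}r)^{-n}\int_B \mu^{-1}\big)^{1/2}\big((2^{-k}r)^{-n}\int_B |\nabla f|^2\mu^{-1}\cdot \mu \big)^{1/2}$, and invoking the $A_2$-condition to replace $(2^{-k}r)^{-n}\int_B\mu^{-1}$ by $[\mu]_{A_2}\,(2^{-k}r)^{-n}|B|^2/\mu(B) \eqsim |B|/\mu(B)$. This produces the estimate
\[
D_{*,\mu}f(x)^2 \lesssim_{n,[\mu]_{A_2}} M_\mu\big(|\nabla f|^2\big)(x),
\]
after summing the (geometrically decaying, thanks to doubling and the annular volume factors) series — here one must be slightly careful that the series converges, which uses that the weighted measure of an annulus of radius $2^{-k}r$ is bounded by a fixed power $2^{-k\epsilon}$ times $\mu(B(x,r))$, a consequence of the reverse-doubling half of the $A_2$-property.

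Given the pointwise bound $D_{*,\mu}f \lesssim \big(M_\mu(|\nabla f|^2)\big)^{1/2}$, the weak-type $(2,2)$ estimate \eqref{Dptilde} follows immediately from the weak-type $(1,1)$ bound \eqref{eq:wkdef} for $M_\mu$ applied to $|\nabla f|^2\in L^1_\mu(\RR^n)$:
\[
\mu\big(\{D_{*,\mu}f>\kappa\}\big) \le \mu\big(\{M_\mu(|\nabla f|^2)>c\kappa^2\}\big) \lesssim \kappa^{-2}\|\,|\nabla f|^2\,\|_{L^1_\mu(\RR^n)} = \kappa^{-2}\|\nabla f\|_{L^2_\mu(\RR^n,\RR^n)}^2.
\]
The density of $C^\infty(\RR^n)\cap W^{1,2}_\mu(\RR^n)$ in $W^{1,2}_\mu(\RR^n)$ (and the fact that $D_{*,\mu}$ is subadditive, so the weak-type bound passes to limits along a sequence converging in $W^{1,2}_\mu$-norm, after passing to an a.e.-convergent subsequence) removes the smoothness assumption used in the Morrey-type pointwise inequality. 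I expect the main obstacle to be the annular summation step: one must verify that the weighted volumes of dyadic annuli decay geometrically, i.e.\ extract the reverse-doubling estimate $\mu(B(x,2^{-k}r)) \lesssim 2^{-k\epsilon}\mu(B(x,r))$ from the $A_2$-condition, and confirm that the resulting geometric series has a sum bounded in terms of $n$ and $[\mu]_{A_2}$ only — everything else is routine.
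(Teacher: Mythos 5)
There is a genuine gap at the very first step: the ``one\nobreakdash-sided'' Morrey estimate
\[
|f(x)-f(y)| \lesssim \int_{B(x,2|x-y|)} \frac{|\nabla f(z)|}{|x-z|^{n-1}}\, dz
\]
is false as a pointwise inequality in $x$ and $y$. The fundamental-theorem-of-calculus/cone argument only yields $|f(x)-f_{B}|\lesssim \int_{B}|\nabla f(z)|\,|x-z|^{-(n-1)}dz$ for the deviation of $f(x)$ from an \emph{average} over $B$; to compare two point values you must add the companion term $\int_{B}|\nabla f(z)|\,|y-z|^{-(n-1)}dz$ with the singularity at $y$. A concrete counterexample: let $f$ be a smooth bump equal to $1$ on $B(y,\epsilon/2)$, supported in $B(y,\epsilon)$, with $|\nabla f|\lesssim \epsilon^{-1}$, and take $|x-y|=1$. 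Then the left side equals $1$ while the right side is $\lesssim \epsilon^{-1}\cdot\epsilon^{n}\cdot 1=\epsilon^{n-1}\to 0$. Everything downstream of this inequality — in particular the fact that your majorant depends on $y$ only through $|x-y|$, which is what makes the annular decomposition close up to a single term $M_\mu(|\nabla f|^2)(x)$ — collapses once the inequality is corrected.

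With the correct symmetric Morrey bound one gets (for smooth $f$) $\frac{|f(x)-f(y)|}{|x-y|}\lesssim M(\nabla f)(x)+M(\nabla f)(y)$, and averaging the square against $d\mu(y)$ over $B(x,r)$ then forces the genuinely $y$-dependent term, giving
\[
D_{*,\mu}f(x)\lesssim M(\nabla f)(x)+\bigl(M_{\mu}[M(\nabla f)]^2(x)\bigr)^{1/2},
\]
which is how the paper argues. The price is that you can no longer conclude with only the weak-$(1,1)$ bound for $M_\mu$: the term $\bigl(M_{\mu}[M(\nabla f)]^2\bigr)^{1/2}$ requires Muckenhoupt's theorem that the \emph{unweighted} maximal operator $M$ is bounded on $L^2_\mu(\RR^n)$ for $\mu\in A_2$, after which $\|M(\nabla f)\|_{L^2_\mu}\lesssim\|\nabla f\|_{L^2_\mu}$ and both terms give the weak-type $(2,2)$ bound. (Your closing observations — the $A_2$ Cauchy--Schwarz step showing $M(\nabla f)(x)^2\lesssim M_\mu(|\nabla f|^2)(x)$, the density argument with an a.e.-convergent subsequence, and the reverse-doubling decay of annuli — are all fine, but they do not repair the false pointwise inequality on which the claimed bound $D_{*,\mu}f\lesssim (M_\mu(|\nabla f|^2))^{1/2}$ rests.)
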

\begin{proof}
If $f\in C_c^\infty(\RR^n)$, then a version of Morrey's inequality (see, for instance, Theorem~3.5.2 in~\cite{Mo}) shows that
\[
\frac{|f(x) - f(y)|}{|x-y|} \lesssim M(\nabla f)(x) + M(\nabla f)(y)
\]
for almost every $x,y\in\RR^n$, hence
\[
D_{*,\mu}f(x) \lesssim M(\nabla f)(x) + \left(M_\mu[M(\nabla f)]^2(x)\right)^{1/2}.
\]
Estimate~\eqref{Dptilde} then follows from the weak-type bound for $M_\mu$ in \eqref{eq:wkdef}, the fact that $M$ is bounded on $L^2_\mu(\RR^n)$ (see, for instance, Theorem~1 in Chapter~V of \cite{St2}) and the density of $C_c^\infty(\RR^n)$ in $W^{1,2}_\mu(\RR^n)$.
\end{proof}

We now define the non-tangential maximal operators $N_*^{\eta}$ and $\widetilde{N}_{*,\mu}^{\eta}$, for $\eta>0$, by
\begin{align*}
N_*^{\eta}u(x) := \sup_{(y,t)\in \Gamma_\eta(x)} |u(y,t)|,\qquad
\widetilde{N}_{*,\mu}^{\eta}v(x) := \sup_{(y,t)\in \Gamma_\eta(x)} \left(\fint_{B(y,\eta t)} |v(z,t)|^2\ d\mu(z) \right)^{1/2}
\end{align*}
for all measurable functions $u, v$ on $\RR^{n+1}_+$ (such that $v(\cdot,t)\in L^2_{\mu,\loc}(\RR^n)$ for a.e. $t>0$) and $x\in\RR^n$, where $\Gamma_{\eta}(x):=\{(y,t)\in\RR^{n+1}_+ : |y-x|<\eta t \}$ is the conical non-tangential approach region in $\RR^{n+1}_+$ with vertex at~$x$ and aperture~$\eta$.

Now suppose that $\mathcal{A} \in \mathcal{E}(n,\lambda,\Lambda,\mu)$, as defined by \eqref{eq:deg.def.bdry}. In particular, since $\mathcal{A}$ has real-valued coefficients, there exists an integral kernel $W_t(x,y)$ such that
\begin{equation}\label{eq:G1}
e^{-t\mathcal{L}_{\mu}}f(x) = \int_{\RR^n} W_t(x,y) f(y)\ d\mu(y),
\end{equation}
for all $f\in L^2_\mu(\RR^n)$, and there exists constants $C_1, C_2 >0$ such that
\begin{equation}\label{eq:G2}
|W_t(x,y)| \leq \frac{C_1}{\mu(B(x,\sqrt{t}))} \exp\left(-C_2\frac{|x-y|^2}{t}\right)
\end{equation}
for all $t>0$ and $x,y\in\RR^n$. This was proved by Cruz-Uribe and Rios for $f\in C_c^
\infty(\RR^n)$ under the assumption that $\mathcal{A}$ is symmetric (see Theorem~1 and Remark~3 in~\cite{CuRc}). The symmetry assumption can be removed, however, by following their proof and applying the  Harnack inequality for degenerate parabolic equations obtained by Ishige in Theorem~A of \cite{I}, which does not require symmetric coefficients, instead of the version recorded in Proposition~3.8 of~\cite{CuRg}. The results also extend to $f\in L^2_\mu(\RR^n)$ by density, Schur's Lemma and the doubling property of $\mu$. 

We now consider the semigroup generated by $\mathcal{L}_\mu:=-\div_\mu(\frac{1}{\mu}\mathcal{A}\nabla)$ with elliptic homogeneity ($t$ replaced by $t^2$) and denoted by $\mathcal{P}_t := e^{-t^2\mathcal{L}_{\mu}}$ in the estimates below.

\begin{lemma}\label{Lpsegr}
Let $n\geq2$ and suppose that $\mathcal{A}\in\mathcal{E}(n,\lambda,\Lambda,\mu)$ for some constants $0<\lambda\leq\Lambda<\infty$ and an $A_2$-weight $\mu$ on $\RR^n$. Let $p\in(1,\infty)$ and suppose that $\mu$ is also an $A_p$-weight on $\RR^n$. If $x\in\RR^n$, $\eta>0$ and $\alpha\geq1$, then
\begin{equation}\label{NonA}
\sup_{(y,t)\in\Gamma_\eta(x)} |(\eta t)^{-1}[\PP_{\eta t}(f - c_{B(x,\alpha \eta t)})](y)|^2
\lesssim_\alpha [M_{\mu}(|\nabla f|^p)(x)]^{2/p}
\end{equation}
for all $f\in W^{1,p}_\mu(\RR^n)$ and $c_{B(x,\alpha \eta t)} \in \left\{\fint_{B(x,\alpha \eta t)} f\ d\mu, \fint_{B(x,\alpha \eta t)} f \right\}$, and
\begin{align}
\label{NonB}
|N_*^\eta(\partial_t\mathcal{P}_t f)(x)|^2
&\lesssim_\eta  [M_{\mu}(|\nabla f|^p)(x)]^{2/p},\\
\label{NonBb}
|\eta^{-1}N_*^\eta(\partial_t\mathcal{P}_{\eta t} f)(x)|^2
&\lesssim [M_{\mu}(|\nabla f|^p)(x)]^{2/p},\\
\label{NonC}
|\widetilde{N}_{*,\mu}^\eta(\nabla_x\mathcal{P}_{\eta t}f)(x)|^2
&\lesssim M_{\mu}\big([M_{\mu}(|\nabla f|^p)]^{2/p}\big)(x) + M_{\mu}(|\nabla f|^2)(x),
\end{align}
for all $f\in W^{1,2}_\mu(\RR^n) \cap W^{1,p}_{\mu,\loc}(\RR^n)$, where the implicit constants depend only on $n$, $\lambda$, $\Lambda$, p, $[\mu]_{A_2}$ and $[\mu]_{A_p}$, as well as on $\alpha$ in \eqref{NonA} and on $\eta$ in \eqref{NonB}.
\end{lemma}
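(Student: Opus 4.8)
The plan is to reduce all four estimates to pointwise bounds on the heat kernel $W_t(x,y)$ from \eqref{eq:G1}--\eqref{eq:G2} and its derivatives, together with the weighted Poincaré inequality \eqref{wPoinc} and the $A_p$-doubling property \eqref{eq:D}. The key structural observation is that $\PP_t = e^{-t^2\mathcal{L}_\mu}$ annihilates constants, so $\PP_{\eta t}(f - c) = \PP_{\eta t} f - c$ and $\partial_t \PP_t (f - c) = \partial_t \PP_t f$, which lets us freely subtract the average $c_{B(x,\alpha\eta t)}$ inside any derivative of the semigroup. I would first record the conservation property $\PP_t 1 = 1$ (equivalently $\int W_{t^2}(x,y)\,d\mu(y) = 1$), which follows since $\mathcal{L}_\mu$ is a real divergence-form operator; this should be justified by testing the form against the constant function, using a cutoff argument, or citing the Gaussian bounds directly.

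\textbf{Step 1 (pointwise kernel and derivative bounds).} I would first establish, for $t>0$ and $x\in\RR^n$,
\begin{align*}
|W_t(x,y)| &\lesssim \frac{1}{\mu(B(x,\sqrt t))} e^{-C|x-y|^2/t},\\
|\partial_t W_t(x,y)| &\lesssim \frac{1}{t\,\mu(B(x,\sqrt t))} e^{-C|x-y|^2/t},
\end{align*}
the first being \eqref{eq:G2} and the second following from analyticity of the semigroup (Cauchy's integral formula on a circle of radius $\sim t$) combined with the first bound and the doubling of $\mu$. For the $t$-derivative estimates I pass to $\PP_t = e^{-t^2\mathcal{L}_\mu}$, so $\partial_t \PP_t = 2t\,\mathcal{L}_\mu e^{-t^2\mathcal{L}_\mu} = -2t\,\partial_s e^{-s\mathcal{L}_\mu}|_{s=t^2}$, which has kernel $-2t\,\partial_s W_s(x,y)|_{s=t^2}$ of size $\lesssim t^{-1}\mu(B(x,t))^{-1} e^{-C|x-y|^2/t^2}$.

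\textbf{Step 2 (from kernel bounds to \eqref{NonA}, \eqref{NonB}, \eqref{NonBb}).} For \eqref{NonA}, since the kernel of $(\eta t)^{-1}\PP_{\eta t}$ applied to $f - c_{B(x,\alpha\eta t)}$ integrates (in the first variable's derivative of the semigroup sense — here one actually wants $\partial_t$ or the bare semigroup acting on the mean-zero function) against Gaussian tails, I dominate $|(\eta t)^{-1}\PP_{\eta t}(f-c)(y)|$ by a sum over dyadic annuli $\{2^{j-1}\eta t \le |z - x| < 2^j \eta t\}$ of terms $\lesssim 2^{-cj}(\eta t)^{-1}\fint_{B(x,2^j\eta t)}|f - c|\,d\mu$; the Poincaré inequality \eqref{wPoinc} converts this annular average into $\lesssim 2^j \big(\fint_{B(x,2^j\eta t)}|\nabla f|^p\,d\mu\big)^{1/p}$, and summing the geometric series in $j$ (the Gaussian decay $2^{-cj}$ beating the polynomial growth $2^j$) yields $\lesssim_\alpha [M_\mu(|\nabla f|^p)(x)]^{1/p}$. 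Squaring gives \eqref{NonA}. The estimates \eqref{NonB} and \eqref{NonBb} are the same argument with the kernel of $\partial_t\PP_t$ (resp. $\partial_t\PP_{\eta t}$) from Step 1, again subtracting a constant for free; one must also absorb the extra $t^{-1}$ by the same Poincaré-plus-doubling mechanism, which is why the gradient of $f$ appears.

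\textbf{Step 3 (the spatial gradient estimate \eqref{NonC}).} This is the main obstacle, because no pointwise bound on $\nabla_x W_t(x,y)$ is available in this generality — only Caccioppoli-type $L^2_\mu$ control. The plan is: for fixed $(y,t)\in\Gamma_\eta(x)$, write $v := \PP_{\eta t} f$, which solves the parabolic equation with right-hand side, and apply Caccioppoli's inequality on $B(y,\eta t)$ (using that $u(z,s) = \PP_s f$ is a caloric-type function) to get
\[
\fint_{B(y,\eta t)}|\nabla_x \PP_{\eta t} f|^2\,d\mu \lesssim (\eta t)^{-2}\fint_{B(y,2\eta t)}|\PP_{\eta t}f - c|^2\,d\mu + (\text{a time-derivative term}),
\]
where again $c$ is an average over a slightly larger ball. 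The first term is controlled by $\eta^{-2}|(\eta t)^{-1}\PP_{\eta t}(f-c)|^2$ averaged, hence by \eqref{NonA} after one more application of $M_\mu$ (to pass from the ball $B(y,2\eta t)$ based at $y$ to a ball based at $x$), producing the $M_\mu\big([M_\mu(|\nabla f|^p)]^{2/p}\big)(x)$ term; the time-derivative (or the "$\int t^2\langle\mathcal{A}\nabla_x v,\nabla_x v\rangle$" energy) term is controlled as in the proof of Theorem \ref{VSFbound}, giving the $M_\mu(|\nabla f|^2)(x)$ summand via $\|\nabla \PP_{\eta t} f\|^2 \lesssim \|\nabla f\|^2$ localised through Gaussian off-diagonal decay. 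The delicate points are: choosing the cutoff radii so that the annular-sum argument still closes, and handling the two different averages $c \in \{\fint f\,d\mu, \fint f\}$ uniformly — both are legitimate in \eqref{wPoinc}, so no difficulty arises there. Throughout I use only the doubling property \eqref{eq:D}, boundedness of $M_\mu$ and $M$ on the relevant $L^q_\mu$, and the Gaussian bounds, so all implicit constants depend only on $n,\lambda,\Lambda,p,[\mu]_{A_2},[\mu]_{A_p}$ (and on $\alpha$, resp. $\eta$, where the non-scale-invariant annuli enter).
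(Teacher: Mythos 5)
Your proposal follows essentially the same route as the paper's proof: the Gaussian kernel bounds for $e^{-t\mathcal{L}_\mu}$ and for the kernel of $t\partial_t\mathcal{P}_t$ together with its conservation property (which the paper takes from Theorem~5 in \cite{CuRc} rather than re-deriving via analyticity and Cauchy's formula), combined with a dyadic annular decomposition and the weighted Poincar\'e inequality, give \eqref{NonA}--\eqref{NonBb}, and an elliptic Caccioppoli-type energy argument with $\mathcal{L}_\mu\mathcal{P}_{\eta t}f=-(2\eta^2t)^{-1}\partial_t\mathcal{P}_{\eta t}f$ treated as a right-hand side reduces \eqref{NonC} to the first three estimates plus Poincar\'e, exactly as in the paper. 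The two points you should make explicit in a write-up are (i) the telescoping of averages $|c_{B(x,\alpha\eta t)}-c_{B(x,2^{j}\alpha\eta t)}|\leq\sum_{i\leq j}|c_{B_i}-c_{B_{i-1}}|$ that must precede the application of \eqref{wPoinc} on each annulus, since your single application of Poincar\'e on $B(x,2^j\eta t)$ with the constant taken from the small ball is not literally licensed (the correct outcome is a bound by $2^j[M_\mu(|\nabla f|^p)(x)]^{1/p}$, as you state), and (ii) that the time-derivative term in \eqref{NonC} is most cleanly closed by the pointwise bounds \eqref{NonB}--\eqref{NonBb} you have already established, which is what produces the composed maximal function $M_\mu([M_\mu(|\nabla f|^p)]^{2/p})(x)$, rather than by localising the global bound $\|\nabla\mathcal{P}_tf\|_{L^2_\mu}\lesssim\|\nabla f\|_{L^2_\mu}$ through off-diagonal decay, whose proof passes through the non-local operator $\mathcal{L}_\mu^{1/2}$ and does not localise directly.
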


\begin{proof}
Let $x\in\RR^n$, $(y,t)\in \Gamma^\eta (x)$, $f\in W^{1,2}_\mu(\RR^n) \cap W^{1,p}_{\mu,\loc}(\RR^n)$, $f_{B(x,t)} := \fint_{B(x,t)} f$ and $\tilde{f}_{B(x,t)} := \fint_{B(x,t)} f\ d\mu$. To prove~\eqref{NonA}, it suffices to assume that $\eta=1$ and $\alpha\geq 1$. We set $C_0(t):= B(x,\alpha t)$ and define the dyadic annulus $C_j(t) := B(x, 2^j \alpha t)\setminus B(x, 2^{j-1} \alpha t)$ for all $j\in\N$. The Gaussian kernel estimates in~\eqref{eq:G1} and \eqref{eq:G2} imply that
\begin{multline*}
|t^{-1}[\PP_t(f - f_{B(x,\alpha t)})](y)|  = t^{-1}\left|\int_{\RR^n} W_{t^2}(y,z)[f(z)- f_{B(x,\alpha t)}]\ d\mu(z) \right|\\
\leq \sum_{j=0}^{\infty}  t^{-1} \frac{C_1}{\mu(B(y,t))} \int_{C_j(t)} \exp\left({-C_2\frac{|y-z|^2}{t^2}}\right)|f (z)- f_{B(x,\alpha t)}|\ d\mu(z)
=: \sum_{j=0}^\infty I_j.
\end{multline*}
To estimate $I_0$, note that $B(x,\alpha t)\subseteq B(y,(1+\alpha)t)$ and apply the doubling property of~$\mu$, followed by the $L^p_\mu$-Poincar\'{e} inequality in~\eqref{wPoinc} with $c_B=\fint_{B(x,\alpha t)} f$, to obtain
\begin{align*}
I_0
\lesssim_\alpha t^{-1} \fint_{B(x,\alpha t)} |f (z)- f_{B(x,\alpha t)}|\ d\mu(z)
\lesssim \left(\fint_{B(x,\alpha t)} |\nabla f|^p\ d\mu \right)^{1/p} \!\!\!
\lesssim [M_{\mu}(|\nabla f|^p)(x)]^{1/p}.
\end{align*}
To estimate $I_j$, for each $j\in\N$, expand $f(z) - f_{B(x,\alpha t)}$ as a telescoping sum to write
\begin{align*}
I_j
&\leq C_1 e^{-C_2(2^{j-1}\alpha -1)^2} \frac{\mu(B(x,2^j \alpha t))}{\mu(B(y, t))} t^{-1} \Bigg(\fint_{B(x,2^j \alpha t)} |f - \tilde{f}_{B(x,2^j \alpha t)}|\ d\mu \\
&\qquad\qquad\qquad\qquad\qquad\qquad
+ \sum_{i=1}^{j} |\tilde{f}_{B(x,2^i \alpha t)} - \tilde{f}_{B(x,2^{i-1} \alpha t)}|
+ |\tilde{f}_{B(x,\alpha t)}- f_{B(x,\alpha t)}|\Bigg)\\
&\lesssim e^{-C_2(2^{j-1}\alpha-1)^2} \frac{\mu(B(y,(1+2^j\alpha) t))}{\mu(B(y,t))} \sum_{i=0}^j t^{-1} \fint_{B(x,2^i \alpha t)} |f - \tilde{f}_{B(x,2^i \alpha t)}|\ d\mu \\
&\lesssim e^{-C_2(2^{j-1}\alpha-1)^2} (1+2^j\alpha)^{2n}  \sum_{i=0}^j 2^i\alpha \left(\fint_{B(x,2^i \alpha t)} |\nabla f|^p\ d\mu\right)^{1/p} \\
&\lesssim_\alpha e^{-C4^j} 4^{nj} [M_\mu(|\nabla f|^p)(x)]^{1/p},
\end{align*}
where the second inequality relies on the inclusion $B(x,2^j \alpha t)\subseteq B(y,(1+2^j\alpha) t)$, whilst the third inequality uses the doubling property of $\mu$ in \eqref{eq:D} with $p=2$, and the $L^p_\mu$-Poincar\'{e} inequality in~\eqref{wPoinc} with $c_B=\fint_{B(x,2^i \alpha t)} f\ d\mu$. Altogether, we have
\[
|t^{-1}[\PP_t(f - f_{B(x,\alpha t)})](y)|
\lesssim_\alpha \bigg(\sum_{j=0}^\infty e^{-C 4^j} 4^{nj} \bigg) [M_\mu(|\nabla f|^p)(x)]^{1/p}
\lesssim [M_\mu(|\nabla f|^p)(x)]^{1/p},
\]
which proves~\eqref{NonA} when $c_{B(x,\alpha t)}= \fint_{B(x,\alpha t)} f$. The proof when $c_{B(x,\alpha t)} = \fint_{B(x,\alpha t)} f\ d\mu$ follows as above by replacing $f_{B(x,\alpha t)}$ with $\tilde{f}_{B(x,\alpha t)}$, since~\eqref{wPoinc} can still be applied.

To prove~\eqref{NonB} and \eqref{NonBb}, suppose that $\eta>0$. The Gaussian kernel estimate for $e^{-t\mathcal{L}_\mu}$ in~\eqref{eq:G2} implies that $t \partial_t \mathcal{P}_t f (y)$ has an integral kernel $\widetilde{W}_{t^2}(y,z)$ satisfying 
\[
|\widetilde{W}_{t^2}(y,z)| \leq \frac{C_1}{\mu(B(y,t))} \exp\left(-C_2\frac{|y-z|^2}{t^2}\right)
\]
and the conservation property $\int_{\RR^n} \widetilde{W}_{t^2}(y,z)\ d\mu(y)=0$ for all $z\in \RR^n$ and $t>0$. This follows from Theorem~5 in~\cite{CuRc}, where the assumption that $\mathcal{A}$ is symmetric can be removed as per the remarks preceding this lemma. Therefore, we may write
\[
|\partial_t \mathcal{P}_t f(y)|
= t^{-1} \left|\int_{\RR^n} \widetilde{W}_{t^2}(y,z)[f(z) - f_{B(x,\eta t)}]\ d\mu(z)\right|
\]
and a change of variables implies that
\[
\sup_{(y,t)\in\Gamma^\eta(x)} |\partial_t \mathcal{P}_t f(y)|
= \sup_{(y,t)\in\Gamma (x)} t^{-1} \left|\int_{\RR^n} \eta \widetilde{W}_{(t/\eta)^2}(y,z)[f(z) - f_{B(x,t)}]\ d\mu(z)\right|.
\]
We can then obtain~\eqref{NonB} by following the proof of~\eqref{NonA} with $\alpha=1$ in order to show that this is bounded by $[M_{\mu}(|\nabla f|^p)(x)]^{1/p}$, since the doubling property of $\mu$ ensures that
\[
|\eta \widetilde{W}_{(t/\eta)^2}(y,z)| \leq \frac{C_{1,\eta}}{\mu(B(y,t))} \exp\left(-C_{2,\eta} \frac{|y-z|^2}{t^2}\right)
\]
for some positive constants $C_{1,\eta}$ and $C_{2,\eta}$ that depend on $\eta$. We obtain~\eqref{NonBb} as an immediate consequence of \eqref{NonB} and the fact that $\eta^{-1}\partial_t\mathcal{P}_{\eta t} = (\partial_{s}\mathcal{P}_{s})|_{s=\eta t}$.

To prove \eqref{NonC}, let $\eta>0$, set $u_{\eta t} := \mathcal{P}_{\eta t}f$ and choose a non-negative function $\Phi\in C_c^\infty(B(y,2\eta t))$ such that $\Phi\equiv 1$ on $B(y,\eta t)$ and $|\nabla_x \Phi| \lesssim (\eta t)^{-1}$. Let $c>0$ denote a constant that will be chosen later. The definition of $\mathcal{L}_\mu$ implies that
\begin{align*}
\fint_{B(y, \eta t)} &|\nabla_x \mathcal{P}_{\eta t}f|^2\ d\mu
\leq \frac{1}{\mu(B(y, \eta t))} \int_{\RR^n} |\nabla_x u_{\eta t}|^2\Phi^2 \ d\mu  \\
&\lesssim \frac{1}{\mu(B(y, \eta t))} \int_{\RR^n} \langle \mathcal{A}\nabla_x u_{\eta t}, \nabla_x (u_{\eta t} - c)\rangle \Phi^2 \\
&=\frac{1}{\mu(B(y,\eta t))}\int_{\RR^n} \left\{\langle \mathcal{A}\nabla_{x}u_{\eta t}, \nabla_x[(u_{\eta t} - c)\Phi^2]\rangle 
\!-\!2 \langle \mathcal{A}\nabla_{x}u_{\eta t}, \nabla_x\Phi(u_{\eta t} - c)\rangle \Phi\right\}\\
&\lesssim \frac{1}{\mu(B(y,\eta t))} \int_{\RR^n} \left\{(\mathcal{L}_{\mu} u_{\eta t}) (u_{\eta t} - c)\Phi^2
+ |\nabla_{x}u_{\eta t}||\nabla_x\Phi||(u_{\eta t} - c)\Phi| \right\} d\mu\\
&\leq \frac{1}{\mu(B(y,\eta t))} \int_{B(y,2\eta t)} \left( \frac{1}{2\eta^2t} |\partial_t u_{\eta t}| |u_{\eta t} - c|\Phi^2
+ |\nabla_{x}u_{\eta t}||\nabla_x\Phi||u_{\eta t} - c|\Phi\right) d\mu\\
&=: I + II.
\end{align*}
Now fix $c := \tilde{f}_{B(x, 3\eta t)}$. To estimate $I$, we use Cauchy's inequality and the doubling property of $\mu$, combined with the fact that $B(x,\eta t) \subseteq B(y, 2\eta t) \subseteq B(x, 3\eta t)$, to obtain
\begin{align*}
I &\lesssim \fint_{B(x,3\eta t)} \! \left(|\eta^{-1} \partial_t u_{\eta t}|^2
+ (\eta t)^{-2} |u_{\eta t} - f|^2
+ (\eta t)^{-2} |f - \tilde{f}_{B(x, 3\eta t)}|^2\right) d\mu =:I_1 + I_2 + I_3.
\end{align*}
It is immediate that $I_1 \leq M_\mu(|\eta^{-1} N_{*}^{\eta}(\partial_t\mathcal{P}_{\eta t}f)|^2)(x)$, whilst the semigroup property
\[
|u_{\eta t}(z) - f(z)|
= \left|\int_{0}^{\eta t} \partial_s u_s(z)\ ds\right|
\leq \eta t N_*(\partial_su_s)(z)
\]
implies that $I_2 \lesssim M_{\mu}(|N_*(\partial_su_s)|^2)(x)$, and the $L^2_\mu$-Poincar\'{e} inequality in~\eqref{wPoinc} shows that $I_3 \lesssim M_{\mu}(|\nabla f|^2)(x)$, hence
\begin{equation*}
I \leq M_\mu(|\eta^{-1} N_{*}^{\eta}(\partial_t\mathcal{P}_{\eta t}f)|^2)(x) + M_{\mu}(|N_*(\partial_su_s)|^2)(x) + M_{\mu}(|\nabla f|^2)(x).
\end{equation*}
To estimate $II$, we use Cauchy's inequality with $\epsilon>0$ to obtain
\begin{align*}
II \lesssim \frac{\epsilon}{\mu(B(y,\eta t))} \int_{\RR^n} |\nabla_x u_{\eta t}|^2 \Phi^2\ d\mu
+ \epsilon^{-1} (I_2+I_3).
\end{align*}
A sufficiently small choice of $\epsilon>0$ allows the $\epsilon$-term to be subtracted, yielding
\[
\fint_{B(y, \eta t)} |\nabla_x \mathcal{P}_{\eta t}f|^2\ d\mu
\lesssim I + II
\lesssim  M_{\mu}(|\eta^{-1}N_{*}^{\eta}(\partial_t\mathcal{P}_{\eta t}f)|^2 + |N_*(\partial_t\mathcal{P}_t f)|^2 + |\nabla f|^2)(x),
\]
which, combined with \eqref{NonB} and \eqref{NonBb}, implies~\eqref{NonC}.
\end{proof}

The pointwise estimates in Lemma~\ref{Lpsegr} have the following corollary.

\begin{corollary}\label{cor:wkest}
Let $n\geq2$ and suppose that $\mathcal{A}\in\mathcal{E}(n,\lambda,\Lambda,\mu)$ for some constants $0<\lambda\leq\Lambda<\infty$ and an $A_2$-weight $\mu$ on $\RR^n$. If $\eta>0$, then
\begin{align}
\label{Non1}
\mu\big(\{x\in\RR^n : |N_*^{\eta}(\partial_t\mathcal{P}_t f)(x)| > \kappa \}\big)
&\lesssim_\eta \kappa^{-2} \|\nabla f\|_{L^2_\mu(\RR^n,\RR^n)}^2,\\
\label{Non2}
\mu\big(\{x\in\RR^n : |\eta^{-1}N_*^{\eta}(\partial_{t}\mathcal{P}_{\eta t} f)(x)| > \kappa \}\big)
&\lesssim \kappa^{-2} \|\nabla f\|_{L^2_\mu(\RR^n,\RR^n)}^2,\\
\label{Non3}
\mu\big(\{x\in\RR^n : |\widetilde{N}_{*,\mu}^{\eta}(\nabla_x\mathcal{P}_{\eta t} f)(x)| > \kappa \}\big)
&\lesssim\kappa^{-2} \|\nabla f\|_{L^2_\mu(\RR^n,\RR^n)}^2,
\end{align}
for all $\kappa>0$ and $f\in W^{1,2}_{\mu}(\RR^n)$, where the implicit constants depend only on $n$, $\lambda$, $\Lambda$ and $[\mu]_{A_2}$, as well as on $\eta$ in \eqref{Non1}.
\end{corollary}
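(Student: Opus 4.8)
The plan is to deduce each of the three weak-type bounds from the corresponding pointwise estimate in Lemma~\ref{Lpsegr}, combined with the mapping properties of the weighted maximal operator $M_\mu$. First I would exploit the well-known self-improvement (openness) of the Muckenhoupt $A_p$-classes: since $\mu$ is an $A_2$-weight, there is an exponent $p\in(1,2)$, with both $p$ and $[\mu]_{A_p}$ depending only on $n$ and $[\mu]_{A_2}$, for which $\mu$ is also an $A_p$-weight. Since $\mu$ is locally finite, H\"older's inequality gives the inclusion $W^{1,2}_\mu(\RR^n)\subseteq W^{1,p}_{\mu,\loc}(\RR^n)$, so that the estimates \eqref{NonB}, \eqref{NonBb} and \eqref{NonC} of Lemma~\ref{Lpsegr} become available, for this fixed $p$, for every $f\in W^{1,2}_\mu(\RR^n)$. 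The decisive feature of this choice is that $2/p>1$: the maximal operator $M_\mu$ is then bounded on $L^{2/p}_\mu(\RR^n)$, so for any $g\in L^2_\mu(\RR^n,\RR^n)$ I would record the inequality
\[
\big\|[M_\mu(|g|^p)]^{2/p}\big\|_{L^1_\mu(\RR^n)}
= \|M_\mu(|g|^p)\|_{L^{2/p}_\mu(\RR^n)}^{2/p}
\lesssim \big\||g|^p\big\|_{L^{2/p}_\mu(\RR^n)}^{2/p}
= \|g\|_{L^2_\mu(\RR^n,\RR^n)}^2,
\]
with implicit constant depending only on $n$ and $[\mu]_{A_2}$.

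With these preparations, \eqref{Non1} follows by applying \eqref{NonB} (with this $p$) to bound the set $\{x : |N_*^\eta(\partial_t\mathcal{P}_t f)(x)|>\kappa\}$ above by a superlevel set $\{x : [M_\mu(|\nabla f|^p)(x)]^{2/p}>c_\eta\kappa^2\}$, and then invoking Chebyshev's inequality together with the displayed estimate (taking $g=\nabla f$); this yields $\mu(\{|N_*^\eta(\partial_t\mathcal{P}_t f)|>\kappa\})\lesssim_\eta\kappa^{-2}\|\nabla f\|_{L^2_\mu(\RR^n,\RR^n)}^2$. Estimate \eqref{Non2} is obtained identically from \eqref{NonBb}, now with no dependence on $\eta$. (For these two estimates it would in fact suffice to take $p=2$ in Lemma~\ref{Lpsegr} and apply the weak-type $(1,1)$ bound \eqref{eq:wkdef} for $M_\mu$ directly to $|\nabla f|^2$.)

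For \eqref{Non3} I would set $h:=[M_\mu(|\nabla f|^p)]^{2/p}$, so that the displayed estimate gives $h\in L^1_\mu(\RR^n)$ with $\|h\|_{L^1_\mu(\RR^n)}\lesssim\|\nabla f\|_{L^2_\mu(\RR^n,\RR^n)}^2$, while trivially $\||\nabla f|^2\|_{L^1_\mu(\RR^n)}=\|\nabla f\|_{L^2_\mu(\RR^n,\RR^n)}^2$. Estimate \eqref{NonC} then provides the pointwise bound $|\widetilde{N}_{*,\mu}^\eta(\nabla_x\mathcal{P}_{\eta t}f)(x)|^2\lesssim M_\mu(h)(x)+M_\mu(|\nabla f|^2)(x)$, and splitting the superlevel set of the left-hand side and applying the weak-type $(1,1)$ estimate \eqref{eq:wkdef} for $M_\mu$ to $h$ and to $|\nabla f|^2$ separately delivers the claim. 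The step I expect to be the main (and essentially the only) obstacle is precisely this passage from \eqref{NonC}: its right-hand side features $M_\mu$ composed with itself, and because $M_\mu$ is not bounded on $L^1_\mu$ one must ensure the inner quantity $[M_\mu(|\nabla f|^p)]^{2/p}$ already lies in $L^1_\mu(\RR^n)$ — which is exactly what the choice $p<2$, hence $2/p>1$, guarantees. The remaining task is the routine bookkeeping that every implicit constant depends only on $n$, $\lambda$, $\Lambda$ and $[\mu]_{A_2}$ (and on $\eta$ in \eqref{Non1}), which holds because $p$ and $[\mu]_{A_p}$ are themselves controlled by $[\mu]_{A_2}$.
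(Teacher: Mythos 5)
Your proposal is correct and follows essentially the same route as the paper: for \eqref{Non1} and \eqref{Non2} the paper simply takes $p=2$ in Lemma~\ref{Lpsegr} and applies the weak-type bound \eqref{eq:wkdef} (the simpler alternative you note parenthetically), and for \eqref{Non3} it likewise uses the openness of the $A_2$-class to pick $q\in(1,2)$ with $\mu\in A_q$, applies \eqref{NonC} with $p=q$, and exploits $2/q>1$ so that $M_\mu$ is bounded on $L^{2/q}_\mu(\RR^n)$, which is exactly your resolution of the $M_\mu\circ M_\mu$ issue. No gaps.
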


\begin{proof}
Estimates \eqref{Non1} and \eqref{Non2} follow respectively from \eqref{NonB} and \eqref{NonBb}, in the case $p=2$, since $M_\mu$ satisfies the weak-type estimate in \eqref{eq:wkdef}. To prove \eqref{Non3}, note that there exists $1<q<2$ such that $\mu$ is an $A_q$-weight on $\RR^n$ (see, for instance, Section 3 in Chapter~V of~\cite{St2}). Therefore, combining \eqref{NonC} in the case $p=q$ with \eqref{eq:wkdef} and noting that $2/q>1$, we obtain
\begin{align*}
\mu\big(\{x\in\RR^n : |\widetilde{N}_{*,\mu}^{\eta}(\nabla_x\mathcal{P}_{\eta t} f)(x)| > \kappa \}\big) 
& \lesssim \kappa^{-2} \big(\|M_{\mu}(|\nabla f|^q)\|^{2/q}_{L^{2/q}_\mu(\RR^n)} + \|\nabla f\|_{L^2_\mu(\RR^n,\RR^n)}^2\big) \\
& \lesssim \kappa^{-2} \|\nabla f\|_{L^2_\mu(\RR^n,\RR^n)}^2
\end{align*}
for all $\kappa>0$ and $f\in W^{1,2}_\mu(\RR^n)$ (since $W^{1,2}_\mu(\RR^n) \subseteq W^{1,q}_{\mu,\loc}(\RR^n)$), as required.
\end{proof}

\section{The Carleson Measure Estimate}\label{sec:CME}
The purpose of this section is to prove the Carleson measure estimate \eqref{CME} in Theorem~\ref{thm:introCME}. We adopt the strategy outlined at the end of Section~3.1 in~\cite{HKMP1}, although the crucial technical estimate, stated here as Theorem~\ref{thm:BPF}, is not at all an obvious extension of the uniformly elliptic case. Moreover, establishing the Carleson measure estimate directly allows us to avoid ``good-$\lambda$'' inequalities and thus apply a change of variables based on the $W^{1,2}_{0,\mu}$-Hodge decomposition in~\eqref{eq:W12Hodge}, instead of the $W^{1,2+\epsilon}_0$-version (for a sufficiently small $\epsilon>0$) required in~\cite{HKMP1}.

The technical result in Theorem~\ref{thm:BPF} establishes \eqref{CME} on certain ``big pieces'' of all cubes. The passage to the general estimate ultimately follows from the self-improvement property for Carleson measures in Lemma~\ref{JNforCar}. This requires, however, that the Carleson measure estimate on the full gradient $\nabla u$ of a solution $u$ can be controlled by the same estimate on its transversal derivate $\partial_t u$, which is the content of  Lemma~\ref{lem:tangrad}. We briefly postpone the statement and proof of Lemma~\ref{lem:tangrad} and Theorem~\ref{thm:BPF}, however, in order to deduce Theorem~\ref{thm:introCME} from those results below.

In contrast to the previous two sections, the results here concern solutions of the equation $\div (A \nabla u)=0$ in open sets $\Omega\subseteq \RR^{n+1}_+$ when $n\geq2$ and $A$ is a $t$-independent coefficient matrix that satisfies \eqref{eq:deg.def} for some $0<\lambda\leq\Lambda<\infty$ and an $A_2$-weight $\mu$ on~$\RR^n$. In particular, in Section~\ref{sec:prelims}, weighted Sobolev spaces were defined on open sets in $\RR^d$ and matrix coefficients $\mathcal{A}\in\mathcal{E}(d,\lambda,\Lambda,\mu)$ were considered for all $d\in\N$. Those results also hold here on open sets in the upper half-space with the weight $\mu(x,t):=\mu(x)$ and the coefficients $A(x,t):=A(x)$ for all $(x,t)\in\RR^{n+1}$, since then $[\mu]_{A_2(\RR^{n+1})}=[\mu]_{A_2(\RR^n)}$ and $A\in\mathcal{E}(n\!+\!1,\lambda,\Lambda,\mu)$. In particular, the solution space $W^{1,2}_{\mu,\loc}(\Omega)$ is defined and the regularity estimates in \eqref{eq:M}, \eqref{eq:dGN} and \eqref{eq:H} hold when $\Omega \subseteq \RR^{n+1}_+$.

We will also use, without reference, the well-known fact that if $u$ is a solution of $\div (A \nabla u)=0$ in $\Omega \subseteq \RR^{n+1}_+$, then $\partial_t u$ is also a solution in $\Omega$. In particular, to see that $\partial_t u$ is in $W^{1,2}_{\mu,\loc}(\Omega)$, a Whitney decomposition of $\Omega$ reduces matters to showing that $\partial_t u$ is in $W^{1,2}_{\mu}(R)$ for all cubes $R\subset\Omega$ satisfying $\ell(R) < \frac{1}{2}\dist(R,\partial\Omega)$. To this end, define the difference quotients $D^h_i u(X) := \tfrac{1}{h}[u(X+he_i) -u(X)]$ for all $X\in R$ and $h\!<\!\dist(R,\partial\Omega)$, where $e_i$ is the unit vector in the $i$th-coordinate direction in $\RR^{n+1}$. The $t$-independence of the coefficients implies that $D^h_{n+1}u$ is a solution in $R$, so we use the identity $D^h_{n+1}(\partial_i u) = \partial_i (D^h_{n+1}u)$ and Caccioppoli's inequality to obtain 
\begin{align*}
\iint_{R} |D^h_{n+1}(\partial_i u)|^2\ d\mu
&\leq \iint_{R} |\nabla (D^h_{n+1}u)|^2\ d\mu
\lesssim \ell(R)^2 \iint_{2R} |D^h_{n+1}u|^2\ d\mu \\
&\leq \ell(R)^2 \iint_{2R} |\partial_tu|^2\ d\mu =: K
\qquad \forall h<\dist(R,\partial\Omega),
\end{align*}
where the implicit constant depends only on $n$, $\lambda$, $\Lambda$ and $[\mu]_{A_2}$, and the final bound holds uniformly in $h$ because $u$ is in $W^{1,2}_\mu(R)$ (see Lemma 7.23 in \cite{GT}). We can then use Lemma~7.24 in \cite{GT} to deduce that $\partial_t u$ is in $W^{1,2}_\mu(R)$ with the estimate $\|\partial_i\partial_tu\|_{L^2_\mu(R)}^2=\|\partial_t\partial_iu\|_{L^2_\mu(R)}^2\leq K$ for all $i\in\{1,\ldots,n\!+\!1\}$, as required. Note that the proofs of Lemmas 7.23 and 7.24 in \cite{GT} extend immediately to the weighted context considered here because $C^\infty(R)$ is still dense in $W^{1,2}_\mu(R)$.

\begin{proof}[Proof of Theorem~\ref{thm:introCME} from Lemma~\ref{lem:tangrad} and Theorem~\ref{thm:BPF}] Let $Q\subset\RR^n$ denote a cube and suppose that $u\in L^\infty(\RR^{n+1}_+)$ solves $\div (A \nabla u)=0$ in $\RR^{n+1}_+$. It follows \textit{a~fortiori} from Theorem~\ref{thm:BPF} that there exist constants $C, c_0>0$ and, for each cube $Q'\subseteq Q$, a measurable set $F' \subset Q'$ such that $\mu(F') \geq c_0\mu(Q')$ and
\[
\frac{1}{\mu(Q')}\int_0^{l(Q')}\int_{F'}|t\partial_t u(x,t)|^2\ d\mu(x)\frac{dt}{t} \leq C \|u\|_{{\infty}}^2,
\]
where $C$ and $c_0$ depend only on $n$, $\lambda$, $\Lambda$ and $[\mu]_{A_2}$.

The coefficient matrix $A$ is $t$-independent, so $\partial_t u$ is also a solution and thus the degenerate version of Moser's estimate in \eqref{eq:M}, followed by Caccioppoli's inequality, shows that $\|t\partial_t u\|_\infty \lesssim \|u\|_\infty$. Moreover, the degenerate version of the de~Giorgi--Nash H\"{o}lder regularity for solutions in \eqref{eq:dGN} shows that  
\[
|t\partial_tu(x,t)-t\partial_tu(y,t)| 
\lesssim \left(\frac{|x-y|}{t}\right)^{\alpha} \|t\partial_tu\|_\infty 
\lesssim \|u\|_\infty \left(\frac{|x-y|}{t}\right)^{\alpha}
\]
for all $x,y \in Q$ and $t>0$, where all of the implicit constants and the exponent $\alpha>0$ depend only on $n$, $\lambda$, $\Lambda$ and $[\mu]_{A_2}$. Therefore, we may apply Lemma~\ref{JNforCar} with $\{v_t,\alpha,\beta_0,\eta,\beta\}:=\{(t\partial_t u)^2,\alpha,C \|u\|_\infty^2,c_0,C\|u\|_\infty^2\}$ to obtain
\begin{equation}\label{eq:bdrctyest}
\frac{1}{\mu(Q)}\int_0^{\ell(Q)}\int_Q |t\partial_t u(x,t)|^2\ d\mu(x)\frac{dt}{t} \lesssim \|u\|_{{\infty}}^2,
\end{equation}
where the implicit constant depends only on $n$, $\lambda$, $\Lambda$ and $[\mu]_{A_2}$. This estimate holds for all cubes $Q$, so by Lemma~\ref{lem:tangrad}, we conclude that \eqref{CME} holds.
\end{proof}

%\begin{proof}
%Using Lemma~\ref{Lpsegr} we have
%\[
%\{x\in\RR^n:|N_*^\eta(\partial_t\mathcal{P}_t f)(x)|>\kappa\} 
%\subseteq \{x\in\RR^n:C_\eta M_{\mu}(|\nabla f|^2)(x)>\kappa^2\}
%\]
%so by \eqref{eq:wkdef} we have
%\begin{align*}
%\mu(\{x\in\RR^n:N_*^\eta(\partial_t\mathcal{P}_t f)(x)>\kappa\} )
%&\leq \mu(\{x\in\RR^n:C_\eta M_{\mu}(|\nabla f|^2)(x)>\kappa^2\})\\
%& \leq C_\eta\kappa^{-2}\|(|\nabla f|^2)\|_{L^1_\mu(\RR^n)} \\
%&= C_\eta\kappa^{-2}\|\nabla f\|_{L^2_\mu(\RR^n,\RR^n)}^2.
%\end{align*}
%\end{proof}

We now dispense with the following lemma, which was used in the proof of Theorem~\ref{thm:introCME} above to reduce to a Carleson measure estimate on the transversal derivative of solutions. The proof is adapted from Section~3.1 of ~\cite{HKMP1}.

\begin{lemma}\label{lem:tangrad}
Let $n\geq 2$ and consider a cube $Q\subset\RR^n$. If $A$ is a $t$-independent coefficient matrix that satisfies the degenerate bound and ellipticity in~\eqref{eq:deg.def} for some constants $0<\lambda\leq\Lambda<\infty$ and an $A_2$-weight $\mu$ on $\RR^n$, then any solution $u\in L^\infty(4Q\times(0,4\ell(Q)))$ of $\div (A \nabla u)=0$ in $4Q\times(0,4\ell(Q))$ satisfies
\[
\int_{0}^{\ell(Q)}\int_{Q} |t\nabla u(x,t)|^2\ d\mu(x)\frac{dt}{t} 
\lesssim \int_{0}^{4\ell(Q)}\int_{4Q} |t\partial_t u(x,t)|^2\ d\mu(x)\frac{dt}{t} + \mu(Q)\|u\|_\infty^2,
\]
where the implicit constant depends only on $n$, $\lambda$, $\Lambda$ and $[\mu]_{A_2}$.
\end{lemma}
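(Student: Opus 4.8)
The plan is to reduce to the tangential part $\nabla_x u$ of the gradient and then exploit that, since $A$ is $t$-independent, $\partial_t u$ is again a solution of $\div(A\nabla\cdot)=0$; this lets one recover $\nabla_x u$ at a small height $t$ by integrating $\nabla_x\partial_s u$ \emph{downward} from the fixed height $\ell(Q)$, a scale at which $\nabla u$ is already controlled by the Caccioppoli inequality and $\|u\|_\infty$. Write $\rho:=\ell(Q)$ and split $|t\nabla u|^2=|t\nabla_x u|^2+|t\partial_t u|^2$; the $|t\partial_t u|^2$-contribution to the left-hand side is dominated by the first term on the right, so it suffices to bound $\iint_{Q\times(0,\rho)}t|\nabla_x u(x,t)|^2\,d\mu(x)\,dt$. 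Since $\partial_t u\in W^{1,2}_{\mu,\loc}$ (established before the lemma) and mixed distributional derivatives commute, $\partial_t(\nabla_x u)=\nabla_x(\partial_t u)\in L^2_{\mu,\loc}$, and a routine mollification/Fubini argument gives, for a.e.\ $x\in Q$ and a.e.\ $t\in(0,\rho)$,
\[
\nabla_x u(x,t)=\fint_\rho^{2\rho}\nabla_x u(x,t')\,dt'-\fint_\rho^{2\rho}\!\!\int_t^{t'}\!\nabla_x\partial_s u(x,s)\,ds\,dt',
\]
so that $|\nabla_x u(x,t)|\le\fint_\rho^{2\rho}|\nabla_x u(x,t')|\,dt'+R(x,t)$, where $R(x,t):=\int_t^{2\rho}|\nabla_x\partial_s u(x,s)|\,ds$.

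Next I square this and integrate against $t\,d\mu\,dt$ over $Q\times(0,\rho)$, obtaining two terms. The first equals $\tfrac12\rho^2\int_Q\big(\fint_\rho^{2\rho}|\nabla_x u(x,t')|\,dt'\big)^2d\mu\le\tfrac12\rho\iint_{Q\times(\rho,2\rho)}|\nabla_x u|^2\,d\mu\,dt$, and since $2Q\times(\rho/2,5\rho/2)\subset\subset 4Q\times(0,4\rho)$, the Caccioppoli inequality (with constant term $0$) together with doubling bounds it by $C\rho^{-1}\mu(Q)\|u\|_\infty^2\cdot\rho\lesssim\mu(Q)\|u\|_\infty^2$. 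The second term is $\iint_{Q\times(0,\rho)}t\,R(x,t)^2\,d\mu\,dt$; for each fixed $x$ I apply the weighted Hardy inequality $\int_0^\infty t\big(\int_t^\infty g(s)\,ds\big)^2 dt\le\int_0^\infty s^3 g(s)^2\,ds$ for $g\ge0$ (which follows from Schur's test, the kernel $t^{1/2}s^{-3/2}\mathbf{1}_{\{s>t\}}$ being homogeneous of degree $-1$ with $\int_1^\infty u^{-2}\,du=1$) to $g(s)=|\nabla_x\partial_s u(x,s)|\mathbf{1}_{\{s<2\rho\}}$, which yields
\[
\iint_{Q\times(0,\rho)}t\,R(x,t)^2\,d\mu\,dt\ \lesssim\ \iint_{Q\times(0,2\rho)}s^3\,|\nabla_x\partial_s u(x,s)|^2\,d\mu(x)\,ds.
\]

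It remains to dominate this last integral by the square function of $\partial_t u$. Because $\partial_s u$ is itself a solution, I decompose $Q\times(0,2\rho)$ into dyadic boxes of side comparable to their distance from $\{t=0\}$ and apply the Caccioppoli inequality (again with constant term $0$) on each: on a box at scale $s_k$ one gets $\iint_{\text{box}}s^3|\nabla_x\partial_s u|^2\,d\mu\,ds\lesssim s_k^3\cdot s_k^{-2}\iint_{\text{double}}|\partial_s u|^2\,d\mu\,ds\sim\iint_{\text{double}}s|\partial_s u|^2\,d\mu\,ds$, and summing over the boundedly overlapping doubled boxes and over scales gives $\lesssim\iint_{2Q\times(0,4\rho)}|s\partial_s u|^2\,\frac{d\mu\,ds}{s}\le\iint_{4Q\times(0,4\rho)}|t\partial_t u|^2\,\frac{d\mu\,dt}{t}$. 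Combining the three displayed estimates with the trivial bound for the $|t\partial_t u|^2$-contribution proves the lemma, the constants entering only through Caccioppoli's inequality and the doubling of $\mu$, hence depending only on $n$, $\lambda$, $\Lambda$ and $[\mu]_{A_2}$.

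The main obstacle is the behaviour as $t\to0$. The more obvious route---test the equation with $u\phi^2 t$ for a cutoff $\phi$ adapted to $Q\times(0,\rho)$ and use ellipticity on the resulting energy---produces an unavoidable term $\iint(A\nabla u)_{n+1}\,u\,\phi^2$ carrying \emph{no} $t$-weight, which cannot be integrated by parts in $x$ (that would differentiate the merely measurable matrix $A$) and is not absolutely integrable near $t=0$ in general, so it cannot be controlled uniformly. Recovering $\nabla_x u$ by integrating $\nabla_x\partial_s u$ down from the interior height $\rho$ sidesteps this entirely; the price is the weighted Hardy inequality and the Whitney--Caccioppoli summation above, together with the (routine but non-trivial) justification that the fundamental theorem of calculus in the $t$-variable applies to $\nabla_x u$ on almost every vertical line.
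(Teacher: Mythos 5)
Your proof is correct. It reaches the same pivotal quantity as the paper, namely $\iint t^3|\nabla\partial_t u|^2\,d\mu\,dt$, and your final step (Whitney decomposition of $Q\times(0,2\ell(Q))$ plus Caccioppoli applied to the solution $\partial_t u$, with bounded overlap of the doubled boxes) is essentially identical to the paper's. Where you genuinely diverge is in how you get there. The paper multiplies $|\nabla u|^2$ by a smooth cutoff $\Phi_Q(t)$ truncated at height $\delta\ell(Q)$, integrates $\partial_t(|\nabla u|^2\Phi_Q)\,t^2$ by parts in $t$, applies Cauchy's inequality with $\epsilon$, and \emph{absorbs} the term $\epsilon\mathbf{I}$ into the left-hand side; this hiding step is only legitimate because the truncation makes $\mathbf{I}$ finite a priori, so the paper must carry the parameter $\delta$ throughout and finish with Fatou's lemma as $\delta\to0$. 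You avoid absorption altogether: writing $\nabla_x u(x,t)$ as its average over $t'\in(\ell(Q),2\ell(Q))$ minus $\fint\int_t^{t'}\nabla_x\partial_s u\,ds\,dt'$ and invoking the one-dimensional weighted Hardy inequality $\int_0^\infty t\big(\int_t^\infty g\big)^2dt\le\int_0^\infty s^3g^2\,ds$ (your Schur-test verification is correct, with constant $1$) gives the bound directly, with no truncation parameter and no a priori finiteness issue. The price, as you note, is the measure-theoretic justification of the fundamental theorem of calculus for $t\mapsto\nabla_x u(x,t)$ on almost every vertical line, which does follow from $\partial_t\nabla_x u=\nabla_x\partial_t u\in L^2_{\mu,\loc}$, Fubini, and the fact that $\mu>0$ a.e.; the splitting off of the trivial $|t\partial_t u|^2$ contribution at the outset is also fine. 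Both routes use only Caccioppoli, doubling, and $t$-independence, so the constants depend only on the allowed parameters, as required.
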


\begin{proof}
Let $0<\delta<1/2$ and set $\Phi_Q(t):= \Phi\left({t}/{\ell(Q)}\right)$, where $\Phi:\RR\rightarrow[0,1]$ denotes a $C^\infty$-function such that $\Phi(t) = 1$ for all $2\delta \leq t \leq 1$ whilst $\Phi(t) = 0$ for all $t\leq\delta$ and $t \geq 2$. Integrating by parts with respect to the $t$-variable and noting that $\|\partial_t\Phi\|_{L^\infty([1,2])} \lesssim 1$ whilst $\|\partial_t\Phi\|_{L^\infty([\delta,2\delta])} \lesssim 1/\delta$, we obtain
\begin{align*}
\mathbf{I}:&=\int_Q\int_{0}^{2\ell(Q)} \left|\nabla u(x,t) \right|^2 \Phi_Q(t)t\ dt d\mu(x) \\
&\eqsim \int_Q\int_{0}^{2\ell(Q)}\partial_t \left( \left| \nabla u(x,t)\right|^2\Phi_Q(t)\right) t^2 dt d\mu(x)\\
&\lesssim \int_Q\int_{0}^{2\ell(Q)} \left\langle \nabla\partial_t u(x,t), \nabla u(x,t)\right\rangle \Phi_Q(t) t^2 dt d\mu(x) \\
& \quad+ \int_Q\fint_{\ell(Q)}^{2\ell(Q)} \left| \nabla u(x,t)\right|^2 t^2 dt d\mu(x) + \int_Q\fint_{\delta \ell(Q)}^{2\delta \ell(Q)} \left| \nabla u(x,t)\right|^2 t^2 dt d\mu(x) \\
&=: \mathbf{I}'+\mathbf{I}''+\mathbf{I}'''.
\end{align*}
For the term $\mathbf{I}'$, we apply Cauchy's inequality with an arbitrary $\epsilon>0$, to obtain
\begin{align*}
\mathbf{I}' \leq \epsilon \mathbf{I} + \frac{1}{\epsilon}\int_Q\int_0^{2\ell(Q)}\left|\nabla \partial_t u(x,t) \right|^2t^3 dt d\mu(x).
\end{align*}
For the term $\mathbf{I}''$, we apply Caccioppoli's inequality, the doubling property of $\mu$ and the fact that $t\approx \ell(Q)$ in the domain of the integration, to obtain
\begin{align*}
\mathbf{I}'' &\eqsim \ell(Q) \int_Q\int_{\ell(Q)}^{2\ell(Q)} | \nabla u(x,t)|^2 dt d\mu(x)\\
&\lesssim \frac{1}{\ell(Q)}\int_{2Q}\int_{\ell(Q)/2}^{5\ell(Q)/2} |u(x,t)|^2 dt d\mu(x) \\
&\lesssim \mu(Q) \|u\|_{{\infty}}^2.
\end{align*}
For the term $\mathbf{I}'''$, the same reasoning shows that $\mathbf{I}'''\lesssim \mu(Q) \|u\|_{{\infty}}^2$. We now fix $\epsilon>0$, depending only on allowable constants, such that altogether
\[
\mathbf{I} \lesssim \int_Q\int_{0}^{2\ell(Q)} \left| \nabla \partial_t u(x,t)\right|^2 t^3dt d\mu(x) + \mu(Q) \|u\|^2_{\infty},
\]
which is justified since $\mathbf{I}<\infty$ by Caccioppoli's inequality and the support of $\Phi_Q$.

To complete the estimate, we let $\{W_j: j \in J\}$ denote a collection of Whitney boxes (from a Whitney decomposition of $\RR^{n+1}_+$) such that $W_j \cap \left( Q\times (0,2\ell(Q))\right)\neq\emptyset$ and $\sum_{j\in J}\mathbbm{1}_{2W_j}(x,t)\lesssim 1$. The coefficient matrix $A$ is $t$-independent, so $\partial_t u$ is also a solution of $\div (A\nabla u)=0$ in each set $W_j$, hence we may apply Caccioppoli's inequality in combination with the fact that $t \eqsim l(W_j)$ in $W_j$, to obtain
\begin{align*}
\int_{2\delta \ell(Q)}^{\ell(Q)} \int_Q \left|t\nabla u(x,t) \right|^2  d\mu(x) \frac{dt}{t} 
&\lesssim\sum_{j\in J}\iint_{W_j} \left| \nabla \partial_t u(x,t)\right|^2t^3 dt d\mu(x) + \mu(Q) \|u\|^2_{\infty}\\
&\lesssim \sum_{j\in J}l(W_j)\iint_{2W_j} \left| \partial_t u(x,t)\right|^2 dt d\mu(x)+ \mu(Q) \|u\|^2_{\infty}\\
&\lesssim \int_0^{4\ell(Q)} \int_{4Q} \left|t\partial_t u(x,t)\right|^2 d\mu(x) \frac{dt}{t} + \mu(Q) \|u\|^2_{\infty},
\end{align*}
where the implicit constants do not depend on $\delta$. The final result is then obtained by applying Fatou's lemma to estimate the limit as $\delta$ approaches $0$.
\end{proof}

The remainder of this section is dedicated to the proof of the crucial technical estimate, Theorem~\ref{thm:BPF}, that was used to prove Theorem~\ref{thm:introCME}. The proof adapts the change of variables from Section~3.2 of~\cite{HKMP1} to the degenerate elliptic case. This is used to pull-back solutions to certain sawtooth domains where the Carleson measure estimate can be verified by reducing matters to the vertical square function estimates in Theorem~\ref{VSFbound}, which we recall were obtained from the solution of the Kato problem in~\cite{CuR}. The following technical lemma, which reprises the notation $\mathcal{P}_t := e^{-t^2\mathcal{L}_\mu}$ for $\mathcal{L}_\mu:=-\div_\mu(\frac{1}{\mu}\mathcal{A}\nabla)$ and $\mathcal{A}\in\mathcal{E}(n,\lambda,\Lambda,\mu)$ as in~\eqref{eq:deg.def.bdry} and Lemma~\ref{Lpsegr}, will be used to justify these changes of variables.

\begin{lemma}\label{lem:CoV}
Let $n\geq2$ and suppose that $\mathcal{A}\in\mathcal{E}(n,\lambda,\Lambda,\mu)$ for some  constants $0<\lambda\leq\Lambda<\infty$ and an $A_2$-weight $\mu$ on $\RR^n$. Let $Q\subset\RR^n$ denote a cube and suppose that $\mathbf{f}:5Q\rightarrow \RR^n$ is a measurable function such that $\frac{1}{\mu}\mathbf{f} \in L^\infty(5Q)$. Let $\phi \in W^{1,2}_{0,\mu}(5Q)$ and suppose that $\div (\mathcal{A} \nabla \phi) = \div \mathbf{f}$ in $5Q$. If $\kappa_0>0$, $0<\eta<1/2$ and $x_0\in Q$ satisfy $\Lambda(\eta,\phi,\mathcal{A})(x_0) \leq \kappa_0$, where
\begin{equation}\label{eq:LambdaDef}
\Lambda(\eta,\phi,\mathcal{A}):=\eta^{-1}N_*^{\eta}(\partial_t\PP_{\eta t} \phi) + N_*(\partial_t\PP_{ t} \phi) + [M_\mu(|\nabla_x \phi |^2)]^{1/2} + D_{*,\mu}\phi,
\end{equation}
then
\begin{equation}\label{eq:partialPest}
|\partial_t \PP_{\eta t} \phi(x)| \leq \eta\kappa_0
\qquad \forall (x,t)\in \Gamma_\eta(x_0)
\end{equation}
and
\begin{equation}\label{eq:I-Pest}
|(I-\PP_{\eta t})\phi(x)| \lesssim \eta (\kappa_0 + \|\tfrac{1}{\mu}\mathbf{f}\|_\infty) t
\qquad \forall (x,t)\in \Gamma_\eta(x_0) \cap (2Q \times (0,4\ell(Q))),
\end{equation}
where the implicit constant depends only on $n$, $\lambda$, $\Lambda$ and $[\mu]_{A_2}$.
\end{lemma}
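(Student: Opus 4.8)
The plan is to deduce both estimates from the pointwise bounds already recorded in Lemma~\ref{Lpsegr}, the definition of $\Lambda(\eta,\phi,\mathcal{A})$, and the local boundedness estimate for the inhomogeneous equation in Theorem~\ref{MoEsttype}. For \eqref{eq:partialPest}, I would argue as follows. Since $\eta^{-1}N_*^{\eta}(\partial_t\PP_{\eta t}\phi)$ is one of the four non-negative terms summing to $\Lambda(\eta,\phi,\mathcal{A})$, the hypothesis $\Lambda(\eta,\phi,\mathcal{A})(x_0)\leq\kappa_0$ immediately gives $N_*^{\eta}(\partial_t\PP_{\eta t}\phi)(x_0)\leq\eta\kappa_0$. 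Unwinding the definition of $N_*^\eta$, this says precisely that $|\partial_t\PP_{\eta t}\phi(x)|\leq\eta\kappa_0$ for every $(x,t)\in\Gamma_\eta(x_0)$, which is \eqref{eq:partialPest}. So this first estimate is essentially a tautology once one reads off the correct term of $\Lambda$.

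For \eqref{eq:I-Pest} the plan is to write the telescoping/fundamental-theorem-of-calculus identity
\[
(I-\PP_{\eta t})\phi(x) = -\int_0^{\eta t}\partial_s\bigl(\PP_s\phi(x)\bigr)\,ds = -\int_0^{t}\eta\,\partial_s\bigl(\PP_{\eta s}\phi(x)\bigr)\Big|\,ds,
\]
so that, pointwise in $(x,t)\in\Gamma_\eta(x_0)$,
\[
|(I-\PP_{\eta t})\phi(x)| \leq \int_0^t \bigl|\eta\,(\partial_s\PP_{\eta s})\phi(x)\bigr|\,ds \lesssim \eta\kappa_0\,t,
\]
where the last step uses that $(x,s)\in\Gamma_\eta(x_0)$ whenever $(x,t)\in\Gamma_\eta(x_0)$ and $0<s<t$ (the cone is star-shaped about its vertex), together with the bound just obtained. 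This already yields \eqref{eq:I-Pest} \emph{without} the term $\|\tfrac1\mu\mathbf{f}\|_\infty$; a priori one must check that the improper integral converges, i.e. that $(I-\PP_{\eta t})\phi(x)\to 0$ as $t\to 0$ for a.e.\ $x$, which follows from the strong continuity of the semigroup on $L^2_\mu(\RR^n)$ together with the local boundedness / Moser estimate applied to $\phi-\PP_{\eta t}\phi$, which is where $\phi$ being a solution of $\div(\mathcal{A}\nabla\phi)=\div\mathbf{f}$ and hence $\PP_{\eta t}\phi-\phi$ being controlled by Theorem~\ref{MoEsttype} enters; it is here that the extra term $\|\tfrac1\mu\mathbf{f}\|_\infty\,t$ and the restriction to $2Q\times(0,4\ell(Q))$ (needed so that $5Q$ contains the relevant balls) show up.

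Let me be more careful about where $\mathbf{f}$ is really used, since this is the point I expect to be the main obstacle. The quantity $\PP_{\eta t}\phi$ is built from the \emph{global} operator $\mathcal{L}_\mu$ on $\RR^n$, whereas $\phi$ solves the inhomogeneous equation only on $5Q$; so the clean identity $\partial_s(\PP_{\eta s}\phi) = -\eta^2 s\,\mathcal{L}_\mu\PP_{\eta s}\phi$ does not directly see the right-hand side $\div\mathbf{f}$. The correct approach is to compare $\PP_{\eta t}\phi(x)$ with $\phi(x)$ via a pointwise argument localised near $x_0$: writing $w := \phi - \PP_{\eta t}\phi$, one observes that $w$ is a solution of an inhomogeneous equation whose data is controlled by $\mathbf{f}$ together with $\nabla\phi$, and then applies Theorem~\ref{MoEsttype} on balls $B(x,c\eta t)$ with $2B\subseteq 5Q$ — which is guaranteed precisely when $(x,t)\in 2Q\times(0,4\ell(Q))$ and $\eta<1/2$ — to bound $|w(x)|$ by $\eta t$ times the $L^2_\mu$-average of $|\nabla\phi|$ over a slightly larger ball plus $\eta t\|\tfrac1\mu\mathbf{f}\|_\infty$. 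The $L^2_\mu$-average of $|\nabla\phi|$ is in turn dominated by $[M_\mu(|\nabla_x\phi|^2)(x_0)]^{1/2}\leq\kappa_0$ since $x_0$ lies in a comparable ball, and the telescoping estimate above handles the rest. The delicate bookkeeping is making the geometry of the balls, the cone, and the cube $5Q$ fit together so that every application of Theorem~\ref{MoEsttype} is legitimate, and tracking that all implicit constants depend only on $n,\lambda,\Lambda,[\mu]_{A_2}$ (in particular not on $\eta$, which forces the $\eta$-scaling to be extracted explicitly at each step rather than absorbed).
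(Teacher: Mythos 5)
Your proof of \eqref{eq:partialPest} is correct and is exactly the paper's: it is just reading off the first term of $\Lambda(\eta,\phi,\mathcal{A})$ and unwinding the definition of $N_*^\eta$. The argument for \eqref{eq:I-Pest}, however, has a genuine gap, in fact two. First, the telescoping step
$|(I-\PP_{\eta t})\phi(x)|\leq\int_0^t|\eta\,\partial_s\PP_{\eta s}\phi(x)|\,ds\lesssim\eta\kappa_0 t$
relies on the claim that $(x,s)\in\Gamma_\eta(x_0)$ whenever $(x,t)\in\Gamma_\eta(x_0)$ and $0<s<t$. This is false: $\Gamma_\eta(x_0)=\{(y,s):|y-x_0|<\eta s\}$ opens \emph{upward}, so the cross-section shrinks as $s\to0$ and the vertical segment below $(x,t)$ leaves the cone unless $x=x_0$. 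The cone is star-shaped only along rays through the vertex $(x_0,0)$, not along vertical lines. Consequently the fundamental-theorem-of-calculus argument is only available at the axis point $x=x_0$, which is precisely how the paper uses it: $|(I-\PP_{\eta t})\phi(x_0)|\leq\int_0^{\eta t}|\partial_s\PP_s\phi(x_0)|\,ds\leq\eta t\,N_*(\partial_s\PP_s\phi)(x_0)\leq\eta t\kappa_0$, which is why the aperture-one term $N_*(\partial_t\PP_t\phi)$ appears in $\Lambda$ at all.

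Second, your fallback — applying Theorem~\ref{MoEsttype} to $w:=\phi-\PP_{\eta t}\phi$ — does not work as stated. For fixed $t$ one has $\div(\mathcal{A}\nabla w)=\div\mathbf{f}+\mu\,\mathcal{L}_\mu\PP_{\eta t}\phi=\div\mathbf{f}-\tfrac{\mu}{2\eta^2t}\partial_t\PP_{\eta t}\phi$ in $5Q$, which contains a non-divergence source term, so $w$ is not of the form $\div(\mathcal{A}\nabla w)=\div\mathbf{g}$ with $\tfrac1\mu\mathbf{g}\in L^\infty$ required by Theorem~\ref{MoEsttype}; and even granting an adapted version, the Moser bound controls $\|w\|_{L^\infty(B)}$ by the $L^2_\mu$-average of $|w|$ itself over $2B$, which is the quantity you are trying to estimate — the argument is circular and never produces the factor $\eta t$ times an average of $|\nabla\phi|$. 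The paper instead splits $(I-\PP_{\eta t})\phi(x)$ into four pieces: $\phi(x)-\phi(x_0)$, handled by applying Theorem~\ref{MoEsttype} to $\phi-\phi(x_0)$ (which \emph{does} solve $\div(\mathcal{A}\nabla\cdot)=\div\mathbf{f}$) on $B(x_0,2\eta t)\subseteq5Q$ together with the $D_{*,\mu}\phi(x_0)\leq\kappa_0$ term to get $\lesssim\eta t(\kappa_0+\|\tfrac1\mu\mathbf{f}\|_\infty)$; $(I-\PP_{\eta t})\phi(x_0)$, by the axis FTC argument above; and $\PP_{\eta t}(\phi-\phi_{x_0,\eta t})$ evaluated at $x_0$ and at $x$, each $\lesssim\eta t[M_\mu(|\nabla\phi|^2)(x_0)]^{1/2}\leq\eta t\kappa_0$ by \eqref{NonA} with $\alpha=2$. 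Your proposal never invokes \eqref{NonA} or $D_{*,\mu}$, yet these two maximal functions are included in $\Lambda$ precisely because they are the indispensable ingredients for \eqref{eq:I-Pest} away from $x_0$.
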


\begin{proof}
Suppose that $\kappa_0>0$, $0<\eta<1/2$ and $x_0\in Q$ satisfy $\Lambda(\eta,\phi,\mathcal{A})(x_0) \leq \kappa_0$. It follows \textit{a~fortiori} that $\eta^{-1}N_*^{\eta}(\partial_t\PP_{\eta t} \phi)(x_0) \leq \kappa_0$, so \eqref{eq:partialPest} holds for all $(x,t)\in \Gamma_\eta(x_0)$.

To prove \eqref{eq:I-Pest}, first note that the properties of the semigroup imply that
\begin{equation}\label{chvar1}
|(I - \PP_{\eta t})\phi(x_0)| = \left| \int_{0}^{\eta t} \partial_s\PP_{s}\phi(x_0)\ ds\right|
\leq \eta t \kappa_0
\end{equation}
for all $t>0$, since $N_*(\partial_s\PP_s\phi)(x_0) \leq \kappa_0$. Now let $(x,t) \in \Gamma_\eta(x_0) \cap (2Q \times (0,4\ell(Q)))$. We set $\phi_{x_0, \eta t}:=\fint_{B(x_0,2 \eta t)}\phi(y) dy$ and apply estimate (\ref{NonA}) with $\alpha=2$ to obtain
\begin{equation}\label{chvar2}
|\PP_{\eta t}(\phi - \phi_{x_0,\eta t})(x)| \lesssim \eta t [M_{\mu}(|\nabla_x\phi|^2)(x_0)]^{1/2} \leq \eta t \kappa_0.
\end{equation}
Next, since $\div(\mathcal{A}\nabla(\phi-\phi(x_0))) = \div(\mathcal{A}\nabla\phi)= \div \mathbf{f}$ in $5Q$, and since $0<\eta<1/2$ ensures that $B(x_0,2 \eta t) \subseteq 5Q$, we may apply the degenerate version of Moser's estimate for inhomogeneous equations in \eqref{eq:MoserInh} to obtain
\begin{align}\begin{split}\label{MoChvar}
|\phi(x) - \phi(x_0)| &\lesssim \left(\fint_{B(x_0,2 \eta t)}|\phi(y) - \phi(x_0)|^2\ d\mu(y) \right)^{1/2} + \eta t \|\tfrac{1}{\mu}\mathbf{f}\|_\infty \\
&\lesssim \eta t (D_{*,\mu}\phi(x_0) + \|\tfrac{1}{\mu}\mathbf{f}\|_\infty) \\
&\lesssim \eta t (\kappa_0 + \|\tfrac{1}{\mu}\mathbf{f}\|_\infty).
\end{split}\end{align}
Combining estimates (\ref{chvar1}), (\ref{chvar2}) and (\ref{MoChvar}), we obtain
\begin{align*}
|(I - \PP_{\eta t})\phi (x)| &\leq |\phi(x) - \phi(x_0)| + |(I - \PP_{\eta t})\phi(x_0)| \\ 
&\qquad + |\PP_{\eta t}(\phi - \phi_{x_0, \eta t})(x_0)| + |\PP_{\eta t}(\phi - \phi_{x_0, \eta t})(x)| \\
&\lesssim \eta (\kappa_0 + \|\tfrac{1}{\mu}\mathbf{f}\|_\infty) t,
\end{align*}
which proves \eqref{eq:I-Pest}, as the implicit constant depends only on $n$, $\lambda$, $\Lambda$ and $[\mu]_{A_2}$.
\end{proof}

We now present the main technical result of this section. The proof is adapted from Section~3.2 of \cite{HKMP1}, although some arguments have been simplified as detailed at the beginning of this section, and the additional justification required in the degenerate elliptic case has been emphasized.

The strategy of the original proof in \cite{HKMP1} was motivated in-part by the fact that integration by parts is sufficient to establish the required estimate in the case when $A$ has a certain block upper-triangular structure. A key idea in \cite{HKMP1} was to account for the presence of lower-triangular coefficients $\mathbf{c}$ (and upper-triangular coefficients) by decomposing them according to a $W^{1,2+\epsilon}_0$-Hodge decomposition. This was done locally on a given cube $Q$ and the idea has been adapted  here. First, the $W^{1,2}_{0,\mu}$-Hodge decomposition $\mathbf{c}\mathbbm{1}_{5Q} = \mu \mathbf{h}-A_{||}^*\nabla \varphi$ is introduced in \eqref{Hodgeforvarphi}, where $A_{||}$ is the $n\times n$ submatrix of $A$ shown in \eqref{eq:Abcd}. After integrating by parts, the divergence-free component $\mu \mathbf{h}$ provides valuable cancellation, whilst the adapted gradient vector field $A_{||}^*\nabla \varphi$ facilitates a reduction to the square function estimates in Theorem~\ref{VSFbound}, which are implied by the solution to the Kato problem in~\cite{CuR}, for the boundary operator $L_{||,\mu}^*:=-\div_\mu (\tfrac{1}{\mu}A_{||}^*\nabla_x)$.\vspace{-3pt}

The latter estimates, however, require that $L_{||,\mu}^*$ acts on the range of $P_{t}^*:=e^{-t^2L_{||,\mu}^*}$ and this is arranged by initially making the Dhalberg--Kenig--Stein-type pull-back $t \mapsto t - (I-P_{\eta t}^*)\varphi(x)$ so that the lower-triangular coefficients become $\mu\mathbf{h} - A_{||}^*\nabla_xP_{\eta t}^*\varphi$. This change of variables is justified by choosing $\eta>0$ small enough so that the pull-back is bi-Lipschitz in $t$. Once this is in place, a set $F$ is introduced that contains a ``big piece'' of $Q$ and on which the various maximal functions in Lemma~\ref{lem:CoV} are bounded. The integration on $F\times (0,\ell(Q))$ is then performed by introducing a smooth test function $\Psi_\delta$ that equals 1 on $F\times (2\delta\ell(Q), 2\ell(Q))$ and is supported on a certain truncated sawtooth domain $\Omega_{\eta/8,Q,\delta}$ over $F$, where $\delta>0$ is an arbitrary (small) parameter that provides for a smooth truncation in the $t$-direction near the boundary of $\RR^{n+1}_+$. The main integration by parts is then performed in \eqref{mainest}. The two principal terms $\mathbf{S}_1$ and $\mathbf{S}_2$  arise from the tangential and transversal integration by parts, respectively, where the former is taken with respect to the measure $\mu$ and thus requires additional justification from the uniformly elliptic case. These and numerous error terms are then shown to be appropriately under control.

\begin{theorem}\label{thm:BPF}
Let $n\geq 2$ and consider a cube $Q\subset\RR^n$. If $A$ is a $t$-independent coefficient matrix that satisfies the degenerate bound and ellipticity in \eqref{eq:deg.def} for some constants $0<\lambda\leq\Lambda<\infty$ and an $A_2$-weight $\mu$ on $\RR^n$, then for any solution $u\in L^\infty(4Q\times(0,4\ell(Q)))$ that solves $\div (A \nabla u)=0$ in $4Q\times(0,4\ell(Q))$, there exist constants $C, c_0>0$ and a measurable set $F \subset Q$  such that $\mu(F) \geq c_0\mu(Q)$ and
\begin{equation}\label{eq:CMEforFinPf}
\frac{1}{\mu(Q)}\int_0^{\ell(Q)}\int_F|t\nabla u(x,t)|^2\ d\mu(x)\frac{dt}{t} \leq C \|u\|_{{\infty}}^2,
\end{equation}
where $C$ and $c_0$ depend only on $n$, $\lambda$, $\Lambda$ and $[\mu]_{A_2}$.
\end{theorem}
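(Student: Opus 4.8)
The plan is to follow the change-of-variables scheme from Section~3.2 of~\cite{HKMP1}, replacing the unweighted $W^{1,2+\epsilon}_0$-Hodge decomposition by the $W^{1,2}_{0,\mu}$-decomposition from~\eqref{eq:W12Hodge} and using the vertical square function bounds in Theorem~\ref{VSFbound} together with the weak-type maximal estimates in Corollary~\ref{cor:wkest} and Lemma~\ref{lem:Dpbound}. Write the $(n{+}1)\times(n{+}1)$ matrix in block form relative to the splitting $\RR^{n+1}=\RR^n\oplus\RR$, say $A=\left(\begin{smallmatrix} A_{||} & \mathbf{b}\\ \mathbf{c} & d\end{smallmatrix}\right)$, so that $A_{||}\in\mathcal{E}(n,\lambda,\Lambda,\mu)$. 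On the cube $Q$ we apply the $W^{1,2}_{0,\mu}(5Q)$-Hodge decomposition to the lower-triangular row, writing $\mathbf{c}\,\mathbbm{1}_{5Q}=\mu\mathbf{h}-A_{||}^*\nabla\varphi$ with $\div_{\mu,5Q}\mathbf{h}=0$ and $\varphi\in W^{1,2}_{0,\mu}(5Q)$ solving $\div(A_{||}^*\nabla\varphi)=\div(\mathbf{c}\mathbbm{1}_{5Q})$; by the bounds following~\eqref{eq:W12Hodge} we get $\|\nabla\varphi\|_{L^2_\mu(5Q)}\lesssim\mu(5Q)^{1/2}$ and $\tfrac1\mu\mathbf{h}\in L^\infty$ with norm $\lesssim1$. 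Set $L_{||,\mu}^*:=-\div_\mu(\tfrac1\mu A_{||}^*\nabla_x)$ and $P_t^*:=e^{-t^2L_{||,\mu}^*}$.

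\textbf{The pull-back and the good set.} Next I would fix a small parameter $\eta>0$ (depending only on allowable constants) and perform the Dahlberg--Kenig--Stein-type change of variables $(x,t)\mapsto(x,\rho(x,t))$ with $\rho(x,t)=t-(I-P_{\eta t}^*)\varphi(x)$, which by Lemma~\ref{lem:CoV} (applied with $\mathbf{f}=\mathbf{c}\mathbbm{1}_{5Q}$ and $\phi=\varphi$) is bi-Lipschitz in $t$ on the relevant region once $\eta$ is small, since $|\partial_t(I-P_{\eta t}^*)\varphi|=|\,\eta^{-1}\partial_s P_s^*\varphi|_{s=\eta t}\cdot\eta\cdot(-1)|\lesssim\eta\kappa_0$ on the cone over a good point. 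Define the good set $F\subseteq Q$ to be the points $x_0$ where $\Lambda(\eta,\varphi,A_{||}^*)(x_0)\le\kappa_0$ for a suitable threshold $\kappa_0\eqsim1$; the weak-type estimates in Corollary~\ref{cor:wkest}, Lemma~\ref{lem:Dpbound} and~\eqref{eq:wkdef}, combined with the Hodge bound on $\nabla\varphi$, give $\mu(Q\setminus F)\le\tfrac12\mu(Q)$ by Chebyshev, hence $\mu(F)\ge c_0\mu(Q)$. On $F$ all the maximal functions appearing in the error estimates are controlled, and the pulled-back coefficients in the lower-triangular position become the ``good'' expression $\mu\mathbf{h}-A_{||}^*\nabla_x P_{\eta t}^*\varphi$, whose divergence-free part produces cancellation under integration by parts and whose remaining part is an image of $P_{\eta t}^*$ to which Theorem~\ref{VSFbound} applies.

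\textbf{The main integration by parts.} Then I would introduce a smooth cutoff $\Psi_\delta$ equal to $1$ on $F\times(2\delta\ell(Q),2\ell(Q))$ and supported in a truncated sawtooth region $\Omega_{\eta/8,Q,\delta}$ over $F$, with $\delta>0$ an auxiliary truncation parameter to be sent to $0$ at the end via Fatou. Testing the equation $\div(A\nabla u)=0$ (in the pulled-back variables) against $u\,\Psi_\delta^2$ and integrating by parts, one separates a tangential piece $\mathbf{S}_1$ (the $\RR^n$-divergence, integrated against $d\mu$, which is the term requiring extra care compared with the uniformly elliptic case) and a transversal piece $\mathbf{S}_2$; the divergence-free structure of $\mathbf{h}$ kills the main part of $\mathbf{S}_1$, while $\mathbf{S}_2$ and the $A_{||}^*\nabla_x P_{\eta t}^*\varphi$-contribution are estimated by Cauchy--Schwarz and then by the vertical square function bounds~\eqref{eq:VSF1}--\eqref{eq:VSF2} for $L_{||,\mu}^*$, using $\|\nabla\varphi\|_{L^2_\mu}^2\lesssim\mu(Q)$. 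All remaining error terms — those involving $\nabla\Psi_\delta$, the semigroup remainders $(I-P_{\eta t}^*)\varphi$ estimated by~\eqref{eq:I-Pest}, and the $\mathbf{b},d$ blocks — are bounded by $\mu(Q)\|u\|_\infty^2$ using Caccioppoli on Whitney boxes, the bound $\|t\partial_t u\|_\infty\lesssim\|u\|_\infty$ from~\eqref{eq:M}, and the maximal function bounds valid on $F$, with all constants uniform in $\delta$. Sending $\delta\to0$ and undoing the (bi-Lipschitz) change of variables yields~\eqref{eq:CMEforFinPf}. I expect the main obstacle to be the tangential term $\mathbf{S}_1$: in the weighted setting the integration by parts is with respect to $d\mu$ rather than Lebesgue measure, so one must carefully track how $\div_{\mu,5Q}\mathbf{h}=0$ (as opposed to the Euclidean divergence) interacts with the test function and the $A_2$-weight, and verify that the manipulations of the Hodge decomposition and the functional calculus of $L_{||,\mu}^*$ remain valid on the sawtooth domain despite $\mu$ being merely doubling.
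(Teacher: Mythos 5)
Your outline follows the same route as the paper (Hodge decomposition on $5Q$, Dahlberg--Kenig--Stein pull-back via $P_{\eta t}^*\varphi$, good set $F$ cut out by maximal functions, cut-off $\Psi_\delta$ on a truncated sawtooth, main integration by parts into tangential and transversal pieces), but there are two concrete gaps. First, a single Hodge decomposition of the lower-triangular row $\mathbf{c}$ is not enough. After the pull-back, the transversal term $\mathbf{S}_2$ contains $\partial_t\bigl(\langle A\mathbf{p},\mathbf{p}\rangle/J\bigr)$ with $\mathbf{p}=(\nabla_x(P_{\eta t}^*-I)\varphi,-1)$, and the piece $\langle A\mathbf{p},\partial_t\mathbf{p}\rangle$ produces the pairing $\langle \mathbf{b},\nabla_x\partial_tP_{\eta t}^*\varphi\rangle$. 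This cannot be dismissed as an error term controlled by Caccioppoli: since $\nabla_x\partial_tP_{\eta t}^*\varphi=-2\eta^2t\,\nabla_xL_{||,\mu}^*P_{\eta t}^*\varphi$ only obeys the square-function bound \eqref{eq:VSF2}, a blind Cauchy--Schwarz against $\iint_{\Omega_{\eta/8,Q,\delta}}d\mu\,\tfrac{dt}{t}\approx\mu(Q)\log(1/\delta)$ blows up as $\delta\to0$. One needs a \emph{second} Hodge decomposition $\tfrac1\mu\mathbf{b}\mathbbm{1}_{5Q}=-\tfrac1\mu A_{||}\nabla\widetilde\varphi+\widetilde{\mathbf{h}}$ with $\div_\mu\widetilde{\mathbf{h}}=0$, an enlargement of $F$ to also control $\Lambda(\eta,\widetilde\varphi,A_{||})$ and $\widetilde N^{\eta}_{*,\mu}(\nabla_xP_{\eta t}\widetilde\varphi)$, and then (in the paper's notation) the term $\mathbf{III}$ is split using the divergence-free part $\mu\widetilde{\mathbf{h}}$ for cancellation and a Hardy-inequality estimate on the difference $(P_{\eta t}^*\varphi-\varphi)-(P_{\eta t}\widetilde\varphi-\widetilde\varphi)$ paired with \eqref{eq:VSF1}--\eqref{eq:VSF2}. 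Your sketch omits this entirely.

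Second, your claim that $\tfrac1\mu\mathbf{h}\in L^\infty$ with norm $\lesssim1$ is false: the decomposition \eqref{eq:W12Hodge} only yields $\fint_{5Q}|\mathbf{h}|^2\,d\mu\lesssim1$, and indeed $\mathbf{h}=\tfrac1\mu\mathbf{c}\mathbbm{1}_{5Q}+\tfrac1\mu A_{||}^*\nabla\varphi$ involves $\nabla\varphi$, which is merely $L^2_\mu$. The correct mechanism is the pointwise bound $|\mathbf{h}|^2\lesssim1+|\nabla_x\varphi|^2$ together with the control $M_\mu(|\nabla_x\varphi|^2)(x_0)\le\kappa_0^2$ for $x_0\in F$, exploited Whitney-cube by Whitney-cube on the sawtooth (as in \eqref{eq:gradtauest} and Remark~\ref{rem:nabla.psi.est}); an $L^\infty$ bound is neither available nor needed. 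A smaller but related point: the test function must carry the factor $t$ (the paper pairs against $\nabla(\Psi_\delta^2t)$), since it is precisely $\partial_t(\Psi_\delta^2t)$ that produces the row $\tfrac1\mu A_1^*e_{n+1}=\mathbf{h}-\tfrac1\mu A_{||}^*\nabla_xP_{\eta t}^*\varphi$ and hence the cancellation from $\div_\mu\mathbf{h}=0$; and the resulting tangential term $\mathbf{S}_1=\tfrac12\iint u_1^2(L_{||,\mu}^*P_{\eta t}^*\varphi)\Psi_\delta^2\,d\mu\,dt$ is not ``killed'' but still requires a further integration by parts in $t$ and a hiding argument for $\iint|\nabla_x\tau|^2|\partial_tu_1|^2\Psi_\delta^2\,t\,d\mu\,dt$ behind the left-hand side.
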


\begin{proof}
We begin by expressing the matrix $A$ and its adjoint $A^*$ (which is just the transpose $A^{\mathrm{t}}$, since the matrix coefficients are real-valued) in the following form
\begin{equation}\label{eq:Abcd}
A = \left[
\begin{array}{c|c}
A_{||} & \mathbf{b} \\
\hline
\mathbf{c}^{\mathrm{t}} & d
\end{array}
\right],
\qquad
A^* = \left[
\begin{array}{c|c}
A_{||}^* & \mathbf{c} \\
\hline
\mathbf{b}^{\mathrm{t}} & d
\end{array}
\right],
\end{equation}
where $A_{||}$ denotes the $n\times n$ submatrix of $A$ with entries $(A_{||})_{i,j}:= A_{i,j}, 1\leq i,j \leq n$, whilst $\mathbf{b} := (A_{i, n+1})_{1\leq i \leq n}$ is a column vector, $\mathbf{c}^{\mathrm{t}}:=(A_{n+1, j})_{1\leq j \leq n}$ is a row vector and $d:=A_{n+1, n+1}$ is a scalar. 

Now consider a cube $Q\subset\RR^n$. The aim is to construct a set $F\subset Q$ with the required properties. To this end, we apply the Hodge decomposition from \eqref{eq:W12Hodge} to the space $L^2_{\mu}(5Q,\RR^n)$ in order to write
\begin{equation}\label{Hodgeforvarphi}
\tfrac{1}{\mu}\mathbf{c}\mathbbm{1}_{5Q} = -\tfrac{1}{\mu}A_{||}^*\nabla \varphi + \mathbf{h},
\qquad \tfrac{1}{\mu}\mathbf{b}\mathbbm{1}_{5Q} = -\tfrac{1}{\mu} A_{||}\nabla \widetilde{\varphi} + \widetilde{\mathbf{h}},
\end{equation}
where $\varphi, \widetilde{\varphi} \in W^{1,2}_{\mu,0}(5Q)$ and $\mathbf{h},\widetilde{\mathbf{h}} \in L^2_\mu(5Q,\RR^n)$ are such that $\div_
\mu\mathbf{h} = \div_\mu\widetilde{\mathbf{h}}=0$ and
\begin{align}\label{HodEst1}
\begin{split}
\fint_{5Q}\Big(|\nabla \varphi(x)|^{2} + |\mathbf{h}(x)|^{2}\Big) d\mu(x)
\lesssim \fint_{5Q}\left|\frac{\mathbf{c}(x)}{\mu}\right|^{2} d\mu(x) \lesssim 1,
\end{split}
\end{align}
\begin{align}\label{HodEst2}
\begin{split}
\fint_{5Q}\Big(|\nabla \widetilde\varphi(x)|^{2} + |\mathbf{\widetilde{h}}(x)|^{2}\Big) d\mu(x)
\lesssim \fint_{5Q}\left|\frac{\mathbf{b}(x)}{\mu}\right|^{2} d\mu(x) \lesssim 1.
\end{split}
\end{align}
We extend each of $\varphi, \widetilde{\varphi}, \mathbf{h}, \widetilde{\mathbf{h}}$ to functions on $\RR^n$ by setting them equal to 0 on $\RR^n\setminus 5Q$.

In Sections~\ref{sec:prelims} and~\ref{sec:maxops}, we investigated the operators $\mathcal{L}_\mu:=-\div_\mu (\tfrac{1}{\mu}\mathcal{A}\nabla)$ and $\mathcal{P}_t:=e^{-t^2\mathcal{L}_\mu}$ for arbitrary coefficient matrices $\mathcal{A}$ in $\mathcal{E}(n,\lambda,\Lambda,\mu)$. We now set
\begin{align}\begin{split}\label{eq:L||mudef}
L_{||,\mu}:=-\div_\mu (\tfrac{1}{\mu}A_{||}\nabla_x),\qquad
&P_t:=e^{-t^2L_{||,\mu}},\\
L_{||,\mu}^*:=-\div_\mu (\tfrac{1}{\mu}A_{||}^*\nabla_x),\qquad
&P_t^*:=e^{-t^2L_{||,\mu}^*}
\end{split}\end{align}
in order to apply those results in the cases $\mathcal{A}=A_{||}$ and $\mathcal{A}=A_{||}^*$.

We now introduce two constants $\kappa_0,\eta>0$, which will be fixed shortly, and recall the function $\Lambda(\eta,\phi,\mathcal{A})$ from \eqref{eq:LambdaDef} to define the set $F\subset Q$ by
\begin{multline}\label{F}
F := \Big\{x\in Q: \Lambda(\eta,\varphi,A_{||}^*)(x) + \Lambda(\eta,\widetilde{\varphi},A_{||})(x) \\
+ \widetilde{N}_{*,\mu}^{\eta}(\nabla_x P^*_{\eta t} \varphi)(x) + \widetilde{N}_{*,\mu}^{\eta}(\nabla_x P_{\eta t} \widetilde{\varphi})(x) \leq \kappa_0 \Big\}.
\end{multline}
Applying the weak-type bounds in \eqref{eq:wkdef}, \eqref{Dptilde}, \eqref{Non2} and \eqref{Non3} followed by the estimates from the Hodge decomposition in \eqref{HodEst1} and \eqref{HodEst2}, we obtain
\[
\mu(Q\setminus F) \lesssim \kappa_0^{-2} \left(\|\nabla \varphi\|_{L^2_\mu(\RR^n,\RR^n)}^2+\|\nabla \widetilde\varphi\|_{L^2_\mu(\RR^n,\RR^n)}^2\right) \lesssim \kappa_0^{-2} \mu(Q),
\]
where the implicit constants depend only on $n$, $\lambda$, $\Lambda$ and $[\mu]_{A_2}$. This allows us to now fix $\kappa_0>1$ and some constant  $c_0>0$ such that $\mu(F) \geq c_0\mu(Q)$, where both $\kappa_0$ and $c_0$ depend only on the allowed constants, and thus are independent of $\eta$.

We now fix the value of $\eta$ as follows. First, for $0\leq\alpha\leq 4$ and $\beta>0$, let
\[
\Omega_{\beta} := \bigcup\nolimits_{x \in F}\Gamma_\beta(x),
\quad
\Omega_{\beta,Q,\alpha} := \Omega_{\beta} \cap \Big(2Q \times (\alpha\ell(Q),4\ell(Q))\Big)
\quad\textrm{and}\quad
\Omega_{\beta,Q} := \Omega_{\beta,Q,0}
\]
denote the sawtooth domains in $\RR^{n+1}_+$ spanned by cones centered on~$F$ of aperture~$\beta$. Next, note that the properties of the Hodge decomposition in~\eqref{Hodgeforvarphi} imply that $-\div (A_{||}^*\nabla\varphi)=\div (\mathbf{c}\mathbbm{1}_{5Q})$ and $-\div (A_{||}\nabla\widetilde\varphi)=\div (\mathbf{b}\mathbbm{1}_{5Q})$ in $5Q$. Therefore, we now fix $0<\eta<1/2$ in accordance with \eqref{eq:partialPest} and \eqref{eq:I-Pest} such that
\begin{equation}\label{eq:CoV1}
\max\left\{|\partial_t P_{\eta t}^* \varphi(x)|,|\partial_t P_{\eta t} \widetilde\varphi(x)|\right\} \leq \eta\kappa_0 < 1/8
\qquad \forall (x,t)\in \Omega_\eta
\end{equation}
and
\begin{align}\begin{split}\label{eq:CoV2}
&\max\big\{|(I-P_{\eta t}^*)\varphi(x)|,|(I-P_{\eta t})\widetilde\varphi(x)|\big\} \\
&\lesssim \eta \big(\kappa_0 + \max\big\{\|\tfrac{1}{\mu}\mathbf{c}\|_\infty,\|\tfrac{1}{\mu}\mathbf{b}\|_\infty\big\}\big) t
\lesssim \eta \kappa_0 t
< t/8
\qquad \forall (x,t)\in \Omega_{\eta,Q},
\end{split}\end{align}
where $\eta$ and the implicit constants depend only on $n$, $\lambda$, $\Lambda$ and $[\mu]_{A_2}$.

It remains to prove \eqref{eq:CMEforFinPf}. We will achieve this by changing variables in the transversal direction using the mapping $t \mapsto \tau(x,t)$, with $x\in\RR^n$ fixed, defined by
\begin{equation*}
\tau(x,t):=t - (I-P_{\eta t}^*)\varphi(x)
\end{equation*}
and having Jacobian denoted by
\begin{equation}\label{eq:Jdef}
J(x,t):=  \partial_t\tau(x,t) 
= 1+ \partial_tP_{\eta t}^*\varphi(x).
\end{equation}
In order to justify such changes of variables, we note from \eqref{eq:CoV1} and \eqref{eq:CoV2} that 
\begin{equation}\label{334}
\frac{7t}{8} < \tau(x,t) < \frac{9t}{8}
\qquad\textrm{and}\qquad
\frac{7}{8} < J(x,t)< \frac{9}{8}
\qquad \forall (x,t)\in \Omega_{\eta,Q}.
\end{equation}
In particular, for each $x\in F$ and $0\leq\alpha\leq1/8$, this implies that the mapping $t \mapsto \tau(x,t)$ is bi-Lipschitz in $t$ on $(2\alpha\ell(Q),2\ell(Q))$ with range 
\begin{equation}\label{334tau}
(4\alpha\ell(Q),\ell(Q)) \subseteq \tau(x,\cdot)\big((2\alpha\ell(Q),2\ell(Q))\big) \subseteq (\alpha\ell(Q),4\ell(Q)).
\end{equation}
Moreover, for each $0<\beta \leq \eta$, the mapping $(x,t) \mapsto \rho(x,t)$ defined by
\begin{equation*}
\rho(x,t):=(x, \tau(x,t))=(x, t +P_{\eta t}^*\varphi(x) - \varphi(x))
\end{equation*}
is bi-Lipschitz in $t$ on $\Omega_{\beta,Q}$ with range 
\begin{equation}\label{335}
\Omega_{{8\beta}/{9},Q} \subseteq \rho(\Omega_{\beta,Q}) \subseteq \Omega_{{8\beta}/{7},Q}.
\end{equation}

Now consider a bounded solution $u$ satisfying $\div (A \nabla u)=0$ in $4Q\times(0,4\ell(Q))$. The pull-back $u_1 := u\circ \rho$ is in $L^\infty(\Omega_{\eta,Q})$ and $\div (A_1\nabla u_1)=0$ in $\Omega_{\eta,Q}$, where
\[
A_1 := \left[
\begin{array}{c|c}
JA_{||} & \mathbf{b}  + A_{||}\nabla_x\varphi - A_{||}\nabla_xP_{\eta t}^*\varphi \\
\hline
(\mu\mathbf{h} - A_{||}^*\nabla_xP_{\eta t}^*\varphi)^{\mathrm{t}} & {\left\langle A\mathbf{p}, \mathbf{p}\right\rangle}/{J}
\end{array}
\right]
\]
and
\begin{equation}\label{eq:pdef}
\mathbf{p}(x,t) := 
\begin{bmatrix} \nabla_x\tau(x,t) \\ -1 \end{bmatrix}
= \begin{bmatrix} \nabla_xP_{\eta t}^*\varphi(x) - \nabla_x\varphi(x) \\ -1 \end{bmatrix}.
\end{equation}
Our statement that $\div (A_1\nabla u_1)=0 $ in $\Omega_{\eta,Q}$ does not mean that $A_1$ satisfies \eqref{eq:deg.def}, only that $u_1\in W^{1,2}_{\mu,\loc}(\Omega_{\eta,Q})$ and that $\int_{\RR^{n+1}_+} \langle A_1\nabla u_1, \nabla \Phi\rangle =0$ for all $\Phi\in C^\infty_c(\Omega_{\eta,Q})$. To prove this, we combine the pointwise identity
\begin{equation}\label{210}
\langle A \left( (\nabla u)\circ\rho\right), (\nabla v)\circ\rho\rangle J = \langle A_1\nabla (u \circ \rho) , \nabla (v \circ \rho)\rangle
\qquad \forall
v \in W^{1,2}_{0,\mu}(\rho(\Omega_{\eta,Q}))
\end{equation}
with the change of variables $(x,t) \mapsto \rho(x,t)$ on $\Omega_{\eta,Q}$, which is justified because $\rho$ is bi-Lipschitz in $t$ on $\Omega_{\eta,Q}$ with range $\rho(\Omega_{\eta,Q}) \subset 4Q\times(0,4\ell(Q))$ by \eqref{335}. Also, we note for later use that $\|\mathbbm{1}_{\Omega_{\eta,Q}}u_1\|_\infty \leq \|u\|_\infty$ and, using~\eqref{334}, that
\begin{equation}\label{eq:gradu1est}
|\nabla u_1| 
\lesssim \left| \begin{bmatrix} \nabla_x u_1 - (\nabla_x \tau)(\partial_t u_1)/J \\(\partial_t u_1)/J \end{bmatrix}\right| + |\nabla_x \tau| |\partial_t u_1|
= \left|(\nabla u)\circ \rho\right| + \left|\nabla_x \tau\right| \left|\partial_t u_1\right|
\end{equation}
on $\Omega_{\eta,Q}$.

Next, in order to work with the pull-back solution $u_1$, we consider an arbitrary constant $0<\delta\leq1/8$ and define a smooth cut-off function $\Psi_\delta$ adapted to $\Omega_{\eta,Q}$ as follows. Let $\delta_F(x):=\dist(x, F)$, fix a $C^\infty$-function $\Phi:\RR\rightarrow[0,1]$ satisfying $\Phi(t)=1$ when $t < \frac{1}{16}$ and $\Phi(t) = 0$ when $t \geq \frac{1}{8}$, and then define
\begin{equation*}
\Psi_\delta(x,t):= \Phi\left( \frac{\delta_F(x)}{\eta t}\right)\Phi\left(\frac{t}{32\ell(Q)}\right)\left(1-\Phi\left(\frac{t}{16\delta\ell(Q)}\right)\right)
\qquad\forall(x,t)\in\RR^{n+1}_+.
\end{equation*}
This function is designed so that $\Psi_\delta \equiv 1$ on $F\times (2\delta\ell(Q), 2\ell(Q))$, and since $\eta<1/2$, we have $\supp \Psi_\delta \subseteq \Omega_{\eta/8,Q,\delta}$ and
\begin{equation}\label{eq.gradpsi}
|\nabla_{x,t}\Psi_\delta(x,t)| \lesssim \frac{\mathbbm{1}_{E_1}(x,t)}{t}
+ \frac{\mathbbm{1}_{E_2}(x,t)}{\ell(Q)}
+ \frac{\mathbbm{1}_{E_3}(x,t)}{\delta\ell(Q)}
\qquad\forall(x,t)\in\Omega_{\eta/8,Q,\delta},
\end{equation}
where
\begin{align*}
E_1&:=\big\lbrace (x,t) \in 2Q\times (0, 4\ell(Q)): {\eta t}/{16} \leq \delta_F(x) \leq {\eta t}/{8}\big\rbrace,\\
E_2&:= 2Q \times (2\ell(Q), 4\ell(Q)),\\
E_3&:= 2Q \times (\delta\ell(Q), 2\delta\ell(Q)).
\end{align*}

In contrast to Section~3.2 in~\cite{HKMP1}, the cut-off function $\Psi_\delta$ introduced here incorporates an additional truncation in the $t$-direction at the boundary. This is done to simplify subsequent integration by parts arguments, since it ensures that $\Psi_\delta$ vanishes on the boundary of $\RR^{n+1}_+$. For later purposes, it is also convenient to isolate the following general fact here.

\begin{remark}\label{rem:nabla.psi.est}
For each $k\in\mathbb{Z}$, let $\mathbb{D}_{k}^{\eta}$ denote the grid of dyadic cubes $Q'\subset\RR^n$ such that $\eta2^{-k}/64 \leq \diam Q' < \eta 2^{-k}/32$. If $C_0>0$ and $(v_t)_{t>0}$ is a collection of non-negative measurable functions such that
\[
\sup_{t\in[2^{-k},2^{-k+1}]} \fint_{Q'} v_t(x)\ d\mu(x) \leq C_0
\qquad \forall k\in\mathbb{Z},\ \forall Q'\in\mathbb{D}_{k}^{\eta},
\]
then
\begin{equation}\label{eq:E123est}
\iint_{\RR^{n+1}_+} 
\left(\frac{\mathbbm{1}_{E_1}(x,t)}{t}
+ \frac{\mathbbm{1}_{E_2}(x,t)}{\ell(Q)}
+ \frac{\mathbbm{1}_{E_3}(x,t)}{\delta\ell(Q)}\right) v_t(x)\ d\mu(x)dt \lesssim C_0 \mu(Q),
\end{equation}
where the implicit constant depends only on $n$, $\lambda$, $\Lambda$ and $[\mu]_{A_2}$. To see this, first observe that since $\delta_F$ is a Lipschitz mapping with constant 1, we have
\begin{align*}
Q^{(1)} \times [2^{-k}, 2^{-k+1}] &\subseteq \widetilde{E}_1:=\left\lbrace (x,t) \in 4Q\times (0, 4\ell(Q)): \frac{\eta t}{C} \leq \delta_F(x) \leq C \eta t\right\rbrace,\\
Q^{(2)} \times [2^{-k}, 2^{-k+1}] &\subseteq 4Q\times (\ell(Q), 8\ell(Q)),\\
Q^{(3)} \times [2^{-k}, 2^{-k+1}] &\subseteq 4Q\times ((\delta/2)\ell(Q), 4\delta\ell(Q))
\end{align*}
whenever $E_i\cap (Q^{(i)}\times [2^{-k}, 2^{-k+1}]) \neq \emptyset$ and $i\in\{1,2,3\}$. The   estimate in \eqref{eq.gradpsi} and the doubling property of $\mu$ then imply that the left side of \eqref{eq:E123est} is bounded by
\begin{align*}
C_0 &\left(\sum_{k\in\mathbb{Z}}\sum_{Q' \in
 \mathbb{D}_{k}^{\eta}} \int_{2^{-k}}^{2^{-k+1}} \int_{Q'} \mathbbm{1}_{\widetilde{E}_1}\ d\mu\frac{dt}{t} + C\fint_{\ell(Q)}^{8\ell(Q)}\mu(Q)\ dt + C\fint_{(\delta/2)\ell(Q)}^{4\delta\ell(Q)} \mu(Q)\ dt \right) \\
&\lesssim C_0 \left(\int_{4Q}\int_{\frac{1}{C\eta}\delta_F(x)}^{\frac{C}{\eta}\delta_F(x)} \ \frac{dt}{t}d\mu(x) + \mu(Q) \right)
\lesssim C_0\mu(Q),
\end{align*}
as required.
\end{remark}

We now proceed to prove \eqref{eq:CMEforFinPf}. First, note that it suffices to show that
\begin{equation}\label{eq:CMEforFinPf.trunc0}
\sup_{0<\delta\leq1/8}\int_{4\delta\ell(Q)}^{\ell(Q)}\int_F |t\nabla u(x,t)|^2\ d\mu(x)\frac{dt}{t}
\lesssim \|u\|^2_{{\infty}} \mu(Q),
\end{equation}
since we may then obtain~\eqref{eq:CMEforFinPf} by using Fatou's lemma to pass to the limit as $\delta$ approaches $0$. To this end, we use \eqref{334tau}, followed by the bi-Lipschitz in $t$ change of variables $t \mapsto \tau(x,t)$ on $(\delta\ell(Q),2\ell(Q))$ for each $x\in F$, estimate \eqref{334} and identity \eqref{210}, to obtain
\begin{align*}
\int_{4\delta\ell(Q)}^{\ell(Q)}\int_F |t\nabla u(x,t)|^2\ d\mu(x)\frac{dt}{t}
&\lesssim \int_F\int_{4\delta\ell(Q)}^{\ell(Q)} \langle A\nabla u , \nabla u\rangle \ t dt dx \\
&\lesssim \int_{F}\int_{2\delta\ell(Q)}^{2\ell(Q)} \langle A_1\nabla u_1, \nabla u_1\rangle \ tdtdx \\
&\leq \iint_{\RR^{n+1}_+} \langle A_1\nabla u_1 , \nabla u_1\rangle \Psi_\delta^2\ t dx dt.
\end{align*}
Thus, in order to prove \eqref{eq:CMEforFinPf.trunc0} and ultimately \eqref{eq:CMEforFinPf}, it suffices to show that
\begin{equation}\label{eq:CMEforFinPf.trunc}
\iint_{\RR^{n+1}_+} \langle A_1\nabla u_1, \nabla u_1\rangle \Psi_\delta^2\ t dx dt
\lesssim \|u\|^2_{{\infty}} \mu(Q)
\qquad \forall 0<\delta\leq1/8,
\end{equation}
where the implicit constant depends only on $n$, $\lambda$, $\Lambda$ and $[\mu]_{A_2}$.

Next, we recall that $\div (A_1\nabla u_1)=0 $ in $\Omega_{\eta,Q}$, noting that $u_1\Psi_\delta^2t\in W^{1,2}_{0,\mu}(\Omega_{\eta,Q})$, and then integrate by parts to obtain
\begin{align}\begin{split}\label{mainest}
&\iint_{\RR^{n+1}_+} \langle A_1\nabla u_1 , \nabla u_1\rangle \Psi_\delta^2\ t dx dt 
=-\frac{1}{2}\iint_{\RR^{n+1}_+} \langle A_1\nabla(u_1^2),\nabla(\Psi_\delta^2 t)\rangle\ dx dt \\
&=-\frac{1}{2}\iint_{\RR^{n+1}_+} \langle \nabla(u_1^2), \tfrac{1}{\mu} A_1^*e_{n+1}\rangle \Psi_\delta^2 \ d\mu dt
-\frac{1}{2}\iint_{\RR^{n+1}_+} \langle A_1\nabla(u_1^2), \nabla(\Psi_\delta^2)\rangle \ tdx dt \\
&=\frac{1}{2}\iint_{\RR^{n+1}_+}u_1^2 
(L_{||,\mu}^*P_{\eta t}^*\varphi) \Psi_\delta^2\ d\mu dt 
+ \frac{1}{2}\iint_{\RR^{n+1}_+}u_1^2 
\partial_t(\langle A\mathbf{p},\mathbf{p}\rangle/J) \Psi_\delta^2\ dxdt\\
&\quad - \frac{1}{2}\iint_{\RR^{n+1}_+} \langle A_1\nabla(u_1^2), \nabla(\Psi_\delta^2)\rangle\ t dxdt  + \frac{1}{2}\iint_{\RR^{n+1}_+}u_1^2 \langle {e_{n+1}} , A_1\nabla(\Psi_\delta^2)\rangle \ dxdt \\
&=: \mathbf{S}_1 + \mathbf{S}_2 + \mathbf{E}_1 + \mathbf{E}_2,
\end{split}\end{align}
where $e_{n+1}:=(0, ..., 0, 1)$ denotes the unit vector in the $t$-direction. In particular, note that the tangential integration by parts
\[
\int_{\RR^n} \langle \nabla_x(u_1^2), \mathbf{h}-\tfrac{1}{\mu}A_{||}^*\nabla_xP_{\eta t}^*\varphi\rangle \Psi_\delta^2 \ d\mu
=\int_{\RR^n} u_1^2 \div_\mu[(\mathbf{h}-\tfrac{1}{\mu}A_{||}^*\nabla_xP_{\eta t}^*\varphi) \Psi_\delta^2] \ d\mu,
\]
with respect to the measure $\mu$, is justified by the definition of the operator $\div_\mu$, since $P_{\eta t}^*\varphi \in \Dom(L_{||,\mu}^*)$ and $\div_\mu \mathbf{h} = 0$ imply that $(\mathbf{h}-\tfrac{1}{\mu}A_{||}^*\nabla_xP_{\eta t}^*\varphi) \Psi_\delta^2 \in \Dom(\div_\mu)$ (recall~\eqref{eq:Lbdrydef}, \eqref{eq:divbdrydef} and \eqref{eq:L||mudef}). Meanwhile, the transversal integration by parts
\[
\int_0^\infty \partial_t(u_1^2)(\langle A\mathbf{p},\mathbf{p}\rangle/J) \Psi_\delta^2 \ dt
= - \int_0^\infty u_1^2 \partial_t[(\langle A\mathbf{p},\mathbf{p}\rangle/J) \Psi_\delta^2] \ dt 
\]
is justified because $\Psi_\delta$ vanishes on the boundary of $\RR^{n+1}_+$.

We proceed to prove that, for all $\sigma\in(0,1)$, each term in \eqref{mainest} is controlled by
\begin{equation}\label{eq:SEwithsigma}
\mathbf{S}_1 + \mathbf{S}_2 + \mathbf{E}_1 + \mathbf{E}_2
\lesssim \sigma\iint_{\RR^{n+1}_+}\langle A_1\nabla u_1, \nabla u_1\rangle \Psi_\delta^2\ tdxdt + \sigma^{-1}\|u\|_\infty^2\mu(Q),
\end{equation}
where the implicit constant depends only on $n$, $\lambda$, $\Lambda$ and $[\mu]_{A_2}$. Estimate \eqref{eq:CMEforFinPf.trunc} will then follow by fixing a sufficiently small $\sigma\in(0,1)$, depending only on allowed constants, to move the integral in \eqref{eq:SEwithsigma} to the left side of \eqref{mainest}. This is justified because the integral in \eqref{eq:SEwithsigma} is finite by Caccioppoli's inequality and the fact that $\Psi_\delta$ vanishes in a neighbourhood of the boundary of $\RR^{n+1}_+$ ($\supp \Psi_\delta \subseteq \Omega_{\eta/8,Q,\delta}$).

We now prove \eqref{eq:SEwithsigma} in three steps to complete the proof.

\noindent\textbf{Step 1:} Estimates for the error terms $\mathbf{E}_1$ and $\mathbf{E}_2$ in \eqref{mainest}.

We first apply Cauchy's inequality with $\sigma$ to write
\begin{align*}
\mathbf{E}_1 &\leq \left| \frac{1}{2}\iint_{\RR^{n+1}_+} \langle A_1\nabla(u_1^2),\nabla(\Psi_\delta^2)\rangle\ tdxdt\right|\\
&= 2\left|\iint_{\RR^{n+1}_+} \langle A_1\nabla u_1 ,\nabla \Psi_\delta\rangle u_1\Psi_\delta\ tdxdt\right|\\
&\lesssim \sigma\iint_{\RR^{n+1}_+}\langle A_1\nabla u_1, \nabla u_1\rangle \Psi_\delta^2\ tdxdt + \sigma^{-1} \iint_{\RR^{n+1}_+}u_1^2 \langle A_1\nabla \Psi_\delta, \nabla \Psi_\delta\rangle \ tdxdt\\
&=: \sigma\iint_{\RR^{n+1}_+}\langle A_1\nabla u_1, \nabla u_1\rangle \Psi_\delta^2\ tdxdt + \sigma^{-1}\mathbf{E}_1'.
\end{align*}
We then use $\mu\mathbf{h} = \mathbf{c}\mathbbm{1}_{5Q} + A_{||}^*\nabla \varphi$ from \eqref{Hodgeforvarphi}, the degenerate bound in \eqref{eq:deg.def} for $A$, the bound $\|\mathbbm{1}_{\Omega_{\eta,Q}}u_1\|_\infty\lesssim\|u\|_\infty$ and the estimate for $\nabla\Psi_\delta$ from \eqref{eq.gradpsi} to obtain
\[
\mathbf{E}_1' + \mathbf{E}_2
\lesssim \|u\|_\infty^2 \iint_{\Omega_{\eta/8,Q}} \left(\frac{\mathbbm{1}_{E_1}}{t}
+ \frac{\mathbbm{1}_{E_2}}{\ell(Q)}
+ \frac{\mathbbm{1}_{E_3}}{\delta\ell(Q)}\right) \left(1 + |\nabla_x(I- P_{\eta t}^*)\varphi|^2\right) d\mu dt ,
\]
where \eqref{eq.gradpsi} ensures that $|\nabla(\Psi_\delta^2)|$ and $|\nabla\Psi_\delta|^2t$ can be controlled in the same manner. In order to apply Remark~\ref{rem:nabla.psi.est} with $v_t=\mathbbm{1}_{\Omega_{\eta/8,Q}}(1 + |\nabla_x(I- P_{\eta t}^*)\varphi|^2)$, we observe that if $k\in\mathbb{Z}$, $Q'\in\mathbb{D}_{k}^{\eta}$ and $\Omega_{\eta/8,Q,\delta}\cap (Q'\times [2^{-k}, 2^{-k+1}]) \neq \emptyset$, then there exists $x_0 \in F$ such that $Q' \subseteq \Delta(x_0,\eta 2^{-k}) \subseteq C Q'$, where $\Delta$ is used to  denote balls in $\RR^n$, hence
\begin{equation}\label{eq:sawtoothcontainment}
Q'\times [2^{-k}, 2^{-k+1}] \subseteq \Omega_{\eta,2Q,\delta/4}
\end{equation}
and the doubling property of $\mu$ implies that
\begin{multline}\label{eq:gradtauest}
\fint_{Q'}|\nabla_x (I - P_{\eta t}^*)\varphi|^2\ d\mu
\lesssim \fint_{\Delta(x_0,\eta t)}| \nabla_x P_{\eta t}^{*}\varphi|^2\ d\mu + \fint_{\Delta(x_0,\eta 2^{-k})}| \nabla_x \varphi|^2\ d\mu\\
\lesssim \big[\widetilde{N}_{*,\mu}^{\eta}(\nabla_xP_{\eta t}^*\varphi)(x_0)\big]^2 + M_{\mu}( |\nabla_x \varphi|^2)(x_0) \lesssim \kappa_0^2 \lesssim 1
\qquad \forall t \in [2^{-k}, 2^{-k+1}],
\end{multline}
where in the last line we used the definition of the set $F$ in \eqref{F} and the weighted maximal operators $\widetilde{N}_{*,\mu}$ and $M_{\mu}$ from Section~\ref{sec:maxops}. It thus follows from \eqref{eq:E123est} that $\mathbf{E}_1' + \mathbf{E}_2 \lesssim \|u\|^2_{\infty} \mu(Q)$, so altogether we have
\begin{equation}\label{eq:E1E2}
\mathbf{E}_1 + \mathbf{E}_2
\lesssim \sigma\iint_{\RR^{n+1}_+}\langle A_1\nabla u_1, \nabla u_1\rangle \Psi_\delta^2\ tdxdt + \sigma^{-1}\|u\|_\infty^2\mu(Q)
\quad\forall\sigma\in(0,1).
\end{equation}

\noindent\textbf{Step 2:} Estimates for the term $\mathbf{S}_1$ in  \eqref{mainest}.

We note that $\partial_t P_{\eta t}^* = -2\eta^2 t L_{||,\mu}^*P_{\eta t}^*$ on $L^2_\mu(\RR^n)$ and integrate by parts in $t$ to write
\begin{align*}
\mathbf{S}_1 &= \frac{1}{2}\iint_{\RR^{n+1}_+}u_1^2 
(L_{||,\mu}^*P_{\eta t}^*\varphi) \Psi_\delta^2\ d\mu dt \\
&=-\frac{1}{2}\iint_{\RR^{n+1}_+}u_1^2\partial_t(L_{||,\mu}^*P_{\eta t}^*\varphi)\Psi_\delta^2\ td\mu dt + \frac{1}{2\eta^2}\iint_{\RR^{n+1}_+}(u_1\partial_t u_1)(\partial_tP_{\eta t}^*\varphi) \Psi_\delta^2\ d\mu dt \\
&\quad+\frac{1}{2\eta^2}\iint_{\RR^{n+1}_+}u_1^2(\partial_tP^{*}_{\eta t}\varphi) \Psi_\delta\partial_t\Psi_\delta\ d\mu dt
=: \mathbf{S}_1'+ \mathbf{S}_1'' + \mathbf{S}_1''',
\end{align*}
where there is no boundary term because $\Psi_\delta$ vanishes on the boundary of $\RR^{n+1}_+$.

To estimate $\mathbf{S}_1'''$, we use the definition of the set $F$ in \eqref{F}, the estimate for $|\nabla\Psi_\delta|$ from~\eqref{eq.gradpsi}, and Remark~\ref{rem:nabla.psi.est} in the case $v_t\equiv1$, to obtain
\[
\mathbf{S}_1'''
\lesssim \|u\|^2_{{\infty}}\iint_{\Omega_{\eta/8,Q}} N_*^\eta(\partial_t P_{\eta t}^* \varphi)\, |\partial_t \Psi_\delta| \ d\mu dt
\lesssim \eta\kappa_0\|u\|^2_{{\infty}}\mu(Q) \lesssim \|u\|^2_{{\infty}}\mu(Q).
\]

To estimate $\mathbf{S}_1'$, we observe that $\partial_t(L_{||,\mu}^*P_{\eta t}^*\varphi) = L_{||,\mu}^* (\partial_t P_{\eta t}^*\varphi)$, since $\varphi\in W^{1,2}_{\mu,0}(\RR^n)$ and  $\partial_t P_{\eta t}^* = -2\eta^2 t P_{\eta t}^*L_{||,\mu}^*$ on the dense subset $\Dom(L_{||,\mu}^*)$ of $W^{1,2}_{0,\mu}(\RR^n)$ (note also that $t\nabla_xP_{\eta t}^*$ and hence its adjoint are bounded operators on $L^2_\mu$, as can be seen from the proof of Theorem~\ref{VSFbound}). We then apply Cauchy's inequality with $\sigma$ to write
\begin{align} \label{355}
\begin{split}
\mathbf{S}_1' &\leq \left| \iint_{\RR^{n+1}_+} L_{||,\mu}^* (\partial_t P_{\eta t}^*\varphi) u_1^2\Psi_\delta^2\ td\mu dt\right|\\
&\lesssim
\left| \iint_{\RR^{n+1}_+} \langle \tfrac{1}{\mu}A_{||}^*\nabla_x (\partial_t P_{\eta t}^*\varphi) , \nabla_x u_1\rangle   u_1\Psi_\delta^2\ td\mu dt\right|  \\
&\quad+ 
 \left| \iint_{\RR^{n+1}_+} \langle\tfrac{1}{\mu} A_{||}^*\nabla_x (\partial_t P_{\eta t}^*\varphi) , \nabla_x\Psi_\delta\rangle u_1^2\Psi_\delta\ td\mu dt\right|
=: \mathbf{J} + \mathbf{K} \\
&\lesssim \sigma \iint_{\RR^{n+1}_+} \left| \nabla_x u_1\right|^2\Psi_\delta^2\ t d\mu dt
+ (\sigma^{-1} +1)\iint_{\RR^{n+1}_+}u_1^2|\nabla_x\partial_tP_{\eta t}^*\varphi|^2 \Psi_\delta^2\ t d\mu dt\\
&\quad+ \iint_{\RR^{n+1}_+}u_1^2\left| \nabla_x\Psi_\delta\right|^2\ t d\mu dt
=: \sigma\mathbf{S}_{11}' + (\sigma^{-1}+1)\mathbf{S}_{12}' + \mathbf{S}_{13}',
\end{split}
\end{align}
where the integration by parts in $x$, with respect to the measure $\mu$, is justified by the definition of the operator $L_{||,\mu}^*$ (recall~\eqref{eq:Lbdrydef}, \eqref{eq:divbdrydef} and \eqref{eq:L||mudef}). The terms~$\mathbf{J}$ and~$\mathbf{K}$ are highlighted above for reference in Step 3.

To estimate $\mathbf{S}_{13}'$, we use the estimate for $|\nabla\Psi_\delta|$ from~\eqref{eq.gradpsi} and Remark~\ref{rem:nabla.psi.est} in the case $v_t\equiv1$, to obtain $ \mathbf{S}_{13}' \lesssim \|u\|^2_{{\infty}}\mu(Q)$.
 
To estimate $\mathbf{S}_{12}'$, we observe that $\nabla_x\partial_t P_{\eta t}^* = -2\eta^2 t \nabla_xL_{||,\mu}^*P_{\eta t}^*$ on $L^2_\mu(\RR^n)$ and then apply the vertical square function estimate from~\eqref{eq:VSF2} followed by the $W^{1,2}_{0,\mu}(5Q)$-Hodge estimate for $\varphi$ from \eqref{HodEst1} to obtain
\begin{align*}
\mathbf{S}_{12}' &\lesssim \iint_{\RR^{n+1}_+} u_1^2|\nabla_x\partial_tP_{\eta t}^*\varphi|^2\Psi_\delta^2\ t d\mu dt
\lesssim \|u\|^2_{{\infty}}\iint_{\RR^{n+1}_+} |t^2 \nabla_x L_{||,\mu}^*P_{\eta t}^*\varphi|^2\ d\mu\frac{dt}{t} \\
&\lesssim \|u\|^2_{{\infty}} \|\nabla \varphi\|_{L^2_{\mu}(\RR^n,\RR^n)}^2
\lesssim \|u\|^2_{{\infty}} \mu(Q).
\end{align*}

The terms $\mathbf{S}_{11}'$ and $\mathbf{S}_{1}''$ will now be estimated together. We again apply Cauchy's inequality with $\sigma$, followed by the vertical square function estimate from~\eqref{eq:VSF1} with $\mathcal{L}_\mu = L_{||,\mu}^*$  and the $W^{1,2}_{0,\mu}(5Q)$-Hodge estimate for $\varphi$ from \eqref{HodEst1} to obtain
\begin{align*}
&\sigma\mathbf{S}_{11}'+ \mathbf{S}_{1}'' \lesssim  \sigma \iint_{\RR^{n+1}_+} \left| \nabla_x u_1\right|^2\Psi_\delta^2\ t d\mu dt
+ \left|\iint_{\RR^{n+1}_+} (u_1\partial_t u_1) (\partial_tP_{\eta t}^{*}\varphi)\Psi_\delta^2\ d\mu dt \right|
\\
&\lesssim \sigma \iint_{\RR^{n+1}_+} |\nabla u_1|^2 \Psi_\delta^2\ t d\mu dt + \sigma^{-1} \|u\|^2_{{\infty}} \iint_{\RR^{n+1}_+} |\partial_t P_{\eta t}^{*}\varphi|^2 \ d\mu\frac{dt}{t}\\
&\lesssim \sigma \iint_{\RR^{n+1}_+} \langle A_1\nabla u_1,\! \nabla u_1\rangle \Psi_\delta^2\ t dxdt +\sigma \iint_{\RR^{n+1}_+} \left|\nabla_x \tau\right|^2\! \left|\partial_t u_1\right|^2\! \Psi_\delta^2\ t d\mu dt
+\sigma^{-1}\|u\|^2_{\infty} \mu(Q),
\end{align*}
where we combined the pointwise estimates for $\nabla u_1$ and $J$ from \eqref{eq:gradu1est} and \eqref{334} with identity \eqref{210} and the ellipticity of $A$ to deduce the final inequality.

We use the dyadic decomposition from Remark~\ref{rem:nabla.psi.est} to write
\begin{align}\label{362}
\iint_{\RR^{n+1}_+} |\nabla_x \tau|^2 |\partial_t u_1|^2 \Psi_\delta^2\ t d\mu dt
\leq\sum_{k\in\mathbb{Z}}\sum_{Q' \in \mathbb{D}_k^{\eta}}\int_{2^{-k}}^{2^{-k+1}}\!\!\!\! \int_{Q'}\!\!\mathbbm{1}_{\Omega_{\eta,Q,\delta}}|\nabla_x \tau|^2 |\partial_t u_1|^2 \ t d\mu dt.
\end{align}
Observe that if $k\in\mathbb{Z}$, $Q'\in\mathbb{D}_{k}^{\eta}$ and $\Omega_{\eta/8,Q,\delta}\cap (Q'\times [2^{-k}, 2^{-k+1}]) \neq \emptyset$, then as in \eqref{eq:sawtoothcontainment} and \eqref{eq:gradtauest}, it holds that $Q'\times [2^{-k}, 2^{-k+1}] \subseteq \Omega_{\eta,2Q,\delta/4}$ and
\[
\fint_{Q'}|\nabla_x \tau(x,t)|^2\ d\mu(x)
\lesssim \kappa_0^2
\qquad \forall t \in [2^{-k}, 2^{-k+1}].
\]
Also, we have $\frac{7}{8}t < \tau(x,t) < \frac{9}{8}t$ and $J(x,t)\eqsim 1$ on $Q'\times [2^{-k}, 2^{-k +1}]$ by \eqref{334}, so the degenerate version of Moser's estimate in \eqref{eq:M} and $t$-independence show that
\begin{align*}
\sup_{x \in Q'}|\partial_t u_1(x,t)|^2
= \sup_{x \in Q'}\left|J(x,t) \partial_{\tau} u(x, \tau(x,t))\right|^2
\lesssim \fint_{2Q'}\fint_{{t}/{2}}^{2t}\left| \partial_s u(y,s)\right|^2\ dsd\mu(y)
\end{align*}
for all $t \in [2^{-k}, 2^{-k+1}]$. In particular, note that
\[
2Q' \times [2^{-k-1}, 2^{-k+2}] \subseteq \Omega^* :=\left\lbrace (y,s) \in \RR^{n+1}_+: \delta_F(y) < \frac{5\eta s}{8},\ \frac{\delta}{2}\ell(Q) < s < 8\ell(Q)\right\rbrace,
\]
since there exists $(x_0,t_0) \in Q'\times [2^{-k}, 2^{-k+1}]$ satisfying $\delta_F(x_0) < \frac{1}{8}\eta t_0$, whence
\begin{equation*}
\delta_F(y) < \diam (2Q') + \frac{1}{8}\eta t_0 \leq \frac{5}{16}\eta 2^{-k} \leq \frac{5}{8}\eta s
\qquad \forall y \in 2Q' \mbox{ and } s \geq 2^{-k-1},
\end{equation*}
whilst $\delta\ell(Q)<t_0 < 4\ell(Q)$ implies that $[2^{-k}, 2^{-k+1}] \subseteq ((\delta/2)\ell(Q),8\ell(Q))$.

The observations in the preceding paragraph show that \eqref{362} is bound by
\begin{align*}
\begin{split}
&\sum_{k\in\mathbb{Z}}\sum_{Q'\in \mathbb{D}_k^{\eta}}\int_{2^{-k}}^{2^{-k+1}}\left(\fint_{Q'}|\nabla_x \tau|^2 d\mu\right)\left(\int_{2Q'}\int_{{t}/{2}}^{2t}\left| \partial_s u(y,s)\right|^2\mathbbm{1}_{\Omega^*}(y,s)\ dsd\mu(y)\right)dt\\
&\quad \lesssim \sum_{k\in\mathbb{Z}}\sum_{Q'\in \mathbb{D}_k^{\eta}}\int_{2^{-k-1}}^{2^{-k+2}}\int_{2Q'}\left|\partial_s u(y,s)\right|^2\mathbbm{1}_{\Omega^*}(y,s)\ s d\mu(y)ds\\
&\quad \lesssim \left( \iint_{\Omega^{**}}\left| \partial_s u(y,s)\right|^2\ s d\mu(y)ds + \iint_{\Omega^*\setminus \Omega^{**}}\left| \partial_s u(y,s)\right|^2\ s d\mu(y)ds\right)
:= \mathbf{M} + \mathbf{E},
\end{split}
\end{align*}
where we used the fact that $\sum_{k\in\mathbb{Z}}\sum_{Q'\in \mathbb{D}_k^{\eta}} \mathbbm{1}_{2Q' \times [2^{-k-1}, 2^{-k+2}]} \lesssim \mathbbm{1}_{\RR^{n+1}_+}$ and  introduced
\[
\Omega^{**}:=\left\lbrace (y,s) \in \RR^{n+1}_+: \delta_F(y) < \eta s/18,\ 4\delta\ell(Q) < s < \ell(Q)\right\rbrace.
\]

To estimate the main term $\mathbf{M}$, we use \eqref{334}-\eqref{335} to observe that
\[
\rho^{-1}(\Omega^{**}) \subseteq \Omega_{\frac{\eta}{16}} \cap (2Q \times (2\delta\ell(Q), 2 \ell(Q))).
\]
Thus, since $\Psi_\delta \equiv 1$ on these sets, the change of variables $(y,s)\mapsto \rho(y,s)$ gives
\begin{equation*}
\mathbf{M} \lesssim \iint_{\RR^{n+1}_+}\left| (\partial_t u)\circ \rho \right|^2 J\, \Psi_\delta^2\ t d\mu dt
\lesssim \iint_{\RR^{n+1}_+} \langle A_1\nabla u_1, \nabla u_1\rangle \Psi_\delta^2\ t dx dt,
\end{equation*}
where we used identity \eqref{210} and the ellipticity of $A$ to deduce the final inequality.

To estimate the error term $\mathbf{E}$, recall that the degenerate version of Moser's estimate in \eqref{eq:M}, followed by Caccioppoli's inequality, ensures that $\|s\partial_s u\|_\infty \lesssim \|u\|_\infty$. Thus, by the definition of $\Omega^* \setminus \Omega^{**}$ and the doubling property of $\mu$, we obtain
\begin{align*} 
\mathbf{E} \lesssim \|u\|_{\infty}^2 \int_{2Q}\left( \int_{\frac{8}{5\eta}\delta_F(y)}^{\frac{18}{\eta}\delta_F(y)} \frac{ds}{s} + \int_{\ell(Q)}^{8 \ell(Q)}\frac{ds}{s} + \int_{(\delta/2)\ell(Q)}^{4\delta\ell(Q)}\frac{ds}{s}\right) d\mu(y)
\lesssim \|u\|_{\infty}^2\mu(Q).
\end{align*} 

This shows that $\sigma \mathbf{S}_{11}'+ \mathbf{S}_{1}''
\lesssim \sigma \iint_{\RR^{n+1}_+} \langle A_1\nabla u_1, \nabla u_1\rangle \Psi_\delta^2\ t dx dt +\sigma^{-1} \|u\|^2_{\infty} \mu(Q)$, hence
\begin{equation}\label{eq:S1}
\mathbf{S}_1
\lesssim \sigma \iint_{\RR^{n+1}_+} \langle A_1\nabla u_1, \nabla u_1\rangle \Psi_\delta^2\ t dx dt 
+\sigma^{-1} \|u\|^2_{\infty} \mu(Q) \quad\forall\sigma\in(0,1).
\end{equation}

\noindent\textbf{Step 3:} Estimates for the term $\mathbf{S}_2$ in  \eqref{mainest}.

We observe that since $A$ is $t$-independent it is possible to write
\begin{align*}
2\mathbf{S}_2 &= \iint_{\RR^{n+1}_+} u_1^2\partial_t (\langle A\mathbf{p}, \mathbf{p}\rangle/J)\Psi_\delta^2\ dx dt\\
&=\iint_{\RR^{n+1}_+} u_1^2\partial_t (1/J)\langle A\mathbf{p}, \mathbf{p}\rangle\Psi_\delta^2\ dx dt
+ \iint_{\RR^{n+1}_+} (u_1^2/J) \langle \partial_t\mathbf{p}, A^*\mathbf{p}\rangle\Psi_\delta^2\ dx dt\\
&\quad+\iint_{\RR^{n+1}_+} (u_1^2/J) \langle A\mathbf{p},  \partial_t\mathbf{p}\rangle\Psi_\delta^2\ dx dt
=: \mathbf{I} + \mathbf{II} + \mathbf{III}.
\end{align*}

To estimate $\mathbf{I}$, we recall the Jacobian $J(x,t) = 1 +\partial_t P_{\eta t}^* \varphi(x)$ from \eqref{eq:Jdef} and then integrate by parts in $t$ to write
\begin{align*}
\mathbf{I} &= -\iint_{\RR^{n+1}_+} u_1^2 \frac{\partial_t^2P_{\eta t}^*\varphi}{J^2}\langle A\mathbf{p}, \mathbf{p}\rangle\Psi_\delta^2\ dx dt\\
&= \iint_{\RR^{n+1}_+} \partial_t (u_1^2) 
\frac{\partial_tP_{\eta t}^*\varphi}{J^2}\langle A\mathbf{p}, \mathbf{p}\rangle\Psi_\delta^2\ dx dt
+ \iint_{\RR^{n+1}_+} u_1^2 \frac{\partial_tP_{\eta t}^*\varphi}{J^2}\partial_t(\langle A\mathbf{p}, \mathbf{p}\rangle)\Psi_\delta^2\ dx dt \\
&\quad +  \iint_{\RR^{n+1}_+} u_1^2 \partial_tP_{\eta t}^*\varphi\,\partial_t (J^{-2})\langle A\mathbf{p}, \mathbf{p}\rangle\Psi_\delta^2\ dx dt  +  \iint_{\RR^{n+1}_+} u_1^2 \frac{\partial_tP_{\eta t}^*\varphi}{J^2}\langle A\mathbf{p}, \mathbf{p}\rangle\partial_t(\Psi_\delta^2)\ dx dt \\
&=: \mathbf{I}_1 + \mathbf{I}_2 + \mathbf{I}_3 + \mathbf{I}_4,
\end{align*}
where there is no boundary term because $\Psi_\delta$ vanishes on the boundary of $\RR^{n+1}_+$.

%I_1
To estimate $\mathbf{I}_1$, we recall that $J\eqsim 1$ on $\supp \Psi_\delta \subseteq \Omega_{\eta/8,Q,\delta}$ by \eqref{334} and then apply Cauchy's inequality with $\sigma$ to obtain
\begin{align}\begin{split}\label{365}
|\mathbf{I}_1|
&\lesssim \sigma\iint_{\RR^{n+1}_+}|\partial_t u_1|^2|\mathbf{p}|^2\Psi_\delta^2\ td\mu dt \\
&\quad + \sigma^{-1}\iint_{\RR^{n+1}_+}u_1^2|\partial_tP_{\eta t} \varphi|^2|\mathbf{p}|^2\Psi_\delta^2 \ d\mu\frac{dt}{t}
=: \sigma\mathbf{I}_1' + \sigma^{-1}\mathbf{I}_1''.
\end{split}\end{align}

To estimate $\mathbf{I}_1'$, recall that $|\mathbf{p}|^2=1+|\nabla_x\tau|^2$ by the definition of $\mathbf{p}$ in \eqref{eq:pdef}, so we follow the treatment of \eqref{362} above to obtain
\[
\mathbf{I}_1' 
\lesssim  \iint_{\RR^{n+1}_+} \langle A_1\nabla u_1, \nabla u_1\rangle \Psi_\delta^2\ t dx dt + \|u\|^2_{\infty} \mu(Q).
\]

To estimate $\mathbf{I}_1''$, recall that $\|\mathbbm{1}_{\Omega_{\eta,Q}}u_1\|_\infty\lesssim\|u\|_\infty$ and use the dyadic decomposition from Remark~\ref{rem:nabla.psi.est} to obtain
\begin{align}\begin{split}\label{eq:partialPtEst}
\mathbf{I}_1''
&\lesssim \|u\|_{\infty}^2 \sum_{k\in\ZZ}\sum_{Q' \in \mathbb{D}_{k}^{\eta}}
 \|\partial_t P_{\eta t}^*\varphi\|^2_{L^\infty(Q'\times[2^{-k},2^{-k+1}])}
\int_{2^{-k}}^{2^{-k +1}} \int_{Q'} |\mathbf{p}|^2\Psi_\delta^2\ d\mu\frac{dt}{t} \\
&\lesssim \|u\|_{\infty}^2 \sum_{k\in\ZZ}\sum_{Q' \in \mathbb{D}_{k}^{\eta}}
\mu(Q') \|\partial_t P_{\eta t}^*\varphi\|^2_{L^\infty(Q'\times[2^{-k},2^{-k+1}])}
\\
&\lesssim \|u\|_{\infty}^2 \sum_{k\in\ZZ}\sum_{Q' \in \mathbb{D}_{k}^{\eta}}
\mu(Q')\fint_{2^{-k-1}}^{2^{-k+2}} \fint_{2Q'} |\partial_tP_{\eta t}^*\varphi |^2\ d\mu dt \\
&\lesssim \|u\|_{\infty}^2 \sum_{k\in\ZZ}\sum_{Q' \in \mathbb{D}_{k}^{\eta}} \int_{2^{-k-1}}^{2^{-k+2}} \int_{2Q'} |\partial_tP_{\eta t}^*\varphi |^2\ d\mu\frac{dt}{t}\\
&\lesssim \|u\|_{\infty}^2\iint_{\RR^{n+1}_+}|tL_{||,\mu}^*e^{-t^2L_{||,\mu}^*}\varphi|^2\ d\mu \frac{dt}{t} \\
&\lesssim \|u\|_{\infty}^2\|\nabla \varphi\|^2_{L^2_{\mu}(\RR^n,\RR^n)} 
\lesssim \|u\|_{\infty}^2 \mu(Q),
\end{split}\end{align}
where the second line uses the pointwise bound $|\mathbf{p}|^2\Psi_\delta^2 \leq \mathbbm{1}_{\Omega_{\eta/8,Q,\delta}}(1 + |\nabla_x(I- P_{\eta t}^*)\varphi|^2)$ and estimate \eqref{eq:gradtauest}, the third line uses the parabolic version of the degenerate Moser-type estimate in \eqref{eq:M} (see Theorem B in \cite{F}), noting that $v:=\partial_t(e^{-tL_{||,\mu}^*}\varphi)$ solves $\partial_t v = - L_{||,\mu}^*v$ whilst $|\partial_tP_{\eta t}^*\varphi(x)| \lesssim |t\, v(x,\eta^2 t^2)|$, and the final line uses the vertical square function estimate from~\eqref{eq:VSF1} with $\mathcal{L}_\mu = L_{||,\mu}^*$ and the $W^{1,2}_{0,\mu}(5Q)$-Hodge estimate for $\varphi$ from \eqref{HodEst1}.

%I_2
To estimate $\mathbf{I}_2$, we again use the bound $J\eqsim 1$ on $\supp \Psi_\delta \subseteq \Omega_{\eta/8,Q,\delta}$ from \eqref{334}, and then recall the definition $\mathbf{p}:= (\nabla_x(P_{\eta t}^*-I)\varphi, -1)$ from \eqref{eq:pdef} to obtain
\begin{equation}\label{366}
| \mathbf{I}_2|
\lesssim \iint_{\RR^{n+1}_+} u_1^2 |\nabla_x \partial_t P_{\eta t}^*\varphi|^2\Psi_\delta^2\ td\mu dt
+ \iint_{\RR^{n+1}_+} u_1^2 |\partial_t P_{\eta t}^*\varphi|^2|\mathbf{p}|^2\Psi_\delta^2 \ d\mu\frac{dt}{t}.
\end{equation}
The first integral in \eqref{366} is the same as $\mathbf{S}_{12}'$ from \eqref{355} whilst the second integral is the same as $\mathbf{I}_1''$ from \eqref{365}, hence $| \mathbf{I}_2 | \lesssim \|u\|_{\infty}^2\mu(Q)$.

%I_3
To estimate $\mathbf{I}_3$, we use the bound $|\partial_tP_{\eta t}^*\varphi|<1/8$ guaranteed by \eqref{eq:CoV1} to deduce that $|\partial_t(J^{-2})| = |\partial_t(1 +\partial_t P_{\eta t}^* \varphi)^{-2}| \lesssim |\partial_t^2P_{\eta t}^*\varphi |$ on $\supp \Psi_\delta \subseteq \Omega_{\eta/8,Q,\delta}$ and write
 \begin{equation*}
|\mathbf{I}_3|
\lesssim \iint_{\RR^{n+1}_+} u_1^2|\partial_tP_{\eta t}^*\varphi|^2|\mathbf{p}|^2\Psi_\delta^2\ d\mu\frac{dt}{t}
+ \iint_{\RR^{n+1}_+} u_1^2 |\partial_t^2P_{\eta t}^*\varphi |^2|\mathbf{p}|^2\Psi_\delta^2\  td\mu dt
=: \mathbf{I}_3' + \mathbf{I}_3''
\end{equation*} 

To estimate $\mathbf{I}_3'$, we note that it is the same as $\mathbf{I}_1''$ from \eqref{365}, thus $\mathbf{I}_3'\lesssim \|u\|_{\infty}^2\mu(Q)$.

To estimate $\mathbf{I}_3''$, we follow the estimates and justification provided for \eqref{eq:partialPtEst}, noting in addition that $\partial_tv=\partial_t^2(e^{-tL_{||,\mu}^*}\varphi)$ solves $\partial_t (\partial_tv) = - L_{||,\mu}^*(\partial_tv)$, to obtain
\begin{align*}
\mathbf{I}_3''
&\lesssim \|u\|_{\infty}^2 \sum_{k\in\ZZ}\sum_{Q' \in \mathbb{D}_{k}^{\eta}} \|t\partial_t^2P_{\eta t}^*\varphi\|^2_{L^\infty(Q'\times[2^{-k},2^{-k+1}])} \int_{2^{-k}}^{2^{-k+1}}\int_{Q'} |\mathbf{p}|^2\Psi_\delta^2\ d\mu \frac{dt}{t}\\
&\lesssim \|u\|_{\infty}^2 \sum_{k\in\ZZ}\sum_{Q' \in \mathbb{D}_{k}^{\eta}} \mu(Q') \|\, |tL_{||,\mu}^*P_{\eta t}^*\varphi| + |t^2\partial_t(L_{||,\mu}^*P_{\eta t}^*\varphi)|\, \|^2_{L^\infty(Q'\times[2^{-k},2^{-k+1}])}\\
&\lesssim \|u\|_{\infty}^2 \sum_{k\in\ZZ}\sum_{Q' \in \mathbb{D}_{k}^{\eta}}
\mu(Q') \fint_{2^{-k-1}}^{2^{-k+2}} \fint_{2Q'} (|tL_{||,\mu}^*P_{\eta t}^*\varphi|^2 + |t^2\partial_t(L_{||,\mu}^*P_{\eta t}^*\varphi)|^2)\ d\mu dt \\
&\lesssim \|u\|_{\infty}^2\iint_{\RR^{n+1}_+}|tL_{||,\mu}^*P_{\eta t}^*\varphi|^2\ d\mu\frac{dt}{t}
+ \|u\|_{\infty}^2\iint_{\RR^{n+1}_+}|t^2 \nabla_{x,t}(L_{||,\mu}^*P_{\eta t}^*\varphi)|^2\ d\mu\frac{dt}{t} \\
&\lesssim  \|u\|_{\infty}^2\|\nabla \varphi\|_{L^{2}_{\mu}(\RR^n,\RR^n)}^2 
\lesssim \|u\|_{\infty}^2\mu(Q),
\end{align*}
where the second line uses $|\partial_t^2P^{*}_{\eta t}\varphi| \lesssim |\partial_t(tL_{||,\mu}^*P_{\eta t}^*\varphi)| \lesssim |L_{||,\mu}^*P_{\eta t}^*\varphi| + |t\partial_t(L_{||,\mu}^*P_{\eta t}^*\varphi)|$, the third line uses $|L_{||,\mu}^*P_{\eta t}^*\varphi(x)|=|v(x,\eta^2 t^2)|$ and $|\partial_t(L_{||,\mu}^*P_{\eta t}^*\varphi)(x)|\lesssim |t (\partial_t v)(x,\eta^2 t^2)|$, and the final line uses the vertical square function estimates from \eqref{eq:VSF1} and \eqref{eq:VSF2} with $\mathcal{L}_\mu = L_{||,\mu}^*$, hence $| \mathbf{I}_3 | \lesssim \|u\|_{\infty}^2\mu(Q)$

%I_4
To estimate $\mathbf{I}_4$, we use $|\partial_tP_{\eta t}^*\varphi|\lesssim 1$, $J\eqsim 1$ and $|\mathbf{p}|^2 \leq (1 + |\nabla_x(I- P_{\eta t}^*)\varphi|^2)$, which hold on $\supp \Psi_\delta \subseteq \Omega_{\eta/8,Q,\delta}$ by \eqref{eq:CoV1}, \eqref{334} and \eqref{eq:pdef}, to reduce to the estimate obtained for $\mathbf{E}_1'+\mathbf{E}_2$, hence $|\mathbf{I}_4| \lesssim \|u\|_{\infty}^2\mu(Q)$.

%II
To estimate $\mathbf{II}$, we use the definition $\mathbf{p}:= (\nabla_x(P_{\eta t}^*-I)\varphi, -1)$ from \eqref{eq:pdef} to note that $\partial_t \mathbf{p} = (\nabla_x\partial_tP_{\eta t}^*\varphi, 0 )$ and use the Hodge decomposition from~\eqref{Hodgeforvarphi} to write
\begin{align}\label{367}
\begin{split}
\langle \partial_t\mathbf{p}, A^*\mathbf{p}\rangle
&= \langle \nabla_x\partial_tP_{\eta t}^*\varphi, A_{||}^*\nabla_x(P_{\eta t}^*\!-\!I)\varphi - \mathbf{c}\rangle 
= \langle \nabla_x\partial_tP_{\eta t}^*\varphi, A_{||}^*\nabla_xP_{\eta t}^*\varphi - \mu\mathbf{h}\rangle
\end{split}
\end{align}
for all $x \in 5Q$ and $t>0$. Using this and recalling that $\div_\mu\mathbf{h}=0$, it follows that
\begin{align}\begin{split}\label{eq:II}
\mathbf{II} &= \iint_{\RR^{n+1}_+} (u_1^2/J)\langle \nabla_x\partial_tP_{\eta t}^*\varphi, A_{||}^*\nabla_xP_{\eta t}^*\varphi - \mu\mathbf{h}\rangle \Psi_\delta^2\ dx dt \\
&= \iint_{\RR^{n+1}_+ } (u_1^2/J)(\partial_t P_{\eta t}^*\varphi) (L_{||,\mu}^* P_{\eta t}^*\varphi)\Psi_\delta^2\ d\mu dt\\
&\quad - \iint_{\RR^{n+1}_+ }\partial_t P_{\eta t}^*\varphi \langle\nabla_x(u_1^2/J), A_{||}^*\nabla_x P_{\eta t}^*\varphi - \mu\mathbf{h}\rangle\Psi_\delta^2\ dx dt\\
&\quad -\iint_{\RR^{n+1}_+ } (u_1^2/J)\partial_t P_{\eta t}^*\varphi \langle\nabla_x(\Psi_\delta^2), A_{||}^*\nabla_x P_{\eta t}^*\varphi - \mu\mathbf{h}\rangle\ dx dt\\
&=: \mathbf{II}_1 + \mathbf{II}_2 + \mathbf{II}_3,
\end{split}\end{align}
where the integration by parts in $x$, with respect to the measure $\mu$, is justified by the definition of the operator $L_{||,\mu}^*$ (recall~\eqref{eq:Lbdrydef}, \eqref{eq:divbdrydef} and \eqref{eq:L||mudef}). 

To estimate $\mathbf{II}_1$, we use $J\eqsim 1$ and $L_{||,\mu}^*P_{\eta t}^*\varphi = -(2\eta^2 t)^{-1}\partial_t P_{\eta t}^*\varphi$ to show that it can be treated the same way as $\mathbf{I}_1''$ in \eqref{365}, without $|\mathbf{p}|^2$, hence $|\mathbf{II}_1| \lesssim \|u\|_{\infty}^2 \mu(Q)$.

To estimate $\mathbf{II}_2$, we use $J\eqsim 1$, $|\nabla_x (J^{-1})| = |\nabla_x(1 +\partial_t P_{\eta t}^* \varphi)^{-1}| \lesssim |\nabla_x \partial_t P_{\eta t}^*\varphi|$ and apply Cauchy's inequality inequality with $\sigma$  to obtain
\begin{align}\begin{split}\label{368}
|\mathbf{II}_2|
&\lesssim \sigma \iint_{\RR^{n+1}_+} |\nabla_x u_1|^2\Psi_\delta^2\ td\mu dt
+  \iint_{\RR^{n+1}_+} u_1^2|\nabla_x \partial_tP_{\eta t}^*\varphi|^2 \Psi_\delta^2\ t d\mu dt \\
&\quad +(\sigma^{-1}+1)\iint_{\RR^{n+1}_+} u_1^2|\partial_t P_{\eta t}^*\varphi|^2(|\nabla_xP_{\eta t}^*\varphi|^2 + |\mathbf{h}|^2)\Psi_\delta^2\ d\mu\frac{dt}{t}.
\end{split}\end{align}
The first integral is the same as $\mathbf{S}_{11}'$ from \eqref{355} whilst the remaining two integrals are the same as those that bound $\mathbf{I}_2$ in \eqref{366}, except $(|\nabla_x P_{\eta t}^*\varphi|^2+|\mathbf{h}|^2)$ replaces $|\mathbf{p}|^2$. This factor is controlled in the same way, however, since the Hodge decomposition in~\eqref{Hodgeforvarphi} implies that $|\mathbf{h}|^2 = |\tfrac{1}{\mu}\mathbf{c}\mathbbm{1}_{5Q} + \tfrac{1}{\mu}A_{||}^*\nabla_x \varphi|^2 \lesssim 1+|\nabla_x\varphi|^2$, so by \eqref{eq:gradtauest} we obtain
\[
|\mathbf{II}_2|
\lesssim \sigma \iint_{\RR^{n+1}_+} \langle A_1\nabla u_1, \nabla u_1\rangle \Psi_\delta^2\ t dx dt + \sigma^{-1} \|u\|^2_{\infty} \mu(Q).
\]

To estimate $\mathbf{II}_3$, we use $J \eqsim 1$ and Cauchy's inequality to write
\begin{equation*}
|\mathbf{II}_3| \lesssim \iint_{\RR^{n+1}_+} u_1^2 |\nabla_x \Psi_\delta|^2\ t d\mu dt
+ \iint_{\RR^{n+1}_+} u_1^2 |\partial_t P_{\eta t}^*\varphi|^2 (|\nabla_xP_{\eta t}^*\varphi|^2 + |\mathbf{h}|^2) \Psi_\delta^2\ d\mu\frac{dt}{t}
\end{equation*}
The first term above is the same as $\mathbf{S}_{13}'$ in \eqref{355} whilst the remaining term is the same as the last integral in \eqref{368}, hence $|\mathbf{II}_3| \lesssim \|u\|_{\infty}^2 \mu(Q)$.

To estimate $\mathbf{III}$, we observe by analogy with \eqref{367} that
\begin{align*}
\langle A\mathbf{p},\partial_t\mathbf{p}\rangle
&= \langle A_{||}\nabla_x(P_{\eta t}^*-I)\varphi - \mathbf{b},\nabla_x\partial_tP_{\eta t}^*\varphi\rangle \\
&= \langle A_{||}\nabla_x(P_{\eta t}^*\varphi-\varphi) + A_{||}\nabla_x \widetilde{\varphi} - \mu\widetilde{\mathbf{h}},\nabla_x\partial_tP_{\eta t}^*\varphi\rangle \\
&= \langle A_{||}\nabla_x[(P_{\eta t}^*\varphi-\varphi)-(P_{\eta t}\widetilde{\varphi}-\widetilde{\varphi})] +  A_{||}\nabla_xP_{\eta t}\widetilde{\varphi} - \mu\widetilde{\mathbf{h}},\nabla_x\partial_tP_{\eta t}^*\varphi\rangle
\end{align*}
for all $x \in 5Q$ and $t>0$ and then write 
\begin{align*}
\begin{split}
\mathbf{III} &= \iint_{\RR^{n+1}_+} (u_1^2/J)\langle \nabla_x[(P_{\eta t}^*\varphi-\varphi)-(P_{\eta t}\widetilde{\varphi}-\widetilde{\varphi})], A_{||}^*\nabla_x\partial_t P_{\eta t}^*\varphi\rangle \Psi_\delta^2\ dx dt \\
&\quad + \iint_{\RR^{n+1}_+} (u_1^2/J)\langle A_{||}\nabla_xP_{\eta t}\widetilde{\varphi} - \mu\widetilde{\mathbf{h}}, \nabla_x\partial_t P_{\eta t}^*\varphi\rangle \Psi_\delta^2\ dx dt
=: \mathbf{III}_1 + \mathbf{III}_2.
\end{split}
\end{align*}

To estimate $\mathbf{III}_1$, we integrate by parts in $x$ with respect to the measure $\mu$ to write
\begin{align*}
\mathbf{III}_1 &= \iint_{\RR^{n+1}_+} (u_1^2/J)[(P_{\eta t}^*\varphi-\varphi)-(P_{\eta t}\widetilde{\varphi}-\widetilde{\varphi})] (L_{||,\mu}^*\partial_t P_{\eta t}^* \varphi)\Psi_\delta^2\ d\mu dt \\
&\quad - \iint_{\RR^{n+1}_+} [(P_{\eta t}^*\varphi-\varphi)-(P_{\eta t}\widetilde{\varphi}-\widetilde{\varphi})]\langle \nabla_x(u_1^2\Psi_\delta^2/J), A_{||}^*\nabla_x\partial_t P_{\eta t}^*\varphi\rangle\ dx dt \\
&=: \mathbf{III}_1' + \mathbf{III}_1'',
\end{align*}
which is justified by the definition of $L_{||,\mu}^*$ (recall~\eqref{eq:Lbdrydef}, \eqref{eq:divbdrydef} and \eqref{eq:L||mudef}). 

To estimate $\mathbf{III}_1'$, we use Hardy's inequality (see, for instance, page 272 in \cite{St}) to observe, for the semigroups $\mathcal{P}_t \in \{e^{-t^2L_{||,\mu}^*},e^{-t^2L_{||,\mu}}\}$, the estimate
\begin{align*}
\int_0^\infty |\mathcal{P}_{\eta t}f - f|^2\ \frac{dt}{t^3}
\leq\int_0^\infty\!\left(\int_{0}^{\eta t}|\partial_s\mathcal{P}_sf|\ ds\right)^2 \frac{dt}{t^3}
\lesssim \int_0^\infty |\partial_t\mathcal{P}_tf|^2\ \frac{dt}{t}
\ \forall f \in L^2_\mu(\RR^n).
\end{align*} 
We then recall that $\|\mathbbm{1}_{\Omega_{\eta,Q}}u_1\|_\infty\lesssim\|u\|_\infty$ and $J\eqsim 1$ on $\supp \Psi_\delta \subseteq \Omega_{\eta/8,Q,\delta}$ to obtain
\begin{align*}
&|\mathbf{III}_1'|
\lesssim \|u\|_{\infty}^2 \int_{\RR^n} \int_0^\infty (|P_{\eta t}^*\varphi - \varphi| + |P_{\eta t}\widetilde{\varphi}-\widetilde{\varphi}|)\, |L_{||,\mu}^*\partial_t P_{\eta t}^* \varphi|\ dt d\mu \\
&\lesssim\|u\|_{\infty}^2\int_{\RR^n}\left(\int_0^{\infty} |P_{\eta t}^*\varphi - \varphi|^2 + |P_{\eta t}\widetilde{\varphi} - \widetilde{\varphi}|^2\ \frac{dt}{t^3}\right)^{1/2} \!
\left(\int_0^{\infty} |t^2 L_{||,\mu}^* \partial_t P_{\eta t}^* \varphi |^2\ \frac{dt}{t}\right)^{1/2}\! d\mu\\
&\lesssim\|u\|_{\infty}^2\left(\iint_{\RR^{n+1}_+} |\partial_tP_t^*\varphi|^2 + |\partial_tP_t\widetilde{\varphi}|^2\ d\mu\frac{dt}{t}\right)^{1/2} \!
\left(\iint_{\RR^{n+1}_+} |t^2 \partial_t L_{||,\mu}^* P_{\eta t}^* \varphi |^2\ d\mu\frac{dt}{t}\right)^{1/2}\\
&\lesssim \|u\|_{\infty}^2 (\|\nabla \varphi\|_{L^2_{\mu}(\RR^n,\RR^n)}^2 + \|\nabla \widetilde{\varphi}\|_{L^2_{\mu}(\RR^n,\RR^n)}^2)^{1/2} \|\nabla \varphi\|_{L^2_{\mu}(\RR^n,\RR^n)}
\lesssim \|u\|_{\infty}^2\mu(Q),
\end{align*}
where the final line uses the vertical square function estimates from~\eqref{eq:VSF1}-\eqref{eq:VSF2} for $\mathcal{L}_\mu\in \{L_{||,\mu}^*$, $L_{||,\mu}\}$ and the $W^{1,2}_{0,\mu}(5Q)$-Hodge estimates for $\varphi$, $\widetilde{\varphi}$ from \eqref{HodEst1}-\eqref{HodEst2}.

To estimate $\mathbf{III}_1''$, recall that $|P_{\eta t}^*\varphi - \varphi| \lesssim t$ and $|P_{\eta t}\widetilde{\varphi} - \widetilde{\varphi}| \lesssim t$ on $\supp \Psi_\delta \subseteq \Omega_{\eta/8,Q,\delta}$ by \eqref{eq:CoV2}, whilst $J\eqsim 1$ and $|\nabla_x (J^{-1})|  \lesssim |\nabla_x \partial_t P_{\eta t}^*\varphi|$, so distributing $\nabla_x$ over $u_1^2$, $\Psi_\delta^2$ and $1/J$ yields terms that can be controlled in the same way $\mathbf{J}$, $\mathbf{K}$ and $\mathbf{S}_{12}'$ in \eqref{355}.

To estimate $\mathbf{III}_2$, note that the estimates used to control $\varphi$ and $P_{\eta t}\varphi$ also hold for $\widetilde{\varphi}$ and $P_{\eta t}\widetilde{\varphi}$ by \eqref{HodEst1}-\eqref{HodEst2} and \eqref{eq:CoV1}-\eqref{eq:CoV2}, whilst $\div_\mu\mathbf{h} = \div_\mu\widetilde{\mathbf{h}}=0$  by \eqref{Hodgeforvarphi}, hence $\mathbf{III}_2$ can be estimated in the same way as $\mathbf{II}$ in \eqref{eq:II}.

This gives $|\mathbf{III}_1''|+ |\mathbf{III}_2| \lesssim \sigma \iint_{\RR^{n+1}_+} \langle A_1\nabla u_1, \nabla u_1\rangle \Psi_\delta^2\ t dx dt +\sigma^{-1} \|u\|^2_{\infty} \mu(Q)$, hence
\begin{equation}\label{eq:S2}
\mathbf{S}_2
\lesssim \sigma \iint_{\RR^{n+1}_+} \langle A_1\nabla u_1, \nabla u_1\rangle \Psi_\delta^2\ t dx dt 
+\sigma^{-1} \|u\|^2_{\infty} \mu(Q) \quad\forall\sigma\in(0,1).
\end{equation}

We combine \eqref{eq:E1E2}, \eqref{eq:S1} and \eqref{eq:S2} to obtain \eqref{eq:SEwithsigma}, as required.
\end{proof}

\section{Solvability of the Dirichlet Problem}\label{sec:Dp}
This section is dedicated to the proof of Theorem~\ref{thm:introAinfinity+Dp}. We first consider the construction and properties of a degenerate elliptic measure $\omega^X$ for degenerate elliptic equations $\div (A \nabla u) = 0$ in the upper half-space, where $X=(x,t)\in\RR^{n+1}_+$ and $n\geq2$. The $t$-independent coefficient matrix $A$ is assumed throughout to satisfy the degenerate bound and ellipticity in~\eqref{eq:deg.def} for some constants $0<\lambda\leq\Lambda<\infty$ and an $A_2$-weight $\mu$ on $\RR^n$. This is necessary as the literature only seems to treat bounded domains whilst the passage to unbounded domains in the uniformly elliptic case (see Section 10 in \cite{LSW} and \cite{HK}) relies on a global version of the Sobolev embedding in~\eqref{wSob}, which is not known for $A_2$-weights in general. The degenerate elliptic measure is then shown to be in the $A_{\infty}$-class with respect to $\mu$ on the boundary $\RR^n$ in Theorem~\ref{thm:Ainftymain} and the solvability of the Dirichlet problem follows in Theorem~\ref{thm:DpSolvmain}. These results together prove Theorem~\ref{thm:introAinfinity+Dp}.

\subsection{Boundary estimates for solutions}
We require some estimates for solutions near the boundary $\partial\Sigma$ of a bounded Lipschitz domain $\Sigma \subset \RR^n$ (see Section~2 of \cite{CFMS} for the standard definition). These estimates require some regularity on the domain boundary but no attempt is made here to obtain the minimal such regularity, as the focus is to define and analyse a degenerate elliptic measure on $\RR^n$.

The Lipschitz regularity of the boundary $\partial\Sigma$ ensures that the smooth class $C^\infty(\overline{\Sigma})$ and the Lipschitz class $C^{0,1}(\overline\Sigma)$ are both dense in $W^{1,2}_\mu(\Sigma)$ (see Theorem~3.4.1 in~\cite{Mo} and page 29 in \cite{KS}). This allows the usual definition, for $E\subseteq \partial\Sigma$ and $u \in W^{1,2}_\mu(\Sigma)$, whereby $u\geq 0$ on $E$ in the $W^{1,2}_\mu(\Sigma)$-sense means there exists a sequence $u_j$ in $C^{0,1}(\overline\Sigma)$ that converges to $u$ in $W^{1,2}_\mu(\Sigma)$ with $u_j(x)\geq 0$ for all $x\in E$. This induces definitions for inequalities $\leq$, $\geq$ and $=$, between functions and/or constants, on $E$ in the $W^{1,2}_\mu(\Sigma)$-sense (see, for instance, Definition 5.1 in \cite{KS}). Moreover, with $\sup_{\partial\Sigma} u := \inf\{k\in\RR : u\leq k \textrm{ on ${\partial\Sigma} $ in the $W^{1,2}_\mu(\Sigma)$-sense}\}$ and $\inf_{\partial\Sigma} :=-\sup_{\partial\Sigma} (-u)$, the weak maximum principle holds (see Theorem 2.2.2 in \cite{FKS}), and the strong version follows by the Harnack inequality in \eqref{eq:H} (see Corollary 2.3.10 in \cite{FKS}).

We can now state a H\"{o}lder continuity estimate and a Harnack inequality for certain solutions near the boundary. For a cube $Q\subset \RR^n$, recall the corkscrew point $X_Q:=(x_Q,\ell(Q))$ and denote the Carleson box in $\RR^{n+1}_+$ by $T_Q:=Q\times (0,\ell(Q))$. Also, recall that $\mu(x,t):=\mu(x)$, so $d\mu(x,t)=\mu(x)dxdt$, for $(x,t)\in \RR^{n+1}$. If $u\in W^{1,2}_{\mu}(T_{2Q})$ is a solution of $\div (A \nabla u) = 0$ in $T_{2Q}$, and $u=0$ on $2Q$ in the $W^{1,2}_\mu(T_{2Q})$-sense, then
\begin{equation}\label{eq:bdrydGN}
|u(x,t)| \lesssim \left(\frac{t}{\ell(Q)}\right)^{\alpha} \left(\fint_{T_{2Q}} |u|^2\ d\mu\right)^{1/2}
\qquad \forall (x,t) \in T_Q,
\end{equation}
and if, in addition, $u\geq0$ almost everywhere on $T_{2Q}$, then
\begin{equation}\label{eq:bdryH}
u(X) \lesssim u(X_Q)
\qquad \forall X \in T_Q,
\end{equation}
where $\alpha$ is from \eqref{eq:dGN} and the implicit constants depend only on $n$, $\lambda$, $\Lambda$ and $[\mu]_{A_2}$. Estimate \eqref{eq:bdrydGN} follows from standard reflection arguments and the interior H\"{o}lder continuity estimate in \eqref{eq:dGN}, as observed on page 102 in \cite{FKS}. Estimate \eqref{eq:bdryH} can then be deduced from \eqref{eq:bdrydGN} and the interior Harnack inequality in \eqref{eq:H}, as in the uniformly elliptic case (see the proof of Theorem 1.1 in \cite{CFMS}, which does not use the assumption therein that $A$ is symmetric).

\subsection{Definition and properties of degenerate elliptic measure}\label{subsection:defellipticmeasure}
For $X\in\RR^{n+1}$, $x\in\RR^n$ and $r>0$, we use $B(X,r):=\{Y\in\RR^{n+1} : |Y-X| <r\}$ to denote balls in $\RR^{n+1}$ and $\Delta(x,r):=\{y\in\RR^{n} : |x|<r\}$ to denote balls in $\RR^n$, where $\Delta(x,r)$ is identified with the surface ball $B((x,0),r)\cap \partial \RR^{n+1}_+$ in $\RR^{n+1}$. For each~$R>0$, consider the bounded Lipschitz domain $\Sigma_R := B(0,R)\cap \RR^{n+1}_{+}$ with Lipschitz constant at most~1. For each $X\in\Sigma_R$, the degenerate elliptic measure $\omega_R^X$ is the measure on $\partial\Sigma_R$, as defined on page 583 in \cite{FJK2}, such that $u(X) = \int_{\partial\Sigma_R} h\ d\omega_R^X$ solves the Dirichlet problem for continuous boundary data $h\in C(\partial\Sigma_R)$ in the sense that $\div(A\nabla u)=0$ in $\Sigma_R$ and $u\in C(\overline{\Sigma_R})$ with $u|_{\partial\Sigma_R}=h$.

We now define the degenerate elliptic measure on $\RR^n$. If $f\in C_c(\RR^n)$, fix $R_0>0$ such that $\supp f \subseteq \Delta(0,R_0)$ and set $f$ equal to zero on $\RR^{n+1}_+$, so then $f^\pm\in C(\partial\Sigma_R)$ for all $R\geq R_0$, where $f^\pm(X):=\max\{\pm f(X),0\}$, thus
\[
u_R^\pm(X): = \int_{\partial\Sigma_R}f^\pm\ d\omega^{X}_R
\qquad \forall X \in \Sigma_R
\]
solve the Dirichlet problem as above in $\Sigma_R$ for all $R\geq R_0$. The maximum principle then implies that $u_{R_1}^\pm(X)\leq u_{R_2}^\pm(X)$, whenever $R_0\leq R_1\leq R_2$ and $X\in\Sigma_{R_1}$, and that $\sup_{R>0}\|u_R^\pm\|_{\infty} \leq \|f\|_{\infty}$. This allows us to define
\begin{equation}\label{eq:wkconvdef}
u(X) := \lim_{R\rightarrow\infty}[u_R^+(X) - u_R^-(X)]
\qquad \forall X \in \RR^{n+1}_+
\end{equation}
and since the mapping $f\mapsto u(X)$ is a positive linear functional on $C_c(\RR^n)$, the Riesz Representation Theorem implies that there exists a regular Borel probability measure (the degenerate elliptic measure) $\omega^{X}$ on $\RR^n$ such that $u(X) = \int_{\RR^n} f\ d\omega^{X}$.

The function $u$ from \eqref{eq:wkconvdef} solves $\div(A\nabla u)=0$ in $\RR^{n+1}_+$. To prove this, note that $\|u\|_{\infty} \leq \|f\|_{\infty}$, so for each compact set $K\subset \RR^{n+1}_+$, the H\"{o}lder continuity of solutions in \eqref{eq:dGN} ensures the equicontinuity required to apply the Arzel\`{a}--Ascoli Theorem and extract a subsequence $u_{R_j}$ that converges to $u$ uniformly on $K$. This combined with Caccioppoli's inequality shows that $u_{R_j}$ converges to $u$ in $W^{1,2}_{\mu}(K)$, hence $u \in W^{1,2}_{\mu,\loc}(\RR^{n+1}_+)$. Moreover, if $\varphi \in C_c^{\infty}(\RR^{n+1}_+)$ and $K=\supp\varphi \subset \Sigma_R$, then
\begin{equation}\label{eq:solcheck}
\left|\int_{K}\langle A\nabla (u - u_R), \nabla \varphi \rangle\right| \leq \Lambda \|\nabla\varphi\|_\infty \mu(K)^{1/2} \|u - u_R\|_{W^{1,2}_{\mu}(K)},
\end{equation}
from which it follows that $\int_{\RR^{n+1}_+}\langle A\nabla u,\nabla\varphi\rangle = 0$, as required.

We note by \eqref{eq:wkconvdef} that, when restricted to any bounded Borel subset of $\RR^n$, the measures $\omega^X_R$ converge weakly to $\omega^X$, so Theorem 1 on page 54 of \cite{EG} shows that
\begin{equation}\label{eq:KU}
\omega^X(U) \leq \liminf_{R\rightarrow \infty} \omega^X_R(U),
\quad
\omega^X(K) \geq \limsup_{R\rightarrow \infty} \omega^X_R(K),
\quad
\omega^X(\mathcal{B}) = \lim_{R\rightarrow \infty} \omega^X_R(\mathcal{B})
\!\!
\end{equation}
for all bounded open sets $U\subset\RR^n$, all compact sets $K\subset\RR^n$, and all bounded Borel sets $\mathcal{B}\subset\RR^n$ such that $\omega^X(\partial\mathcal{B})=0$. This construction of the degenerate elliptic measure also provides for the following expected properties. 
\begin{lemma}\label{lem:abs.cty}
If $X_0,X_1 \in \RR^{n+1}_+$ and $E\subseteq\RR^n$ is a Borel set, then $\omega^{X_0}(E)=0$ if and only if $\omega^{X_1}(E)=0$. Moreover, the non-negative function $u(X):=\omega^X(E)$ is a solution of $\div(A\nabla u)=0$ in $\RR^{n+1}_+$ and the boundary H\"{o}lder continuity estimate 
\begin{equation}\label{eq:newbdryH}
|u(x,t)| \lesssim \left(\frac{t}{\ell(Q)}\right)^{\alpha} u(X_Q)
\qquad \forall (x,t) \in T_Q
\end{equation}
holds on all cubes $Q$ such that $2Q\subseteq \RR^n \setminus E$, where $\alpha$ is from \eqref{eq:dGN} and the implicit constants depend only on $n$, $\lambda$, $\Lambda$ and $[\mu]_{A_2}$, 
\end{lemma}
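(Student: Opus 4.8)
The plan is to transfer each of the three assertions from the finite-domain measures $\omega^X_R$ to the limiting measure $\omega^X$ using the convergence properties in \eqref{eq:KU}, together with the interior and boundary estimates for solutions already recorded. First, for the Harnack-type comparison of null sets, I would fix $X_0, X_1 \in \RR^{n+1}_+$ and a Borel set $E$. The key point is that by Lemma~2.2 or the standard comparison already available for the bounded domains $\Sigma_R$, the Harnack inequality in \eqref{eq:H} applied along a Harnack chain connecting $X_0$ to $X_1$ inside $\Sigma_R$ (for $R$ large) gives $\omega^{X_0}_R(\mathcal{B}) \eqsim \omega^{X_1}_R(\mathcal{B})$ for every Borel $\mathcal{B}$, with constant independent of $R$ once $R$ exceeds some threshold depending only on $|X_0|,|X_1|$ and $\dist(X_i,\partial\RR^{n+1}_+)$; indeed $X \mapsto \omega^X_R(\mathcal{B})$ is a non-negative solution in $\Sigma_R$ (by the defining property of $\omega_R$ applied to approximating continuous data, or directly to characteristic functions via monotone limits) so Harnack applies. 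Passing to the limit via the third identity in \eqref{eq:KU} — first for open $U \supseteq E$, then taking an infimum over such $U$ using outer regularity of $\omega^{X_i}$ — yields $\omega^{X_0}(E) \eqsim \omega^{X_1}(E)$, and in particular one vanishes iff the other does. Strictly, to use \eqref{eq:KU} cleanly I would first prove the two-sided bound for open sets $U$ (where the first inequality of \eqref{eq:KU} gives the right direction after swapping roles) and then invoke regularity; handling the inner/outer regularity bookkeeping is the one genuinely fiddly part of this step.

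Next, that $u(X) := \omega^X(E)$ solves $\div(A\nabla u) = 0$ in $\RR^{n+1}_+$: since $\omega^X$ is a probability measure, $0 \le u \le 1$, so $u$ is bounded. For open $E$, outer regularity is automatic and $u$ is an increasing limit (as $R \to \infty$, using monotonicity of $u^\pm_R$ from the maximum principle applied to approximating data, combined with \eqref{eq:KU}) of the solutions $X \mapsto \omega^X_R(E)$; the Arzelà--Ascoli plus Caccioppoli argument reproduced in the text just before the lemma — equicontinuity from \eqref{eq:dGN}, $W^{1,2}_{\mu,\loc}$-convergence from Caccioppoli, and the estimate \eqref{eq:solcheck}-style bound on the weak formulation — then shows $u$ is a solution. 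For general Borel $E$, I would approximate $E$ from outside by open sets and from inside by compact sets, note $u$ is then both an infimum of solutions (the open approximants) and a supremum of solutions (the compact approximants), and invoke the regularity of $\omega^X$ (uniformly in $X$, via the Harnack comparison of the previous paragraph, to get a locally uniform limit) so that $u$ is a locally uniform limit of solutions and hence a solution. A cleaner route, which I would prefer to write, is: by inner and outer regularity choose compact $K_j \uparrow$ and open $U_j \downarrow$ with $\omega^{X}(U_j \setminus K_j) \to 0$; the Harnack comparison makes this convergence locally uniform in $X$, so $u$ is sandwiched between locally-uniformly-convergent sequences of solutions with the same limit, and the Caccioppoli/weak-formulation passage gives the claim.

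Finally, the boundary Hölder estimate \eqref{eq:newbdryH}: fix a cube $Q$ with $2Q \subseteq \RR^n \setminus E$. Then $u(\cdot) = \omega^{\cdot}(E)$ is a non-negative solution in $T_{2Q}$ which vanishes continuously on $2Q$ — more precisely, $u = 0$ on $2Q$ in the $W^{1,2}_\mu(T_{2Q})$-sense, which one checks from the fact that $\omega^X(E) \to 0$ as $X \to x_0 \in 2Q$ non-tangentially (since $E$ is at positive distance from $2Q$, continuity of the Dirichlet solution at regular boundary points, which all points of the smooth portion $2Q$ of $\partial\Sigma_R$ are, forces $\omega^X_R(E) \to 0$, and then one passes to the limit in $R$), together with the $W^{1,2}_\mu$-density of Lipschitz functions recalled in \S5.1. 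Applying the boundary Hölder estimate \eqref{eq:bdrydGN} to $u$ on $T_{2Q}$ gives $|u(x,t)| \lesssim (t/\ell(Q))^\alpha (\fint_{T_{2Q}} |u|^2\, d\mu)^{1/2}$, and applying the boundary Harnack inequality \eqref{eq:bdryH} (valid since $u \ge 0$ on $T_{2Q}$) converts the $L^2$ average on the right into a multiple of $u(X_{2Q})$, which by the interior Harnack inequality \eqref{eq:H} along a short Harnack chain is $\eqsim u(X_Q)$. This yields \eqref{eq:newbdryH} with constants depending only on $n,\lambda,\Lambda,[\mu]_{A_2}$.

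The main obstacle I anticipate is the first and second steps taken together: namely establishing the $R$-uniform Harnack comparison $\omega^{X_0}_R(\mathcal{B}) \eqsim \omega^{X_1}_R(\mathcal{B})$ and then leveraging it, via the regularity of the measures, to pass the solution property and the null-set equivalence through the weak limit $R \to \infty$ in a way that is uniform in the space variable. The weak convergence in \eqref{eq:KU} is only set-by-set, so some care is needed to upgrade it to locally uniform convergence of the functions $X \mapsto \omega^X_R(\mathcal{B})$; the Harnack inequality is precisely the tool that makes this work, but the argument has to be arranged so that the comparison constant does not degenerate as $R \to \infty$, which is where one uses that $X_0, X_1$ are fixed interior points and $R$ is eventually much larger than $\max(|X_0|,|X_1|)$.
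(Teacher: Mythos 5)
Your first two steps are essentially sound and reach the same conclusions as the paper, albeit by a slightly different route: the paper works directly with the limiting measure, using Urysohn functions $g\in C_c(\RR^n)$ squeezed between a compact $K\subseteq E$ and a bounded open $U\supset K$ with $\omega^{X_0}(U)<\epsilon$, so that the Harnack chain is applied to the genuine solution $u(X)=\int g\,d\omega^X$ and one gets $\omega^{X_1}(K)\le u(X_1)\le Cu(X_0)\le C\epsilon$; you instead prove an $R$-uniform Harnack comparison for $\omega_R^{X_i}$ and pass to the limit via \eqref{eq:KU}. Both work, and the paper's version sidesteps the need to upgrade set-by-set weak convergence, which you correctly identify as the fiddly point of your version.

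The genuine gap is in your third step. To invoke \eqref{eq:bdrydGN} and \eqref{eq:bdryH} you need $u(X)=\omega^X(E)$ to belong to $W^{1,2}_\mu(T_{2Q})$ \emph{up to the boundary} and to vanish on $2Q$ \emph{in the $W^{1,2}_\mu(T_{2Q})$-sense}, i.e.\ to be a $W^{1,2}_\mu$-limit of Lipschitz functions on $\overline{T_{2Q}}$ vanishing pointwise on $2Q$. Your proposed verification --- pointwise convergence $\omega^X(E)\to 0$ as $X\to x_0\in 2Q$ plus density of Lipschitz functions --- does not deliver either property: Caccioppoli only gives interior $W^{1,2}_{\mu,\loc}$ membership, and pointwise boundary decay of $u$ is not the same as Sobolev-sense vanishing (indeed, quantitative pointwise decay near $2Q$ is exactly the conclusion \eqref{eq:newbdryH} you are trying to prove, so any attempt to use it as input risks circularity). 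The paper avoids this by never applying the boundary estimates to $u$ itself: it sets $\chi_{\epsilon,\delta}=\varphi_\epsilon * \mathbbm{1}_{U_\delta}$ for the open $\delta$-neighbourhood $U_\delta$ of $E$, notes that $v_\epsilon(X)=\int\chi_{\epsilon,\delta}\,d\omega^X$ is, by the FJK construction of the measure from smooth compactly supported data, a bona fide element of $W^{1,2}(T_{3Q/2})$ vanishing on $\tfrac32 Q$, applies \eqref{eq:bdrydGN}--\eqref{eq:bdryH} to $v_\epsilon$, and then recovers the estimate for $u$ via $\mathbbm{1}_E\le\liminf_\epsilon\chi_{\epsilon,\delta}$, Fatou, $v_\epsilon\le\omega^{\cdot}(U_{\delta,\epsilon})$, and the regularity of the measure as $\epsilon,\delta\to 0$. (The paper also needs a separate annular decomposition to pass from bounded to unbounded $E$, since the mollification argument is carried out for bounded $E$.) Your outline needs this approximation layer to be complete.
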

\begin{proof}
The proof follows that of Lemma 1.2.7 in \cite{K}, except we must account for the fact that the solution to the Dirichlet problem in $\RR^{n+1}_+$ defined by \eqref{eq:wkconvdef} requires boundary data to have compact support, which is easily done as we now show. Suppose that $\omega^{X_0}(E)=0$ and that $K\subseteq E$ is a compact set. The regularity of the measure implies that $\omega^{X_0}(K)=0$ and, for each $\epsilon>0$, there exists a  bounded open set $U\supset K$ such that ${\omega^{X_0}(U)<\epsilon}$. In particular, we may assume that $U$ is \textit{bounded} because $K$ is compact, so by Urysohn's Lemma there exists $g\in C_c(\RR^n)$ such that $g(x)=1$ on $K$, $0\leq g(x)\leq1$ on $U$, and $\supp g\subset U$. It follows that $u(X)=\int_{\RR^n} g\ d\omega^X$ is the solution to the Dirichlet problem in $\RR^{n+1}_+$ defined by \eqref{eq:wkconvdef} with boundary data $g$. Applying the Harnack inequality from \eqref{eq:H} and connecting $X_0$ with $X_1$ via a Harnack chain then shows that there exists $C>0$, depending on $X_0$ and $X_1$, such that
\[
\omega^{X_1}(K)
\leq u(X_1)
\leq Cu(X_0)
\leq C \omega^{X_0}(U)
\leq C \epsilon
\qquad \forall \epsilon>0,
\]
hence $\omega^{X_1}(K)=0$ for all compact sets $K\subseteq E$, and so $\omega^{X_1}(E)=0$ by regularity.

The proof that $u(X):=\omega^X(E)$ is a solution of $\div(A\nabla u)=0$ in $\RR^{n+1}_+$ also follows that of Lemma 1.2.7 in \cite{K}. It remains to prove that the boundary H\"older continuity estimate holds on all cubes $Q$ such that  $2Q\subseteq \RR^n \setminus E$. We first consider when $E$ is bounded. In that case, let $U_\delta$ denote the open $\delta$-neighbourhood of $E$ and set $\chi_{\epsilon,\delta}:= \vp_{\epsilon} * \mathbbm{1}_{U_\delta}$ for all $\delta>\epsilon>0$, where $\vp_\epsilon(x) := \epsilon^{-n}\vp(x/\epsilon)$ and $\vp \in C_c^\infty (\Delta(0,1))$ is a fixed non-negative function with $\int_{\rn}\vp = 1$. In particular, since $U_\delta$ is open, we have $\mathbbm{1}_E\leq \mathbbm{1}_{U_\delta} \leq \liminf_{\epsilon\to 0} \chi_{\epsilon,\delta}$. Consequently, if $X=(x,t)\in\reu$, then
\begin{equation}\label{eq1}
u(X) = \hm^X(E)\leq \hm^X(U_\delta)
\leq \int_{\RR^n}\liminf_{\epsilon\to 0}\chi_{\epsilon,\delta} \ d\hm^X
\leq \liminf_{\epsilon\to 0} \int_{\RR^n} \chi_{\epsilon,\delta} \ d\hm^X.
\end{equation}
The function $\chi_{\epsilon,\delta}$ belongs to $C_c^\infty(\rn)$ and thus extends to a function in $C_c^\infty(\RR^{n+1})$. The construction of the degenerate elliptic measure (see pages 580--583 in \cite{FJK2}, which was the starting point for our extension to the upper half-space above) thus implies that $\ve(X):= \int_{\RR^n} \chi_{\epsilon,\delta} \,d\hm^X$ is in $W^{1,2} (T_{\frac{3}{2}Q})$ and vanishes on $\frac{3}{2}Q$ whenever $0<\epsilon<\delta<\ell(Q)/4$, so estimate \eqref{eq1} combined with the boundary H\"older continuity estimate in \eqref{eq:bdrydGN} and the boundary Harnack inequality in \eqref{eq:bdryH} shows that
\begin{equation}\label{eq2} 
u(x,t) \leq \liminf_{\epsilon\to 0}\ve(x,t)
\lesssim \left(\frac{t}{\ell(Q)}\right)^\alpha \liminf_{\epsilon\to 0}\ve(X_Q)
\quad\forall (x,t) \in T_Q.
\end{equation}
We now let $U_{\delta,\epsilon}$ denote the open $\epsilon$-neighbourhood of $U_\delta$, in which case $\chi_{\epsilon,\delta} \leq \mathbbm{1}_{U_{\delta,\epsilon}}$ and $\ve(X) \leq \hm^X(U_{\delta,\epsilon})$, so by \eqref{eq2} and the regularity of the degenerate elliptic measure we have
\[
u(x,t) \lesssim \left(\frac{t}{\ell(Q)}\right)^\alpha \liminf_{\epsilon\to 0} \hm^{X_Q}(U_{\delta,\epsilon})
\lesssim \left(\frac{t}{\ell(Q)}\right)^\alpha \hm^{X_Q}(U_\delta)
\quad\forall (x,t) \in T_Q.
\]
This proves \eqref{eq:newbdryH} if $E$ is bounded, since the regularity of the measure also implies that $\hm^{X_Q}(U_\delta)$ approaches $\hm^{X_Q}(E)=u(X_Q)$ as $\delta$ approaches 0. If $E$ is not bounded, then applying \eqref{eq:newbdryH} on the bounded sets $E_k:= \mathbbm{1}_{2^{k+1}Q\setminus 2^kQ} E$, for $k\in\mathbb{N}$, shows that
\[
u(x,t) = \sum_{k=1}^\infty \omega^X(E_k)
\lesssim \sum_{k=1}^\infty \left(\frac{t}{\ell(Q)}\right)^\alpha  \omega^{X_Q}(E_k)
= \left(\frac{t}{\ell(Q)}\right)^\alpha \omega^{X_Q}(E)
\quad\forall (x,t) \in T_Q,
\]
as required.
\end{proof}

\subsection{Preliminary estimates for degenerate elliptic measure}\label{Sec:PreEst}
In the uniformly elliptic case, there is a rich theory for the Green's function on bounded domains, and specifically, estimates and connections with elliptic measure (see, for instance, Theorem 1.2.8 and Corollary 1.3.6 in \cite{K}). This theory also extends to unbounded domains (see Section 10 in \cite{LSW} and \cite{HK}). In the degenerate elliptic case, the theory was developed on bounded domains in \cite{FJK1}, \cite{FJK2} and \cite{FKS}, but it is not clear if there is always such a Green's function on unbounded domains. In particular, the construction in \cite{HK} for the uniformly elliptic case relies on the (unweighted) global version of the Sobolev embedding in~\eqref{wSob}, which is not known for a general $A_2$-weight. In what follows, we combine the properties of the Green's function on the bounded domain $\Sigma_R := B(0,R)\cap \RR^{n+1}_{+}$ with the limit properties in \eqref{eq:KU} to deduce estimates for degenerate elliptic measure on $\RR^n$. These will be used to prove Lemma~\ref{28} and ultimately Theorem~\ref{thm:Ainftymain}.

For each $R>0$, the Green's function $g_R: \overline{\Sigma}_R \times \overline{\Sigma}_R \mapsto [0,\infty]$ is constructed by following Proposition 2.4 in \cite{FJK1}. In particular, for each $Y\in\Sigma_R$, the mapping $X\mapsto g_R(X,Y)$ is the H\"{o}lder continuous function in $\overline{\Sigma}_R\setminus\{Y\}$ that vanishes on $\partial\Sigma_R$ and satisfies $\int_{\Sigma_R} \langle A\nabla g_R(\cdot, Y), \nabla \Phi \rangle = \Phi(Y)$ for all $\Phi\in C^\infty_c(\Sigma_R)$. As explained on page 583 in \cite{FJK2}, these properties are valid on any NTA domain, hence \textit{a fortiori} on $\Sigma_R$. The proofs do not rely on the assumption therein that $A$ is symmetric, although the symmetry property $``g_R(X,Y)=g_R(Y,X)"$ is no longer guaranteed, as $g^*_R(X,Y):=g_R(Y,X)$ is the Green's function for the adjoint operator $-\div(A^*\nabla)$. We will rely on the following two lemmas, which are immediate from Theorem~4 and Lemma 3 in \cite{FJK2}, respectively, to estimate the Green's function $g_R$ and the degenerate elliptic measure $\omega_R$ on $\Sigma_R$.
\begin{lemma}\label{Greensize}
If $X,Y\in\Sigma_R$ and $|X-Y|<\dist(Y,\partial\Sigma_R)/2$, then
\[
g_R(X,Y) \eqsim \int_{|X-Y|}^{\dist(Y,\partial\Sigma_R)} \frac{s^2}{\mu(B(Y,s))}\frac{ds}{s},
\]
where the implicit constants depend only on  $n$, $\lambda$, $\Lambda$ and $[\mu]_{A_2}$.
\end{lemma}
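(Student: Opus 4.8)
The estimate is the degenerate-elliptic counterpart of the classical bound $g(X,Y)\eqsim|X-Y|^{2-d}$ for the Green's function of the Laplacian in $\RR^d$, $d\geq3$, with the Euclidean kernel replaced by the weighted kernel $\int_{|X-Y|}^{\dist(Y,\partial\Sigma_R)}s^2\mu(B(Y,s))^{-1}\,\tfrac{ds}{s}$. It is established, for symmetric $A$, as Theorem~4 in \cite{FJK2}. The plan is to invoke that result after observing that its proof uses only the interior De~Giorgi--Nash--Moser theory recorded in \eqref{eq:M}--\eqref{eq:H}, Caccioppoli's inequality, the maximum principle on $\Sigma_R$ recalled in Section~\ref{Sec:PreEst}, and the two-sided bound for the Green's function of a ball (the ``fundamental solution'' estimate, which is itself the present statement in the special case $\Sigma_R=B(Y,d)$ and is part of the construction in \cite{FJK1}, \cite{FKS}), all of which are available for non-symmetric $A\in\mathcal{E}(n+1,\lambda,\Lambda,\mu)$ exactly as noted for the other ingredients in this section.

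For orientation, here is the shape of the argument. Write $d:=\dist(Y,\partial\Sigma_R)$ and $r:=|X-Y|<d/2$, and let $g_B(\cdot,Y)$ denote the Green's function with pole $Y$ for a ball $B=B(Y,\rho)$ concentric at $Y$, which obeys $g_B(X,Y)\eqsim\int_r^{\rho}s^2\mu(B(Y,s))^{-1}\,\tfrac{ds}{s}$ whenever $r<\rho\leq d$. The \emph{lower} bound for $g_R(X,Y)$ follows from the maximum principle: $g_R(\cdot,Y)-g_{B(Y,d)}(\cdot,Y)$ is a solution in $B(Y,d)\setminus\{Y\}$ whose singularities at $Y$ cancel, it is nonnegative near $Y$ and on $\partial B(Y,d)$ (where $g_{B(Y,d)}(\cdot,Y)=0\leq g_R(\cdot,Y)$), hence $g_R(\cdot,Y)\geq g_{B(Y,d)}(\cdot,Y)$ on $B(Y,d)$, and the ball estimate with $\rho=d$ gives the claimed lower bound.

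For the \emph{upper} bound one first proves the endpoint estimate $g_R(Z,Y)\lesssim d^2\mu(B(Y,d))^{-1}$ for all $Z$ with $|Z-Y|=d/2$: since $g_R(\cdot,Y)$ is a nonnegative solution in $\Sigma_R\setminus\overline{B(Y,d/4)}$ that vanishes on $\partial\Sigma_R$, this follows from the Moser estimate \eqref{eq:M}, the Harnack inequality \eqref{eq:H} and Caccioppoli's inequality, together with the normalization $\int_{\Sigma_R}\langle A\nabla g_R(\cdot,Y),\nabla\Phi\rangle=\Phi(Y)$, which controls $\fint_{B(Y,3d/4)\setminus B(Y,d/4)}|g_R(\cdot,Y)|^2\,d\mu$. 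With $C$ the constant from this endpoint estimate, the function $g_R(\cdot,Y)-C\,d^2\mu(B(Y,d))^{-1}-g_{B(Y,d/2)}(\cdot,Y)$ is then a solution in $B(Y,d/2)\setminus\{Y\}$ with cancelling singularities that is $\leq0$ near $Y$ and on $\partial B(Y,d/2)$, hence $\leq0$ on $B(Y,d/2)$ by the maximum principle; combined with the ball estimate ($\rho=d/2$) and the doubling property of $\mu$ — which also shows $\int_{d/2}^{d}s^2\mu(B(Y,s))^{-1}\,\tfrac{ds}{s}\eqsim d^2\mu(B(Y,d))^{-1}$ and thus reconciles the integration limits $d/2$ and $d$ — this yields $g_R(X,Y)\lesssim\int_r^{d}s^2\mu(B(Y,s))^{-1}\,\tfrac{ds}{s}$.

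The main obstacle is the endpoint upper bound on the middle sphere $|Z-Y|=d/2$ and, underlying everything, the two-sided bound for the Green's function of a ball in the weighted setting; once these are in hand the rest is maximum-principle comparison and the doubling of $\mu$. A secondary point to record is that every comparison above is purely local, so the loss of the symmetry relation $g_R(X,Y)=g_R(Y,X)$ in the non-symmetric case causes no difficulty.
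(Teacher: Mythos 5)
Your proposal matches the paper's treatment: the paper gives no argument beyond citing Theorem~4 of \cite{FJK2} and observing (in the paragraph preceding the lemma) that the proofs there do not use the symmetry of $A$, which is precisely your plan, and your sketch of the underlying FJK2 argument (maximum-principle comparison with the Green's function of concentric balls plus the endpoint bound on the middle sphere) is the standard one. No discrepancy to report.
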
 

\begin{lemma}\label{grhar}
If $R>0$ and $Q$ is a cube in $\RR^n$ such that $T_{2Q} \subset \Sigma_R$, then
\begin{equation*}
\frac{g_R(X_Q, Y)}{\ell(Q)} \eqsim \omega_R^{Y}(Q) \frac{\ell(Q)}{\mu(T_Q)} = \frac{\omega_R^{Y}(Q)}{\mu(Q)} 
\qquad\forall Y\in \Sigma_R\setminus T_{2Q},
\end{equation*}
where the implicit constants depend only on  $n$, $\lambda$, $\Lambda$ and $[\mu]_{A_2}$.
\end{lemma}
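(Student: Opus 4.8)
The equality is immediate: $d\mu(x,t)=\mu(x)\,dx\,dt$ and $T_Q=Q\times(0,\ell(Q))$ give $\mu(T_Q)=\ell(Q)\,\mu(Q)$, so $\omega_R^Y(Q)\,\ell(Q)/\mu(T_Q)=\omega_R^Y(Q)/\mu(Q)$. The real content is the two‑sided bound $g_R(X_Q,Y)\eqsim \ell(Q)\,\omega_R^Y(Q)/\mu(Q)$ for $Y\in\Sigma_R\setminus T_{2Q}$, which is precisely Lemma~3 of~\cite{FJK2}; as with the other facts we import from~\cite{FJK1,FJK2}, its proof does not use symmetry of $A$. My plan is to reproduce the Caffarelli--Fabes--Mortola--Salsa comparison of Green's functions with elliptic measure, now $\mu$‑weighted and on the bounded NTA domain $\Sigma_R$, using the degenerate‑elliptic tools already assembled: the Green size estimate in Lemma~\ref{Greensize}, the boundary H\"older bound~\eqref{eq:bdrydGN}, the boundary Harnack inequality~\eqref{eq:bdryH}, the interior Harnack inequality~\eqref{eq:H} (with Harnack chains), the doubling property~\eqref{eq:D}, and the maximum principle recalled at the start of Section~\ref{sec:Dp}. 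All of these are available for $A^\ast\in\mathcal{E}(n+1,\lambda,\Lambda,\mu)$ as well, which is what lets the non‑symmetric case through.

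First I would pin down the equivalence at a single reference point. Since $T_{2Q}\subset\Sigma_R$ (which forces $R\gtrsim\ell(Q)$) and $\Sigma_R$ is Lipschitz with constant $\leq1$, hence NTA with constants independent of $R$, there is a point $Y_0\in\Sigma_R\setminus T_{3Q/2}$ with $|Y_0-X_Q|\eqsim\ell(Q)$ and $\dist(Y_0,\partial\Sigma_R)\gtrsim\ell(Q)$; in the generic case $R\gg\ell(Q)$ one can further arrange $|X_Q-Y_0|<\tfrac12\dist(Y_0,\partial\Sigma_R)$, so Lemma~\ref{Greensize}, the relation $\mu(B(Y_0,s))\eqsim s\,\mu(\Delta(x_Q,s))$ for $s\lesssim\ell(Q)$, and~\eqref{eq:D} yield $g_R(X_Q,Y_0)\eqsim \ell(Q)^2/\mu(B(Y_0,\ell(Q)))\eqsim\ell(Q)/\mu(Q)$. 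On the other hand $\omega_R^{Y_0}(Q)\eqsim1$: the bound $\leq1$ is trivial because $\omega_R^{Y_0}$ is a probability measure, while $\gtrsim1$ follows from the corkscrew lower bound $\omega_R^{X_Q}(Q)\gtrsim1$ (apply the maximum principle to the nonnegative solution $\omega_R^{\cdot}(Q)$, with boundary value $1$ on $Q$, against a barrier) transported to $Y_0$ by a Harnack chain inside $\Sigma_R\setminus T_{Q/2}$ via~\eqref{eq:H}. Thus the asserted equivalence holds at $Y=Y_0$.

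Finally I would propagate to a general $Y\in\Sigma_R\setminus T_{2Q}$ by the maximum principle. The natural pair to compare are the two nonnegative solutions of $\div(A\nabla\cdot)=0$ given by $Z\mapsto\omega_R^Z(Q)$ (in $\Sigma_R$) and $Z\mapsto\tfrac{\mu(Q)}{\ell(Q)}g_R(Z,X_Q)$ (in $\Sigma_R\setminus\{X_Q\}$), on the domain $\Sigma_R\setminus\overline{T_{2Q}}$; both vanish on the part of $\partial\Sigma_R$ outside $\overline{2Q}$ (using that $\overline{2Q}$ meets $\partial\Sigma_R$ only in the flat part), and on the inner surface $\partial T_{2Q}\cap\Sigma_R$ each vanishes on $2Q$, hence by~\eqref{eq:bdrydGN}--\eqref{eq:bdryH} and Harnack chaining is comparable there to its value at $X_{2Q}$; together with the doubling of $\omega_R$ this makes the two functions comparable on all of $\partial(\Sigma_R\setminus\overline{T_{2Q}})$, and the maximum principle then carries the two‑sided comparison into the interior, giving $g_R(Y,X_Q)\eqsim\ell(Q)\,\omega_R^Y(Q)/\mu(Q)$ for every $Y\in\Sigma_R\setminus T_{2Q}$. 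It remains to match this with the statement, which is phrased in terms of $g_R(X_Q,Y)$ (observation at $X_Q$, pole at $Y$): in the symmetric case $g_R(X_Q,Y)=g_R(Y,X_Q)$ and there is nothing to do, and in general the estimate is proved directly in~\cite{FJK2} by the same symmetry‑free analysis (running the maximum‑principle step for $A^\ast$ as well). I expect this last point -- correctly arranging the arguments of $g_R$ for the non‑symmetric operator, and making the inner‑surface comparison and maximum‑principle propagation fully rigorous using only the scale‑invariant estimates at hand (including the cases where $R\eqsim\ell(Q)$ and Lemma~\ref{Greensize} does not directly apply) -- to be the main obstacle, and it is exactly the content of~\cite{FJK2}, which is why we are content to cite it.
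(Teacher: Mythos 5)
The paper offers no proof of this lemma beyond declaring it (together with Lemma~\ref{Greensize}) immediate from Lemma~3 of \cite{FJK2}, with the remark just above that the arguments there do not use symmetry of $A$ --- which is precisely the citation your proposal ultimately rests on, so you have matched the paper's approach. Your supplementary CFMS-style sketch is a reasonable reconstruction of what lies behind that citation, and you correctly identify its two delicate points --- the ordering of the arguments of $g_R$ for non-symmetric $A$ (your maximum-principle comparison of the two $\div(A\nabla\cdot)$-solutions $Z\mapsto\omega_R^Z(Q)$ and $Z\mapsto g_R(Z,X_Q)$ naturally produces $g_R(Y,X_Q)$ rather than the $g_R(X_Q,Y)$ appearing in the statement) and the degenerate case $R\eqsim\ell(Q)$ --- as exactly the places where one must defer to the source.
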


The degenerate elliptic measure $\omega_R^{X}$ satisfies the doubling property $\omega_R^{X}(2Q) \leq C_0 \omega_R^{X}(Q)$ for all cubes $Q$ in $\RR^n$ such that $T_{2Q} \subset \Sigma_R$ and all $X\in \Sigma_R \setminus T_{2Q}$, where the doubling constant $C_0>0$ depends only on $n$, $\lambda$, $\Lambda$ and $[\mu]_{A_2}$. This is proved in Lemma 1 on page 584 of \cite{FJK2} by using the estimates in Lemma \ref{grhar}, the Harnack inequality in \eqref{eq:H}, and the doubling property of $\mu$. The doubling constant $C_0$ does not depend on $R$, which allows us to use the inequalities in \eqref{eq:KU} to show that the degenerate elliptic measure $\omega^X$ is locally doubling on $\RR^n$, in the sense that
\begin{equation}\label{localdoubling}
\omega^{X}(2Q) 
\leq\liminf_{R \rightarrow \infty} \omega^{X}_R(2Q) 
\lesssim \liminf_{R \rightarrow \infty} \omega^{X}_R(\tfrac{1}{2}Q) 
\leq\limsup_{R \rightarrow \infty} \omega^{X}_R(\tfrac{1}{2}\overline{Q}) 
\leq \omega^{X}(Q)
\end{equation}
for all cubes $Q\subset\RR^n$ and all $X\in\RR^{n+1}_+\setminus T_{2Q}$, where the implicit constant is $C_0^2$. In particular, the doubling property implies that $\omega^X(\partial Q)=0$ for all cubes $Q\subset\RR^n$ (see page 403 in \cite{GR} or Proposition~6.3 in \cite{HM}), so \eqref{localdoubling} actually improves to $\omega^{X}(2Q) \leq C_0 \omega^X(Q)$, since by the equality in \eqref{eq:KU} we now have
\begin{equation}\label{eq:Qinfsup}
\omega^X(Q) = \lim_{R\rightarrow \infty} \omega^X_R(Q)
\end{equation}
for all cubes $Q\subset\RR^n$ and all $X\in\RR^{n+1}_+\setminus T_{2Q}$. This provides the following estimate for degenerate elliptic measure. 

\begin{lemma}\label{Bo:est}
If $Q$ is a cube in $\RR^n$, then $\omega^{X_Q}(Q) \gtrsim 1$, where the implicit constant depends only on  $n$, $\lambda$, $\Lambda$ and $[\mu]_{A_2}$.
\end{lemma}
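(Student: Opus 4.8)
The plan is to bound $\omega^{X_Q}(Q)$ from below using the local doubling property together with the fact that $\omega^{X_Q}$ is a probability measure. First I would fix a cube $Q$ and consider the enlarged cube $2^kQ$ for some integer $k$ to be chosen. Since $\omega^{X_Q}$ is a regular Borel probability measure on $\RR^n$, we have $\omega^{X_Q}(2^kQ)\leq 1$. On the other hand, provided $2^kQ$ is large enough that $X_Q=(x_Q,\ell(Q))$ lies in the Carleson box $T_{2^kQ}$ — which holds as soon as $2^k\geq 2$, say — we cannot directly apply \eqref{localdoubling} (which requires $X\notin T_{2Q'}$). So instead I would run the doubling in the other direction: start from a small dilate and work up. More precisely, choose a cube $Q'$ concentric with $Q$ with $\ell(Q')=c\ell(Q)$ for a small dimensional constant $c$ (say $c=1/4$), so that $X_Q\in\RR^{n+1}_+\setminus T_{2Q'}$, and then $X_Q$ also serves as a corkscrew-type point for $Q'$ in the sense that $\dist(X_Q,\partial T_{2Q'})\gtrsim\ell(Q)$.

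The key step is then a chain argument. Applying \eqref{localdoubling} (the improved version $\omega^X(2Q'')\leq C_0\omega^X(Q'')$) repeatedly to the cubes $Q'\subset 2Q'\subset 4Q'\subset\cdots$, starting from $Q'$ and doubling $N$ times until $2^NQ'\supseteq Q$ — which requires only $N\eqsim\log(1/c)$ steps, an absolute constant — we obtain
\[
\omega^{X_Q}(Q)\geq \omega^{X_Q}(2^{-N}\cdot 2^NQ')\geq C_0^{-N}\omega^{X_Q}(2^NQ')\gtrsim \omega^{X_Q}(Q').
\]
Wait — that inequality goes the wrong way; $2^NQ'\supseteq Q$ gives $\omega^{X_Q}(Q)\leq\omega^{X_Q}(2^NQ')$, which is not useful. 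The correct move is to bound $\omega^{X_Q}(Q')$ from below directly and note $\omega^{X_Q}(Q)\geq\omega^{X_Q}(Q')$ since $Q'\subseteq Q$. So the real task reduces to showing $\omega^{X_Q}(Q')\gtrsim 1$, where $Q'$ is a small concentric subcube. For this I would invoke the relation in Lemma~\ref{grhar}, namely $\omega_R^{X_Q}(Q')/\mu(Q')\eqsim g_R(X_{Q'},X_Q)/\ell(Q')$, valid once $T_{2Q'}\subset\Sigma_R$ and $X_Q\in\Sigma_R\setminus T_{2Q'}$. Then Lemma~\ref{Greensize} applies to $g_R(X_{Q'},X_Q)$: since $|X_{Q'}-X_Q|\eqsim\ell(Q)\eqsim\ell(Q')$ and $\dist(X_{Q'},\partial\Sigma_R)\gtrsim\ell(Q')$ (for $R$ large), we get $g_R(X_{Q'},X_Q)\eqsim\int_{c_1\ell(Q')}^{c_2\ell(Q')}\frac{s^2}{\mu(B(X_{Q'},s))}\frac{ds}{s}\eqsim\frac{\ell(Q')^3}{\mu(Q')\,\ell(Q')}=\frac{\ell(Q')^2}{\mu(Q')}$, using that $\mu(B(X_{Q'},s))\eqsim\mu(B(x_{Q'},s))\eqsim\mu(Q')$ for $s\eqsim\ell(Q')$ by the doubling property of $\mu$. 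Combining, $\omega_R^{X_Q}(Q')\eqsim\ell(Q')^{-1}\cdot\mu(Q')\cdot g_R(X_{Q'},X_Q)/\mu(Q')\eqsim 1$, uniformly in $R$, and then \eqref{eq:Qinfsup} passes this to $\omega^{X_Q}(Q')\gtrsim 1$, whence $\omega^{X_Q}(Q)\geq\omega^{X_Q}(Q')\gtrsim 1$.

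Actually the chain argument is entirely avoidable — one can apply Lemma~\ref{grhar} and Lemma~\ref{Greensize} directly to $Q'$ being, say, a concentric subcube with $\ell(Q')=\ell(Q)/8$ so that $X_{Q'}=(x_Q,\ell(Q)/8)$ satisfies both $|X_{Q'}-X_Q|\eqsim\ell(Q)$ and $\dist(X_{Q'},\partial\Sigma_R)\eqsim\ell(Q')$ for large $R$; no iteration is needed. The main point to be careful about is the hypothesis checking in Lemmas~\ref{Greensize} and~\ref{grhar}: one needs $|X_{Q'}-X_Q|<\dist(X_Q,\partial\Sigma_R)/2$ and $T_{2Q'}\subset\Sigma_R$ and $X_Q\in\Sigma_R\setminus T_{2Q'}$, all of which hold for $R$ sufficiently large depending on $Q$ (but the resulting constants do not depend on $R$ or $Q$, only on $n,\lambda,\Lambda,[\mu]_{A_2}$, by scale invariance). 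I expect the only mildly delicate point to be the verification that $X_Q$ sits at a controlled distance from $\partial\Sigma_R$ and outside $T_{2Q'}$ — i.e., the geometric bookkeeping needed to legitimately invoke the two cited lemmas — but this is routine once one fixes $\ell(Q')$ to be a small fixed fraction of $\ell(Q)$ and sends $R\to\infty$. Finally, the passage from $\omega_R^{X_Q}$ to $\omega^{X_Q}$ via \eqref{eq:Qinfsup} is immediate since $\omega^{X_Q}(\partial Q')=0$.
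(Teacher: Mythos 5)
Your route through the Green's function is genuinely different from the paper's, which instead shows $\omega_R^{X_Q}(Q)\gtrsim1$ uniformly in $R$ by applying the boundary H\"older continuity estimate \eqref{eq:bdrydGN} to $1-\omega_R^{X}(Q)$ together with the Harnack inequality (following Lemma~3 of \cite{FJK2}), and then passes to the limit via \eqref{eq:Qinfsup}. Your strategy is a legitimate alternative, but the step you dismiss as ``routine geometric bookkeeping'' is exactly where the argument, as written, fails. With $Q'$ concentric and $\ell(Q')=\ell(Q)/8$, you have $X_{Q'}=(x_Q,\ell(Q)/8)$ and $X_Q=(x_Q,\ell(Q))$, so $|X_{Q'}-X_Q|=\tfrac78\ell(Q)$, while $\dist(X_Q,\partial\Sigma_R)\to\ell(Q)$ as $R\to\infty$. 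The hypothesis of Lemma~\ref{Greensize} demands $|X_{Q'}-X_Q|<\dist(X_Q,\partial\Sigma_R)/2=\ell(Q)/2$, which is false. Worse, the failure is structural, not a matter of tuning $c$: Lemma~\ref{grhar} forces $X_Q\notin T_{2Q'}$, i.e.\ $\ell(Q')\leq\ell(Q)/2$, whereas the hypothesis of Lemma~\ref{Greensize} applied to the pair $(X_{Q'},X_Q)$ forces $\ell(Q)-\ell(Q')<\ell(Q)/2$, i.e.\ $\ell(Q')>\ell(Q)/2$. No concentric choice satisfies both. The missing ingredient is an interior Harnack chain for the nonnegative solution $X\mapsto g_R(X,X_Q)$ of the adjoint-free equation away from its pole: connect $X_{Q'}$ to a point $Z$ with $|Z-X_Q|<\ell(Q)/4$ through a bounded number of balls contained in $\Sigma_R\cap\{t\gtrsim\ell(Q)\}\setminus B(X_Q,c\ell(Q))$, apply Lemma~\ref{Greensize} at $Z$, and then transfer the bound back to $X_{Q'}$. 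With that bridge inserted, your computation does close up.

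Two smaller points. First, your size computation conflates the $(n+1)$-dimensional and $n$-dimensional weighted measures: in Lemma~\ref{Greensize}, $B(Y,s)$ is a ball in $\RR^{n+1}$, so $\mu(B(Y,s))\eqsim s\,\mu(\Delta(y,s))$, giving $g_R(X_{Q'},X_Q)\eqsim\ell(Q')/\mu(Q')$ rather than $\ell(Q')^2/\mu(Q')$; plugged into Lemma~\ref{grhar} this yields $\omega_R^{X_Q}(Q')\eqsim1$ correctly, whereas your stated intermediate values are dimensionally inconsistent with your stated conclusion. Second, your opening doubling ``chain argument'' is correctly abandoned — doubling plus the probability-measure normalization can only give upper bounds — so no harm done there, but it could simply be deleted.
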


\begin{proof}
Let $Q$ denote a cube in $\RR^n$ and fix $R_0>0$ such that $T_{2Q} \subset \Sigma_{R_0}$. The H\"{o}lder continuity at the boundary in \eqref{eq:bdrydGN} and the Harnack inequality in \eqref{eq:H} imply (see the proof of Lemma 3 on page 585 in \cite{FJK2}) that
\begin{equation*}
\omega_R^{X_Q}(Q) \gtrsim 1 \qquad \forall R\geq R_0,
\end{equation*}
where the implicit constant depends only on $n$, $\lambda$, $\Lambda$ and $[\mu]_{A_2}$, and so does not depend on $R$. The result follows by using Harnack's inequality to shift the pole (from $X_{2Q}$ to $X_Q$) in \eqref{localdoubling}-\eqref{eq:Qinfsup} to obtain $\omega^{X_Q}(Q) = \lim_{R\rightarrow \infty} \omega^{X_Q}_R(Q)\gtrsim 1$.
\end{proof}

The estimates in Lemma \ref{grhar} also imply the following Comparison Principle. The result is stated on page 585 in \cite{FJK2} and the proof is the same as in the uniformly elliptic case (see Theorem 1.4 in \cite{CFMS} or Lemma 1.3.7 in \cite{K}, neither of which use the assumption therein that $A$ is symmetric).

\begin{lemma}(Comparison Principle) \label{Lem:Comparison}
Let $Q$ denote a cube in $\RR^n$ and suppose that $u,v \in W^{1,2}_\mu(T_{2Q}) \cap C(\overline{T_{2Q}})$ with ${u,v\geq0}$ on $T_{2Q}$ . If $\div (A \nabla u) = \div (A \nabla v) = 0$ in $T_{2Q}$ and $u=v=0$ on $2Q$, then
\[
\frac{u(X)}{v(X)} \eqsim \frac{u(X_Q)}{v(X_Q)}\qquad \forall X \in T_Q,
\]
where the implicit constants depend only on $n$, $\lambda$, $\Lambda$ and $[\mu]_{A_2}$. 
\end{lemma}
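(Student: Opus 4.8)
The plan is to follow the classical argument of Caffarelli--Fabes--Mortola--Salsa (Theorem~1.4 in \cite{CFMS}, or Lemma~1.3.7 in \cite{K}), transplanted to the $\mu$-weighted, non-symmetric setting. Every ingredient it uses has already been made available above, none of them requires symmetry of $A$, and the weight enters only through the doubling of $\mu$, which underlies all of them. First I would normalise: by the interior Harnack inequality \eqref{eq:H} and a Harnack chain of bounded length joining $X_Q$ to any point of $T_Q$ at height comparable to $\ell(Q)$, it is harmless to assume $u(X_Q)=v(X_Q)=1$, and it then suffices to prove $u(X)\lesssim v(X)$ for all $X\in T_Q$, the reverse inequality following by exchanging the roles of $u$ and $v$. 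Fix $R>0$ with $T_{2Q}\subset\Sigma_R$, so that the Green's function $g_R$ and the degenerate elliptic measure $\omega_R$ of $\Sigma_R$, together with Lemmas~\ref{Greensize}, \ref{grhar}, \ref{Bo:est} and the doubling property used in \eqref{localdoubling}, are at our disposal. Let $\omega_{T_Q}^X$ denote the degenerate elliptic measure at $X$ of the Carleson box $T_Q=Q\times(0,\ell(Q))$, which is a bounded Lipschitz (hence NTA) domain, write $Q^{\circ}$ for the relative interior of its bottom face $Q\times\{0\}$, and put $\Gamma:=\{Y\in\partial T_Q:\dist(Y,2Q)\ge\ell(Q)/4\}$.

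The heart of the matter is the intrinsic two-sided estimate
\[
c\,w(X_Q)\,\omega_{T_Q}^X(\Gamma)\ \le\ w(X)\ \le\ C\,w(X_Q)\,\omega_{T_Q}^X\big(\partial T_Q\setminus Q^{\circ}\big),\qquad X\in T_Q,
\]
valid for every nonnegative solution $w$ of $\div(A\nabla w)=0$ in $T_{2Q}$ that is continuous on $\overline{T_{2Q}}$ and vanishes on $2Q$ in the $W^{1,2}_\mu(T_{2Q})$-sense, with constants depending only on $n,\lambda,\Lambda,[\mu]_{A_2}$. For the right-hand inequality one applies the maximum principle in $T_Q$ (or a slightly larger box $T_{3Q/2}$ to accommodate its lateral faces), using that $w$ vanishes continuously on $Q^{\circ}$ while $w\lesssim w(X_Q)$ on all of $\partial T_Q\setminus Q^{\circ}$: on the portion at height $\gtrsim\ell(Q)$ this is \eqref{eq:H} through a short Harnack chain, and on the lateral portion near $\partial Q$ it follows from the boundary H\"older estimate \eqref{eq:bdrydGN} and the boundary Harnack estimate \eqref{eq:bdryH} applied on cubes of side $\lesssim\ell(Q)$ centred at boundary points, together with Moser's estimate \eqref{eq:M}. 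For the left-hand inequality one uses that $w\gtrsim w(X_Q)$ on $\Gamma$, again by \eqref{eq:H}, that $w\ge0$ on the rest of $\partial T_Q$, and once more the maximum principle. Applying this to $w=u$ and to $w=v$ and dividing, the desired bound reduces to the operator- and solution-independent nondegeneracy estimate
\[
\omega_{T_Q}^X\big(\partial T_Q\setminus Q^{\circ}\big)\ \lesssim\ \omega_{T_Q}^X(\Gamma),\qquad X\in T_Q.
\]

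This last estimate is where I expect the main obstacle to lie: it says that the degenerate elliptic measure of the whole ``non-bottom'' boundary of the box is already concentrated, up to constants, on its ``far-top'' piece $\Gamma$. In the uniformly elliptic case it is a standard consequence of the doubling of harmonic measure and of the boundary H\"older decay; here one argues the same way, decomposing $\partial T_Q\setminus(Q^{\circ}\cup\Gamma)$ into dyadic height layers and estimating the elliptic measure of each layer against that of $\Gamma$ using the boundary H\"older estimate \eqref{eq:bdrydGN}, the corkscrew condition, and the local doubling of $\omega_R$ from \eqref{localdoubling}. The delicate point, specific to the degenerate setting, is that the de~Giorgi--Nash exponent $\alpha$ from \eqref{eq:dGN} must be reconciled with the doubling exponent of $\mu$ when summing the geometric series over the layers; but this is carried out exactly as in \cite{CFMS}, since that argument only ever uses the Harnack inequality, boundary H\"older regularity, and doubling, all of which hold here.

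Alternatively, the two-sided estimate can be phrased through the Green's function $g_R(\cdot,X_{2Q})$ of $\Sigma_R$ with pole at the corkscrew point $X_{2Q}\in\Sigma_R\setminus\overline{T_{2Q}}$: this is itself a nonnegative solution in $T_{2Q}$, continuous on $\overline{T_{3Q/2}}$ and vanishing on $2Q$, its size at $X_Q$ is pinned down by $g_R(X_Q,X_{2Q})\eqsim\ell(Q)\mu(Q)^{-1}\omega_R^{X_{2Q}}(Q)\eqsim\ell(Q)\mu(Q)^{-1}$ via Lemmas~\ref{Greensize}, \ref{grhar} and (the bounded-domain form of) \ref{Bo:est}, and the comparison of a general $w$ with it yields the claim on $T_Q$. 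The two routes are equivalent, and once the lemmas above are granted the adaptation to the weighted, non-symmetric framework is routine; the only genuinely new bookkeeping is the exponent/doubling interplay flagged in the previous paragraph.
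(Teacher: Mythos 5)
Your proposal follows the same classical route that the paper itself relies on: the paper's proof of this lemma is essentially a citation of Theorem~1.4 in \cite{CFMS} (or Lemma~1.3.7 in \cite{K}) together with the observation that symmetry of $A$ is never used. Your skeleton is the right one: normalise $u(X_Q)=v(X_Q)=1$ via \eqref{eq:H}; prove the one-sided bounds $w(X)\leq C\,w(X_Q)\,\omega_*^X(\partial T\setminus Q^{\circ})$ and $w(X)\geq c\,w(X_Q)\,\omega_*^X(\Gamma)$ by the maximum principle together with \eqref{eq:H}, \eqref{eq:bdrydGN} and \eqref{eq:bdryH} (working in $T=T_{3Q/2}$ rather than $T_Q$, so that $u,v$ solve the equation across the top and lateral faces and so that the pole $X\in T_Q$ stays at distance $\geq \ell(Q)/4$ from the bottom edge of $T$); and reduce to the nondegeneracy estimate $\omega_*^X(\partial T\setminus Q^{\circ})\lesssim\omega_*^X(\Gamma)$.

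The gap is in your treatment of that last estimate, which you correctly identify as the crux but then dispose of by asserting it is ``carried out exactly as in CFMS.'' The dyadic-layer summation you describe does not close in the degenerate setting. Covering the lateral layer $L_j$ at height $\eqsim 2^{-j}\ell(Q)$ by surface balls of radius $s_j=2^{-j}\ell(Q)$ centred on the bottom edge and applying the NTA form of Lemma~\ref{grhar} gives $\omega_*^X(L_j)\lesssim s_j^{-2}\mu(\mathcal{N}_j)\,2^{-j\alpha}\,g_*(A_0,X)$, where $\mathcal{N}_j$ is the $s_j$-neighbourhood of the edge and $2^{-j\alpha}$ comes from \eqref{eq:bdrydGN} applied to $g_*(\cdot,X)$; comparing with $\omega_*^X(\Gamma)\gtrsim \ell(Q)^{-2}\mu(T_Q)\,g_*(A_0,X)$ leaves the ratio $4^{j}\,2^{-j\alpha}\,\mu(\mathcal{N}_j)/\mu(T_Q)$. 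For Lebesgue measure $\mu(\mathcal{N}_j)/\mu(T_Q)\eqsim 4^{-j}$ and the series is $\sum_j 2^{-j\alpha}$, but for a general $A_2$ weight one only has $\mu(\mathcal{N}_j)/\mu(T_Q)\lesssim 4^{-j\theta}$ for some $A_\infty$ exponent $\theta\in(0,1)$, so the terms are $2^{j(2(1-\theta)-\alpha)}$ and the series diverges whenever $\alpha\leq 2(1-\theta)$; nothing relates the de~Giorgi--Nash exponent $\alpha$ to $\theta$, so this is exactly the point where the unweighted argument fails to transplant. The repair is to avoid the infinite sum altogether: cover the bad set $\partial(\tfrac32 Q)\times(0,c\,\ell(Q)]$ by a \emph{bounded} number (depending only on $n$ and $c$) of surface balls of the single radius $c\,\ell(Q)$ centred on the edge, apply Lemma~\ref{grhar} (in the general NTA-domain form from \cite{FJK2}) to each, and compare the resulting quantities $\mu(B(Z_i,c\,\ell(Q)))(c\,\ell(Q))^{-2}g_*(A_i,X)$ with the same quantity for one top surface ball, using finitely many applications of the doubling of $\mu$ and of the Harnack inequality for $g_*(\cdot,X)$ along chains avoiding $X$ (first reducing, by interior Harnack, to poles $X$ at height $\leq\ell(Q)/100$). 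No exponent comparison is then needed, and the constants depend only on $n$, $\lambda$, $\Lambda$ and $[\mu]_{A_2}$.
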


The following corollary of these preliminaries will be used in Lemma~\ref{Ker:sup_bound} to estimate Radon--Nikodym derivatives of the degenerate elliptic measure.

\begin{lemma}\label{138}
If $Q_0$ and $Q$ are cubes in $\RR^n$ such that $Q\subseteq Q_0$, then
\[
\omega^{X_{Q_0}}(Q) \eqsim \frac{\omega^{X}(Q)}{\omega^{X}(Q_0)}
\qquad \forall X\in \RR^{n+1}_+\setminus T_{2Q_0},
\]
where the implicit constants depend only on $n$, $\lambda$, $\Lambda$ and $[\mu]_{A_2}$.
\end{lemma}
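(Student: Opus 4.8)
The plan is to deduce Lemma~\ref{138} from the Comparison Principle (Lemma~\ref{Lem:Comparison}) together with the Green's function estimates (Lemmas~\ref{Greensize} and~\ref{grhar}), proving the corresponding statement first at the level of the bounded domains $\Sigma_R$ and then passing to the limit $R\to\infty$ using~\eqref{eq:KU}, \eqref{localdoubling}--\eqref{eq:Qinfsup} and Lemma~\ref{Bo:est}. First I would fix $Q\subseteq Q_0$ and $X\in\RR^{n+1}_+\setminus T_{2Q_0}$, and choose $R_0>0$ large enough that $T_{2Q_0}\subset\Sigma_{R_0}$ and $X\in\Sigma_{R_0}$; all estimates below will be for $R\geq R_0$ with implicit constants independent of $R$.

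The core of the argument is an identity for $\omega^X_R(Q)$ in terms of the Green's function with pole at $X_{Q_0}$. The functions $Y\mapsto \omega^Y_R(Q)$ and $Y\mapsto \omega^Y_R(Q_0)$ are both non-negative solutions of $\div(A\nabla u)=0$ in $\Sigma_R\setminus T_{2Q_0}$ (since $Q\subseteq Q_0$), and both vanish on $\partial\RR^{n+1}_+\setminus\overline{2Q_0}$; in particular they vanish on $2Q_0\setminus\overline{Q_0}$ in the appropriate $W^{1,2}_\mu$-sense. Applying the Comparison Principle on a cube comparable to $Q_0$ (after a Harnack chain to move the pole from the corkscrew point of that cube to $X$ and to $X_{Q_0}$), I get
\[
\frac{\omega^X_R(Q)}{\omega^X_R(Q_0)} \eqsim \frac{\omega^{X_{Q_0}}_R(Q)}{\omega^{X_{Q_0}}_R(Q_0)}
\qquad \forall X\in\Sigma_R\setminus T_{2Q_0},
\]
with constants depending only on $n,\lambda,\Lambda,[\mu]_{A_2}$. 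By Lemma~\ref{Bo:est} (in its $\Sigma_R$-form, i.e. the displayed inequality $\omega^{X_Q}_R(Q)\gtrsim 1$ from its proof, applied to $Q_0$) combined with the trivial bound $\omega^{X_{Q_0}}_R(Q_0)\leq 1$, we have $\omega^{X_{Q_0}}_R(Q_0)\eqsim 1$, so the right-hand side is $\eqsim \omega^{X_{Q_0}}_R(Q)$, giving
\[
\omega^{X_{Q_0}}_R(Q) \eqsim \frac{\omega^X_R(Q)}{\omega^X_R(Q_0)}
\qquad \forall X\in\Sigma_R\setminus T_{2Q_0},\ R\geq R_0,
\]
uniformly in $R$.

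To finish, I would pass to the limit $R\to\infty$. Since $X\notin T_{2Q_0}$ and $Q\subseteq Q_0$, we also have $X\notin T_{2Q}$, so by~\eqref{eq:Qinfsup} both $\omega^X_R(Q)\to\omega^X(Q)$ and $\omega^X_R(Q_0)\to\omega^X(Q_0)$; likewise $\omega^{X_{Q_0}}_R(Q)\to\omega^{X_{Q_0}}(Q)$, using that $X_{Q_0}$ can be connected to a point outside $T_{2Q_0}$ by a Harnack chain and that $\omega^X(\partial Q)=0$. Also $\omega^X(Q_0)>0$ and $\omega^X(Q)\geq 0$ are finite, and $\omega^X(Q_0)\gtrsim \omega^X(Q_0)$ is bounded below after a Harnack chain from $\omega^{X_{Q_0}}(Q_0)\gtrsim 1$ (Lemma~\ref{Bo:est}), so no degeneracy occurs in the ratio. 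Taking limits in the displayed comparison yields $\omega^{X_{Q_0}}(Q)\eqsim \omega^X(Q)/\omega^X(Q_0)$, as claimed.

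The main obstacle is the careful bookkeeping in applying the Comparison Principle: Lemma~\ref{Lem:Comparison} is stated for the cube $Q_0$ itself with pole at $X_{Q_0}$, whereas I need it with an arbitrary pole $X\notin T_{2Q_0}$, which forces a Harnack-chain argument to transfer the pole, and one must check that both comparison functions genuinely vanish on the relevant portion of the boundary and lie in $W^{1,2}_\mu(T_{2Q_0'})\cap C(\overline{T_{2Q_0'}})$ for a slightly enlarged cube $Q_0'$—this is where the boundary regularity from~\eqref{eq:bdrydGN} and the construction of $\omega_R$ via \cite{FJK2} are invoked. A secondary technical point is ensuring the $R$-independence of all constants so that the limiting step is legitimate; this follows because every constant above traces back to $n,\lambda,\Lambda,[\mu]_{A_2}$ only, exactly as in the proofs of Lemmas~\ref{grhar}, \ref{Bo:est} and~\ref{Lem:Comparison}.
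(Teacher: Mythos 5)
There is a genuine gap at the heart of your argument: the step where you claim
\[
\frac{\omega^X_R(Q)}{\omega^X_R(Q_0)} \eqsim \frac{\omega^{X_{Q_0}}_R(Q)}{\omega^{X_{Q_0}}_R(Q_0)}
\qquad \forall X\in\Sigma_R\setminus T_{2Q_0}
\]
by applying the Comparison Principle to $Y\mapsto\omega^Y_R(Q)$ and $Y\mapsto\omega^Y_R(Q_0)$ ``on a cube comparable to $Q_0$'' and then running a Harnack chain to $X$. Lemma~\ref{Lem:Comparison} compares two solutions at points of $T_{Q'}$ only when both vanish on $2Q'$; since your two functions carry boundary data supported on $Q_0$, the only admissible cubes $Q'$ satisfy $2Q'\cap Q_0=\emptyset$, and a general $X\in\RR^{n+1}_+\setminus T_{2Q_0}$ (e.g.\ $X=(x_{Q_0},100\,\ell(Q_0))$, directly above $Q_0$) lies in no such Carleson box. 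The fallback you propose — a Harnack chain from a fixed corkscrew point to $X$ — destroys uniformity: the chain length, and hence the compounded Harnack constant, grows with $\mathrm{dist}(X,T_{Q_0})/\ell(Q_0)$, whereas the lemma requires constants depending only on $n$, $\lambda$, $\Lambda$ and $[\mu]_{A_2}$. Obtaining the $X$-uniform statement is precisely the content of the lemma, so this step cannot be waved through.

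The paper routes around this via the Green's function. By Lemma~\ref{grhar}, $\omega_R^{X}(Q)\,\ell(Q)\eqsim\mu(Q)\,g_R(X_Q,X)$ and $\omega_R^{X}(Q_0)\,\ell(Q_0)\eqsim\mu(Q_0)\,g_R(X_{Q_0},X)$ for every $X\in\Sigma_R\setminus T_{2Q_0}$, and similarly $\omega_R^{X_{3Q_0}}(Q)\,\ell(Q)\eqsim\mu(Q)\,g_R(X_Q,X_{3Q_0})$. The Comparison Principle is then applied in the \emph{first} variable to $u(Y)=g_R(Y,X)$ and $v(Y)=g_R(Y,X_{3Q_0})$ on $T_{2Q_0}$, where both genuinely vanish on $2Q_0$ regardless of where $X$ sits; combined with $g_R(X_{Q_0},X_{3Q_0})\eqsim\ell(Q_0)/\mu(Q_0)$ from Lemma~\ref{Greensize}, the $X$-dependence enters only through values of $g_R(\cdot,X)$ at the two fixed points $X_Q,X_{Q_0}\in T_{Q_0}$, so all constants are uniform; a single bounded Harnack step moves $X_{3Q_0}$ to $X_{Q_0}$. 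Your limiting argument via \eqref{eq:Qinfsup} and the use of Lemma~\ref{Bo:est} are fine, but the proof needs to be rebuilt on the Green's function representation to close the uniformity gap.
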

\begin{proof}
Let $Q\subset Q_0$ be cubes in $\RR^n$, suppose that $X\in \RR^{n+1}_+\setminus T_{2Q_0}$ and consider $R>0$ large enough so that $X\in\Sigma_R$ and $T_{4Q_0}\subset \Sigma_R$. Lemma \ref{grhar} shows that
\begin{align*}
\omega_R^{X}(Q_0)\, \ell(Q_0) &\eqsim \mu(Q_0)\, g_R(X_{Q_0},X) ,\\
\omega_R^{X}(Q)\, \ell(Q) &\eqsim \mu(Q)\, g_R(X_Q,X) \\
\omega_R^{X_{3Q_0}}(Q)\, \ell(Q)  & \eqsim \mu(Q)\, g_R(X_Q, X_{3Q_0}).
\end{align*}
If $u(Y) = g_R(Y,X)$ and $v(Y) = g_R(Y,X_{3Q_0})$, then $\div(A\nabla u) = \div(A\nabla u) v = 0$ in $T_{2Q_0}$ and $u=v=0$ on $2Q_0$, so the Comparison Principle in Lemma~\ref{Lem:Comparison} shows that
\begin{equation*}
\frac{g_R(X_Q,X)}{g_R(X_Q, X_{3Q_0})} = \frac{u(X_Q)}{v(X_Q)} \eqsim \frac{u(X_{Q_0})}{v(X_{Q_0})} = \frac{g_R(X_{Q_0},X)}{g_R(X_{Q_0},X_{3Q_0})}.
\end{equation*}
Also, Lemma~\ref{Greensize} shows that $g_R(X_{Q_0}, X_{3Q_0}) \eqsim \ell(Q_0)/ \mu(Q_0)$, so together we obtain
\begin{align*}
\frac{\omega_R^{X}(Q)}{\omega_R^{X}(Q_0)} 
\eqsim \frac{g_R(X_Q,X)}{g_R(X_{Q_0},X)}
\frac{\mu(Q)}{\ell(Q)}\frac{\ell(Q_0)}{\mu(Q_0)}
\eqsim \frac{g_R(X_Q, X_{3Q_0})}{g_R(X_{Q_0},X_{3Q_0})} \frac{\mu(Q)}{\ell(Q)}\frac{\ell(Q_0)}{\mu(Q_0)}
\eqsim \omega_R^{X_{3Q_0}}(Q).
\end{align*}
The Harnack inequality from \eqref{eq:H} then shows that $\omega_R^{X}(Q) \eqsim \omega_R^{X}(Q_0) \omega_R^{X_{Q_0}}(Q)$ and the result follows by using \eqref{eq:Qinfsup} to estimate the limit as $R$ approaches infinity.
\end{proof}

If $X,X_0\in\RR^{n+1}_+$, then Lemma~\ref{lem:abs.cty} shows that $\omega^{X}$ and $\omega^{X_0}$ are mutually absolutely continuous, so the Lebesgue differentiation theorem for the locally doubling measure $\omega^{X_0}$ implies that the Radon--Nikodym derivative of $\omega^{X}$ satisfies
\begin{equation}\label{eq:Klim}
K(X_0,X,y) := \frac{d\omega^{X}}{d\omega^{X_0}}(y) = \lim_{s \rightarrow 0}\frac{\omega^X(Q(y,s))}{\omega^{X_0}(Q(y,s))}
\qquad \omega^{X_0}\text{-a.e. }y\in \RR^n,
\end{equation}
where $Q(y,s)$ denotes the cube in $\RR^n$ with centre $y$ and side length $s$. The following decay estimate for the kernel function $K$ extends Lemma 2 on page 584 in \cite{FJK2}. It is the final property of degenerate elliptic measure needed to prove Lemma~\ref{28}.

\begin{proposition}\label{Ker:sup_bound}
If $Q_0$ and $Q$ are cubes in $\RR^n$ such that $Q\subseteq Q_0$, then
\begin{equation*}
K(X_{Q_0},X_Q,y) \lesssim \frac{1}{\omega^{X_{Q_0}}(Q)}\max\left\lbrace\frac{|y - x_Q|}{\ell(Q)}, 1\right\rbrace^{-\alpha}
\qquad \omega^{X_{Q_0}}\text{-a.e. }y\in Q_0,
\end{equation*}
where $\alpha\!>\!0$ from \eqref{eq:dGN} and the implicit constant depend only on  $n$, $\lambda$, $\Lambda$ and $[\mu]_{A_2}$.
\end{proposition}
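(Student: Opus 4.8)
The plan is to establish the decay estimate by splitting $Q_0$ into the dyadic annuli around $Q$ and combining Proposition~\ref{Ker:sup_bound}'s companion identity \eqref{eq:Klim} with the boundary H\"older continuity of degenerate elliptic measure from Lemma~\ref{lem:abs.cty}. First I would observe that the inequality is trivial on the region $\{y\in Q_0 : |y-x_Q| \leq \ell(Q)\}$, since there the claimed bound reduces to $K(X_{Q_0},X_Q,y) \lesssim \omega^{X_{Q_0}}(Q)^{-1}$; this follows from \eqref{eq:Klim}, the local doubling of $\omega^{X_Q}$, and Lemma~\ref{138} applied at scale comparable to $\ell(Q)$ (which gives $\omega^{X_Q}(Q(y,s)) \eqsim \omega^{X_{Q_0}}(Q(y,s)) / \omega^{X_{Q_0}}(Q)$ for such $y$, modulo a Harnack chain to move the pole from $X_{Q_0}$ to $X_Q$).

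For the main region, fix $k\in\mathbb{N}$ and consider $y\in Q_0$ with $2^{k}\ell(Q) < |y-x_Q| \leq 2^{k+1}\ell(Q)$, so $y$ lies in a cube $R_k$ of sidelength $\eqsim 2^k\ell(Q)$ with $2R_k \subseteq \RR^n\setminus Q$ (say, after shrinking $Q$ slightly, which is harmless). The key input is that the function $Y \mapsto \omega^Y(Q)$ is a non-negative solution vanishing continuously on $2R_k$ in the appropriate weak sense, so the boundary H\"older estimate \eqref{eq:newbdryH} (or \eqref{eq:bdrydGN} via the reflection argument) applied on $R_k$ gives, for $s$ small,
\[
\omega^{(y,s)}(Q) \lesssim \left(\frac{s}{2^k\ell(Q)}\right)^{\alpha} \omega^{X_{R_k}}(Q).
\]
Dividing by $\omega^{(y,s)}(Q_0)$, using Lemma~\ref{138} in the form $\omega^{X_{R_k}}(Q) \eqsim \omega^{X_{R_k}}(Q_0)\,\omega^{X_{Q_0}}(Q)$ together with the local doubling of $\omega^{X_{Q_0}}$ to compare $\omega^{(y,s)}(Q_0)$ with $\omega^{X_{R_k}}(Q_0)$ up to a constant independent of $s$, and finally letting $s\to0$ along the limit in \eqref{eq:Klim}, I would conclude
\[
K(X_{Q_0},X_Q,y) \lesssim \frac{\omega^{X_{R_k}}(Q)}{\omega^{X_{R_k}}(Q_0)\,\omega^{X_{Q_0}}(Q)}\cdot\frac{1}{\omega^{X_{Q_0}}(Q)}\cdot 2^{-k\alpha} \cdot \omega^{X_{Q_0}}(Q)
\eqsim \frac{2^{-k\alpha}}{\omega^{X_{Q_0}}(Q)},
\]
where I should double-check the bookkeeping but the essential point is that the factors involving $R_k$ cancel by Lemma~\ref{138}, leaving exactly $\omega^{X_{Q_0}}(Q)^{-1}$ times the H\"older gain $2^{-k\alpha} \eqsim (|y-x_Q|/\ell(Q))^{-\alpha}$. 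Since these annuli exhaust $\{y\in Q_0 : |y-x_Q| > \ell(Q)\}$, combining with the first region completes the proof.

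The main obstacle, I expect, is not any single estimate but rather the careful passage to the limit $s\to0$ in \eqref{eq:Klim}: one must verify that all the comparison constants (from Lemma~\ref{138}, from local doubling, and from the Harnack chains needed to relocate poles) are uniform in $s$, and that the boundary H\"older estimate genuinely applies to the solution $Y\mapsto\omega^Y(Q)$ at the relevant small scales near $y$. A secondary technical point is the need to slightly shrink $Q$ so that the separation $2R_k\cap Q = \emptyset$ holds strictly, which requires either an elementary limiting argument in $Q$ using the local doubling of $\omega^{X_{Q_0}}$, or working directly with a concentric dilate; this is routine but should be stated. Everything else — the reflection argument producing \eqref{eq:bdrydGN}, the doubling of $\omega$, and the identity in Lemma~\ref{138} — is already available from the preceding results.
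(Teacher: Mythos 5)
Your treatment of the near region $|y-x_Q|\lesssim\ell(Q)$ is essentially the paper's (Lemma~\ref{138} plus Harnack plus \eqref{eq:Klim}), but the main region contains a genuine gap: you apply the boundary H\"older estimate to the wrong solution. The Radon--Nikodym derivative in \eqref{eq:Klim} is $\lim_{s\to0}\omega^{X_Q}(Q(y,s))/\omega^{X_{Q_0}}(Q(y,s))$, with the \emph{poles fixed} and the cube shrinking; the quantity you estimate, $\omega^{(y,s)}(Q)/\omega^{(y,s)}(Q_0)$, has the \emph{sets fixed} and the pole tending to the boundary. These are not the same limit, and your own bound $\omega^{(y,s)}(Q)\lesssim (s/(2^k\ell(Q)))^{\alpha}\,\omega^{X_{R_k}}(Q)$ shows the problem: the numerator decays like $s^{\alpha}$ while $\omega^{(y,s)}(Q_0)$ stays bounded below (indeed tends to $1$ for interior $y$), so your ratio tends to $0$, not to $K(X_{Q_0},X_Q,y)$. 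Identifying the two limits would require the kernel-function/Green-function comparison $d\omega^{X}/d\omega^{X_0}(y)\eqsim\lim_{Y\to y}g(X,Y)/g(X_0,Y)$, which is precisely the sort of global Green's function machinery the paper does not have on the unbounded half-space (see Section~\ref{Sec:PreEst}); also, your appeal to Lemma~\ref{138} with pole $X_{R_k}$ is illegitimate for small $k$, since that lemma requires the pole to lie outside $T_{2Q_0}$.

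The fix is to keep $Q(y,s)$ as the boundary set and vary the pole: for $y\in 2^{j}Q\setminus 2^{j-1}Q$ the solution $X\mapsto\omega^{X}(Q(y,s))$ vanishes on $2^{j-1}Q$ for small $s$, so \eqref{eq:newbdryH} applied on the cube $2^{j-2}Q$, evaluated at $X_Q=(x_Q,\ell(Q))\in T_{2^{j-2}Q}$, gives
\begin{equation*}
\omega^{X_Q}(Q(y,s))\lesssim \Bigl(\tfrac{\ell(Q)}{2^{j-2}\ell(Q)}\Bigr)^{\alpha}\omega^{X_{2^{j-2}Q}}(Q(y,s)),
\end{equation*}
with no spurious factor of $s^{\alpha}$. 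Lemma~\ref{138} and the Harnack inequality then convert $\omega^{X_{2^{j-2}Q}}(Q(y,s))$ into $\omega^{X_{Q_0}}(Q(y,s))/\omega^{X_{Q_0}}(2^{j}Q)$, and only after this does one let $s\to0$ in \eqref{eq:Klim} to obtain $K(X_{Q_0},X_Q,y)\lesssim 2^{-j\alpha}/\omega^{X_{Q_0}}(Q)$. Your instinct to use the H\"older decay across dyadic annuli is the right one, but it must act in the pole variable at $X_Q$, not at the boundary point $y$.
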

\begin{proof}
Let $Q\subseteq Q_0$ denote cubes in $\RR^n$ and fix $J\in\N$ such that $2^{J-1}Q\subseteq Q_0 \subseteq 2^{J}Q$. If $y\in Q$, then Lemma~\ref{138} and the Harnack inequality in \eqref{eq:H} show that
\[
\omega^{X_Q}(Q(y,s)) \eqsim \frac{\omega^{X_{2Q_0}}(Q(y,s))}{\omega^{X_{2Q_0}}(Q)} \eqsim \frac{\omega^{X_{Q_0}}(Q(y,s))}{\omega^{X_{Q_0}}(Q)}
\]
whenever $0<s<\dist(y,\RR^n\setminus Q)$. If $y\in 2^j Q \setminus 2^{j-1} Q$ for some $j\in\{1,\ldots,J\}$, then the boundary H\"{o}lder continuity estimate in \eqref{eq:newbdryH} combined with Lemma~\ref{138} and the Harnack inequality in \eqref{eq:H} show that
\[
\omega^{X_Q}(Q(y,s)) 
\lesssim \left(\frac{\ell(Q)}{2^{j-2}\ell(Q)}\right)^{\alpha}\omega^{X_{2^{j-2}Q}}(Q(y,s))
\eqsim \left(\frac{\ell(Q)}{|y-x_Q|}\right)^{\alpha} \frac{\omega^{X_{Q_0}}(Q(y,s))}{\omega^{X_{Q_0}}(2^j{Q})}
\]
whenever $0<s<\dist(y,\RR^n\setminus (2^jQ\setminus2^{j-2}Q))$, where $\alpha>0$ from \eqref{eq:dGN} and the implicit constants depend only on  $n$, $\lambda$, $\Lambda$ and $[\mu]_{A_2}$. The result follows by using these two  estimates to bound the limit as $s$ approaches zero in \eqref{eq:Klim}.
\end{proof}

\subsection{The $A_\infty$-estimate for degenerate elliptic measure}\label{sect:dyadiccubes}
We now combine the properties of degenerate elliptic measure with good $\epsilon_0$-coverings for sets, as introduced in \cite{KKoPT} and defined below (see also \cite{KKiPT}), to construct bounded solutions that satisfy the truncated square function estimate in Lemma~\ref{28}. This result, combined with the Carleson measure estimate from Theorem~\ref{thm:introCME}, allows us to prove the $A_\infty$-estimate for the degenerate elliptic measure in Theorem~\ref{thm:Ainftymain}. This avoids the need to apply the method of $\epsilon$-approximability, as was done in \cite{HKMP1}, and so simplifies the proof in the uniformly elliptic case. 

Let $\DD(\RR^n)$ denote the standard collection $\{2^k(j+[0,1]^n):k\in\ZZ,j\in\ZZ^n\}$ of all  \textit{closed} dyadic cubes $S$ in $\RR^n$. For each $S\in\DD(\RR^n)$ and $\eta=2^{-K}$, where $K\in\mathbb{N}$, define $\DD(S):=\{S'\in\DD(\RR^n) : S'\subseteq S\}$ and 
\begin{equation}\label{eq:DetaDef}
\DD^\eta(S):=\{S'\in \DD(S) : \ell(S') = 2^{-K} \ell(S)\},
\end{equation}
so $\DD^\eta(S)$ is precisely the set of all \textit{dyadic descendants} of $S$ at scale $2^{-K}\ell(S)$.

\begin{definition}\label{def:epscover} Suppose that $Q_0$ is a cube in $\RR^n$. If $\epsilon_0>0$, $k\in\N$, $Q\subseteq Q_0$ is a cube and $E \subseteq Q$, then a \textit{good $\epsilon_0$-cover of $E$ of length $k$ in $Q$} is a collection $\{O_l\}_{l=1}^{k}$ of nested open sets that satisfy $E\subseteq O_k \subseteq O_{k-1} \subseteq \ldots \subseteq O_1\subseteq Q$ and each of which has a decomposition $O_l=\cup_{i=1}^\infty S_i^l$ given by a collection $\{S_i^{l}\}_{i\in\mathbb{N}}\subseteq\mathbb{D}(\RR^n)$ of dyadic cubes with pairwise disjoint interiors such that
\begin{equation}\label{cover1}
\omega^{X_{2Q_0}}(O_l\cap S_i^{l-1}) \leq \epsilon_0\, \omega^{X_{2Q_0}}(S_i^{l-1})
\qquad \forall i\in\N, \ \forall l\in\{2,\ldots,k\}.
\end{equation}
\end{definition}

Let us record a few important consequences of this definition that will be needed. It is proved on page 243 in \cite{KKoPT} that for each $i\in\N$ and $l\in\{2,\ldots,k\}$, there exists a unique $j\in\N$ such that $S_i^l$ is a \textit{proper} subset of $S_j^{l-1}$, thus $\ell(S_i^l)\leq \tfrac{1}{2}\ell(S_j^{l-1})$. Also, for $m\in\{2,\ldots,k\}$, iterating \eqref{cover1} as in Lemma~2.5 of \cite{KKoPT} shows that
\begin{equation}\label{cover2}
\omega^{X_{2Q_0}}(O_l\cap S_{i}^{m}) \leq \epsilon_0^{l-m}\omega^{X_{2Q_0}}(S_i^{m})
\qquad \forall i\in\N,\ \forall l\in\{m,\ldots,k\}.
\end{equation}

In the uniformly elliptic case, the following result is Lemma 2.3 from \cite{KKiPT}. The proof extends to the degenerate elliptic case, since it only relies on the fact that the degenerate elliptic measure $\omega^{X_{2Q_0}}$ is doubling when restricted to the cube $Q_0$.

\begin{lemma}\label{lemma25} Suppose that $Q_0$ is a cube in $\RR^n$. If $\epsilon_0 >0$, then there exists $\delta_0>0$, depending only on $\epsilon_0$, $n$, $\lambda$, $\Lambda$ and $[\mu]_{A_2}$, such that the following property holds:\\ \indent  If $Q\subseteq Q_0$ is a cube and $E\subseteq Q_0$ such that $\omega^{X_{2Q_0}}(E) \leq \delta_0$, then there exists a good $\epsilon_0$-cover of $E$ of length $k$ in $Q$ for some natural number $k \eqsim {\log(\omega^{X_{2Q_0}}(E))}/{\log\epsilon_0}$, where the implicit constants depend only on $n$, $\lambda$, $\Lambda$ and $[\mu]_{A_2}$.
\end{lemma}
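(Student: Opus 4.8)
The plan is to follow the strategy of Lemma~2.3 in~\cite{KKiPT}, the only input being that $\omega:=\omega^{X_{2Q_0}}\lfloor Q_0$ is a doubling measure on $Q_0$ with a doubling constant $C_0$ depending only on $n$, $\lambda$, $\Lambda$ and $[\mu]_{A_2}$ (established in~\eqref{localdoubling} and the paragraph following it). The good $\epsilon_0$-cover will be built by a stopping-time argument applied to $\omega$: starting from a generation of dyadic cubes covering $E$, one selects, inside each previously chosen cube, the maximal dyadic subcubes in which the $\omega$-density of the set being covered drops below a prescribed fraction. Iterating this construction $k$ times yields the nested open sets $O_k\subseteq O_{k-1}\subseteq\cdots\subseteq O_1$ with the required decompositions into dyadic cubes with pairwise disjoint interiors.

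First I would fix $\epsilon_0>0$ and choose an auxiliary threshold $\theta\in(0,1)$ close to $1$, to be determined by $\epsilon_0$ and $C_0$. Given $E\subseteq Q_0$ with $\omega(E)\leq\delta_0$ (where $\delta_0$ is still to be chosen), set $O_1$ to be a union of dyadic cubes containing $E$ of total $\omega$-measure at most, say, $2\omega(E)$; concretely, by the Lebesgue density theorem for the doubling measure $\omega$ and an elementary covering argument (a Vitali-type selection of maximal dyadic cubes where the density of $E$ is small), one obtains such an $O_1$ with the additional feature that on each constituent cube $S_i^1$ the $\omega$-density of $E$ is bounded below by a constant depending only on $C_0$. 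Having constructed $O_{l-1}=\cup_i S_i^{l-1}$, I would define $O_l$ inside each $S_i^{l-1}$ as the union of the maximal \emph{proper} dyadic subcubes $S$ of $S_i^{l-1}$ for which $\omega(E\cap S)/\omega(S)\geq \tau$, for an appropriate small $\tau=\tau(\epsilon_0,C_0)$; the maximality forces $\ell(S)\leq\tfrac12\ell(S_i^{l-1})$ automatically, which is the properness needed for~\eqref{cover2}. The stopping condition, together with the doubling property used on the dyadic parent of each selected $S$, yields $\omega(O_l\cap S_i^{l-1})\leq C_0\tau\,\omega(S_i^{l-1})$; choosing $\tau:=\epsilon_0/C_0$ gives exactly~\eqref{cover1}. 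One must also check $E\subseteq O_l$: since $\omega$-a.e.\ point of $E$ is a density point, for $\omega$-a.e.\ $x\in E$ all sufficiently small dyadic cubes about $x$ have $E$-density close to $1>\tau$, hence are contained in $O_l$; a routine argument upgrades "$\omega$-a.e." to genuine containment by slightly enlarging the $O_l$ to open sets (or by noting that the union of the selected closed dyadic cubes already has full $\omega$-measure on $E$ and passing to the open set obtained from the interiors together with a null adjustment).

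The length $k$ is then controlled as follows. Iterating~\eqref{cover1} from level $1$ to level $k$ as in Lemma~2.5 of~\cite{KKoPT} gives $\omega(O_k)\leq \epsilon_0^{\,k-1}\omega(O_1)\leq 2\epsilon_0^{\,k-1}\omega(E)\leq \omega(Q_0)$, but more to the point, the construction is run as long as the covering has not collapsed. Since at each stage $\omega(O_{l})\leq\epsilon_0\,\omega(O_{l-1})$ but $O_l$ must still contain $E$, we necessarily have $\omega(E)\leq\omega(O_l)\leq\epsilon_0^{\,l-1}\omega(O_1)$, forcing $l-1\leq \log(\omega(E)/\omega(O_1))/\log\epsilon_0 \lesssim \log(\omega(E))/\log\epsilon_0$; conversely one may \emph{continue} the construction until the first level at which $O_l$ would be forced to be essentially all of $E$ up to the stopping threshold, and a matching lower bound $k\gtrsim \log(\omega(E))/\log\epsilon_0$ follows from $\omega(O_1)\eqsim\omega(E)$ and $\omega(O_k)\gtrsim\omega(E)$ (the latter because $O_k\supseteq E$). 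This pins down $k\eqsim \log(\omega^{X_{2Q_0}}(E))/\log\epsilon_0$ with implicit constants depending only on $n$, $\lambda$, $\Lambda$, $[\mu]_{A_2}$. Finally, $\delta_0$ is chosen small enough (depending on $\epsilon_0$ and $C_0$) to guarantee that $\omega(O_1)<\omega(Q_0)$ with room to spare, so that at least one iteration is possible and the cover has positive length; tracking constants, $\delta_0$ depends only on $\epsilon_0$, $n$, $\lambda$, $\Lambda$ and $[\mu]_{A_2}$.

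The main obstacle I anticipate is bookkeeping the passage between closed dyadic cubes (which genuinely cover, hence are convenient for the stopping-time selection) and the \emph{open} sets $O_l$ demanded by Definition~\ref{def:epscover}, while simultaneously ensuring $E\subseteq O_l$ honestly (not merely $\omega$-a.e.) and that the decomposition cubes have pairwise disjoint \emph{interiors}. This is handled exactly as in~\cite{KKoPT}: one selects maximal dyadic cubes (automatically pairwise disjoint interiors), takes $O_l$ to be the interior of their union together with the relevant boundary identifications, and uses that $\omega$ charges no dyadic boundary (which holds here by the doubling property, cf.\ the remark after~\eqref{localdoubling}) to see that this open modification changes no $\omega$-measure; the density-point argument then places $\omega$-a.e.\ point of $E$ in $O_l$, and since $\omega$ and $\omega^{X_{2Q_0}}$ are mutually absolutely continuous with the relevant null sets harmless, one may absorb the exceptional set. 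No new analytic input beyond the doubling property of degenerate elliptic measure is needed, so the proof is a verbatim transcription of the uniformly elliptic argument with Lebesgue measure replaced by $\omega^{X_{2Q_0}}$, as the statement already asserts.
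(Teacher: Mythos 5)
Your overall strategy --- reduce everything to the doubling property of $\omega:=\omega^{X_{2Q_0}}$ on $Q_0$ (from \eqref{localdoubling} and the remarks following it) and transcribe the dyadic stopping-time construction of \cite{KKoPT}, \cite{KKiPT} with Lebesgue measure replaced by $\omega$ --- is exactly the paper's, whose proof consists of citing Lemma~2.3 of \cite{KKiPT} and observing that only doubling is used. However, the specific stopping rule you propose is not the one in those references and does not work, so there is a genuine gap.

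The problem is the \emph{fixed} density threshold $\tau$. If $O_l$ is built inside $S_i^{l-1}$ from the maximal proper dyadic subcubes $S$ with $\omega(E\cap S)\geq\tau\,\omega(S)$, then disjointness gives only $\omega(O_l\cap S_i^{l-1})\leq\tau^{-1}\omega(E\cap S_i^{l-1})$, while the selection at the previous stage gives the \emph{lower} bound $\omega(E\cap S_i^{l-1})\geq\tau\,\omega(S_i^{l-1})$ and the parent-failure/doubling argument gives at best the upper bound $\omega(E\cap S_i^{l-1})\leq C_0\tau\,\omega(S_i^{l-1})$; combining these yields $\omega(O_l\cap S_i^{l-1})\leq C_0\,\omega(S_i^{l-1})$, with no factor of $\tau$ surviving, so \eqref{cover1} does not follow. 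Concretely, if $E$ is (essentially) a single very deep dyadic cube, your rule makes each $O_l$ the dyadic child of $O_{l-1}$ containing $E$, and doubling forces $\omega(O_l\cap S_i^{l-1})\gtrsim\omega(S_i^{l-1})$, violating \eqref{cover1} once $\epsilon_0$ is small; the iteration also never terminates, so the length count has nothing to bite on. There is moreover an internal contradiction in your normalization: if $\omega(O_1)\leq 2\,\omega(E)$ and a good $\epsilon_0$-cover of length $k$ exists, then $E\subseteq O_k$ and \eqref{cover2} give $\omega(E)\leq\omega(O_k)\leq\epsilon_0^{k-1}\omega(O_1)\leq 2\epsilon_0^{k-1}\omega(E)$, which forces $k\leq 1+\log 2/\log(1/\epsilon_0)$ --- the cover could never be long, defeating the purpose of the lemma. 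The correct construction uses \emph{level-dependent} thresholds $\lambda_1<\lambda_2<\cdots<\lambda_k<1$ increasing geometrically, $\lambda_l=(C_0/\epsilon_0)\lambda_{l-1}$, so that $\omega(O_l\cap S_i^{l-1})\leq\lambda_l^{-1}\omega(E\cap S_i^{l-1})\leq\lambda_l^{-1}C_0\lambda_{l-1}\omega(S_i^{l-1})=\epsilon_0\,\omega(S_i^{l-1})$; the constraints $\lambda_k<1$ (needed so that $\omega$-a.e.\ density point of $E$ is captured at every level, whence $E\subseteq O_l$) and the weak-type bound $\omega(O_1)\leq\lambda_1^{-1}\omega(E)$ are what determine both $\delta_0$ and the count $k\eqsim\log(\omega^{X_{2Q_0}}(E))/\log\epsilon_0$. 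Your remarks about open versus closed cubes, disjoint interiors, and the $\omega$-nullity of dyadic boundaries are fine.
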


We can now prove the following lemma by adapting the proof in \cite{KKiPT} to the degenerate elliptic case. The original argument has also been somewhat modified.
\begin{lemma}\label{28}
Suppose that $Q_0$ is a cube in $\RR^n$. If $M \geq 1$, then there exists $\delta_M>0$, depending only on $M$,  $n$, $\lambda$, $\Lambda$ and $[\mu]_{A_2}$, such that the following property holds:\\ \indent If $Q\subseteq Q_0$ is a cube and $E \subseteq Q$ and $\omega^{X_{2Q_0}}(E) \leq \delta_M$, then there is a Borel subset $\mathcal{B}$ of $\RR^n$ such that the solution $u(X) :=\omega^{X}(\mathcal{B})$ of $\div (A \nabla u) = 0$ in $\RR^{n+1}_+$ satisfies
\begin{equation*}
M \leq \int_0^{\gamma \ell(Q)}\int_{\Delta(x,\gamma t)}|t\nabla u(y,t)|^2\frac{d\mu(y)}{\mu(\Delta(x,t))}\frac{dt}{t}
\qquad \forall x\in E,
\end{equation*}
where $\gamma>0$ is a constant that depends only on $n$, $\lambda$, $\Lambda$ and $[\mu]_{A_2}$.
\end{lemma}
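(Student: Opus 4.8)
The plan is to follow the mechanism introduced by Kenig, Kirchheim, Pipher and Toro in \cite{KKiPT}: given a set $E\subseteq Q$ of small degenerate elliptic measure, produce a good $\epsilon_0$-cover $\{O_l\}_{l=1}^k$ of $E$ in $Q$ of length $k\eqsim \log(\omega^{X_{2Q_0}}(E))/\log\epsilon_0$ (Lemma~\ref{lemma25}), and then use the associated dyadic structure to manufacture a single Borel set $\mathcal{B}$ whose degenerate elliptic potential $u(X):=\omega^X(\mathcal{B})$ oscillates at least a fixed amount across each layer of the cover, at each point of $E$. The natural candidate is to take $\mathcal{B}$ to be (a slight modification of) the set $O_k$ itself, or a union over $l$ of the cubes $S_i^l$ comprising $O_l$, chosen so that $u$ is comparable to $l/k$ (suitably normalised) on the corkscrew points above the cubes $S_i^l$. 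One then has $k$ well-separated scales at which $t\nabla u$ must be nontrivial, and square-summing the resulting lower bounds over the layers gives a truncated square function bounded below by a constant multiple of $k$, hence by $M$ once $\delta_M$ (equivalently $\epsilon_0$, equivalently the forced length $k$) is chosen large enough.

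\textbf{Key steps, in order.} First, fix $M\geq 1$, choose $\epsilon_0>0$ small (depending on $M$ and the allowed constants, to be pinned down at the end), and invoke Lemma~\ref{lemma25} to obtain $\delta_M:=\delta_0(\epsilon_0)$ so that any $E\subseteq Q$ with $\omega^{X_{2Q_0}}(E)\leq\delta_M$ admits a good $\epsilon_0$-cover $\{O_l\}_{l=1}^k$ in $Q$ with $k\gtrsim \log(1/\omega^{X_{2Q_0}}(E))/\log(1/\epsilon_0)$; by further shrinking $\delta_M$ we force $k$ as large as needed. Second, using Lemma~\ref{Bo:est}, Lemma~\ref{138}, Proposition~\ref{Ker:sup_bound} and the locally doubling property in \eqref{localdoubling}, estimate $u(X_{S})=\omega^{X_S}$-potentials on corkscrew points: for a cube $S=S_i^{l-1}$ from layer $l-1$ of the cover, the defining inequality \eqref{cover1} controls $\omega^{X_{2Q_0}}(O_l\cap S)\leq\epsilon_0\,\omega^{X_{2Q_0}}(S)$, and together with the comparison/change-of-pole estimates this translates into a bound on $\omega^{X_S}(O_l)\lesssim\epsilon_0$, while the iterate \eqref{cover2} gives the nested versions. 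Third, select $\mathcal{B}$ so that $u\approx \tfrac{l}{k}$ on the corkscrew points above the cubes of $O_l$ but $u\approx \tfrac{l-1}{k}$ on those just above $O_{l-1}\setminus O_l$: concretely one can take $\mathcal{B}=O_k$ and show via the change-of-pole lemmas that $u(X_{S_i^l})\gtrsim 1$ uniformly in $i,l$ while $u(X_{S_j^{l-1}})\lesssim\epsilon_0$ when $S_j^{l-1}$ meets $E$ only through its descendants in $O_l$; rescaling by $k$ (or choosing $\mathcal{B}$ as a $k$-fold ``layered'' union) produces the desired $\Theta(1/k)$ jumps. Fourth, for $x\in E$, since $x\in O_l$ for all $l$, pick for each $l$ a dyadic cube $S^l(x)\ni x$ from the decomposition of $O_l$ and its corkscrew $X^l(x)$; the proper-containment property $\ell(S_i^l)\leq\tfrac12\ell(S_j^{l-1})$ guarantees the heights $t_l:=\ell(S^l(x))$ are geometrically decreasing, so the corkscrew points lie in $\Gamma_\gamma(x)$ at well-separated scales for a suitable aperture $\gamma$. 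Fifth, on each such scale, apply the interior oscillation (de Giorgi--Nash) estimate \eqref{eq:dGN} together with Caccioppoli's inequality \emph{in reverse}: the jump $|u(X^{l}(x))-u(X^{l-1}(x))|\gtrsim 1/k$ forces $\fint t^2|\nabla u|^2\,d\mu\gtrsim 1/k^2$ on an annular Whitney region of $\Gamma_\gamma(x)$ at height $\eqsim t_l$ (using the Poincar\'e inequality \eqref{wPoinc} to pass from oscillation of $u$ to its gradient), hence $\int_{\text{that annulus}}\int_{\Delta(x,\gamma t)}|t\nabla u|^2\,d\mu(y)/\mu(\Delta(x,t))\,dt/t\gtrsim 1/k^2$. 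Sixth, sum over the $k$ disjoint scales to get the truncated square function at $x$ bounded below by $c\,k\cdot k^{-2}\cdot(\text{doubling factor})$; however the sharper bookkeeping in \cite{KKiPT} yields a genuine lower bound of order $k$ (not $1/k$) because one only needs the layers to be \emph{distinguishable}, not uniformly spaced in value --- one replaces the linear profile by letting $u$ range over $\{0,1\}$-type plateaus and counts the number of scales across which $u$ changes by a definite amount. Finally, choosing $\epsilon_0$ small (hence $\delta_M$ small, hence $k\geq CM$) makes this lower bound exceed $M$, completing the proof; the constant $\gamma$ depends only on $n,\lambda,\Lambda,[\mu]_{A_2}$ through the aperture needed to contain the Whitney regions and through the doubling constant of $\mu$.

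\textbf{Main obstacle.} The delicate point is Step 5 combined with Step 6: turning a pointwise oscillation of $u$ between two corkscrew points into a genuine lower bound on $\iint|t\nabla u|^2$ over the \emph{cone} $\Gamma_\gamma(x)$ with the degenerate measure $d\mu(y)/\mu(\Delta(x,t))$, and then ensuring the scales one uses are honestly disjoint so the contributions add rather than overlap. One must choose the aperture $\gamma$ large enough that, for every $x\in E$ and every layer $l$, a whole Whitney-type ball around the corkscrew $X^l(x)$ at height $\eqsim\ell(S^l(x))$ lies inside $\Gamma_\gamma(x)$ --- this is where the nestedness $S_i^l\subsetneq S_j^{l-1}$ and the fact that $x$ lies in all the $O_l$ are used --- while simultaneously keeping $\gamma$ bounded by the allowed constants. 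The weighted Poincar\'e inequality \eqref{wPoinc} and the de Giorgi--Nash estimate \eqref{eq:dGN} are exactly the tools that convert $|u(X^l)-u(X^{l-1})|\gtrsim 1/k$ into $\fint_{\text{ball}}|t\nabla u|^2\,d\mu\gtrsim k^{-2}$, but one has to check that the two corkscrew points at consecutive layers can be joined by a bounded Harnack chain inside the cone so that the oscillation is genuinely localised to a single Whitney scale; here the geometric decay of the heights $t_l$ and the local doubling of both $\mu$ and $\omega^{X}$ are essential. Getting the counting in Step 6 to produce a factor growing \emph{linearly} in $k$ (rather than the naive $k^{-1}$) is the crux, and it is handled exactly as in Lemma~2.3 of \cite{KKiPT} by taking $\mathcal{B}$ so that $u$ jumps by a \emph{fixed} amount (independent of $k$) across each of a positive fraction of the $k$ layers.
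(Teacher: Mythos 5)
Your overall architecture matches the paper's: obtain a good $\epsilon_0$-cover of $E$ of length $k\eqsim\log(1/\omega^{X_{2Q_0}}(E))/\log(1/\epsilon_0)$ via Lemma~\ref{lemma25}, build a single Borel set $\mathcal{B}$ whose potential $u(X)=\omega^X(\mathcal{B})$ oscillates by a definite amount between two corkscrew points at each of the $k$ nested scales containing a given $x\in E$, convert each oscillation into a lower bound on the locally averaged square function via the Moser and Poincar\'e inequalities, and sum over the boundedly overlapping Whitney regions to get a lower bound $\gtrsim k\geq M$. The aperture, scale-separation and summation issues you flag in your ``main obstacle'' paragraph are handled essentially as you anticipate.

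The genuine gap is the construction of $\mathcal{B}$, which is the heart of the lemma and which your proposal first gets wrong and then defers to \cite{KKiPT}. Your explicit candidate $\mathcal{B}=O_k$ cannot work: by the iterated covering property \eqref{cover2}, $\omega^{X_{2Q_0}}(O_k\cap S_i^{l})\leq\epsilon_0^{k-l}\omega^{X_{2Q_0}}(S_i^{l})$, so after changing the pole the potential of $O_k$ seen from the corkscrew of $S_i^l$ is exponentially small in $k-l$; there is no definite jump across a positive fraction of the layers. Likewise the ``linear profile $u\approx l/k$'' produces jumps of size $1/k$ whose squares sum to $k^{-1}\to0$, as you yourself note; the fix is not sharper bookkeeping but a different set. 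The paper's device is: fix a small dyadic scale $\eta=2^{-K}$, for each cover cube $S_i^l$ choose the descendant $\widetilde S_i^l\in\DD^\eta(S_i^l)$ of sidelength $\eta\ell(S_i^l)$ containing the centre of $S_i^l$, set $\widetilde O_l:=\cup_i\widetilde S_i^l$, and take $\mathcal{B}$ with $\mathbbm{1}_{\mathcal{B}}=\sum_{l=2}^k\mathbbm{1}_{\widetilde O_{l-1}\setminus O_l}$. Then at the corkscrew $X_{\eta S_i^l}=(x_i^l,\eta\ell(S_i^l))$, the boundary H\"older estimate \eqref{eq:newbdryH}, the kernel bound of Proposition~\ref{Ker:sup_bound} and \eqref{cover2} show that $u(X_{\eta S_i^l})=\omega^{X_{\eta S_i^l}}(\widetilde S_i^l)+O(\eta^\alpha+C_\eta\epsilon_0)$, a quantity Lemma~\ref{Bo:est} pins inside $[c_0,1-c_0]$; whereas at the corkscrew $X_{\eta\widehat S_i^l}$ of any descendant $\widehat S_i^l\in\DD^\eta(S_i^l)$ the same reasoning forces $u$ to lie within $O(\eta^\alpha+C_\eta\epsilon_0)$ of either $0$ or $1$, according to whether $\widehat S_i^l$ equals $\widetilde S_i^l$ or not. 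Choosing first $\eta$ and then $\epsilon_0$ small yields the uniform oscillation $|u(X_{\eta S_i^l})-u(X_{\eta\widehat S_i^l})|\gtrsim1$ of \eqref{214} at every layer, which is precisely the fixed-size jump your argument requires and does not establish.
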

\begin{proof}
We introduce three constants $\epsilon_0,\delta,\eta\in(0,1)$ that will be chosen with $\delta\leq\delta_0$, where $\delta_0$ is determined by $\epsilon_0$ as in Lemma \ref{lemma25}, and $\eta=2^{-K}$ for some $K\in\N$. Therefore, if $E \subseteq Q \subseteq Q_0$ and $\omega^{X_{2Q_0}}(E) \leq \delta$, then there exists a good $\epsilon_0$-cover of $E$ of length $k$ in $Q$ such that $k \eqsim \log(\omega^{X_{2Q_0}}(E))/\log \epsilon_0$. This cover is denoted by $\{O_l\}_{l=1}^{k}$ with $O_l=\cup_{i=1}^\infty S_i^{l}$ as in Definition~\ref{def:epscover}, and for each such cube ~$S_i^l$, a dyadic descendant $\widetilde{S}_i^l$ in $\DD^\eta(S_i^l)$ that contains the centre of $S_i^l$ is now fixed and
\begin{equation}\label{eqOtildeDef}
\widetilde{O}_l := \cup_{i=1}^\infty \widetilde{S}_{i}^{l},
\end{equation}
where we note that $\ell(\widetilde{S}_i^l)=\eta\ell(S_i^l)$ in accordance with \eqref{eq:DetaDef}.

We claim that there exists a Borel subset $\mathcal{B}$ of $\RR^n$ such that $\mathbbm{1}_\mathcal{B} = \sum_{m=2}^k \mathbbm{1}_{\widetilde{O}_{j-1}\setminus O_j}$. To see this, suppose that $\sum_{m=2}^k \mathbbm{1}_{\widetilde{O}_{j-1}\setminus O_j}(x) \neq 0$ and let $l_0$ denote the smallest integer $l\in[2,k]$ such that $\mathbbm{1}_{\widetilde{O}_{l-1}\setminus O_l}(x) =1$. It must hold that $x \in \widetilde{O}_{l_0-1}\setminus O_{l_0}$, so then $x \notin O_{l_0}$, which implies that $x \notin O_l$ and $x \notin \widetilde{O}_l$ for all $l \geq l_0$, hence $ \mathbbm{1}_{\widetilde{O}_{l-1}\setminus O_l}(x) =0$ for all $l > l_0$ and the claim follows.

We now aim to choose $\epsilon_0,\eta\in(0,1)$ such that $u(X) :=\omega^X(\mathcal{B})$ on $\RR^{n+1}_+$ satisfies 
\begin{equation}\label{214}
|u(X_{\eta S^l_i}) - u(X_{\eta \widehat{S}^l_i})| \gtrsim 1
\qquad \forall \widehat{S}^l_i\in\DD^\eta(S_i^l),\ \forall i\in\N,\ \forall l \in \{1,\ldots,k\},
\end{equation}
where the implicit constant depends only on the allowed constants $n, \lambda, \Lambda$ and $[\mu]_{A_2}$, and if $x^l_i$ and $\hat{x}^l_i$ denote the centres of $S^l_i$ and $\widehat{S}^l_i$, then the relevant corkscrew points are precisely $X_{\eta S^l_i}=(x^l_i,\eta\ell(S^l_i))$ and $X_{\eta \widehat{S}^l_i}=(\hat{x}^l_i,\eta^2\ell(S^l_i))$. To this end, we proceed to obtain estimates for $u(X_{\eta S^l_i})$ and $u(X_{\eta \widehat{S}^l_i})$.

To estimate $u(X_{\eta S^l_i})$, write
\[
u(X_{\eta S^l_i})
= \int_{\RR^n \setminus S^l_i} \mathbbm{1}_\mathcal{B}\ d\omega^{X_{\eta S_i^l}} + \int_{S^l_i}\mathbbm{1}_\mathcal{B}\ d\omega^{X_{\eta S_i^l}}=: I + II.
\]
The boundary H\"{o}lder continuity in \eqref{eq:newbdryH} shows that $I \leq \omega^{X_{\eta S_i^l}}(\RR^n \setminus S^l_i) \leq C_0 \eta^\alpha$, where $C_0, \alpha>0$ depend only on the allowed constants. To estimate $II$, write
\begin{align*}
II &= \sum_{j=2}^{l}\int_{S^l_i}\mathbbm{1}_{\widetilde{O}_{j-1} \setminus O_j}\ d\omega^{X_{\eta S^l_i}}
+ \sum_{j=l + 2}^{k}\int_{S^l_i}\mathbbm{1}_{\widetilde{O}_{j-1}\setminus O_j}\ d\omega^{X_{\eta S^l_i}}
+ \int_{S^l_i}\mathbbm{1}_{\widetilde{O}_l\setminus O_{l+1}}\ d\omega^{X_{\eta S^l_i}}\\
&=: II_1 + II_2 + II_3.
\end{align*}
First, observe that $II_1 = 0$, since if $m \in\{2,\ldots,l\}$, then $S^l_i \subseteq O_l \subseteq O_j$ and so $(\widetilde{O}_{j-1}\setminus O_j) \cap S^l_i = \emptyset$. To estimate $II_2$, the kernel function representation in \eqref{eq:Klim} and estimates in Proposition~\ref{Ker:sup_bound}, the local doubling property of the degenerate elliptic measure in \eqref{localdoubling} and property \eqref{cover2} of the good $\epsilon_0$-covering, show that
\begin{align*}
II_2 &= \sum_{j=l + 2}^{k} \int_{(\widetilde{O}_{j-1}\setminus O_j) \cap S^l_i} K(X_{2Q_0},X_{\eta S^l_i},y)\ d\omega^{X_{2Q_0}}(y) \\
&\leq \frac{C_\eta}{\omega^{X_{2Q_0}}(S^l_i)}\sum_{j=l + 2}^{k} \omega^{X_{2Q_0}}\left((\widetilde{O}_{j-1}\setminus O_j) \cap S^l_i \right)\\
&\leq \frac{C_\eta}{\omega^{X_{2Q_0}}(S^l_i)}\sum_{j=l + 2}^{k} \omega^{X_{2Q_0}}(O_{j-1}\cap S^l_i)\\
&\leq \frac{C_\eta}{\omega^{X_{2Q_0}}(S^l_i)}\sum_{j=l + 2}^{k} \epsilon_0^{j-1-l}\omega^{X_{2Q_0}}(S_i^l)
\leq C_\eta\epsilon_0/(1-\epsilon_0),
\end{align*}
where the constant $C_\eta>0$ depends only on $\eta$ and the allowed constants.

To estimate $II_3$, observe that $S_i^l \cap \widetilde{O}_l = \widetilde{S}^l_i$ by the definition of $\widetilde{O}_l$ in \eqref{eqOtildeDef}, hence
\begin{equation*}
II_3 = \int_{\widetilde{S}^l_i} d\omega^{X_{\eta S^l_i}} - \int_{\widetilde{S}^l_i \cap O_{l+1}}d\omega^{X_{\eta S^l_i}}=: II_3' - II_3''.
\end{equation*}
The term $II_3''$ is estimated in the same way as $II_2$ above to show that
\begin{equation*}
II_3'' \leq \frac{C_\eta}{\omega^{X_{2Q_0}}(S^l_i)}\omega^{X_{2Q_0}}(O_{l+1} \cap \widetilde{S}^l_i) \leq \frac{C_\eta}{\omega^{X_{2Q_0}}(S^l_i)}\omega^{X_{2Q_0}}( O_{l+1} \cap S^l_i)\leq C_\eta \epsilon_0.
\end{equation*}
We estimate $II_3'$ from above and below. First, note that $X_{\eta S^l_i}=(x^l_i,\eta\ell(S^l_i))$, $x_i^l\in \widetilde{S}_i^l$ and $\ell(\widetilde{S}_i^l)=\eta\ell(S_i^l)$, so $\omega^{X_{\eta S^l_i}}(\widetilde{S}_i^l) \eqsim \omega^{X_{\widetilde{S}^l_i}}(\widetilde{S}_i^l)$ by the Harnack inequality in \eqref{eq:H}, whilst $\omega^{X_{\widetilde{S}^l_i}}(\widetilde{S}_i^l)\gtrsim1$ by Lemma~\ref{Bo:est}. Thus, there exists $c_0\in(0,1)$ depending only on the allowed constants such that $II_3' = \omega^{X_{\eta S^l_i}}(\widetilde{S}_i^l) \geq c_0$. Next, choose a different dyadic descendant $\utilde{S}_i^l \neq \widetilde{S}_i^l $ in $\DD^\eta(S_i^l)$ that contains the centre of $S_i^l$. The preceding argument shows that $\omega^{X_{\eta S_i^l}}(\utilde{S}_i^l) \geq c_0$, whilst $\omega^{X_{\eta S^l_i}}(\utilde{S}_i^l\cap \widetilde{S}_i^l)\leq\omega^{X_{\eta S^l_i}}(\partial \widetilde{S}_i^l)=0$, hence
\begin{equation*}
c_0 \leq II_3' = \omega^{X_{\eta S^l_i}}(\widetilde{S}_i^l) = 1 - \omega^{X_{\eta S^l_i}}(\RR^n\setminus \widetilde{S}_i^l) \leq 1 - \omega^{X_{\eta S^l_i}}(\utilde{S}_i^l) \leq 1 - c_0.
\end{equation*}
The above estimates together show that if $\epsilon_0\in(0,1/2)$, then 
\begin{equation}\label{eq:est1}
c_0 \leq u(X_{\eta S_i^l}) \leq C_0\eta^{\alpha} + 3C_{\eta}\epsilon_0 + 1 - c_0.
\end{equation}

To estimate $u(X_{\eta\widehat{S}^l_i})$, write
\begin{equation*}
 u(X_{\eta\widehat{S}^l_i}) = \int_{\RR^n \setminus \widehat{S}^l_i} \mathbbm{1}_\mathcal{B}\ d\omega^{X_{\eta\widehat{S}^l_i}} + \int_{\widehat{S}^l_i} \mathbbm{1}_\mathcal{B}\ d\omega^{X_{\eta\widehat{S}^l_i}}=: \widehat{I} + \widehat{II}
\end{equation*}
as well as
\begin{align*}
\widehat{II} &= \sum_{j=2}^{l}\int_{\widehat{S}^l_i}\mathbbm{1}_{\widetilde{O}_{j-1}\setminus O_j}\ d\omega^{X_{\eta \widehat{S}^l_i}}
+ \sum_{j=l + 2}^{k}\int_{\widehat{S}^l_i}\mathbbm{1}_{\widetilde{O}_{j-1}\setminus O_j}\ d\omega^{X_{\eta\widehat{S}^l_i}}
+ \int_{\widehat{S}^l_i}\mathbbm{1}_{\widetilde{O}_l\setminus O_{l+1}}\ d\omega^{X_{\eta\widehat{S}^l_i}}\\
&=: \widehat{II}_1 + \widehat{II}_2 + \widehat{II}_3.
\end{align*}
The arguments used to estimate $I$, $II_1$ and $II_2$ show that $\widehat{I} \leq \omega^{X_{\eta \widehat{S}_i^l}}(\RR^n \setminus \widehat{S}^l_i) \leq C_0 \eta^{\alpha}$, $\widehat{II}_1=0$ and $\widehat{II}_2 \leq C_{\eta}\epsilon_0/(1-\epsilon_0)$. To estimate $\widehat{II}_3$, observe that
\[
\widehat{S}^l_i \cap (\widetilde{O}_l \setminus O_{l+1}) = (\widehat{S}^l_i \cap\widetilde{S}^l_i)\setminus O_{l+1},
\]
where either $\omega^{X_{\eta\widehat{S}^l_i}}(\widehat{S}^l_i \cap\widetilde{S}^l_i) = 0$ and $\widehat{II}_3=0$, or $\widehat{S}^l_i =\widetilde{S}^l_i$ and
\begin{equation*}
\widehat{II}_3 = \int_{\widehat{S}^l_i}\ d\omega^{X_{\eta\widehat{S}^l_i}} - \int_{\widehat{S}^l_i \cap O_{l+1}}\ d\omega^{X_{\eta\widehat{S}^l_i}}=: \widehat{II}_3' - \widehat{II}_3''.
\end{equation*}
The boundary H\"{o}lder continuity estimate in \eqref{eq:newbdryH} shows that
\[
\widehat{II}_3' = \omega^{X_{\eta\widehat{S}^l_i}}(\widehat{S}^l_i) 
= 1 - \omega^{X_{\eta\widehat{S}^l_i}}(\RR^n \setminus \widehat{S}^l_i )
\geq 1 - C_0\eta^{\alpha}
\]
whilst repeating the arguments used to estimate $II_3''$ shows that
\begin{equation*}
\widehat{II}_3'' \leq \frac{C_{\eta}}{\omega^{X_{2Q_0}}(\widehat{S}^l_i)} \omega^{X_{2Q_0}}(O_{l+1} \cap \widehat{S}^l_i) 
\leq \frac{C_{\eta}}{\omega^{X_{2Q_0}}(S^l_i)} \omega^{X_{2Q_0}}(O_{l+1} \cap S^l_i)
\leq C_{\eta}\epsilon_0.
\end{equation*}
These estimates together show that if $\epsilon_0\in(0,1/2)$, then either
\begin{equation}\label{216}
 0 \leq u( X_{\eta\widehat{S}^l_i}) \leq C_0\eta^{\alpha} + 3C_{\eta}\epsilon_0
\quad\textrm{or}\quad
u( X_{\eta\widehat{S}^l_i}) \geq 1 - \left(C_0\eta^{\alpha} + C_{\eta}\epsilon_0\right).
\end{equation}

The estimates \eqref{eq:est1} and \eqref{216} together imply that
\[
|u( X_{\eta S^l_i}) - u( X_{\eta\widehat{S}^l_i})| \geq c_0 - 2C_0\eta^{\alpha} - 4C_{\eta}\epsilon_0.
\]
We thus obtain \eqref{214} by first choosing $\eta\in(0,1)$ so that $2C_0\eta^{\alpha}\leq c_0/4$ and then choosing $\epsilon_0\in(0,1/2)$ (depending on $\eta$) so that $4C_{\eta}\epsilon_0 \leq c_0/4$. These choices of $\eta$ and $\epsilon_0$, which depend only on the allowed constants, are now fixed.

To complete the proof, suppose that $M\geq 1$ and $x\in E$, and recall that $\delta\in(0,\delta_0)$ remains to be chosen, where $\delta_0$ is now fixed by our choice of $\epsilon_0$ as in Lemma \ref{lemma25}. First, fix a cube $S^k$ in $\{S^k_i\}_{i\in\mathbb{N}}$ such that $x\in S^k$. The remarks after Definition~\ref{def:epscover} then imply that for each $l\in\{1,\ldots,k-1\}$, there exists a unique cube $S^l$ in $\{S^l_i\}_{i\in\mathbb{N}}$ such that $x\in S^l$ and $S^{l+1}\subset S^l$, thus $\ell(S^{l+1})\leq \tfrac{1}{2}\ell(S^l)$. Next, for each $l\in\{1,\ldots,k\}$, fix a dyadic descendant $\widehat{S}^l$ in $\DD^\eta(S^l)$ such that $x\in \widehat{S}^l$.

Observe that, for some $\tau\in(0,1)$ sufficiently close to $1$ and depending only on $\eta$, the corkscrew points $X_{\eta S^l}$ and $X_{\eta\widehat{S}^l}$ both belong to the dilate $\tau Q^l_\eta$ of the cube
\[
Q^l_\eta:=\{(y,t)\in\RR^{n+1}_+ : |y-x|_\infty < (\tfrac{1}{2}+\tfrac{\eta^2}{4})\ell(S^l),\ \tfrac{\eta^2}{2}\ell(S^l) < t < (1+\eta^2)\ell(S^l)\}
\]
with $\ell(Q^l_\eta)=(1+\tfrac{\eta^2}{2})\ell(S^l)$. Therefore, if $c^l:=\fint_{Q^l_\eta} u$, then the Moser-type estimate in \eqref{eq:M}, the Poincar\'{e} inequality in \eqref{wPoinc} and the doubling property of $\mu$ show that
\begin{align}\begin{split}\label{eqMPD}
|u( X_{\eta S^l}) - u( X_{\eta\widehat{S}^l})|^2 
&\lesssim |u( X_{\eta S^l})-c^l|^2 + |u( X_{\eta\widehat{S}^l})-c^l|^2 \\ 
&\lesssim \|u - c^l\|_{L^\infty(\tau Q^l_\eta)}^2 \\
&\lesssim_\eta \fint_{Q^l_\eta} |u - c^l|^2\ d\mu \\
&\lesssim \ell(Q^l_\eta)^2\fint_{Q^l_\eta} |\nabla u|^2\ d\mu \\
&\lesssim \frac{\ell(S^l)}{\mu\big(\Delta(x,(1+\tfrac{\eta^2}{2})\ell(S^l))\big)}\int_{Q^l_\eta} |\nabla u|^2\ d\mu\\
&\lesssim \iint_{Q^l_\eta} |t\nabla u(y,t)|^2\ \frac{d\mu(y)}{\mu(\Delta(x,t))}\frac{dt}{t}.
\end{split}\end{align}

Iterating the bound $\ell(S^{l+1})\leq \tfrac{1}{2}\ell(S^l)$ shows that $\ell(S^{l'})\leq 2^{l-l'}\ell(S^{l})$ when $l'\geq l$. This implies that the collection $\{Q^1_\eta,\ldots,Q^k_\eta\}$ has the bounded intersection property whereby for each $l\in\{1,\ldots,k\}$, there are at most $3+2\log_2(\tfrac{1}{\eta^2}+1))$ such cubes $Q^{l'}_\eta$ satisfying $Q^{l'}_\eta \cap Q^l_\eta \neq \emptyset$. This allows us to sum estimate \eqref{eqMPD} over $l\in\{1,\ldots,k\}$ and then apply \eqref{214} to obtain
\[
k \lesssim_\eta \iint_{\cup_{l=1}^k Q^l_\eta} |t\nabla u(y,t)|^2\ \frac{d\mu(y)}{\mu(\Delta(x,t))}\frac{dt}{t} 
\lesssim \int_0^{\gamma\ell(Q)}\int_{\Delta(x,\gamma t)}|t\nabla u(y,t)|^2\frac{d\mu(y)}{\mu(\Delta(x,t))}\frac{dt}{t}
\]
for some $\gamma>0$ that depends only on $\eta>0$ and thus only on the allowed constants.

To conclude, recall that $k \eqsim \log(\omega^{X_{2Q_0}}(E)^{-1})/\log(1/\epsilon_0)\geq \log(1/\delta)/\log(1/\epsilon_0)$, since $\omega^{X_{2Q_0}}(E)\leq\delta<1$. Therefore, the result follows by choosing $\delta\in(0,\delta_0]$ such that $M\leq\log(1/\delta)$, since $\delta_M\!:=\delta$ depends only on $M$ and the allowed constants.
\end{proof}

We now combine the above technical lemma with the Carleson measure estimate from Theorem~\ref{thm:introCME} to prove the main $A_\infty$-estimate for degenerate elliptic measure.

\begin{theorem}\label{thm:Ainftymain}
Suppose that $Q_0$ is a cube in $\RR^n$. If $X\in \RR^{n+1}_+\setminus T_{Q_0}$ and $\omega:=\omega^X\lfloor{Q_0}$ denotes the degenerate elliptic measure restricted to $Q_0$, then $\omega\in A_\infty(\mu)$ and the following equivalent properties hold:
\begin{enumerate}
\item For each $\epsilon\in(0,1)$, there exists $\delta\in(0,1)$, depending only on $\epsilon$, $n$, $\lambda$, $\Lambda$ and $[\mu]_{A_2}$, such that the following property holds: If $Q\subseteq Q_0$ is a cube and $E \subseteq Q$ such that $\omega(E) \leq \delta \omega(Q)$, then $\mu(E) \leq \epsilon \mu(Q)$.
\item The measure $\omega$ is absolutely continuous with respect to $\mu$ and there exists $q\in(1,\infty)$  such that the Radon--Nikodym derivative $k := d\omega/d\mu$ satisfies, on all surface balls $\Delta \subseteq Q_0$, the reverse H\"{o}lder estimate 
\[
\left(\fint_{\Delta} k^q\ d\mu\right)^{1/q}
\lesssim \fint_{\Delta} k\ d\mu,
\]
where $q$ and the implicit constant depend only on $n$, $\lambda$, $\Lambda$ and $[\mu]_{A_2}$.
\item There exist $C, \theta>0$, depending only on $n$, $\lambda$, $\Lambda$ and $[\mu]_{A_2}$, such that
\[
\omega(E) \leq C \left(\frac{\mu(E)}{\mu(Q)}\right)^\theta \omega(Q)
\]
for all cubes $Q\subseteq Q_0$ and all Borel sets $E\subseteq Q$.
\end{enumerate}
\end{theorem}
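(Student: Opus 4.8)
The plan is to deduce all three statements by combining Lemma~\ref{28} with the Carleson measure estimate in Theorem~\ref{thm:introCME}, and then to invoke the standard equivalences among the $A_\infty$-type conditions. First I would prove statement~(1). Fix $\epsilon\in(0,1)$. The idea is contrapositive: suppose $Q\subseteq Q_0$ is a cube and $E\subseteq Q$ with $\mu(E)>\epsilon\mu(Q)$; I want to show $\omega(E)>\delta\omega(Q)$ for a $\delta$ depending only on $\epsilon$ and the allowed constants. By Lemma~\ref{Bo:est} and the Harnack inequality (moving the pole from $X$ to $X_{2Q_0}$ and then comparing with $X_{Q}$), we have $\omega^X(Q)\eqsim\omega^{X_{2Q_0}}(Q)\eqsim\omega^{X_{2Q_0}}(Q_0)$, so it suffices to work with $\omega^{X_{2Q_0}}$ and normalise. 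Now choose $M\geq1$ large (to be fixed in terms of $\epsilon$ and the allowed constants), let $\delta_M$ be as in Lemma~\ref{28}, and argue by contradiction assuming $\omega^{X_{2Q_0}}(E)\leq\delta_M\,\omega^{X_{2Q_0}}(Q)$ (after normalising $\omega^{X_{2Q_0}}(Q)$; here one uses that $\omega^{X_{2Q_0}}(Q_0)\eqsim1$ by Lemma~\ref{Bo:est}, so that the hypothesis $\omega^{X_{2Q_0}}(E)\le\delta_M$ of Lemma~\ref{28} can be met). Lemma~\ref{28} then produces a Borel set $\mathcal B$ and the bounded solution $u(X):=\omega^X(\mathcal B)$, with $\|u\|_\infty\le1$, such that the truncated square function of $u$ exceeds $M$ on all of $E$. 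Integrating this lower bound in $x$ over $E$ against $d\mu$ and using Fubini to pass from the square function to a Carleson-box integral over $\gamma Q$ gives
\[
M\,\mu(E)\le \int_E\!\int_0^{\gamma\ell(Q)}\!\int_{\Delta(x,\gamma t)}|t\nabla u(y,t)|^2\frac{d\mu(y)}{\mu(\Delta(x,t))}\frac{dt}{t}\,d\mu(x)
\lesssim_\gamma \int_0^{C\ell(Q)}\!\int_{CQ}|t\nabla u(y,t)|^2\,d\mu(y)\frac{dt}{t},
\]
where the last step swaps the order of integration, uses $\mathbbm 1_{\Delta(x,\gamma t)}(y)\le\mathbbm 1_{\Delta(y,\gamma t)}(x)$, and bounds $\mu(E\cap\Delta(y,\gamma t))/\mu(\Delta(x,t))\lesssim1$ via the doubling property of $\mu$. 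By Theorem~\ref{thm:introCME} the right-hand side is $\lesssim\|u\|_\infty^2\,\mu(CQ)\lesssim\mu(Q)$, again by doubling. Hence $M\mu(E)\lesssim\mu(Q)$, i.e. $\mu(E)\lesssim M^{-1}\mu(Q)$; choosing $M$ large enough, depending only on $\epsilon$ and the allowed constants, contradicts $\mu(E)>\epsilon\mu(Q)$. Thus $\omega^{X_{2Q_0}}(E)>\delta_M\,\omega^{X_{2Q_0}}(Q)$, and after undoing the Harnack comparisons this yields statement~(1) with $\delta:=c\,\delta_M$.

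Next I would establish the absolute continuity $\omega\ll\mu$ and the implications $(1)\Rightarrow(3)\Rightarrow(2)$. For $\omega\ll\mu$: if $\mu(E)=0$ with $E\subseteq Q\subseteq Q_0$, then for every $\delta$ the hypothesis $\mu(E)\le\epsilon\mu(Q)$ of~(1) is vacuously met for all $\epsilon$, so one concludes $\omega(E)\le\delta\,\omega(Q)$ for all $\delta\in(0,1)$, forcing $\omega(E)=0$; combined with inner regularity this gives $\omega\ll\mu$ on $Q_0$. For $(1)\Rightarrow(3)$: this is the classical self-improving argument of the $A_\infty$-condition (see, e.g., the argument in \cite{GR} or \cite{CFMS}), which uses only that both $\omega$ and $\mu$ are doubling on $Q_0$ — for $\omega$ this is \eqref{localdoubling} together with the pole-shift to $X$, and for $\mu$ it is \eqref{eq:D}. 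One iterates~(1) at dyadic scales: writing $E\subseteq Q$ and splitting according to a stopping-time decomposition of $Q$ into subcubes on which $\mu$-density of $E$ is controlled, the geometric decay of $\omega$-measure across generations yields the power-law bound $\omega(E)\le C(\mu(E)/\mu(Q))^\theta\omega(Q)$ with $\theta=\theta(\delta,n,[\mu]_{A_2})$. Finally $(3)\Rightarrow(2)$ is again standard: the power bound is precisely the statement that $\omega\in A_\infty(\mu)$ in the Muckenhoupt sense, which by the Coifman--Fefferman theory for the doubling measure $\mu$ is equivalent to $k:=d\omega/d\mu$ satisfying a reverse Hölder inequality $\big(\fint_\Delta k^q\,d\mu\big)^{1/q}\lesssim\fint_\Delta k\,d\mu$ for some $q>1$ on surface balls $\Delta\subseteq Q_0$; here one uses Lemma~\ref{138} and Proposition~\ref{Ker:sup_bound} to identify $k$ with the kernel function and to pass between $\omega^X$-averages and $\omega^{X_{Q_0}}$-averages. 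The remaining implications $(2)\Rightarrow(1)$ and $(3)\Rightarrow(1)$ are immediate from the definitions, so the three properties are equivalent and the chain closes.

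The main obstacle I anticipate is the bookkeeping in the first step: ensuring that the normalisations and pole-shifts are done correctly so that Lemma~\ref{28} (which is stated for $\omega^{X_{2Q_0}}$ with the \emph{unnormalised} smallness hypothesis $\omega^{X_{2Q_0}}(E)\le\delta_M$) can be applied on an arbitrary subcube $Q\subseteq Q_0$ where the natural normalisation is by $\omega(Q)$ rather than by $\omega(Q_0)$. This is handled by the comparison $\omega^{X_{2Q_0}}(Q)\eqsim\omega^{X_{2Q}}(Q)\omega^{X_{2Q_0}}(Q)\cdot(\omega^{X_{2Q}}(Q))^{-1}$ — more precisely, by using Lemma~\ref{138} to write $\omega^{X_{2Q_0}}(E)/\omega^{X_{2Q_0}}(Q)\eqsim\omega^{X_{Q}}(E)$ and similarly for the covering construction, so that one may replace $Q_0$ by $Q$ throughout Lemma~\ref{28} at the cost of harmless constants; this is exactly why Lemma~\ref{lemma25} and Lemma~\ref{28} were phrased with a fixed reference cube $Q_0$ but an arbitrary inner cube $Q$. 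A secondary technical point is the Fubini step converting the square-function lower bound into the Carleson-box integral: one must be careful that the truncation radii $\gamma\ell(Q)$ and apertures $\gamma$ from Lemma~\ref{28} are absorbed into the supremum defining the Carleson norm in \eqref{CME}, which is legitimate since \eqref{CME} is a supremum over \emph{all} cubes and the dilated box $C Q\times(0,C\ell(Q))$ is contained in a bounded multiple of a Carleson box, again by doubling. Neither of these is deep, but both require care to state cleanly.
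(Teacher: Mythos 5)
Your proposal is correct and follows essentially the same route as the paper: statement (1) is obtained by applying Lemma~\ref{28} together with the Carleson measure estimate of Theorem~\ref{thm:introCME}, Fubini's theorem and the doubling property of $\mu$, the pole is shifted to $X_{2Q_0}$ via Lemma~\ref{138}, and the equivalence of (1)--(3) is the standard $A_\infty$ theory for doubling measures, which the paper simply cites from Theorem~1.4.13 of \cite{K}. One caveat: your claim that $\omega^{X_{2Q_0}}(Q)\eqsim\omega^{X_{2Q_0}}(Q_0)$ is false for small subcubes $Q\subseteq Q_0$, but it is also not needed --- to invoke Lemma~\ref{28} you only require $\omega^{X_{2Q_0}}(E)\le\delta_M\,\omega^{X_{2Q_0}}(Q)\le\delta_M$, which holds because $\omega^{X_{2Q_0}}$ is a probability measure, and the reduction from general $X$ to $X_{2Q_0}$ is correctly handled by the ratio comparison of Lemma~\ref{138}, as you note in your final paragraph.
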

\begin{proof}
It is well-known that (1)--(3) are equivalent (see Theorem 1.4.13 in \cite{K}). Moreover, by Lemma~\ref{138}, it suffices to prove (1) when $X=X_{2Q_0}$.  In that case, by Lemma~\ref{28}, the Carleson measure estimate in Theorem~ \ref{thm:introCME}, Fubini's Theorem and the doubling property of $\mu$, it follows that for each $M\geq 1$, there exists $\delta_M>0$, depending only on $M$ and the allowed constants, such that the following property holds: If $Q\subseteq Q_0$ is a cube and $E\subseteq Q$ such that $\omega(E) \leq \delta_M \omega(Q)$, then there exists a solution $u$ of the equation $\div (A \nabla u)=0$ in $\RR^{n+1}_+$ with $\|u\|_\infty\leq1$ such that
\begin{align*}
M\mu(E) 
&\leq \int_{E} \int_0^{\gamma \ell(Q)}\int_{\Delta(x,\gamma t)}|t\nabla u(y,t)|^2\ \frac{d\mu(y)}{\mu(\Delta(x,t))}\frac{dt}{t}\ d\mu(x) \\
&\lesssim \int_0^{\tilde{\gamma}\ell(Q)}\int_{\tilde{\gamma}Q} |t\nabla u(y,t)|^2\ d\mu(y)\frac{dt}{t} 
\lesssim \mu(Q),
\end{align*}
where the implicit constants and $\tilde\gamma>\gamma>0$ depend only on the allowed constants. Therefore, if $\epsilon\in(0,1)$, we choose $M(\epsilon) \geq 1$ and thus $\delta_{M(\epsilon)} \in(0,1)$, depending only on $\epsilon$ and the allowed constants, such that $\mu(E) \leq \epsilon \mu(Q)$, as required.
\end{proof}

\subsection{The square function and non-tangential maximal function estimates}

The $L^p_\mu(\RR^n)$-norm equivalence between the square function $Su$ and the non-tangential maximal function $N_*u$ of solutions $u$ in Theorem~\ref{thm:SNS} is now a corollary of the main $A_\infty$-estimate for the degenerate elliptic measure in Theorem~\ref{thm:Ainftymain}. This was proved by Dahlberg, Jerison and Kenig in Theorem~1 of~\cite{DJK}, which actually provides the more general result in Theorem~\ref{thm:DJKSNS} below. In particular, the degenerate elliptic case is treated on page 106 of~\cite{DJK}, noting that the normalisation $u(X_0)=0$ assumed therein is actually only required for the so-called $N\lesssim S$-estimate.

\begin{theorem}\label{thm:DJKSNS}
Suppose that $\Phi: [0,\infty) \!\rightarrow\! [0,\infty)$ is an unbounded, non-decreasing, continuous function with $\Phi(0)=0$ and $\Phi(2t) \leq C \Phi(t)$ for all $t>0$ and some $C>0$. If $\div( A\nabla u) = 0$ in $\RR^{n+1}_+$, then
\[
\int_{\RR^n} \Phi(Su)\ d\mu \lesssim \int_{\RR^n} \Phi(N_*u)\ d\mu,
\]
and if, in addition, $u(X_0)=0$ for some $X_0\in\RR^{n+1}_+$, then
\[
\int_{\RR^n} \Phi(N_*u)\ d\mu \lesssim \int_{\RR^n} \Phi(Su)\ d\mu,
\]
where the implicit constants depend only on $X_0$, $\Phi$, $n$, $\lambda$, $\Lambda$ and $[\mu]_{A_2}$.
\end{theorem}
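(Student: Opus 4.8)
The plan is to deduce Theorem~\ref{thm:DJKSNS} directly from the $A_\infty$-property of degenerate elliptic measure established in Theorem~\ref{thm:Ainftymain} by invoking the machinery of Dahlberg, Jerison and Kenig in~\cite{DJK}. The key observation is that the argument in~\cite{DJK} is not specific to the Lebesgue measure: it is carried out in the setting of a general doubling measure on the boundary together with an associated elliptic-type measure that lies in $A_\infty$ with respect to it, and the degenerate elliptic framework here fits that template once we have assembled the requisite ingredients.

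First, I would record the structural prerequisites. The measure $\mu$ is doubling by~\eqref{eq:D}, the cones $\Gamma_\eta(x)$ and surface balls $\Delta(x,t)$ behave well with respect to $\mu$, and solutions of $\div(A\nabla u)=0$ in $\RR^{n+1}_+$ satisfy the interior regularity estimates~\eqref{eq:M}, \eqref{eq:dGN} and the Harnack inequality~\eqref{eq:H}, as well as Caccioppoli's inequality. Second, by Theorem~\ref{thm:Ainftymain}, on every cube $Q_0\subset\RR^n$ the degenerate elliptic measure $\omega^{X}\lfloor Q_0$ lies in $A_\infty(\mu)$ with constants depending only on $n$, $\lambda$, $\Lambda$ and $[\mu]_{A_2}$, and in particular it is doubling and mutually absolutely continuous with $\mu$ by Lemma~\ref{lem:abs.cty} and~\eqref{localdoubling}. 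Third, the Green function estimates from Lemma~\ref{Greensize} and Lemma~\ref{grhar}, together with the Comparison Principle in Lemma~\ref{Lem:Comparison}, supply the comparison between $\omega$ and the Green function that is used in~\cite{DJK} to pass between averages of $u$ and averages of its gradient. With these in hand, the proof of Theorem~1 in~\cite{DJK} applies \emph{mutatis mutandis}: one establishes the good-$\lambda$ inequality relating the distribution functions of $Su$ and $N_*u$ (with respect to $\mu$) by comparing, on Whitney-type regions, the local square-function mass to the oscillation of $u$, which is controlled via Caccioppoli and the $A_\infty$-property of $\omega$; integrating the good-$\lambda$ inequality against $d\Phi$ and using the doubling condition $\Phi(2t)\le C\Phi(t)$ then yields both inequalities. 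As noted in the excerpt and on page~106 of~\cite{DJK}, the degenerate case is treated there explicitly, and the normalisation $u(X_0)=0$ is needed only for the $N_*\lesssim S$ direction, which accounts for the dependence of the implicit constant on $X_0$ in that half of the statement.

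The step I expect to require the most care is verifying that the good-$\lambda$ argument of~\cite{DJK} genuinely goes through with $\mu$ in place of Lebesgue measure without any hypothesis beyond those available here. The potential friction points are: (i) the Whitney decomposition and the covering lemmas used in~\cite{DJK} must be taken with respect to $\mu$, which is legitimate since $\mu$ is doubling (cf.\ the remark preceding Lemma~\ref{JNforCar}); (ii) the comparison between $u(X_Q)$-type quantities and $\omega$-masses relies on the boundary Hölder estimate~\eqref{eq:bdrydGN}, the boundary Harnack inequality~\eqref{eq:bdryH} and the Comparison Principle, all of which we have; and (iii) the $A_\infty$-property is only asserted locally, on each $Q_0$, so the global estimates in Theorem~\ref{thm:DJKSNS} must be obtained by a standard exhaustion, summing the local estimates over a fixed grid of boundary cubes and using that the $A_\infty$-constants are uniform. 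None of these presents a genuine obstacle, but since the paper's stated goal is to record that the Dahlberg--Jerison--Kenig theory transfers to the degenerate setting, I would make explicit in the write-up exactly which hypotheses of~\cite{DJK} are being checked, rather than reproving the good-$\lambda$ inequality; the proof therefore reduces to the sentence that the hypotheses of~\cite[Theorem~1]{DJK} are satisfied, with the verification of those hypotheses being precisely the list above.

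\begin{proof}
This is Theorem~1 in~\cite{DJK}, whose proof applies in the present setting. Indeed, the argument there is carried out for a doubling measure on the boundary together with an associated elliptic-type measure in the $A_\infty$-class with respect to it. Here the role of the boundary measure is played by the $A_2$-weight $\mu$, which is doubling by~\eqref{eq:D}, so that the Whitney decompositions and covering arguments of~\cite{DJK} are valid (cf.\ the remark preceding Lemma~\ref{JNforCar}). The role of the elliptic-type measure is played by the degenerate elliptic measure $\omega^X$, which by Theorem~\ref{thm:Ainftymain} satisfies $\omega^X\lfloor{Q_0}\in A_\infty(\mu)$ on every cube $Q_0\subset\RR^n$, with $A_\infty$-constants depending only on $n$, $\lambda$, $\Lambda$ and $[\mu]_{A_2}$, and which is locally doubling and mutually absolutely continuous with $\mu$ by Lemma~\ref{lem:abs.cty}, \eqref{localdoubling} and~\eqref{eq:Qinfsup}. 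The interior regularity estimates~\eqref{eq:M}, \eqref{eq:dGN} and~\eqref{eq:H}, Caccioppoli's inequality, the boundary estimates~\eqref{eq:bdrydGN}, \eqref{eq:bdryH}, \eqref{eq:newbdryH}, the Green function bounds in Lemma~\ref{Greensize} and Lemma~\ref{grhar}, and the Comparison Principle in Lemma~\ref{Lem:Comparison} supply the remaining ingredients used in~\cite{DJK}. With these in place, the good-$\lambda$ inequalities relating the $\mu$-distribution functions of $Su$ and $N_*u$ are established exactly as in~\cite{DJK}, and integrating them against $d\Phi$, using $\Phi(2t)\leq C\Phi(t)$, yields both asserted inequalities. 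The global estimates follow from the local $A_\infty$-property by exhausting $\RR^n$ by a fixed dyadic grid and summing, the uniformity of the $A_\infty$-constants making this legitimate. As observed on page~106 of~\cite{DJK}, the degenerate case is treated there directly, and the normalisation $u(X_0)=0$ is required only for the estimate $\int_{\RR^n}\Phi(N_*u)\,d\mu \lesssim \int_{\RR^n}\Phi(Su)\,d\mu$, which accounts for the dependence of the implicit constant on $X_0$ in that inequality.
\end{proof}
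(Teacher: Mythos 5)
Your proposal is correct and follows essentially the same route as the paper: Theorem~\ref{thm:DJKSNS} is obtained by citing Theorem~1 of~\cite{DJK}, whose argument applies in the degenerate setting (as treated on page~106 of~\cite{DJK}) once the $A_\infty$-property from Theorem~\ref{thm:Ainftymain} and the doubling of $\mu$ are in hand, with the normalisation $u(X_0)=0$ needed only for the $N_*\lesssim S$ direction. Your write-up is simply more explicit than the paper's about which auxiliary estimates (regularity, Green function bounds, Comparison Principle) are being invoked to verify the hypotheses of~\cite{DJK}.
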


The next result is also a consequence of the main $A_\infty$-estimate in Theorem~\ref{thm:Ainftymain}. It will allow us to construct solutions to the Dirichlet problem $(D)_{p,\mu}$ as integrals of $L^p_\mu(\RR^n)$-boundary data with respect to degenerate elliptic measure.

\begin{lemma}\label{lem13}
Suppose that $\tfrac{1}{p}+\tfrac{1}{q}=1$, where $q\in (1,\infty)$ is the reverse H\"{o}lder exponent from Theorem~\ref{thm:Ainftymain}. If $X=(x,t)\in\RR^{n+1}_+$, then the Radon--Nikodym derivative $k(X,\cdot) := d\omega^X/d\mu$ is in $L^q_\mu(\RR^n)$ and
\[
\int_{\RR^n} k((x,t),y)^q\ d\mu(y) \lesssim \mu(\Delta(x,t))^{1-q}.
\]
Moreover, if $f\in L^p_\mu(\RR^n)$ and $u(X) := \int_{\RR^n} f(y) \ d\omega^{X}$, then
$
\|N_* u\|_{L^p_\mu(\RR^n)} \lesssim \|f\|_{L^p_\mu(\RR^n)}.
$
The implicit constant in each estimate depends only on $n$, $\lambda$, $\Lambda$ and $[\mu]_{A_2}$.
\end{lemma}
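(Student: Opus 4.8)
## Proof Proposal for Lemma~\ref{lem13}

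The plan is to prove the three assertions in order, with the first two being local/pointwise consequences of the reverse Hölder inequality from Theorem~\ref{thm:Ainftymain} and the third being a standard maximal-function estimate built on top of the first.

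\textbf{Step 1: The $L^q_\mu$-bound on the kernel.} Fix $X=(x,t)\in\RR^{n+1}_+$ and let $\Delta:=\Delta(x,t)$, a surface ball of radius $t$. First I would verify that $\omega^X\lfloor\Delta' \in A_\infty(\mu)$ with uniform constants for every surface ball $\Delta'\supseteq\Delta$ with comparable radius (say $\Delta'=\Delta(x,2t)$, so $X_{\Delta'}$ and $X$ are in the same Harnack chain): this is Theorem~\ref{thm:Ainftymain} together with Lemma~\ref{138} to change the pole. The reverse Hölder estimate from part~(2) of Theorem~\ref{thm:Ainftymain} then gives, writing $k(X,\cdot)=d\omega^X/d\mu$,
\[
\left(\fint_\Delta k(X,y)^q\ d\mu(y)\right)^{1/q} \lesssim \fint_\Delta k(X,y)\ d\mu(y) = \frac{\omega^X(\Delta)}{\mu(\Delta)} \leq \frac{1}{\mu(\Delta)},
\]
using $\omega^X(\Delta)\le\omega^X(\RR^n)=1$ since $\omega^X$ is a probability measure. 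Multiplying through by $\mu(\Delta)^{1/q}$ and raising to the $q$-th power yields $\int_\Delta k(X,y)^q\,d\mu(y)\lesssim\mu(\Delta)^{1-q}$. For the integral over all of $\RR^n$, I would dyadically decompose $\RR^n\setminus\Delta$ into annuli $\Delta(x,2^{j+1}t)\setminus\Delta(x,2^jt)$, apply the reverse Hölder estimate on $\Delta(x,2^{j+1}t)$, use the boundary Hölder decay $\omega^X(\Delta(x,2^{j+1}t)\setminus\Delta(x,2^jt))\lesssim 2^{-j\alpha}$ — which follows from \eqref{eq:newbdryH} with $u(\cdot)=\omega^{(\cdot)}(\text{the annulus})$ applied on the cube $\approx\Delta$ (noting $2\Delta\subseteq\RR^n\setminus\text{annulus}$), combined with the pole shift from Lemma~\ref{138} — and the doubling property \eqref{eq:D} of $\mu$ to sum a geometric series in $j$. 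The decay $2^{-j\alpha}$ beats the polynomial growth of $\mu(\Delta(x,2^jt))^{1-q}$ appropriately, giving the stated bound (possibly after absorbing a worse but still finite constant).

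\textbf{Step 2: That $u$ is well-defined and pointwise bounded.} For $f\in L^p_\mu(\RR^n)$ and $X=(x,t)$, Hölder's inequality with exponents $p,q$ and Step~1 give
\[
|u(X)| \le \int_{\RR^n}|f(y)|\,k(X,y)\ d\mu(y) \le \|f\|_{L^p_\mu(\RR^n)}\left(\int_{\RR^n}k(X,y)^q\,d\mu(y)\right)^{1/q} \lesssim \mu(\Delta(x,t))^{(1-q)/q}\|f\|_{L^p_\mu}.
\]
This is finite, so $u(X)=\int f\,d\omega^X$ makes sense. (That $u$ solves the equation is already established for $f\in C_c$ in Section~\ref{subsection:defellipticmeasure}; for general $f\in L^p_\mu$ one approximates, but this is not needed for the stated estimate.)

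\textbf{Step 3: The non-tangential maximal estimate.} The plan is to bound $N_*u(x)$ pointwise by a weighted Hardy--Littlewood maximal function of $f$, then invoke the $L^p_\mu$-boundedness of $M_\mu$. For $(y,t)\in\Gamma(x)$, so $|y-x|<t$, I would split $f=f\mathbbm{1}_{\Delta(x,2t)}+\sum_{j\ge1}f\mathbbm{1}_{\Delta(x,2^{j+1}t)\setminus\Delta(x,2^jt)}$. On the near piece, Hölder plus the \emph{localized} reverse Hölder bound from Step~1 (on $\Delta(x,2t)$, using $\Gamma(x)\ni(y,t)$ so that $B(y,t)$-type balls and $\Delta(x,2t)$ are comparable) gives $\lesssim \big(\fint_{\Delta(x,2t)}|f|^p\,d\mu\big)^{1/p}\le (M_\mu(|f|^p)(x))^{1/p}$. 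On the far annuli, I would use the decay of $\omega^{(y,t)}$ on these annuli — again via \eqref{eq:newbdryH} and the pole-shift — to get a factor $2^{-j\alpha'}$ for some $\alpha'>0$, against $\big(\fint_{\Delta(x,2^{j+1}t)}|f|^p\,d\mu\big)^{1/p}\le(M_\mu(|f|^p)(x))^{1/p}$, and sum in $j$. This yields the pointwise estimate $N_*u(x)\lesssim (M_\mu(|f|^p)(x))^{1/p}$, whence
\[
\|N_*u\|_{L^p_\mu(\RR^n)}^p \lesssim \int_{\RR^n}M_\mu(|f|^p)\ d\mu \lesssim \int_{\RR^n}|f|^p\ d\mu = \|f\|_{L^p_\mu(\RR^n)}^p,
\]
using the $L^{p/p}=L^1\to L^1$... — rather, since $p/p=1$ is the wrong exponent, I will instead use that $M_\mu$ is bounded on $L^r_\mu$ for every $r\in(1,\infty)$ applied with $r$ slightly larger, or more simply note $M_\mu$ is bounded on $L^1_\mu$ only in weak form; the clean route is to choose $\tilde p\in(1,p)$, run the argument with $\tilde p$ in place of $p$ everywhere (the reverse Hölder exponent $q$ only needs $q>1$, and $L^p_\mu\subseteq L^{\tilde p}_{\mu,\loc}$ suffices on balls), obtaining $N_*u(x)\lesssim (M_\mu(|f|^{\tilde p})(x))^{1/\tilde p}$ and then $\|N_*u\|_{L^p_\mu}\lesssim \|(M_\mu(|f|^{\tilde p}))^{1/\tilde p}\|_{L^p_\mu} = \|M_\mu(|f|^{\tilde p})\|_{L^{p/\tilde p}_\mu}^{1/\tilde p}\lesssim \||f|^{\tilde p}\|_{L^{p/\tilde p}_\mu}^{1/\tilde p}=\|f\|_{L^p_\mu}$, since $p/\tilde p>1$.

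\textbf{Main obstacle.} The delicate point is the decay estimate for $\omega^X$ on far annuli with the pole at an interior non-tangential point $(y,t)$ rather than at a corkscrew point of $\Delta$: one must combine the boundary Hölder continuity \eqref{eq:newbdryH} with the pole-change lemma (Lemma~\ref{138}) and the local doubling property \eqref{localdoubling} carefully, keeping track that all implicit constants remain uniform as $j\to\infty$ and as $(y,t)$ ranges over the cone. Everything else is routine: Hölder's inequality, geometric summation, and the mapping properties of $M_\mu$. I expect Step~1's global-in-$\RR^n$ bound and the far-annulus part of Step~3 to require essentially the same annular estimate, so it should be isolated once and reused.
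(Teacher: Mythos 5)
Your overall route coincides with the paper's: annular decomposition of $\RR^n$ about the pole, the reverse H\"older inequality of Theorem~\ref{thm:Ainftymain} on each piece, the boundary H\"older decay \eqref{eq:newbdryH} combined with pole shifts (Lemma~\ref{138}, Harnack) to gain the factor $2^{-j\alpha}$, and a pointwise domination of $N_*u$ by a weighted maximal function of $f$. One remark on execution before the main issue: in Steps~1 and~3 you propose to apply the reverse H\"older estimate to $k(X,\cdot)$ on surface balls such as $\Delta(x,2^{j+1}t)$ or $\Delta(x,2t)$ whose Carleson box contains the pole $X=(x,t)$ (resp.\ $(y,t)$), whereas Theorem~\ref{thm:Ainftymain} gives uniform constants only when the pole lies outside $T_{Q_0}$. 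The correct order of operations, as in the paper, is to move the pole up first --- interior Harnack for the near piece, and the pointwise bound $k((x,t),y)\lesssim 2^{-j\alpha}\,k((x,2^jt),y)/\omega^{(x,2^jt)}(\Delta(x,2^jt))$ from the proof of Proposition~\ref{Ker:sup_bound} for the far annuli --- and only then apply reverse H\"older with the elevated pole. You flag this as the ``main obstacle'' and your proposed ingredients do suffice, so I do not count it as a gap.

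The genuine gap is the exponent bookkeeping at the end of Step~3. You correctly notice that $N_*u\lesssim (M_\mu(|f|^p))^{1/p}$ is useless (it would require $M_\mu$ to be bounded on $L^1_\mu$), and propose to rerun the argument with $\tilde p\in(1,p)$, asserting that ``the reverse H\"older exponent $q$ only needs $q>1$.'' That is not so: pairing $\bigl(\fint_{\Delta}|f|^{\tilde p}\,d\mu\bigr)^{1/\tilde p}$ with the kernel via H\"older's inequality forces the kernel into $L^{\tilde p'}_\mu$ locally with $\tilde p'=\tilde p/(\tilde p-1)>q$, and the only local bound you have established is the $L^q_\mu$ one. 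The missing ingredient is the self-improvement (openness) of the reverse H\"older condition: $k$ satisfying the $L^q$ reverse H\"older inequality satisfies it for some $r>q$ as well, whence $r'<q'=p$ and $M_\mu$ is bounded on $L^{p/r'}_\mu$. This is exactly what the paper invokes (citing Theorem~1.4.13 of \cite{K}) to obtain \eqref{eq:knewest}. With that one additional citation your argument closes; without it, the final maximal-function step does not.
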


\begin{proof}
Suppose that $X=(x,t)\in\RR^{n+1}_+$. The proof of Proposition~\ref{Ker:sup_bound} shows that
\[
k((x,t),y) \lesssim 2^{-j\alpha} \frac{k((x,2^j t),y)}{\omega^{(x,2^j t)}(\Delta(x,2^jt))}
\qquad \forall y\in \Delta(x, 2^jt) \setminus \Delta(x, 2^{j-1}t),\ \forall j\in\mathbb{N}.
\]
Applying the reverse H\"{o}lder estimate from Theorem~\ref{thm:Ainftymain} then shows that
\begin{align*}
\int_{\RR^n} &k((x,t),y)^q\ d\mu(y) \\
&= \int_{\Delta(x,t)} k((x,t),y)^q\ d\mu(y)
+ \sum_{j=1}^\infty \int_{\Delta(x, 2^{j}t) \setminus \Delta(x, 2^{j-1}t)} k((x,t),y)^q\ d\mu(y) \\
&\lesssim \mu(\Delta(x,t))^{1-q} + \sum_{j=1}^\infty 2^{-j\alpha q}  \mu(\Delta(x, 2^jt))^{1-q}
\lesssim \mu(\Delta(x,t))^{1-q}.
\end{align*}

To obtain the non-tangential maximal function estimate, it suffices to consider the case when $f\geq 0$, since in general we may then decompose $f=f^+-f^-$ into its positive and negative parts $f^+,f^-\geq0$. To this end, suppose that $x_0 \in \RR^n$ and that $X=(x,t) \in \RR^{n+1}_+$ in order to write
\[
f = f \mathbbm{1}_{\Delta(x_0,2t)} +\sum_{j=1}^{\infty} f \mathbbm{1}_{\Delta(x_0, 2^{j+1}t) \setminus \Delta(x_0, 2^jt)} =: \sum_{j=0}^{\infty}f_j
\]
and define
\[
u_j(X) := \int_{\RR^n} f_j(y)\ d\omega^X(y) = \int_{\RR^n} f_j(y)\, k(X,y)\ d\mu(y).
\]
The self-improvement property of the reverse H\"{o}lder estimate from Theorem~\ref{thm:Ainftymain} (see Theorem 1.4.13 in \cite{K}) implies that there exists an exponent $r>q$ such that 
\begin{equation}\label{eq:knewest}
\left(\fint_\Delta k((x,t),y)^r\ d\mu(y)\right)^{1/r}
\lesssim \fint_\Delta k((x,t),y)\ d\mu(y)
\leq \frac{1}{\mu(\Delta)}
\end{equation}
for all surface balls $\Delta \subseteq \Delta(x,t/2)$.

Now suppose that $X=(x,t) \in \Gamma(x_0)$. To estimate $u_0$, we apply the interior Harnack inequality in \eqref{eq:H} followed by H\"{o}lder's inequality and \eqref{eq:knewest} to obtain
\begin{align*}
u_0(x,t) \eqsim u_0(x, 6t) &\leq \int_{\Delta(x_0,2t)} f(y)\, k((x, 6t),y)\ d\mu(y)\\
&\leq \left(\int_{\Delta(x_0,2t)} |k((x, 6t),y)|^{r}\ d\mu(y) \right)^{1/r}
\left( \int_{\Delta(x_0,2t)}f(y)^{r'}\ d\mu(y)\right)^{1/r'} \\
&\lesssim \mu(\Delta(x_0, 2t))^{-1/r'} \left(\int_{\Delta(x_0,2t)}f(y)^{r'}\ d\mu(y)\right)^{1/r'} \\
&\leq [M_{\mu}(f^{r'})(x_0)]^{1/r'}.
\end{align*}
To estimate $u_j$ when $j\in\N$, we apply the boundary H\"{o}lder continuity estimate from~\eqref{eq:newbdryH} and then proceed as in the estimate above to obtain
\begin{align*}
u_j(x,t) &\lesssim  \left(\frac{t}{2^{j}t}\right)^{\alpha} u_j(x_0, 2^jt) 
\eqsim   2^{-j \alpha} u_j(x_0, 2^{j+2} t) \\
&\leq 2^{-j \alpha} \int_{\Delta(x_0, 2^{j+1}t)} f(y)\, k((x_0, 2^{j+2}t),y)\ d\mu(y)\\
&\leq 2^{-j \alpha}\left(\int_{\Delta(x_0, 2^{j+1}t)} k((x_0, 2^{j+2}t),y)^r\ d\mu(y)\right)^{1/r}
\left( \int_{\Delta(x_0, 2^{j+1}t)}f(y)^{r'}\ d\mu(y)\right)^{1/r'}\\
&\lesssim 2^{-j \alpha}\left(\fint_{\Delta(x_0, 2^{j+1}t)}f(y)^{r'}  d\mu(y)\right)^{1/r'}\\
&\leq 2^{-j \alpha} [M_{\mu}(f^{r'})(x_0)]^{1/r'}.
\end{align*}
The above estimates together show that $N_{*}u(x_0) \lesssim [M_{\mu}(f^{r'})(x_0)]^{1/r'}$ for all $x_0\in \RR^n$, and since $r'<q'=p$, it follows that $\|N_*u\|_{L^p_{\mu}} \lesssim \|f\|_{L^p_{\mu}}$, as required.
\end{proof}

We conclude the paper by using the preceding lemma to obtain solvability of the Dirichlet problem $(D)_{p,\mu}$. A uniqueness result is also obtained but only for solutions that converge uniformly to 0 at infinity. This restriction does not appear in the uniformly elliptic case (see Theorem~1.7.7 in \cite{K}). It arises here because of the absence of a Green's function for degenerate elliptic equations on unbounded domains (see Section~\ref{Sec:PreEst}) and it is not clear to us whether this can be improved.

\begin{theorem}\label{thm:DpSolvmain}
Suppose that $\tfrac{1}{p}+\tfrac{1}{q}=1$, where $q\in (1,\infty)$ is the reverse H\"{o}lder exponent from Theorem~\ref{thm:Ainftymain}. The Dirichlet problem for $L^p_\mu(\RR^n)$-boundary data is solvable in the sense that for each $f\in L^p_\mu(\RR^n)$, there exists a solution $u$ such that
\begin{equation}\tag*{$(D)_{p,\mu}$}
\begin{cases}
\div (A \nabla u)=0 \textrm{ in } \RR^{n+1}_+,\\
N_* u \in L^p_{\mu}(\RR^n),\\
\lim_{t\rightarrow 0} u(\cdot,t) = f,
\end{cases}
\end{equation}
where the limit converges in $L^p_\mu(\RR^n)$-norm and in the non-tangential sense whereby $\lim_{\Gamma(x)\ni(y,t)\rightarrow (x,0)} u(y,t) = f(x)$ for almost every $x\in\RR^n$. Moreover, if $f$ has compact support, then there is a unique solution $u$ of $(D)_{p,\mu}$ that converges uniformly to 0 at infinity in the sense that $\lim_{R\rightarrow\infty} \|u\|_{L^\infty(\RR^{n+1}_+\setminus B(0,R))} = 0$.
\end{theorem}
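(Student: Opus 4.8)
The plan is to prove the two parts of Theorem~\ref{thm:DpSolvmain} separately: first existence (of solutions with non-tangential and $L^p_\mu$-convergence to the data), then uniqueness under the uniform-decay hypothesis.

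\textbf{Existence.} For $f\in L^p_\mu(\RR^n)$, define $u(X):=\int_{\RR^n} f\,d\omega^X$. By Lemma~\ref{lem13}, $u$ is a solution of $\div(A\nabla u)=0$ in $\RR^{n+1}_+$ with $\|N_*u\|_{L^p_\mu}\lesssim\|f\|_{L^p_\mu}$, so the second line of $(D)_{p,\mu}$ is immediate. It remains to establish convergence to $f$ in both senses. First I would treat $f\in C_c(\RR^n)$: here the construction of $\omega^X$ via \eqref{eq:wkconvdef} together with the continuity of solutions of the Dirichlet problem on the bounded domains $\Sigma_R$ (whose boundary data is continuous) gives that $u$ extends continuously to $\overline{\RR^{n+1}_+}$ with boundary value $f$ --- more precisely, $u$ is a uniform limit on compacta of the functions $u_R^+-u_R^-$, each continuous up to the boundary with the correct trace near $\RR^n$, and a barrier/maximum-principle argument at each boundary point (using the H\"older estimate \eqref{eq:bdrydGN}) upgrades this to full continuity at the boundary. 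For general $f\in L^p_\mu(\RR^n)$, pick $f_k\in C_c(\RR^n)$ with $f_k\to f$ in $L^p_\mu$; writing $u_k(X):=\int f_k\,d\omega^X$, the bound $N_*(u-u_k)\lesssim \|f-f_k\|_{L^p_\mu}$ from Lemma~\ref{lem13} shows $u_k\to u$ uniformly on cones, and a standard density argument (as in Theorem~1.7.2 of~\cite{K}) then yields $u(\cdot,t)\to f$ in $L^p_\mu(\RR^n)$-norm as $t\to 0$, and, after passing to the maximal function of the difference and using the weak-type estimate that underlies the bound on $N_*$, the non-tangential a.e.\ convergence $\lim_{\Gamma(x)\ni(y,t)\to(x,0)}u(y,t)=f(x)$. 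The $A_\infty(\mu)$ property from Theorem~\ref{thm:Ainftymain} (equivalently, mutual absolute continuity of $\omega^X$ and $\mu$, the reverse H\"older inequality, and the Lebesgue differentiation theorem for the doubling measure $\omega^{X}$) is exactly what makes the Lebesgue-point/non-tangential-convergence argument go through.

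\textbf{Uniqueness.} Suppose $f\in C_c(\RR^n)$ and let $v$ be a solution of $(D)_{p,\mu}$ with $\lim_{R\to\infty}\|v\|_{L^\infty(\RR^{n+1}_+\setminus B(0,R))}=0$; let $u$ be the solution built above. Then $w:=u-v$ solves $\div(A\nabla w)=0$ in $\RR^{n+1}_+$, satisfies $N_*w\in L^p_\mu$, converges non-tangentially to $0$ a.e.\ on $\RR^n$, and --- since $u$ is also bounded and tends to $0$ at infinity (by the decay estimate in Proposition~\ref{Ker:sup_bound}/Lemma~\ref{lem13} applied to the compactly supported $f$, which shows $|u(x,t)|\lesssim \mu$-averages of $f$ that vanish as $|(x,t)|\to\infty$) --- we get $\lim_{R\to\infty}\|w\|_{L^\infty(\RR^{n+1}_+\setminus B(0,R))}=0$. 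The plan is to apply the maximum principle on the bounded Lipschitz domains $\Sigma_R=B(0,R)\cap\RR^{n+1}_+$. On $\partial\Sigma_R$, the boundary splits into the flat part $\Delta(0,R)\subset\RR^n$ and the spherical cap; on the cap $\|w\|_\infty\le \varepsilon_R\to 0$, while on (most of) the flat part $w$ is small because $w$ has non-tangential limit $0$ a.e. Quantitatively, I would fix a small $h>0$, consider $w_h(X):=w(x,t+h)$, which is a solution in a neighbourhood of $\overline{\Sigma_R}$ and continuous up to $\Delta(0,R)$; the non-tangential convergence of $N_*w\in L^p_\mu$ to $0$, together with a Tchebyshev/Egorov argument, lets one control $\sup_{\Delta(0,R)}|w_h|$ by a quantity that is small off a set of small $\mu$-measure, and the degenerate elliptic measure estimates (Lemma~\ref{lem:abs.cty}, \eqref{eq:newbdryH}, the doubling of $\omega^{X}$, and the comparison results in \S\ref{Sec:PreEst}) convert the smallness of that exceptional boundary set into smallness of $\omega^{X}_R$-mass, hence of $|w_h(X)|=|\int_{\partial\Sigma_R} w_h\,d\omega^X_R|$, for $X$ in compact subsets of $\RR^{n+1}_+$. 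Letting first $h\to 0$, then $R\to\infty$, then the exceptional-set parameter to $0$, gives $w\equiv 0$.

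\textbf{Main obstacle.} I expect the delicate point to be the uniqueness half, specifically making the maximum-principle argument on $\Sigma_R$ rigorous: one must simultaneously handle the shift in $t$ needed to have a continuous trace, control the trace of $w$ on the flat boundary using only the a.e.\ non-tangential limit and the $L^p_\mu$-bound on $N_*w$ (rather than an a priori continuous trace), and absorb the contribution of the ``bad'' boundary set via the absolute continuity $\omega^{X}_R\ll\mu$ and the decay estimate in Proposition~\ref{Ker:sup_bound} --- all uniformly enough in $R$ to pass to the limit. This is precisely where the absence of a global Green's function forces the uniform-decay hypothesis: without decay on the spherical cap one cannot start the maximum-principle comparison, and the argument genuinely uses $\|w\|_{L^\infty(\RR^{n+1}_+\setminus B(0,R))}\to 0$. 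The existence half is comparatively routine given Lemma~\ref{lem13} and Theorem~\ref{thm:Ainftymain}, the only mild subtlety being the passage from $C_c$-data to general $L^p_\mu$-data for the non-tangential a.e.\ convergence, which follows the classical template in~\cite{K}.
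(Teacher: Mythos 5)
Your proposal is correct and follows essentially the same route as the paper: existence via $u(X)=\int f\,d\omega^X$, the kernel/$N_*$ bounds of Lemma~\ref{lem13}, approximation by $C_c$ data, and a Chebyshev argument for the a.e.\ non-tangential limit; and uniqueness via decay of $u$ at infinity (from the reverse H\"older estimate) plus the maximum principle on $\Sigma_R$. The only difference is that the paper dispatches the uniqueness step with a one-line appeal to the maximum principle, whereas you spell out a (reasonable) implementation of it; your minor imprecisions --- Lemma~\ref{lem13} gives an $L^p_\mu$ bound on $N_*(u-u_k)$ rather than a pointwise one (local uniform convergence instead comes from the $L^q_\mu$ kernel estimate and H\"older), and the uniqueness hypothesis is compact support of $f\in L^p_\mu$ rather than $f\in C_c$ --- do not affect the argument.
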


\begin{proof}
Suppose that $f\in L^p_\mu(\RR^n)$ and define $u(X):=\int_{\RR^n} f\ d\omega^X$ for all $X\in\RR^{n+1}_+$. We first prove that $\div( A \nabla u)=0$ in $\RR^{n+1}_+$. Let $(f_j)_j$ denote a sequence in $C_c(\RR^n)$ that converges to $f$ in $L^p_\mu(\RR^n)$ and consider the  solutions $u_j(X):=\int_{\RR^n} f_j\ d\omega^X$. The $L^q(\RR^n)$-estimate for the Radon--Nikodym derivative $d\omega^X/d\mu$ from Lemma~\ref{lem13} and the doubling property of $\mu$ show that $\|u_j-u\|_{L^\infty(K)} \lesssim_{\mu,K} \|f_j-f\|_{L^p_\mu(\RR^n)}$ for all $j\in\N$ and any compact set $K\subset \RR^{n+1}_+$, so $u_j$ converges to $u$ in $L^2_{\mu,\loc}(\RR^n)$. Moreover, Cacioppoli's inequality and the arguments preceding \eqref{eq:solcheck} show that $u_j$ converges to a solution $v$ in $W^{1,2}_{\mu,\loc}(\RR^n)$, so then $u=v$ is a solution in $\RR^{n+1}_+$ as required .

The non-tangential maximal function estimate $\|N_* u\|_{L^p_{\mu}(\RR^n)} \lesssim \|f\|_{L^p_{\mu}(\RR^n)}$ is given by Lemma~\ref{lem13}. To prove the non-tangential convergence to the boundary datum, first recall that $u_j \in C(\overline{\RR^{n+1}_+})$ with $u_j|_{\RR^n}:=f_j$, so $\lim_{\Gamma(x)\ni(y,t)\rightarrow (x,0)} u_j(y,t) = f_j(x)$ (see Section \ref{subsection:defellipticmeasure}). We combine this fact with the bound 
\[
|u(y,t) - f(x)| \leq  |u(y,t) - u_j(y,t)| + |u_j(y,t) - f_j(x)| + |(f_j-f)(x)|
\]
to obtain
\[
\limsup_{\Gamma(x)\ni(y,t)\rightarrow (x,0)} |u(y,t) - f(x)|
\leq |N_*(u-u_j)(x)| + |(f-f_j)(x)|
\]
for all $x\in\RR^n$. For any $\eta >0$, we then apply Chebyshev's inequality and the non-tangential maximal function estimate from Lemma~\ref{lem13}, to show that
\begin{align*}
\mu\Big(\Big\lbrace x &\in \RR^n: \limsup_{\Gamma(x)\ni(y,t)\rightarrow (x,0)} |u(y,t) - f(x)| > \eta \Big\rbrace\Big) \\
&\leq\mu(\lbrace x \in \RR^n:  N_*(u-u_j)(x) > \eta/2 \rbrace)
+ \mu(\lbrace x \in \RR^n: |(f-f_j)(x)| > \eta/2 \rbrace) \\
&\lesssim \eta^{-p} \left(\|N_*(u-u_j)\|_{L^p_\mu(\RR^n)}^p + \|f-f_j\|_{L^p_\mu(\RR^n)}^p\right) \\
&\lesssim \eta^{-p} \|f-f_j\|_{L^p_\mu(\RR^n)}^p.
\end{align*}
It follows, since $f_j$ converges to $f$ in $L^p_\mu(\RR^n)$, that $\lim_{\Gamma(x)\ni(y,t)\rightarrow (x,0)} u(y,t) = f(x)$ for almost every $x\in\RR^n$, as required. The norm convergence $\lim_{t\rightarrow 0} \|u(\cdot,t)-f\|_{L^p_\mu(\RR^n)}$ then follows by Lebesgue's dominated convergence theorem.

It remains to prove that $u$ is the unique solution satisfying $\lim_{|X|\rightarrow\infty} \|u(X)\|_\infty = 0$ when $f$ has compact support. In that case, fix $R_0>0$ such that $f$ is supported in the surface ball $\Delta(0,R_0)$. If $X \in \RR^{n+1}_+$ and $|X|> 2R_0$, then the reverse H\"{o}lder estimate in Theorem~\ref{thm:Ainftymain} shows that
\begin{align*}
|u(X)| &\leq \int_{\Delta(0,R_0)} |f(y)|\, k(X,y)\ d\mu(y)\\
&\leq \|f\|_{L^p_\mu(\RR^n)} \left(\int_{\Delta(0,|X|/2)}k(X,y)^q\ d\mu(y)\right)^{1/q}\\
&\lesssim \|f\|_{L^p_\mu(\RR^n)} \mu(\Delta(0,|X|/2))^{1/q} \fint_{\Delta(0,|X|/2)}k(X,y)\ d\mu(y) \\
&\leq \|f\|_{L^p_\mu(\RR^n)} \mu(\Delta(0,|X|/2))^{-1/p},
\end{align*}
whilst $\lim_{R\rightarrow\infty}\mu(\Delta(0,R))=\infty$, since $\mu$ is in the $A_\infty$-class with respect to Lebesgue measure on $\RR^n$, thus $\lim_{R\rightarrow\infty} \|u\|_{L^\infty(\RR^{n+1}_+\setminus B(0,R))}= 0$. The maximum principle allows us to conclude that any solution of $(D)_{p,\mu}$ with this decay must be unique.
\end{proof}

\end{document}